\numberwithin{equation}{section}
\newtheorem{lemma}{Lemma}[section]
\newtheorem{theorem}[lemma]{Theorem}
\newtheorem*{theorem*}{Theorem}
\newtheorem{corollary}[lemma]{Corollary}
\newtheorem{question}{Open question}
\newtheorem*{question*}{Open question}
\newtheorem{proposition}[lemma]{Proposition}
\newtheorem*{proposition*}{Proposition}
\newtheorem*{problem*}{Problem}
\theoremstyle{definition}
\newtheorem{definition}[lemma]{Definition}
\newtheorem*{claim*}{Claim}
\newtheorem*{notation}{Notation}
\newtheorem*{Conjecture}{Conjecture}
\newtheorem{example}{Example}
\newtheorem{remark}{Remark}
\newcommand{\Mod}[1]{\ \mathrm{mod}\ #1}
\theoremstyle{plain}
\newtheorem*{namedthm}{\namedthmname}
\newcounter{namedthm}
\renewcommand\section{\@startsection{section}{1}%
  \z@{.7\linespacing\@plus\linespacing}{.5\linespacing}%
  {\normalfont\Large\centering\bfseries}}
\newcommand{\N}{{\mathbb N}}
\newcommand{\Z}{{\mathbb Z}}
\DeclareMathOperator{\id}{id}
\newtheorem{theoremL}{Theorem}
\newtheorem{CorollaryL}{Corollary}
\patchcmd{\subsection}{\normalfont}{\large}{}{}
\patchcmd{\subsection}{-.5em}{.5em}{}{}
\begin{document}
\title{Distribution of integers with digit restrictions via Markov chains }

\author{Vicente Saavedra-Araya}
\email{vicente.saavedra-araya@warwick.ac.uk}
\address{University of Warwick, Department of Mathematics, Coventry, UK}	

\begin{abstract}
In this paper, we introduce a new technique to study the distribution in residue classes of sets of integers with digit and sum-of-digits restrictions. From our main theorem, we derive a necessary and sufficient condition for integers with missing digits to be uniformly distributed in arithmetic progressions, extending previous results going back to the work of   Erd\H{o}s, Mauduit and S\'ark\"ozy. 
Our approach utilizes Markov chains and does not rely on Fourier analysis as many results of this nature do. 

Our results apply more generally to the class of multiplicatively invariant sets of integers. This class, defined by Glasscock, Moreira and Richter using symbolic dynamics, is an integer analogue to fractal sets and includes all missing digits sets. We address uniform distribution in this setting, partially answering an open question posed by the same authors.

\end{abstract}

\maketitle
\small
\tableofcontents
\normalsize

\section{Introduction}
 In this paper, we investigate the distribution of sets of integers formed by imposing restrictions on the digits in a given base. Throughout this paper, we denote by $\N_0$ the set of natural numbers including zero. For an integer $g\geq 2$ and a set of digits $\mathcal{D}\subseteq \{0,\cdots,g-1\}$, let
\begin{equation}
 \mathcal{C}_{g,\mathcal{D}}:=\left\{\sum_{i=0}^m w_ig^i:\ m\in \N_0, \  w_i\in \mathcal{D} \right\}\label{def:missing_sets}   
\end{equation}
be the set of non-negative integers that can be written in base $g$ only using digits of $\mathcal{D}$.
Such sets have been widely studied (see, for example, \cite{EMS1,Kempner01021914,Mahler,MAY1}), and we will refer to them as \emph{missing digits sets.} A broader class of integers was introduced by Glasscock, Moreira, and Richter in \cite{GMR}.
\begin{definition}
    Let $g\geq 2$ be an integer. The set $A\subseteq \N_0$ is said to be \emph{$\times g$-invariant} if for every $n\in A$ with base-$g$ expansion given by $n=w_mg^m+\cdots+w_1g+w_0$, where $m\in \N_0$ and $w_m\neq 0$, it holds
    \[w_{m-1}g^{m-1}+\cdots+w_1g+w_0\in A \quad \text{and}\quad w_{m}g^{m-1}+\cdots+w_{2}g+w_1\in A.\]
    We say a set of integers is \emph{multiplicatively invariant} if it is $\times g$-invariant for some $g\geq 2$.\label{def:invariant_sets}
    \end{definition}
    Notice that the first condition in \cref{def:invariant_sets} indicates that the set $A$ is invariant under the operation of deleting the most significant digit in base $g$, while the second condition ensures that it is invariant under the operation of deleting the least significant digit. Clearly, integers with missing digits belong to this class, but so do many other sets described by broader digit restrictions (for example, by forbidding specific combinations of digits). Unlike missing digits sets, much less is known about multiplicatively invariant sets.

    Motivated by the study of the distribution of integers with missing digits in arithmetic progressions by Erdős, Mauduit and Sárközy in \cite{EMS1}, and by an open question posed by Glasscock, Moreira, and Richter in \cite[Question 5.6]{GMR} concerning the interaction between multiplicatively invariant sets and arithmetic progressions, this paper investigates distribution properties of multiplicatively invariant sets.  A central idea is to understand when a set of integers visits different congruence classes  with the same frequency. In this sense, $A\subseteq \N_0$ is said to be \emph{uniformly distributed$\Mod{a}$} (or in arithmetic progressions) if
\[\lim_{N\to \infty}\dfrac{|\{n\in A: n\equiv b\Mod{a}\}\cap [0,N)|}{|A\cap [0,N)|}=\dfrac{1}{a}\] for every $b\in \Z_a:=\Z/a\Z$.
To explore this idea in more generality, we use  the notion of \emph{$g$-additive} functions (firstly considered by Bellman and Shapiro in \cite{BS}). 
We say $f:\N_0\to \Z$ is \emph{$g$-additive} if 
\[f\left(\sum_{i=0}^m w_ig^i\right)=\sum_{i=0}^m f(w_ig^i)\]
for every $w_0,\ldots,w_m\in \{0,\ldots,g-1\}.$  
Let $A\subseteq \N_0$ be $\times g$-invariant and $r\in \N$. For all $i\in \{1,\ldots,r\}$, let  $f_i$ be  a $g$-additive function, $a_i\in \N$ and $b_i\in \Z_{a_i}$.  The main objective in this work is to study the limiting behaviour of  \begin{equation}
    \frac{|\left\{n\in A:\ f_i(n)\equiv b_i\Mod{a_i} \text{ for $i=1,\ldots,r$}\right\}\cap [0,N)|}{|A\cap [0,N)|}\label{eq:distribution}\end{equation}
    when $N\to \infty$, and determine conditions to guarantee the limit equals to $1/(a_1\cdot a_2\cdots a_r)$ for all $b_i\in \Z_{a_i}$. When this happens, we will say that $A$ is \emph{uniformly distributed $\mathbf{f}\Mod{\mathbf{a}}$}, where $\mathbf{f}=(f_1,\ldots,f_r)$ and $\mathbf{a}=(a_1,\ldots,a_r)$. Our main idea lies in the construction of suitable Markov chains that capture the behaviour of \eqref{eq:distribution}, and then study it using classical convergence results for Markov chains. Before stating our main results, we will briefly review some previous results and introduce the context of the problem to be investigated.

    \subsection{History and context}
The study of the distribution of sets of integers with digit restrictions dates back to the works of Fine \cite{Fine} and Gelfond \cite{Gelfond}. For an integer $g\geq 2$, they considered the function sum-of-digits in base $g$, denoted by $S_g$ and defined as follows: If $w_0,\cdots,w_{m}\in \{0,\cdots,g-1\}$,  \begin{equation}S_g\left(\sum_{i=0}^m w_ig^i\right)=w_0+\cdots+w_m.\label{sum_digits}\end{equation}
In this context, Gelfond  studied the interplay between $n$ and $S_g(n)$ for positive integers in terms of their distribution in residue classes, providing sufficient conditions for uniform distribution.  A straightforward consequence of \cite[Theorem 1]{Gelfond} is presented below.
\begin{corollary} Let $a,a'\in \N$ such that $\gcd(g-1,a')=1$. Then,  
\begin{equation}
    \lim_{N\to \infty}\dfrac{|\{n\leq N:\ n\equiv b\Mod{a},\ S_g(n)\equiv b'\Mod{a'}\}|}{N}=\dfrac{1}{aa'}\label{Gelfond}
\end{equation}
for any $b\in \Z_a$ and $b'\in \Z_{a'}$.\label{Gelfond_thm}
\end{corollary} 
 It is worth mentioning that Gelfond's result also gives quantitative information about the convergence. The \cref{Gelfond_thm} shows that the sequences $n\Mod{a}$ and $S_g(n)\Mod{a'}$ are independent; in other words, integers with a fixed sum of digits$\Mod{a'}$ are uniformly distributed in arithmetic progressions (a related result can be found in \cite{MS2}). As part of our work, we provide an equivalent condition to the uniform distribution exhibited in \eqref{Gelfond}, thereby extending previous works.
Results concerning  the independence between sum-of-digits functions in different bases can be found in \cite{Besineau} and \cite{KDH}. For an overview of results of this type, see \cite[Section 4.3]{Sandor_Crstici}.

In a related direction, the distribution of missing digits sets in residue classes was initially studied by Erd\H{o}s, Mauduit and S\'ark\"ozy. Let  $\mathcal{D}\subset\{0,\ldots,g-1\}$ be a subset of digits (with at least 2 elements). Regarding the set $\mathcal{C}_{g,\mathcal{D}}$ defined in \eqref{def:missing_sets}, they showed the following result in \cite[Theorem 1]{EMS1}.

\begin{theorem} Assume $\gcd\left(g(g-1),a\right)=1$, $0\in \mathcal{D}$ and $\gcd\{d\in \mathcal{D}:\ d\neq 0\}=1.$ Then, there exists $c_1,c_2$ and $c_3$ (all dependent only on $g$ and $|\mathcal{D}|)$ such that 
\[\left|\dfrac{|\left\{n\in \mathcal{C}_{g,\mathcal{D}}:\ n\equiv b\Mod{a}\right\}\cap [0,N)|}{|\mathcal{C}_{g,\mathcal{D}}\cap[0,N)|}-\dfrac{1}{a}\right|\leq \dfrac{c_2}{m}\exp \left(-c_3\dfrac{\log(N)}{\log(a)}\right)\]
    for any $a<\exp\left(c_1(\log(N))^{1/2}\right)$ and $b\in \Z_a$.\label{EMS}
\end{theorem}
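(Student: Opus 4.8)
The plan is to set up a Markov chain that reads the base-$g$ digits of an integer in $\mathcal{C}_{g,\mathcal{D}}$ from least significant to most significant, tracking the pair of residues $(n \bmod a, S_g(n)\bmod ?)$—though here there is no sum-of-digits condition, so the relevant state is just a partial residue modulo $a$ together with a bookkeeping of the current power $g^i \bmod a$. Concretely, a digit string $w_0 w_1 \cdots w_{m-1}$ with each $w_i\in\mathcal{D}$ contributes $\sum w_i g^i$, and since $\gcd(g,a)=1$ the powers $g^i\bmod a$ cycle with some period $t\mid \varphi(a)$. The key observation is that the contribution of the digit $w_i$ to $n\bmod a$ depends only on $i\bmod t$, so the state space can be taken to be $\Z_a$ (the running value of $\sum_{j<i} w_j g^j \bmod a$) with a time-inhomogeneous but periodic transition rule; by passing to blocks of $t$ digits one obtains a genuine homogeneous Markov chain on $\Z_a$ whose one-step transition matrix $M$ is the $a\times a$ stochastic matrix averaging over all choices of $t$ digits from $\mathcal{D}$. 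The hypothesis $0\in\mathcal{D}$ and $\gcd\{d\in\mathcal{D}: d\ne 0\}=1$ guarantees (after accounting for the fact that $\gcd(g-1,a)=1$ forces the digit increments to generate all of $\Z_a$ additively) that this chain is irreducible and aperiodic, hence has uniform stationary distribution $1/a$ by symmetry, so the frequencies in \eqref{eq:distribution} converge to $1/a$.

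The quantitative rate then comes from the spectral gap of $M$. First I would show that $\frac{1}{a}$ is an eigenvalue-$1$ (left and right) eigenvector and that all other eigenvalues $\lambda$ of $M$ satisfy $|\lambda|\le 1-\delta$ for some $\delta=\delta(g,|\mathcal{D}|)>0$; this is where the arithmetic hypotheses are used—one writes $M$ via its action on Fourier characters $\chi$ of $\Z_a$ and shows that for every nontrivial $\chi$ the corresponding eigenvalue has modulus bounded away from $1$, uniformly in $a$, using that the digit set generates and that $\gcd(g(g-1),a)=1$ prevents degenerate cancellation. (Even though the paper advertises avoiding Fourier analysis, the spectral-gap estimate for the explicit matrix is the natural route; alternatively one can get it via a Doeblin-type minorization, since after $O(\log a)$ blocks every state is reachable with probability bounded below, giving a gap of the shape $1-\delta$ with the number of blocks controlling things.) One must be careful that $N$ is not of the form $g^{mt}$: write $N$ in base $g$ and split $\mathcal{C}_{g,\mathcal{D}}\cap[0,N)$ into the integers of each fixed digit-length $\ell\le m$ plus a partial top block; the error from the partial block is handled by noting $|\mathcal{C}_{g,\mathcal{D}}\cap[0,g^{\ell+1})| / |\mathcal{C}_{g,\mathcal{D}}\cap[0,g^\ell)| = |\mathcal{D}|$ (up to the $0$-leading-digit caveat), so the top length dominates and the averaging over lengths does not spoil the bound.

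Assembling the pieces: for integers of digit-length exactly $\ell$, the distribution of the residue is $\nu_0 M^{\lfloor \ell/t\rfloor}$ where $\nu_0$ is the initial distribution, and $\|\nu_0 M^{k} - \tfrac1a\mathbf{1}\|_1 \le C\, a\, (1-\delta)^{k}$ by the spectral gap, which after choosing $k\asymp \log N / \log a$ (since $\ell\asymp \log N/\log g$ and $t\le \varphi(a)\le a$, so $\lfloor \ell/t\rfloor \gtrsim \log N/\log a$ once $a$ is not too large) yields the claimed $\exp(-c_3\log N/\log a)$; the leading factor $a$ is absorbed into the range restriction $a<\exp(c_1(\log N)^{1/2})$, which also ensures $\ell/t$ is large enough for the estimate to be non-vacuous. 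The factor $1/m$ appears because one averages the length-$\ell$ contributions over $\ell$ from $1$ to $m\asymp\log_g N$, and the lengths near the bottom contribute negligibly to $|\mathcal{C}_{g,\mathcal{D}}\cap[0,N)|$ while their discrepancies are individually $O(1)$, so dividing by the total count $\asymp |\mathcal{D}|^m$ gives the extra $1/m$ gain.

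The main obstacle I expect is proving the uniform (in $a$) spectral gap $|\lambda|\le 1-\delta$ for all nontrivial eigenvalues of the block transition matrix: one must extract from the two number-theoretic hypotheses ($0\in\mathcal{D}$, $\gcd$ of nonzero digits $=1$, and $\gcd(g(g-1),a)=1$) a genuinely uniform lower bound on how much mass the chain spreads in one (or boundedly many) steps, and the delicate point is that the period $t$ of $g$ modulo $a$ can be as large as $\varphi(a)$, so a single block already involves a huge number of digit choices and one needs the generation hypothesis to kick in at the level of $\Z_a$ rather than just additively on $\Z$; handling the interaction between the cyclic structure of $\{g^i\bmod a\}$ and the additive structure of $\mathcal{D}$ uniformly in $a$ is the crux.
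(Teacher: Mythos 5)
First, be aware that the paper does not prove this statement at all: it is quoted verbatim from Erd\H{o}s, Mauduit and S\'ark\"ozy \cite[Theorem 1]{EMS1}, whose proof goes through exponential sums, and the paper's own Markov-chain machinery is only claimed to recover the qualitative statement (\cref{EMS_Qualitative}) and, for each \emph{fixed} $a$, a rate $O(N^{-\gamma})$ (\cref{prop:main}). The obstruction to deducing \cref{EMS} from \cref{thm:ConvergenceMC} is exactly uniformity in $a$: the constants $C$ and $\rho$ there depend on the transition matrix, hence on $a$, whereas \cref{EMS} requires constants depending only on $g$ and $|\mathcal{D}|$ together with a range of moduli $a$ growing with $N$. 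Your proposal correctly identifies this uniform spectral gap as ``the crux,'' but then does not prove it, so the entire quantitative content of the theorem is missing from the argument.

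Second, and more concretely, your rate bookkeeping fails even if one grants a uniform per-block contraction $1-\delta$ with $\delta=\delta(g,|\mathcal{D}|)>0$. You block the digits into groups of length $t$, the multiplicative order of $g$ modulo $a$, and assert that $\lfloor \ell/t\rfloor \gtrsim \log N/\log a$. But $t$ is only bounded above by $\varphi(a)$, and it genuinely attains that size (take $a$ prime with $g$ a primitive root modulo $a$), so the number of blocks can be as small as $\log N/(a\log g)$; a fixed per-block gap then yields only $\exp(-c\log N/a)$, which is vacuous as soon as $a\gg\log N$, while the theorem must cover $a$ up to $\exp\left(c_1(\log N)^{1/2}\right)$. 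To recover the exponent $\log N/\log a$ one needs the contraction across a \emph{single} block of $t$ digits to be as strong as $(1-c)^{t/\log a}$, i.e.\ one must show that for every nonzero $h\in\Z_a$, at least a proportion $1/\log a$ of the $t$ factors $\sum_{d\in\mathcal{D}}\exp(2\pi i h d g^i/a)$ have modulus at most $|\mathcal{D}|-c$ with $c=c(g,|\mathcal{D}|)$. That is precisely the exponential-sum estimate at the heart of the Erd\H{o}s--Mauduit--S\'ark\"ozy proof, and it is where the hypotheses $\gcd(g(g-1),a)=1$, $0\in\mathcal{D}$ and $\gcd\{d\in\mathcal{D}:d\neq 0\}=1$ actually do their work; nothing in your proposal supplies it, and a soft Doeblin-type minorization cannot, since the minorization constant would again degenerate with $a$.
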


Further generalization of \cref{EMS} were provided by Konyagin \cite[Corollary 1]{SK} and  Aloui \cite[Corollary 2.2]{Aloui}, with the latter also generalizing \cref{Gelfond_thm} to the case of missing digits sets. A qualitative version of \cref{EMS} can be 
presented as follows.
\begin{corollary}
    Assume $\gcd(g(g-1),a)=1$, $0\in \mathcal{D}$ and $\gcd\{d\in \mathcal{D}:\ d\neq 0\}=1.$ Then, $\mathcal{C}_{g,\mathcal{D}}$ is uniformly distributed$\Mod{a}$.\label{EMS_Qualitative}
\end{corollary}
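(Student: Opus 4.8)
The plan is to deduce \cref{EMS_Qualitative} directly from the quantitative bound of \cref{EMS} by fixing the modulus and letting $N\to\infty$. Fix $a$ with $\gcd(g(g-1),a)=1$ and a residue $b\in\Z_a$; we must show
\[
R_N:=\dfrac{|\{n\in\mathcal C_{g,\mathcal D}:\ n\equiv b\Mod a\}\cap[0,N)|}{|\mathcal C_{g,\mathcal D}\cap[0,N)|}\longrightarrow\dfrac1a\qquad(N\to\infty).
\]
By hypothesis $0\in\mathcal D$ and $\gcd\{d\in\mathcal D:\ d\neq0\}=1$, so the assumptions of \cref{EMS} are met, and there are constants $c_1,c_2,c_3>0$, depending only on $g$ and $|\mathcal D|$, with
\[
\Bigl|R_N-\tfrac1a\Bigr|\le \dfrac{c_2}{m}\exp\!\Bigl(-c_3\dfrac{\log N}{\log a}\Bigr)
\]
for every $a<\exp\bigl(c_1(\log N)^{1/2}\bigr)$, where $m$ is the digit-length parameter appearing in \cref{EMS} and satisfies $m\to\infty$ as $N\to\infty$ (indeed $m\asymp\log N/\log g$).

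Since $a$ is now fixed, the constraint $a<\exp\bigl(c_1(\log N)^{1/2}\bigr)$ holds for all sufficiently large $N$, and on this range the right-hand side above tends to $0$: the factor $\exp(-c_3\log N/\log a)=N^{-c_3/\log a}$ already goes to $0$, as does $c_2/m$ since $m\to\infty$. Hence $|R_N-1/a|\to0$, i.e.\ $R_N\to1/a$; as $b\in\Z_a$ was arbitrary, $\mathcal C_{g,\mathcal D}$ is uniformly distributed$\Mod a$. In the notation of \eqref{eq:distribution} this is precisely the case $r=1$, $f_1=\id$, $a_1=a$.

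There is essentially no obstacle here: the statement is a soft limiting consequence of an already-available effective estimate, and the only thing to check is that the error term in \cref{EMS} is eventually applicable and vanishes for a fixed modulus. It is worth noting, however, that a genuinely self-contained proof — not invoking \cref{EMS} — can instead be extracted from the Markov-chain framework developed later in the paper: there this uniform distribution corresponds to an irreducibility/aperiodicity property of the transition chain on $\Z_a$ that is guaranteed exactly by the conditions $\gcd(g(g-1),a)=1$ and $\gcd\{d\in\mathcal D:\ d\neq0\}=1$, and that route simultaneously yields the far more general statements (arbitrary $g$-additive $\mathbf f$ and general multiplicatively invariant $A$) advertised in the introduction.
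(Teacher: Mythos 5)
Your proof is correct and matches the paper's own (implicit) argument: \cref{EMS_Qualitative} is presented there simply as the qualitative consequence of the quantitative bound in \cref{EMS}, obtained exactly as you do by fixing $a$ and letting $N\to\infty$ so that the error term vanishes. Your closing remark that the Markov-chain framework later in the paper yields an independent and more general proof (via \cref{cor:Missing_Theorem}) is also accurate.
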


As a consequence of our work, we strengthen \cref{EMS_Qualitative}, providing an equivalent condition to the uniform distribution when $\gcd(g,a)=1$.

The study of arithmetic properties for sets with digit restrictions has been an active area in number theory and is not limited to the problems covered in this paper. Different directions that have been explored include, for instance, studies on the number of prime factors for elements of sets with digit restrictions \cite{ DM1, DM2,EMS2, KMS}, as well as properties of divisibility \cite{BWS, SC}. It is also worth mentioning the breakthrough work of
Mauduit and Rivat \cite{MR} about the distribution of the function sum-of-digits for primes in residue classes and the works of Maynard \cite{MAY1, MAY2} about primes with missing digits, showing the existence of infinitely many primes with one missing digit. Additional references include \cite{AMM,BCS, DMS, Dartyge_Sarkozy, DMC,DMR,MS, CS,Thus, WW, Yu}.

 On the other hand,  a famous conjecture in fractal geometry and dynamical systems was posed by Furstenberg in \cite{Furst}, which  stated that $\times p \mod 1$ and $\times q\mod 1$ invariant sets of $[0,1]$ are transverse\footnote{For a given notion of dimension, the sets $A,B\subseteq [0,1]$ are said to be transverse if $\dim(A\cap B)\leq \max\{0,\dim(A)+\dim(B)-1\}$.} when $\log(p)/\log(q)\notin \mathbb{Q}$. This conjecture was independently proven by Shmerkin \cite{Shm} and Wu \cite{Wu} (see \cite{Austin} for an alternative proof). Motivated by an integer version of this conjecture, Glasscock, Moreira and Richter  introduced multiplicatively invariant sets of integers (\cref{def:invariant_sets}), proving in  \cite[Theorem B]{GMR} an  analogous result to Furstenberg's conjecture in the setting of integer fractals. Seeking a better understanding of invariant sets and their interaction with other sets from a fractal point of view, they raised the following question \cite[Question 5.6]{GMR}:
\begin{question}
    Let   $A\subseteq \N_0$ be a multiplicatively invariant set and $P$ be an arithmetic progression. Is it true that $\dim(A\cap P)$ is either 0 or $\dim(A)$?\label{OpenQuestion}
\end{question}

Particularly, they used the notion of mass dimension (see \cref{Mass_dimension}). The answer to this question is only known for
certain missing digits sets (those covered by previous results about uniform distribution), but has not been explored in generality. In this paper, we provide a negative answer to this question in the general case, and describe a class of sets where the answer is affirmative. Finally, we propose a conjecture identifying the precise class of multiplicatively invariant sets that satisfy this property.

\subsection{Overview of main results}
The main contribution of this paper lies in the introduction of a new approach to study the distribution of certain sets of integers via Markov chains. From this technique, we can retrieve several results, some of which are presented below.

In the direction of the works of Fine \cite{Fine}, Gelfond \cite{Gelfond}, and  Erd\H{o}s, Mauduit and S\'ark\"ozy \cite{EMS1}, we study the distribution in residue classes of set of integers with digits restrictions. A first consequence of the \cref{MainTheorem}, our main technical result, establishes conditions for a set with missing digits  to be uniformly distributed in residue classes. In contrast to \cref{EMS_Qualitative} and Aloui's result \cite[Theorem 2.9]{Aloui}, which provide only sufficient conditions, our result offers conditions that are both necessary and sufficient for uniform distribution.

\begin{theoremL}
Let $g\geq 2$ be an integer and $a,a'\in \N$ such that $\gcd(g,a)=\gcd(a,a')=1$. Let $\mathcal{D}=\{d_1,\cdots,d_t\}\subseteq \{0,\cdots,g-1\}$ be a subset of digits (where $d_1<\ldots<d_t$) and $\mathcal{C}_{g,\mathcal{D}}$ be the missing digits set defined in \eqref{def:missing_sets}. Then, 
\[\lim_{N\to\infty}\dfrac{|\{n\in \mathcal{C}_{g,\mathcal{D}}:\ n\equiv b\Mod{a},\ S_g(n)\equiv b'\Mod{a'}\}\cap [0,N)|}{|\mathcal{C}_{g,\mathcal{D}}\cap [0,N)|}=\dfrac{1}{aa'}\]
for every $(b,b')\in \Z_a\times \Z_{a'}$ if and only if
$\gcd(aa',d_2-d_1,\cdots,d_t-d_1)=1.$
    \label{thmA}
\end{theoremL}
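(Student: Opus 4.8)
The plan is to deduce \cref{thmA} from the main technical result \cref{MainTheorem}, which reduces the asymptotics of \eqref{eq:distribution} to a convergence statement for an explicitly constructed finite Markov chain. Here $\mathcal{C}_{g,\mathcal{D}}$ is governed by the full shift on the alphabet $\mathcal{D}$, the relevant number of functions is $r=2$, with $f_1=\id$ (so that $f_1(n)\equiv b\Mod a$ reads $n\equiv b\Mod a$, and $\id$ is trivially $g$-additive) and $f_2=S_g$, and moduli $a_1=a$, $a_2=a'$. Reading the base-$g$ digits of an integer from most significant to least significant, the associated chain lives on $\Z_a\times\Z_{a'}$ and moves from $(r,s)$ to $(gr+w,\ s+w)$ when the digit $w\in\mathcal{D}$ is read, each digit equally likely; after reading the $m$ digits of $n$ (padded with leading zeros) starting from $(0,0)$ one lands precisely at $(n\bmod a,\ S_g(n)\bmod a')$. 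Using $\gcd(g,a)=1$ one checks that the uniform measure on $\Z_a\times\Z_{a'}$ is stationary for this chain, so \cref{MainTheorem} tells us that $\mathcal{C}_{g,\mathcal{D}}$ is uniformly distributed $\mathbf{f}\Mod{\mathbf{a}}$ if and only if the chain started at $(0,0)$ equidistributes, i.e. if and only if it is irreducible on all of $\Z_a\times\Z_{a'}$.

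The next step is to describe the reachable set. Writing $w=d_1+v$ with $v$ ranging over $\mathcal{D}':=\{0,\ d_2-d_1,\dots,d_t-d_1\}$, the states reached from $(0,0)$ in exactly $m$ steps form
\[
 (d_1\sigma_m,\ md_1)\ +\ \Big\{\Big(\textstyle\sum_{i=0}^{m-1}g^iv_i,\ \sum_{i=0}^{m-1}v_i\Big):\ v_i\in\mathcal{D}'\Big\},\qquad \sigma_m:=1+g+\dots+g^{m-1}.
\]
Every $v_i$ is a multiple of $e:=\gcd(d_2-d_1,\dots,d_t-d_1)$, so each summand $(g^iv_i,\,v_i)$ is an integer multiple of $(eg^i\bmod a,\ e\bmod a')$; conversely, since $0\in\mathcal{D}'$, the integers $(d_j-d_1)/e$ have gcd $1$, and $g^i\bmod a$ is periodic in $i$, one checks that for all large $m$ the bracketed set equals the subgroup
\[
 H:=\big\langle\, e\,(g^i\bmod a,\ 1\bmod a')\ :\ i\ge 0\,\big\rangle\ \le\ \Z_a\times\Z_{a'}.
\]
Hence for large $m$ the reachable set is one coset of $H$, and the chain is irreducible (and then automatically aperiodic, since it can return to $(0,0)$ at all large times) exactly when $H=\Z_a\times\Z_{a'}$.

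It remains to verify that $H=\Z_a\times\Z_{a'}$ if and only if $\gcd(aa',\,d_2-d_1,\dots,d_t-d_1)=\gcd(aa',e)=1$; this I would do with characters of the finite group $\Z_a\times\Z_{a'}$. A character $(k,l)$ annihilates $H$ if and only if $ekg^ia'+ela\equiv 0\Mod{aa'}$ for every $i\ge 0$. If $\gcd(e,aa')=1$, then $e$ is invertible modulo $aa'$; reducing this relation modulo $a$ and using $\gcd(g,a)=\gcd(a,a')=1$ together with $\gcd(e,aa')=1$ forces $k\equiv 0\Mod a$, and then $l\equiv 0\Mod{a'}$, so only the trivial character annihilates $H$ and $H=\Z_a\times\Z_{a'}$. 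Conversely, if a prime $p$ divides $\gcd(e,aa')$ — say $p\mid a$, the case $p\mid a'$ being symmetric and, since $\gcd(a,a')=1$, mutually exclusive — then $(k,l)=(a/p,\,0)$ is a nontrivial character annihilating $H$ (because $p\mid e$), so $H$ is a proper subgroup and, by the previous paragraph, the chain fails to equidistribute; by the ``only if'' part of \cref{MainTheorem}, $\mathcal{C}_{g,\mathcal{D}}$ is then not uniformly distributed $\mathbf{f}\Mod{\mathbf{a}}$. (Concretely, in this case for every large $m$ all integers in $\mathcal{C}_{g,\mathcal{D}}$ of base-$g$ length $m$ have $(n\bmod a,\,S_g(n)\bmod a')$ lying in a single proper coset of $H$.) Putting the two directions together yields \cref{thmA}.

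The only place I expect real friction is not this group-theoretic computation, which is routine, but the faithful translation between counting inside $[0,N)$ and the Markov chain: a general $N$ forces one to average the chain's time-$k$ distributions over all digit-lengths $k\le \log_g N$ with geometric weights, and in the non-equidistributed case one must ensure the associated character sums do not accidentally cancel to zero along some subsequence of $N$. That bookkeeping is precisely what \cref{MainTheorem} is built to handle, so the task specific to \cref{thmA} is to pin down the chain above and carry out the arithmetic of $H$.
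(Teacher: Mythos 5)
Your forward direction ($\gcd(aa',d_2-d_1,\dots,d_t-d_1)=1$ implies uniform distribution) is essentially the paper's argument: you build the same Markov chain on $\Z_a\times\Z_{a'}$, verify aperiodicity via constant words and irreducibility via a subgroup computation, and invoke \cref{MainTheorem}. Your identification of the reachable set as a coset of $H=\langle e(g^i\bmod a,1\bmod{a'}):i\ge 0\rangle$ and the character-theoretic verification that $H=\Z_a\times\Z_{a'}$ iff $\gcd(e,aa')=1$ is correct and equivalent to the paper's \cref{Bezout}, which instead constructs an explicit word of length divisible by $p$ representing $(\delta,\delta)$ and uses the Chinese remainder theorem with $\gcd(a,a')=1$; both routes yield $H=\langle\delta_a\rangle\times\langle\delta_{a'}\rangle$. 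One cosmetic caveat: your chain reads digits most-significant-first, which is time-homogeneous per step, whereas the Markov condition in \cref{MainTheorem} is stated for the block-of-$p$ matrices $M_i$ of \cref{def:matrix}; the reachability analysis transfers, but to literally invoke the theorem you must check conditions \eqref{irreducible_missingdigits} and \eqref{aperiodic_missingdigits} for those matrices, and your ``padding with leading zeros'' only makes sense when $0\in\mathcal{D}$ (the paper handles the multiplicity of word representations separately).

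The genuine gap is in the converse. You appeal to ``the `only if' part of \cref{MainTheorem}'', but \cref{MainTheorem} is a one-directional implication: the Markov condition implies uniform distribution, and nothing in it asserts that failure of irreducibility implies failure of uniform distribution over initial segments $[0,N)$. Your parenthetical observation --- that for each length $m$ all length-$m$ elements of $\mathcal{C}_{g,\mathcal{D}}$ land in a single proper coset $c_m+H$ --- is the right starting point, but it does not by itself contradict uniform distribution: a fixed class $(b,b')$ missed at scale $m$ could still accumulate density $1/(aa')$ from shorter words, since those contribute a fraction comparable to $1/|\mathcal{D}|$ of $\mathcal{C}_{g,\mathcal{D}}\cap[0,g^m)$. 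The paper closes this in \cref{Missing_Theorem} by splitting into two cases according to whether the coset $c_m+H$ is eventually independent of $m$ (equivalently $d_1\equiv 0$ modulo $\delta_a$ and $\delta_{a'}$): if so, the restricted chains converge on a common coset and the limit equals $\delta/(aa')\neq 1/(aa')$ there and $0$ elsewhere; if not, the cosets $\mathcal{S}(i)$ vary with $i$, and one shows for each $(b,b')$ there is a $j$ with $\mathcal{C}_{g,\mathcal{D}}\cap[g^{j+mp-1},g^{j+mp})$ disjoint from the class for all $m$, whence a comparison of the counts along $[0,g^{j+mp-1})$ and $[0,g^{j+mp})$ forces the limit to be $0$ or to fail to exist. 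This case analysis is not ``bookkeeping handled by \cref{MainTheorem}''; it is a separate argument you would need to supply to complete the equivalence claimed in \cref{thmA}.
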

Indeed, we provide a comprehensive description of the limiting distribution even when  $\gcd(aa',d_2-d_1,\cdots,d_t-d_1)\neq 1$ (\cref{Missing_Theorem}), and  offer insight into the rate of convergence. In \cref{thm:General}, we also present an equivalent condition without the assumption $\gcd(a,a')=1$.

By setting $a'=1$, we generalize \cref{EMS_Qualitative} by providing an equivalent condition. Furthermore, we are able to recover the same convergence speed as that obtained by Erd\H{o}s, Mauduit and S\'ark\"ozy, albeit with a difference in the constants involved. 

\begin{CorollaryL} Let $g\geq2$ be an integer, and $a\in \N$ such that $\gcd(g,a)=1$.  Let $\mathcal{D}=\{d_1,\cdots,d_t\}\subseteq \{0,\cdots,g-1\}$ be a subset of digits, where $d_1<\ldots<d_t.$ Then, $\mathcal{C}_{g,\mathcal{D}}$ is uniformly distributed $\Mod{a}$ if and only if $\gcd(a,d_2-d_1,\cdots,d_t-d_1)=1.$\label{CorA}
\end{CorollaryL}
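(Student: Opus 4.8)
The plan is to obtain \cref{CorA} as the special case $a'=1$ of \cref{thmA}. First I would check that \cref{thmA} is applicable here: by hypothesis $\gcd(g,a)=1$, while $\gcd(a,a')=\gcd(a,1)=1$ holds automatically, so \cref{thmA} applies with the pair of $g$-additive functions $\mathbf{f}=(f_1,f_2)$, where $f_1=\id$ (the identity map is $g$-additive, since $\sum_i w_i g^i=\sum_i f_1(w_i g^i)$) and $f_2=S_g$, and with moduli $\mathbf{a}=(a,1)$.

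The next step is to unwind what \cref{thmA} asserts in this case. Since $\Z_{a'}=\Z_1$ is the trivial group, the congruence $S_g(n)\equiv b'\Mod 1$ is satisfied by every $n$, so the set
\[
\{n\in\mathcal{C}_{g,\mathcal{D}}:\ n\equiv b\Mod a,\ S_g(n)\equiv b'\Mod 1\}
\]
over which one counts in \cref{thmA} reduces to $\{n\in\mathcal{C}_{g,\mathcal{D}}:\ n\equiv b\Mod a\}$; moreover $\tfrac{1}{aa'}=\tfrac1a$, and ranging over $(b,b')\in\Z_a\times\Z_1$ amounts to ranging over $b\in\Z_a$. Hence \cref{thmA} says precisely that
\[
\lim_{N\to\infty}\frac{|\{n\in\mathcal{C}_{g,\mathcal{D}}:\ n\equiv b\Mod a\}\cap[0,N)|}{|\mathcal{C}_{g,\mathcal{D}}\cap[0,N)|}=\frac1a\qquad\text{for every }b\in\Z_a,
\]
that is, that $\mathcal{C}_{g,\mathcal{D}}$ is uniformly distributed $\Mod a$, if and only if $\gcd(a\cdot 1,\,d_2-d_1,\dots,d_t-d_1)=\gcd(a,\,d_2-d_1,\dots,d_t-d_1)=1$ --- which is exactly the statement of \cref{CorA}. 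I would also point out that this already generalizes \cref{EMS_Qualitative}, since the hypotheses $0\in\mathcal{D}$, $\gcd\{d\in\mathcal{D}:\ d\neq 0\}=1$ and $\gcd(g(g-1),a)=1$ are no longer needed.

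At the level of this corollary there is essentially no obstacle; all the work sits in \cref{thmA}, and beneath it in \cref{MainTheorem}. The genuine difficulty there is to build the Markov chain that records $n\bmod a$ (together with $S_g(n)\bmod a'$ for the full \cref{thmA}) as base-$g$ digits from $\mathcal{D}$ are appended, and then to show that the arithmetic condition $\gcd(a,d_2-d_1,\dots,d_t-d_1)=1$ is equivalent to irreducibility and aperiodicity of the resulting transition operator on $\Z_a$ --- so that its stationary distribution is uniform and classical convergence theorems for Markov chains yield the limit. If one wanted only \cref{CorA}, a slightly more economical route would be to invoke \cref{MainTheorem} directly for the single pair $(\id,a)$ and analyse the one-coordinate chain, but this is subsumed by the reduction above.
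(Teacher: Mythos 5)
Your proposal is correct and is exactly the route the paper takes: \cref{CorA} is obtained by specializing \cref{Missing_Theorem} (the full version of \cref{thmA}) to $a'=1$, observing that $\gcd(a,1)=1$ holds automatically, the congruence modulo $1$ is vacuous, and $\gcd(a\cdot 1, d_2-d_1,\dots,d_t-d_1)=\gcd(a,d_2-d_1,\dots,d_t-d_1)$. Your identification of where the real work lies (the Markov chain construction and the equivalence between the gcd condition and irreducibility/aperiodicity) also matches the paper's structure.
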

For a fully description of the distribution $\Mod{a}$, see \cref{cor:Missing_Theorem}. Our results are not limited to the distribution of integers with missing digits, but more general digit restrictions are allowed. In \cref{Section5:SFT} and \cref{Section5:sofic}, we address more general restrictions and provide examples of sets with missing combinations of digits that exhibit uniform distribution. Additionally, our method allows us to revisit the result of Gelfond, yielding an equivalent condition to the uniform distribution exhibited in \eqref{Gelfond}. In particular, this shows that the sufficient condition $\gcd(g-1,a')=1$ provided in \cref{Gelfond_thm} is not necessary.

\begin{theoremL} Let $g\geq 2$ be an integer, and $a,a'\in \N$. Then,
\[\lim_{N\to\infty}\dfrac{|\{n\leq N:\ n\equiv b\Mod{a}, \ S_g(n)\equiv b'\Mod{a'}\}|}{N}=\dfrac{1}{aa'}\]
for every $(b,b')\in \Z_a\times \Z_{a'}$  if and only if there exists $n\in \N$ such that $n\equiv 0\Mod{a}$ and $S_g(n)\equiv 1\Mod{a'}.$
    \label{thmB}
\end{theoremL}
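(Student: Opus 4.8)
The plan is to realize the pair $(n \bmod a,\, S_g(n) \bmod a')$ as the terminal state of a Markov chain built from the base-$g$ digits, so that Theorem~B becomes a statement about the irreducibility/aperiodicity of that chain on the relevant component, matching the general framework behind \cref{MainTheorem}. Concretely, I would pick a random integer uniformly in $[0, g^k)$ by choosing digits $w_0, w_1, \ldots, w_{k-1}$ independently and uniformly in $\{0,\ldots,g-1\}$, and track the state $X_j = (w_0 + w_1 g + \cdots + w_{j-1} g^{j-1} \bmod a,\ f(w_0 g^0) + \cdots + f(w_{j-1} g^{j-1}) \bmod a')$ where here $f = S_g$ restricted to single-digit inputs, i.e. $f(w_i g^i) = w_i$. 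The transition from step $j$ to $j+1$ adds $(w_j g^j \bmod a,\ w_j \bmod a')$ for a uniformly random digit $w_j$. Because $\gcd(g,a)$ need not be $1$ here (unlike Theorem~A), the multipliers $g^j \bmod a$ are eventually periodic but stabilize, so after finitely many steps the walk is on the subgroup $H \le \Z_a \times \Z_{a'}$ generated by $\{(g^j d \bmod a,\ d \bmod a') : j \ge j_0,\ 0 \le d \le g-1\}$; one checks $H = \Z_a \times \Z_{a'}$ iff the stated arithmetic condition holds. The key translation step: uniform distribution of $(n \bmod a, S_g(n) \bmod a')$ over $[0,N)$ is equivalent to the random walk equidistributing on all of $\Z_a \times \Z_{a'}$, which by the classical convergence theorem for random walks on finite abelian groups happens iff the support of the step distribution generates the whole group as a group (irreducibility) and is not trapped in a coset of a proper subgroup for parity reasons (aperiodicity).

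The main work is the equivalence of the group-generation condition with the concrete criterion ``there exists $n \in \N$ with $n \equiv 0 \pmod a$ and $S_g(n) \equiv 1 \pmod{a'}$.'' I would argue as follows. If the walk equidistributes, then in particular some $n \in [0, g^k)$ (for large $k$) has $n \equiv 0 \pmod a$ and $S_g(n) \equiv 1 \pmod{a'}$, giving one direction immediately. For the converse, suppose such an $n$ exists; I want to show the subgroup $H$ generated by the reachable steps is all of $\Z_a \times \Z_{a'}$. Note the element $(0,1)$ is realized: writing $n = \sum w_i g^i$, the vector $\sum_i (w_i g^i \bmod a,\ w_i \bmod a')$ equals $(n \bmod a, S_g(n) \bmod a') = (0,1)$, and this is a sum of elements of the step support (after absorbing the initial transient), hence lies in $H$. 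So $\{0\} \times \Z_{a'} \subseteq H$. It remains to show the first coordinate sweeps out $\Z_a$: the single digit $w = 1$ at position $j$ contributes $(g^j \bmod a,\ 1 \bmod a')$, and since $\gcd(g^{j_0}, a) = \gcd(g^\infty, a) =: e$ the powers $g^j \bmod a$ for $j \ge j_0$ generate the subgroup $e\Z_a$ of index... this needs care, so instead I would observe that taking $w = 1$ at positions $j$ and $j+1$ and subtracting realized elements lets one generate $(g^{j+1} - g^j) \bmod a = g^j(g-1) \bmod a$ in the first coordinate with controlled second coordinate, and combined with $\{0\}\times \Z_{a'} \subseteq H$ and the fact that $n = g^a$ or similar explicit integers always lie in $\mathcal{C}_{g,\{0,1\}}$-type sets, one shows $\Z_a \times \{0\} \subseteq H$; adding the two gives $H = \Z_a \times \Z_{a'}$. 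Aperiodicity is automatic since the digit $0$ gives the step $(0,0)$ in the support, so the walk has a self-loop.

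Actually the cleanest route for the converse avoids dissecting $\gcd(g^\infty,a)$: since $S_g(g^j) = 1$ for every $j$, and $g^j \bmod a$ for $j$ ranging over an interval of length $a$ (or over all $j \ge 0$) hits every element of the cyclic-ish orbit, the single-digit-$1$ steps already generate a subgroup $K \times \{*\}$; the existence of $n$ with $n \equiv 0, S_g(n) \equiv 1$ is exactly the obstruction that closes the gap, because $(0,1) \in H$ together with the translates $(g^j, 1)$ forces $(g^j, 0) \in H$ for all large $j$, and these generate $\langle g \rangle$-orbit in the first coordinate; then one must separately verify $\langle g^j \bmod a : j \ge 0 \rangle = \Z_a$ — but this is \emph{false} in general (e.g. $g = 2, a = 4$), which is precisely why the condition is phrased in terms of $S_g$ rather than in terms of $g$ alone, and why $\gcd(g,a) = 1$ is needed in Theorem~A but not here. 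So the correct statement is that $H$ need not be all of $\Z_a \times \Z_{a'}$, yet the quotient $\Z_a \times \Z_{a'} / H$ is trivial \emph{iff} the named $n$ exists; I would make this precise by identifying $H$ as $\{(x,y) : \exists n \ge 0,\ n \equiv x,\ S_g(n) \equiv y\}$ (the set of all achievable pairs is a subgroup once one allows the transient, using that large powers of $g$ are ``invertible enough''), at which point the theorem is a tautology modulo checking that achievability is closed under the group operations. The hard part, and where I would spend the most care, is exactly this last identification — showing the set of achievable $(n \bmod a, S_g(n) \bmod a')$ is a subgroup of $\Z_a \times \Z_{a'}$ — since subtraction is not obviously realizable by digit manipulations; the trick is that for any target one can pad with $g^{ka}$-blocks to move into a regime where $g$ acts invertibly on its own orbit modulo $a$, reducing the subgroup claim to the transient-free tail of the Markov chain, which is a genuine random walk on a finite group and hence has the subgroup-of-achievable-states property automatically.
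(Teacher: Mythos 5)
Your overall strategy --- realizing $(n\Mod{a},S_g(n)\Mod{a'})$ as the position of a digit-driven walk on $\Z_a\times\Z_{a'}$ and reducing the theorem to a reachability statement --- is the same as the paper's, and your group-theoretic reduction is correct and clean: the achievable pairs form a finite submonoid, hence a subgroup $H$; $(1,1)\in H$ always; and $(0,1)\in H$ (which is exactly the hypothesis) forces $(1,0)=(1,1)-(0,1)\in H$ and hence $H=\Z_a\times\Z_{a'}$. The necessity direction is also fine. But there is a genuine gap in the convergence step. The walk is time-inhomogeneous: the increment at step $j$ is $(w_jg^j\Mod{a},\,w_j\Mod{a'})$, and once $j\geq j_0$ every first coordinate is divisible by $\delta:=\gcd(g^{j_0},a)$. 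So the post-transient steps generate only the proper subgroup $\delta\Z_a\times\Z_{a'}$ whenever $\gcd(g,a)>1$ (e.g.\ $g=a=10$, $a'=1$: all post-transient steps are $(0,0)$ and the walk freezes). Consequently ``the support of the step distribution generates the whole group'' is simply false for the part of the walk to which a convergence theorem could apply, and the classical i.i.d.\ random-walk-on-a-group criterion you invoke does not close the argument. You partly notice this (your $g=2$, $a=4$ remark) but then retreat to ``the set of achievable pairs is a subgroup,'' which only controls the \emph{support} of the limiting distribution, not the measure: knowing every state is reachable does not by itself give the value $1/(aa')$.

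The missing ingredient --- and the actual crux of the paper's proof of this theorem --- is that the transient segment distributes the walk \emph{exactly uniformly} over the cosets of $\delta_i\Z_a$ in $\Z_a$ (in the paper's notation, $\Bar{\mu}_{i,j}=1/\delta_i$ for every $j$), because $w\mapsto (w)_g$ is a bijection from $\{0,\dots,g-1\}^i$ onto $[0,g^i)$ and $\delta_i\mid g^i$. The paper then splits the reducible chain $X^i$ into the irreducible, aperiodic, doubly stochastic pieces $X^{i,j}$ on the communication classes $(j+\langle\delta_i\rangle)\times\Z_{a'}$, each of which converges to the uniform measure $\delta_i/(aa')$ on its own class; only because the initial mass of each class is exactly $1/\delta_i$ does the mixture converge to $1/(aa')$ on all of $\Z_a\times\Z_{a'}$. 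Your proposal needs this coset-equidistribution computation (or an equivalent, e.g.\ a character-sum estimate, which the paper deliberately avoids) stated and proved; without it the argument establishes at best that the limit, if uniform on anything, is uniform on each coset separately, with unknown weights.
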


Notice that the existence of $n\in\N$ that satisfy $n\equiv 0\Mod{a}$ and $S_g(n)\equiv 1\Mod{a'}$ can be established from an elementary construction\footnote{The author thanks for the useful construction provided in \url{https://mathoverflow.net/q/482398}. } under the condition $\gcd(g-1,a')=1$.

Finally, we provide a negative answer in the general case to the open question posed by Glasscock, Moreira and Richter \cite[Question 5.6]{GMR}, and we provide a description using symbolic dynamics of sets of integers satisfying affirmatively the statement. An important idea is that every multiplicatively invariant set of integers can be characterized by a dynamical structure known as a subshift (see \cref{languageset}). Two important classes of subshifts are sofic subshifts and transitive subshifts (see \cref{Section2:Symbolic} for details). Using these notions, we establish the following partial answer to the \cref{OpenQuestion}.

\begin{theoremL} Let $A\subseteq \N_0$ be a $\times g$-invariant set that can be represented by a transitive sofic subshift. Then, for every infinite arithmetic progression $P$, $\dim_{\text{M}}(A\cap P)$ is either 0 or $\dim_{\text{M}}(A)$. Moreover, if the subshift is only transitive, or only sofic, the result is not always true.
    \label{thmC}
\end{theoremL}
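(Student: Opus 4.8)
The plan is to translate the whole statement into the language of Markov chains supplied by the main technical theorem, and then read off mass dimensions from the spectral data of the associated transition matrices. First I would set up the symbolic model: a $\times g$-invariant set $A$ corresponds to a one-sided subshift $X$ over the alphabet $\{0,\dots,g-1\}$ (with a mild normalization handling leading zeros), and $|A\cap[0,N)|$ grows like $g^{n h}$ where $n\approx\log_g N$ and $h$ is the topological entropy of $X$; thus $\dim_{\mathrm M}(A)=h/\log g$. For the restriction to an arithmetic progression $P=\{n:n\equiv b\pmod a\}$, I would apply the Markov chain machinery of \cref{MainTheorem} with $r=1$, $f_1=\mathrm{id}$, $a_1=a$: counting elements of $A$ in a fixed residue class mod $a$ and of length $n$ is, up to the base-change subtleties, governed by a finite matrix $M$ acting on $\Z_a\times(\text{states of the subshift presentation})$, namely $M$ records simultaneously the subshift transition and the update of $n\bmod a$ when one appends a digit (multiplying by $g$ and adding the digit). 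So $|A\cap P\cap[0,N)|$ is comparable to the sum of the entries of $M^n$ over the appropriate coordinates, and $\dim_{\mathrm M}(A\cap P)=\log(\lambda_{A\cap P})/\log g$ where $\lambda_{A\cap P}$ is the spectral radius of $M$ restricted to the (automatically invariant, since $\gcd$ considerations do not enter for $\dim$) communicating class reachable from the initial data.

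The heart of the argument is then the dichotomy: under the hypothesis that $X$ is \emph{transitive} and \emph{sofic}, I claim $\lambda_{A\cap P}$ is either $0$ (when $A\cap P$ is finite, e.g.\ the residue class is simply unreachable) or equals the spectral radius $\lambda_A=g^h$ of the unrestricted presentation. Here is where I would use both hypotheses. Soficity lets me take a \emph{finite} presentation (a labeled graph / right-resolving automaton) for $X$, so that the product with the $\Z_a$-cycle is a genuine finite irreducible-on-each-component nonnegative matrix to which Perron--Frobenius applies. Transitivity of $X$ forces the subshift-presentation matrix to be (eventually, after passing to its unique maximal component) irreducible with Perron eigenvalue exactly $\lambda_A$; tensoring with the permutation matrix of multiplication-by-$g$ on $\Z_a$ gives a matrix on $X$-states $\times\,\Z_a$ whose nonzero communicating classes each have spectral radius \emph{equal} to $\lambda_A$ — this is the standard fact that $\rho(B\otimes(\text{stochastic-type }a\times a\text{ cyclic block}))$ inherits the Perron value of $B$, because the $\Z_a$-coordinate only permutes and does not attenuate mass. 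Consequently every residue class $b$ that $A$ meets infinitely often satisfies $\dim_{\mathrm M}(A\cap P)=h/\log g=\dim_{\mathrm M}(A)$, and every class met only finitely often gives dimension $0$. (The potential annoyance that $A\cap P$ might be met infinitely often but ``thinly'' is exactly what transitivity rules out: a thin infinite intersection would correspond to a proper subcomponent of spectral radius strictly between $1$ and $\lambda_A$, impossible once the base matrix is irreducible with a single Perron value.) The main obstacle I anticipate is precisely verifying this spectral-inheritance lemma cleanly in the presence of the base-change between ``number of digits $=n$'' and ``$n<N$'' — one must check that cumulative sums $\sum_{k\le n}$ do not change the exponential rate, and that the normalization removing leading zeros in the definition of $\times g$-invariance does not split the relevant communicating class; both are routine but need care so that the transitivity hypothesis is actually being used at the right place.

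Finally, for the sharpness clause I would give two explicit families. For ``transitive but not sofic'': take a subshift $X$ that is topologically transitive yet whose minimal presentations are all infinite (a standard strictly-sofic-fails example, e.g.\ a context-free-type constraint on digit blocks), arranged so that along one residue class mod a small $a$ the admissible-block counts grow with a strictly smaller exponent than the full count — the infinite presentation allows a ``slow channel'' that a finite irreducible matrix could not support, yielding $0<\dim_{\mathrm M}(A\cap P)<\dim_{\mathrm M}(A)$. For ``sofic but not transitive'': take $X$ to be a disjoint-union-type sofic shift, say two disjoint full-shift-like pieces on digit-blocks of different lengths glued only at a sink, so that $\dim_{\mathrm M}(A)$ is the max of the two piece-dimensions but one piece sits entirely inside a nontrivial residue class mod $a$; then intersecting with that progression isolates the smaller piece and again produces an intermediate dimension. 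I would present each as a concrete $\mathcal C_{g,\mathcal D}$-style construction (a set of integers defined by a forbidden-block list, transparently $\times g$-invariant) so the counterexamples are self-contained, and verify the claimed dimensions by the same matrix count used in the positive direction — the spectral radius of the restricted matrix is visibly smaller than $\lambda_A$ because the offending component is genuinely separated.
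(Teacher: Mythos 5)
Your positive direction is sound but follows a genuinely different route from the paper. The paper (in the general form \cref{thm:Transversality}) never invokes Perron--Frobenius or spectral radii: it works with the unnormalized counts $\nu_i$ and $\mathcal{M}_i$, uses a ``returning'' lemma (\cref{lemma:Returning}) to show that once some word of length $\geq a-1$ hits the residue class $b$, the windowed sum $\sum_{j=0}^m\nu_{i+(j+n)p}(b,F)$ dominates the number of paths of length $np$ ending at $F$, hence dominates $|\mathcal{L}^{np-\ell-1}(\Sigma)|$; taking logarithms kills the polynomial losses and gives the full entropy. Your spectral-inheritance claim is essentially correct, but note that the product chain is a skew product (the $\Z_a$-coordinate is updated by $b\mapsto b+dg^i$, an edge-dependent translation), not a tensor with a fixed permutation matrix; the correct justification is that the Perron right-eigenvector of the irreducible Fischer-cover adjacency matrix, lifted to be constant in the $\Z_a$-fiber, is a strictly positive eigenvector of the restriction of the skew product to each closed communicating class, forcing every such class to have spectral radius exactly $\lambda_A$, and that no reachable state is transient because every cycle of the irreducible base, iterated $a$ times, closes up in the fiber. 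You also need the cocycle to be position-independent (the paper's eventual period $p$, or a reversed reading order with a left-resolving cover) and an analogue of \cref{lemma:Extension_to_synchronizing} to pass from an arbitrary word hitting the class $b$ to one whose follower set is a vertex of the cover. These are the ``routine but need care'' points you flag, and they are all fixable; what your approach buys is a cleaner conceptual dichotomy, while the paper's counting argument avoids any spectral theory and extends more readily (e.g.\ to the $S$-gap shifts treated afterwards by entropy approximation).

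The genuine gap is in the ``transitive but not sofic'' counterexample, which is part of the statement. You describe the desired phenomenon (a ``slow channel'' along one residue class) but give no construction, and the two requirements are in tension: transitivity tends to propagate the full word-complexity into every residue class (that is exactly what your own positive argument exploits), so one must build the shift so that the thin class is reachable yet forced into a low-complexity continuation. The paper does this explicitly in \cref{prop:ExampleTransitive}: it takes the orbit closure of a single sequence obtained by concatenating all blocks $hd^kz$ with $z\in\mathcal{D}^k$, so that transitivity is automatic, every occurrence of the special digit $h$ is followed by a run of $d$'s of length comparable to the ambient scale, and a two-sided count shows $\dim_{\text{M}}(A_\Sigma\cap(g\N+h))=\tfrac12\dim_{\text{M}}(A_\Sigma)$. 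A ``forbidden-block list'' presentation, as you propose, does not by itself guarantee transitivity, so this half of the moreover-clause needs an actual construction and verification. Your sofic-but-not-transitive example (two disjoint pieces of different sizes separated by a residue class) matches the paper's and is fine.
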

Refer to \cref{thm:Transversality} for a more general version of this result. Furthermore, we show that the sofic and transitive setting is not optimal, as the affirmative answer can be extended to more general classes of subshifts. Thus, using the notion of topological entropy (see \cref{entropy} and \cref{EntropyMinimal}), we propose the following conjecture.

\begin{Conjecture}
    Let $A\subseteq \N_0$ be a $\times g$-invariant set. There exists an entropy minimal subshift that represents $A$ if and only if, for every arithmetic progression $P$, $\dim_{\text{M}}(A\cap P)$ is either 0 or $\dim_{\text{M}}(A)$.
\end{Conjecture}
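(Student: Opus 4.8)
The statement is a conjecture, so what follows is the strategy I would pursue and the point at which I expect it to resist proof, rather than a complete argument. Throughout I use the dictionary between a $\times g$-invariant set $A$ and a subshift $X\subseteq\{0,\ldots,g-1\}^{\N}$ whose language consists of the base-$g$ digit blocks of elements of $A$; here $\dim_{\text{M}}(A)=h(X)/\log g$ with $h$ the topological entropy, and I call $X$ \emph{entropy minimal} if every proper subshift has entropy strictly below $h(X)$. Because deleting leading zeros does not change an integer, a single $A$ may admit several representing subshifts, so the existence clause is genuine. I would prove the two implications separately, and in both I would read \cref{MainTheorem} not at the level of densities but at the level of exponential growth rates: for an arithmetic progression $P=\{n\equiv b\Mod a\}$, first factor $a=a_1a_2$ with $a_1$ supported on the primes dividing $g$ and $\gcd(g,a_2)=1$; the condition modulo $a_1$ constrains only a bounded suffix of digits and is absorbed into the language, so it suffices to treat $\gcd(g,a)=1$. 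There the value $n\bmod a$ is a cocycle $c$ over the shift, and $\dim_{\text{M}}(A\cap P)\cdot\log g$ equals the exponential growth rate of the number of length-$m$ words landing in fiber $b$ of the finite group extension $\widetilde X=X\times_c\Z_a$ — precisely the Markov chain produced by \cref{MainTheorem}.

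For the forward implication I assume $X$ is entropy minimal and must show each fiber grows either at rate $h(X)$ or subexponentially. The decomposition of $\widetilde X$ into transitive components, together with $h(\widetilde X)=h(X)$, forces a full-entropy transitive component $C_0$ (finiteness of this decomposition, automatic in the sofic case, being one technical point to secure); its image under the $a$-to-one factor map $\widetilde X\to X$ is a subshift of entropy $h(X)$, hence equals $X$ by entropy minimality. Thus the full-entropy components surject onto $X$ and realise, over all of $X$, a coset decomposition of $\Z_a$ by the essential-range subgroup $H$ of the cocycle: every residue in the coset of $H$ reachable from the base fiber $0$ then grows at rate $h(X)$, giving full dimension. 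The crux is to show that every remaining reachable residue grows only subexponentially, i.e. that no residue is reached solely through components of intermediate entropy $\theta\in(0,h(X))$. Such components exist a priori — they project to proper subshifts of $X$, which entropy minimality permits — so one must argue that entropy minimality of the base prevents the low-entropy excess from connecting the base fiber to a residue outside the full-entropy cosets; I would attempt this by showing that any positive-entropy continuation of an excess word re-enters the full-entropy locus, collapsing its contribution into $H$.

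The reverse implication is where I expect the real difficulty. Assuming the dichotomy holds for every $P$, I must produce some entropy-minimal subshift representing $A$. The natural route is the contrapositive: given that no representing subshift is entropy minimal, exhibit an arithmetic progression $P$ with $0<\dim_{\text{M}}(A\cap P)<\dim_{\text{M}}(A)$. Non-minimality supplies a proper subshift $Y\subsetneq X_A$ with $h(Y)=h(X_A)$ together with genuinely lower-entropy excess carrying some integers of $A$; I would need to choose a modulus $a$ coprime to $g$ and a residue $b$ so that the cocycle $c$ separates the excess from $Y$ along a coset of a proper subgroup of $\Z_a$, making the excess the only part of the language reaching $b$ and thereby forcing intermediate dimension. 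Manufacturing such an $a$ from a purely topological entropy defect — rather than from an explicitly arithmetic one such as a digit-difference gcd as in \cref{CorA} — is the genuine obstacle: there is at present no general mechanism guaranteeing that a same-entropy proper subshift is \emph{detectable} by a congruence condition. I regard this realisation problem as the heart of the conjecture and the reason it is stated as such; the forward implication looks accessible by upgrading the transitive-sofic analysis behind \cref{thmC} to the entropy-minimal setting, whereas the converse would require a new bridge from entropy geometry to arithmetic progressions.
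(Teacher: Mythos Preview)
The statement is a conjecture in the paper, not a theorem; the paper offers no proof, only supporting evidence: the dichotomy is established for transitive sofic subshifts (\cref{thm:Transversality}) and extended to $S$-gap subshifts, both classes being entropy minimal, while the transitive counterexample of \cref{prop:ExampleTransitive} fails to be entropy minimal. You correctly recognise this and present a strategy sketch rather than a proof, which is the appropriate response.

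Your approach to the forward implication---via the skew product $\widetilde X=X\times_c\Z_a$ and its decomposition into transitive components---is conceptually aligned with, though phrased more abstractly than, the paper's concrete matrix machinery (the non-normalised matrices $\mathcal{M}_i$ and the counting argument in the proof of \cref{thm:Transversality}). The paper itself only speculates that the forward direction might extend to subshifts with left almost specification with bounded function, a proper subclass of entropy-minimal shifts, and does not claim a route to the full statement. The gap you isolate---ruling out that an intermediate-entropy component of $\widetilde X$ could be the sole contributor to some residue class---is real and is precisely what the paper's methods do not handle beyond the sofic setting, where finiteness of the component decomposition is automatic.

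Your assessment of the reverse implication as the deeper obstruction, with no available mechanism for manufacturing a detecting modulus from a purely entropic defect, is consistent with the paper's complete silence on that direction. In short, your proposal is a reasonable and honest appraisal of an open problem; there is nothing in the paper to find it wanting against.
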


\subsection{Outline of the proof}\label{Section1:Outline}

We present an outline of the proof of \cref{CorA}. Despite being a consequence of more general results, the proof contains the key ideas of this paper while avoiding some obstructions that arise when dealing with additional digit restrictions.

Consider an integer $g\geq 2$, an integer $a\in \N$ coprime with $g$, and a set of digits $\mathcal{D}=\{d_1,\cdots,d_t\}\subseteq \{0,\ldots,g-1\}$.  We aim to construct Markov chains that describe the 
 behaviour of\begin{equation}
     \dfrac{|\{n\in \mathcal{C}_{g,\mathcal{D}}:\ n\equiv b \Mod{a}\}\cap [0,N)|}{|\mathcal{C}_{g,\mathcal{D}}\cap [0,N)|}\label{Outline1}
 \end{equation}
 when $N\to \infty$. For a finite sequence of digits $w=w_0w_1\cdots w_m$ (we call it a word of length $m+1$), we denote
\[(w)_g:=w_mg^m+\cdots+w_1g+w_0.\]
We identify $\mathcal{D}^i$ with the set of words of length $i$, and we say $w\in \mathcal{D}^i$ represents $n\in \N_0$ if  $(w)_g=n$. Clearly,
\[\mathcal{C}_{g,\mathcal{D}}=\{(w)_g:\ w\in \mathcal{D}^n \text{ for $n\in \N_0$}\},\]
so equivalently, we can study the behavior of \eqref{Outline1} in terms of words. 
For every $i$, we define the measure $\mu_{i}$ on $\Z_a$ as 
 \[\mu_{i}(\{b\}):=\dfrac{|\{w\in \mathcal{D}^{i}:\ (w)_g\equiv b\Mod{a}\}|}{|\mathcal{D}|^{i}}.\]
 For simplicity, we will denote $\mu_i(b)=\mu_i(\{b\})$. This measure represents the distribution in  $\Z_a$ of elements in $\mathcal{C}_{g,\mathcal{D}}$ that can be expressed using $i$ digits of $\mathcal{D}$ in base $g$. 
 Our goal is to understand the evolution of $\mu_i$, and to do this, we will study how a word of length $i$ is extended\footnote{We say $x'=x'_0\ldots x'_k$ is an extension of $x=x_0\ldots x_m$ if $k>m$ and $x'_j=x_j$ for all $j\leq m$.}. 
For this purpose, we consider $p\in \N$ such that $g^p\equiv 1\Mod{a(g-1)}$ (whose existence is guaranteed by the condition $\gcd(g,a)=1$). Let us define the sets $E_{i}$ for  $i\in \N$ as
\begin{equation}E_{i}(b):=\Big\{w\in \mathcal{D}^{p}:\ g^i(w)_g\equiv b\Mod{a}\Big\},\label{extensions_outline}\end{equation}
and we also define define the matrix on $\Z_a\times \Z_a$ given by
$$M_{i}(b,b'):=\dfrac{|E_{i}(b'-b)|}{|\mathcal{D}|^p}.$$
It is easy to see that $M_i$ is a double stochastic matrix.\\
Let $n\in \N$, and let $x\in \mathcal{D}^{i+np}$  such that $(x)_g\equiv b\Mod{a}$. We can ask the following: For $b'\in \Z_a$, how many extensions  $x'\in \mathcal{D}^{i+(n+1)p}$  of $x$ satisfy $(x')_g\equiv b'\Mod{a}$? 
The answer is the cardinality of $E_{i+np}(b'-b)$, and we can  observe that $E_{i}(b-b')=E_{i+np}(b-b')$ since $g^{np}\equiv 1\mod a$. From this, we can obtain the relation $$\mu_{i+np}=\mu_{i}M_{i}^n.$$

If $X^i$ is a Markov chain with initial distribution $\mu_{i}$ and transition matrix $M_{i}$, it holds
\[\mathbb{P}\Big(X_n^i=b\Big)=\mu_{i+np}(b).\]

Moreover, if $X^i$ is irreducible and aperiodic, the Markov chain converges to its invariant distribution, hence
\begin{equation}
    \lim_{n\to \infty}\mathbb{P}\Big(X^{i}_n=b\Big)=\lim_{n\to \infty}\dfrac{|\{w\in \mathcal{D}^{i+np}:\ (w)_g\equiv b\mod a\}|}{|\mathcal{D}|^{i+np}}=\dfrac{1}{a}.\label{Limit_outline}
\end{equation}

If $0\notin \mathcal{D}$, there is a one-to-one correspondence between elements of $\mathcal{C}_{g,\mathcal{D}}$ and words. On the other hand, since $(w0\cdots 0)_g=(w)_g$,  every element of $\mathcal{C}_{g,\mathcal{D}}$ can be represented by infinitely many words when $0\in \mathcal{D}$. By making this distinction and under the assumption that $X^1,\cdots,X^{p}$ are irreducible and aperiodic, we can elucidate \eqref{Limit_outline} in terms of the set of integers with missing digits:
\begin{itemize}
    \item If $0\in \mathcal{D}$, 
    \[\lim_{N\to \infty}\dfrac{|\{n\in \mathcal{C}_{g,\mathcal{D}}:\ n\equiv b\Mod{a}\}\cap [0,g^N)|}{|\mathcal{C}_{g,\mathcal{D}}\cap [0,g^N)|}=\dfrac{1}{a}.\]
    \item If $0\notin \mathcal{D}$, 
    \[\lim_{N\to \infty}\dfrac{|\{n\in \mathcal{C}_{g,\mathcal{D}}:\ n\equiv b\Mod{a}\}\cap  [g^{N-1},g^N)|}{|\mathcal{C}_{g,\mathcal{D}}\cap [g^{N-1},g^N)|}=\dfrac{1}{a}.\]
\end{itemize}

For an arbitrary set of integers, this does not guarantee the existence of the limit when considering intervals of the form $[0,N)$. However, as we will prove in \cref{Section4}, in this case it happens. Therefore, we can show that
\begin{equation}\lim_{N\to \infty}\dfrac{|\{n\in \mathcal{C}_{g,\mathcal{D}}:\ n\equiv b\Mod{a}\}\cap [0,N)|}{|\mathcal{C}_{g,\mathcal{D}}\cap [0,N)|}=\dfrac{1}{a}\label{Outline3}\end{equation}
for every $b\in \Z_a$. Certainly,  we must still ensure that each Markov chain $X^i$ converges to the equilibrium. This property will hold if we choose $p$ as above and $\gcd(a,d_2-d_1,\cdots,d_t-d_1)=1$, where $d_1$ is the smallest digit of $\mathcal{D}$.

On the other hand, if $\gcd(a,d_2-d_1,\cdots,d_t-d_1)\neq 1$, it is possible to show that there exist $b\in \Z_a$ and $j\in \{1,\cdots,p\}$ such that \[\mathbb{P}\Big(X_n^j=b\Big)=0\] 
for every $n\in \N$. From this fact, we will conclude that \eqref{Outline3} does not hold for such a $b$.

Some difficulties arise when dealing with additional constraints on the digits, but we will extend the construction presented above to more general cases by leveraging ideas from symbolic dynamics. We are going to discuss this in detail in the following sections.

\vspace{1cm}

\textbf{Acknowledgements.} The author thanks his advisor, Joel Moreira, for introducing him to the problem and for his continuous support and guidance throughout this project. The author also expresses gratitude to Vaughn Climenhaga for providing useful references, and to the anonymous referee for their valuable feedback and comments.

\section{Preliminaries}
In this section we introduce some concepts of Markov chains, symbolic dynamics and integer fractals that will be used throughout this paper.

\subsection{Markov chains}
Let $\mathcal{S}$ be a finite set,  and let $M$ be a square matrix indexed by $\mathcal{S}\times \mathcal{S}$ with entries in $[0,1]$.  
The matrix $M$ is called \emph{stochastic} if the sum of each row is 1, i.e.,  for every $s\in \mathcal{S}$, $$\displaystyle \sum_{s'\in \mathcal{S}} M(s,s')=1.$$ 
Given a stochastic matrix $M$ and a probability measure $\mu$ on $\mathcal{S}$, a \emph{discrete-time Markov chain} with state space $\mathcal{S}$, transition matrix $M$ and initial distribution $\mu$ is a sequence of random variables $(X_n)_{n\in \N_0}$ on a probability space (with probability $\mathbb{P}$) such that $\mathbb{P}(X_0=s)=\mu(\{s\})=:\mu(s)$, and
\[\mathbb{P}(X_{n+1}=s_{n+1}|X_n=s_n,\cdots,X_0=s_0)=\mathbb{P}(X_1=s_{n+1}|X_0=s_n)=M(s_n,s_{n+1}) \ \]

 for every $n\in \N_0$ and $s_0,\cdots,s_{n+1}\in \mathcal{S}$. In this case, we have $$\mathbb{P}(X_n=s')=\mu \cdot M^n(\cdot,s')=\sum_{s\in\mathcal{S}}\mu(s)M^n(s,s').$$ For $s,s'\in \mathcal{S}$, we say that $s'$ is \emph{accessible from $s$} if $M^n(s,s')>0$ for some $n\in \N$. The matrix $M$ is called \emph{irreducible} if for every $s,s'\in \mathcal{S}$, there exists $n\in \N$ such that $M^n(s,s')>0$ (i.e., all states are accessible from each other).  We define the \emph{period of the state $s$} as \[Per(s):=\gcd \Big\{n\in \N:\ M^n(s,s)>0\Big\}.\]
If every state has period 1, we say $M$ is \emph{aperiodic}. When $M$ is irreducible, all states share the same period, so aperiodicity can be checked by finding one state with period 1. 

The following classical result on the convergence of Markov chains will be a key tool in this paper.
\begin{theorem}\cite[Theorem 4.9]{L_P}
    Let $X$ be a Markov chain with state space $\mathcal{S}$ (not necessarily finite),  transition matrix $M$ and initial distribution $\mu$. Suppose $M$ is irreducible and aperiodic, and there exists an invariant distribution $\lambda$ for $M$ (i.e., $\lambda=\lambda M$). Then, there exist $\rho\in(0,1)$ and $C>0$ such that
\[|\mathbb{P}(X_n=s)-\lambda(s)|\leq C\rho^n\]
for every $n\in \N$ and $s\in \mathcal{S}$.\label{thm:ConvergenceMC}
\end{theorem}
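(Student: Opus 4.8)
The plan is to reduce the statement to a uniform Doeblin-type minorization and then iterate an explicit geometric contraction. I would carry this out assuming $\mathcal{S}$ is finite, which is the only case needed in the sequel (there $\mathcal{S}=\Z_a$, or a finite product of such groups); the argument transfers verbatim to an infinite state space once a uniform minorization is available, though on a genuinely infinite $\mathcal{S}$ irreducibility and aperiodicity alone need not produce one, and the statement is then best read via coupling, as explained at the end.

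First I would show that some fixed power of $M$ is strictly positive: there is $r\in\N$ with $M^r(s,s')>0$ for all $s,s'\in\mathcal{S}$. For a fixed state $s$ the set $\{n\in\N:M^n(s,s)>0\}$ is closed under addition and, by aperiodicity, has greatest common divisor $1$, so it contains every sufficiently large integer. Combining this with irreducibility — which for each pair $(s,s')$ gives some $m$ with $M^m(s,s')>0$ — yields $M^n(s,s')>0$ for all large $n$, and since $\mathcal{S}$ is finite a single $r$ works for every pair. Set $\theta=\min_{s,s'}M^r(s,s')>0$.

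Next, from $\lambda=\lambda M^r$ and the positivity of $M^r$ one gets $\lambda(s')=\sum_s\lambda(s)M^r(s,s')\ge\theta>0$ for every $s'$; combined with $\sum_s\lambda(s)=1$, the number $\delta:=\min_{s,s'}M^r(s,s')/\lambda(s')$ lies in $(0,1]$ and satisfies $M^r(s,\cdot)\ge\delta\lambda$ for all $s$. If $\delta=1$ then every row of $M^r$ equals $\lambda$ and we are done, so assume $\delta<1$. Let $\Lambda$ be the stochastic matrix all of whose rows equal $\lambda$, and write $M^r=\delta\Lambda+(1-\delta)Q$ with $Q:=(1-\delta)^{-1}(M^r-\delta\Lambda)$, which is stochastic by the minorization. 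Using that $N\Lambda=\Lambda$ for every stochastic $N$ (rows sum to $1$) and that $\Lambda Q^k=\Lambda$ (since $\lambda Q=\lambda$, which follows from $\lambda M^r=\lambda$ and $\lambda\Lambda=\lambda$), a short induction on $k$ gives
\[M^{rk}=\bigl(1-(1-\delta)^k\bigr)\Lambda+(1-\delta)^k Q^k\qquad\text{for all }k\ge 1.\]

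To finish, for any initial law $\mu$ this yields $\mu M^{rk}(s)=\bigl(1-(1-\delta)^k\bigr)\lambda(s)+(1-\delta)^k(\mu Q^k)(s)$, hence $|\mu M^{rk}(s)-\lambda(s)|\le(1-\delta)^k$ because $\mu Q^k(s),\lambda(s)\in[0,1]$. For general $n$, write $n=rk+j$ with $0\le j<r$, observe $\mathbb{P}(X_n=s)=(\mu M^j)M^{rk}(s)$ with $\mu M^j$ again a probability law, and conclude
\[|\mathbb{P}(X_n=s)-\lambda(s)|\le(1-\delta)^{\lfloor n/r\rfloor}\le\frac{1}{1-\delta}\bigl((1-\delta)^{1/r}\bigr)^n,\]
so $\rho=(1-\delta)^{1/r}\in(0,1)$ and $C=(1-\delta)^{-1}$ work. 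The one genuinely non-routine step is the first: upgrading ``irreducible and aperiodic'' to ``a fixed power of $M$ is everywhere positive'', uniformly in the pair of states — this is exactly where finiteness of $\mathcal{S}$ is used. To sidestep it (or to treat an infinite $\mathcal{S}$ under a uniform minorization hypothesis) one can argue by coupling instead: run one copy of the chain from $\mu$ and one from $\lambda$, and every $r$ steps flip a $\delta$-coin, resampling both copies from $\lambda$ on success so that they agree forever after and moving by $Q$ otherwise; then the coupling time $\tau$ satisfies $\mathbb{P}(\tau>rk)\le(1-\delta)^k$, and the coupling inequality $|\mathbb{P}(X_n=s)-\lambda(s)|\le\mathbb{P}(\tau>n)$ gives the same geometric bound.
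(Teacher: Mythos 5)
This result is quoted in the paper from \cite[Theorem 4.9]{L_P} without proof, so there is no internal argument to compare against; your proof is correct and is essentially the proof given in that cited reference (the Doeblin decomposition $M^r=\delta\Lambda+(1-\delta)Q$ with the induction $M^{rk}=(1-(1-\delta)^k)\Lambda+(1-\delta)^kQ^k$). Your caveat about the infinite case is also well taken: as literally stated, with $\mathcal{S}$ infinite, irreducibility, aperiodicity and existence of a stationary distribution give convergence but not in general a geometric rate, so the uniform bound $C\rho^n$ genuinely requires finiteness (or an added minorization hypothesis) --- which is harmless here, since every state space used in the paper is finite.
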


Notice that the invariant distribution for an irreducible and aperiodic transition matrix with a finite state space always exists (and it is unique) as a consequence of the Perron-Frobenius theorem. In addition, the \cref{thm:ConvergenceMC} is only dependent on the transition matrix, so that $\lambda$, $C$ and $\rho$ are independent of the initial distribution. A particularly interesting case occurs when the sum of each column of $M$ is also 1, in which case $M$ is called \emph{doubly stochastic}. In this case, the distribution $\lambda(s) = 1/|\mathcal{S}|$ is invariant for $M$. For further discussion on the convergence of Markov chains, see \cite[Section 1.8]{MarkovChain2} and \cite[Section 3]{MarkovChain1}.

\subsection{Elements of symbolic dynamics}\label{Section2:Symbolic}

Let $\mathcal{D}$ be a finite set (in this paper, we assume
$\mathcal{D}\subseteq\{0,\cdots,g-1\}$ for some $g\in \N$), and consider the \emph{left-shift} operator $\sigma$ over $\mathcal{D}^{\N_0}$, defined by
\[\sigma: (w_n)_{n\in \N_0}\mapsto (w_{n+1})_{n\in \N_0} .\]

Endowing $\mathcal{D}$ with the discrete topology and considering the product topology on $\mathcal{D}^{\N_0}$, $\sigma$ is a continuous function, and the topological system $(\mathcal{D}^{\N_0},\sigma)$ is called a \emph{full shift}.  If $\Sigma\subseteq \mathcal{D}^{\N_0}$ is closed and $\sigma(\Sigma)\subseteq \Sigma$, the dynamical system $(\Sigma,\sigma)$ is called a \emph{subshift}.
 Although a subshift is a set of infinite sequences of symbols, it can be fully characterized by the finite sequences that appear in $\Sigma$, referred to as \emph{words}. 
 
 We define the \emph{language} of $\Sigma$, denoted by $\mathcal{L}(\Sigma)$, as the set of all the words in $\Sigma$,
\[\mathcal{L}(\Sigma):=\Big\{w_0\cdots w_{n-1}: \ n\in \N_0, \  \exists x\in \Sigma \text{ s.t. } x_0=w_0, \ x_1=w_1,\ldots,\ x_{n-1}=w_{n-1} \Big\}.\]

When there is no confusion, we will simply denote the language by $\mathcal{L}$. For any word $w=w_0\cdots w_{n-1}\in \mathcal{L}(\Sigma)$, we say $w$ has \emph{length} $n$ (and we denote the length by $|w|$). The set $\mathcal{L}^n(\Sigma)$ denotes all words in the language of length $n$. Also, we will commonly use the notions of concatenation of words, and subwords. For $u=u_0\ldots u_{m-1}$ and $v=v_0\ldots v_{n-1}$, the \emph{concatenation} of $u$ with $v$ is written as $uv=u_0\ldots u_{m-1}v_0\ldots v_{n-1}$  and we say that $uv$ is an \emph{extension} of $u$. For a given word $u$ and $k\in \N$, we also denote by $u^k=u\cdots u$, the concatenation $k$-times of $u$. We say $w$ is a \emph{subword} of $x$ if the sequence of $x$ contains $w$. 

A subshift is a particular case of a topological dynamical system, and some classical notions of topological dynamics have a special characterization in the context of symbolic systems. 
\begin{definition} The topological dynamical system $(\Sigma,\sigma)$ is said to be \emph{transitive} (or \emph{irreducible}) if for every $u,v\in \mathcal{L}(\Sigma)$, there exists $w\in \mathcal{L}(\Sigma)$ such that $uwv\in \mathcal{L}(\Sigma).$ Also, the system is said to be \emph{mixing} if for every $u,v$ in the language, there exists $n\in \N$ such that for every $m\geq n$ there exists  $w\in \mathcal{L}^m(\Sigma)$ such that $uwv\in \mathcal{L}(\Sigma)$.
\end{definition}
On the other hand, the notion of \emph{topological entropy} is a measure of the complexity of a dynamical system. In symbolic systems, it quantifies the complexity of the language.
\begin{definition}
    The (topological) entropy of the subshift $\Sigma$ is defined as
    \[h(\Sigma):=\lim_{N\to\infty}\dfrac{\log |\mathcal{L}^N(\Sigma)|}{N}.\]\label{entropy}
\end{definition}

See Chapter 4 of \cite{Lind_Marcus_2021} for further discussion. Regarding the classification of subshifts, we will consider two important classes, \emph{shifts of finite type} and \emph{sofic shifts}. 
\begin{definition}
    Consider the full shift $X=\mathcal{D}^{\N_0}$, and let $\mathscr{F}\subseteq \mathcal{L}(X)$ be a finite set of \emph{forbidden words}. We define the subshift
\[X_{\mathscr{F}}:=\{x\in X:\ x \text{ does not contain any $w\in \mathscr{F}$ as subword} \}.\]
We say that $\Sigma\subseteq X$ is a \emph{shift of finite type} (SFT for short) if there exists a finite set $\mathscr{F}$ such that $\Sigma=X_\mathscr{F}$.\label{SFT}
\end{definition} If the longest forbidden word has length $M+1$, we say that $X_{\mathscr{F}}$ is a \emph{$M$-step} shift of finite type. In a $1$-step SFT, all forbidden words have length $2$, so it can be described by a matrix indexed by $\mathcal{D}\times \mathcal{D}$ with values in $\{0,1\}$. In this sense, $\Sigma$ is a $1$-step SFT if and only if there exists a matrix $T$ such that
\[\Sigma=\Big\{x\in \mathcal{D}^{\N_0}:\  T(x_i,x_{i+1})=1 \text{ for all }i\Big\}.\]

A more general way to construct subshifts is by reading infinite sequences in a \emph{labelled directed multi-graph}.  A labelled multi-graph is a triple $G=(V,E,\lambda)$, where $V$ is a finite set of nodes, $E$ is a set of edges and $\lambda:E\to \mathcal{A}$ is a labelling function for some finite alphabet $\mathcal{A}$. Each edge $e\in E$ has an associated starting node $s(e)\in V$ and a terminal node $t(e)\in V$. A sequence of edges $\pi=e_0\cdots e_{n}$ is called a \emph{path} in $G$ if $t(e_i)=s(e_{i+1})$ for every $i\in \{0,\cdots,n-1\}$, and we say that $\pi$ has length $|\pi|:=n+1$. Note that multiple edges may share the same starting and terminal nodes while having different labels.
\begin{definition}
    Let $G=(V,E,\lambda)$ be a labelled directed multi-graph. If $\mathcal{D}$ is the finite alphabet associated with the labelling function, we define the subshift $\Sigma_G\subset \mathcal{D}^{\mathbb{N}_0}$ as
    \[\Sigma_G=\left\{\left(\lambda(e_j)\right)_{j\in \mathbb{N}_0}\in \mathcal{D}^{\mathbb{N}_0}: \ s(e_{j+1})=
    t(e_j) \ \forall j\in \mathbb{N}_0 \right\}.\]\label{cover_def}
    
\end{definition}

Thus, $\Sigma_G$ are all infinite sequences that we can read in some infinite path of $G$. It is well known that $\Sigma_G$ is a subshift. 
\begin{definition}
    The subshift $\Sigma\subseteq \mathcal{D}^{\N_0}$ is said to be \emph{sofic} if there exists a labelled directed multi-graph such that $\Sigma=\Sigma_G$. The graph $G$ is called a \emph{cover} of $\Sigma$.
    \end{definition}
    Note that the cover of a sofic shift is not unique. Additionally, every shift of finite type is sofic.
    \begin{remark}
Let $\Sigma\subseteq \mathcal{D}^{\N_0}$ be a 1-step shift of finite type, and consider a matrix $T$ that represents $\Sigma$, i.e., $T(d,d')=1$ if and only if $dd'\in \mathcal{L}(\Sigma)$. We create a node $V(d)$ for every $d\in \mathcal{D}$, and place an edge from $V(d)$ to $V(d')$ labelled by $d'$ if and only if $T(d,d')=1$. The resulting labelled graph is a cover for $\Sigma$, and we denote it by $\mathcal{G}_T$.\label{graph:SFT}
    \end{remark}

To construct a cover for a sofic shift, we can rely on the notion of \emph{follower sets} \cite[Chapter 3]{Lind_Marcus_2021}.

\begin{definition}
   Let $\Sigma$ be a subshift of $\mathcal{D}^{\N_0}$. For any $w\in\mathcal{L}(\Sigma)$, the \emph{follower set of $w$}, denoted $F_\Sigma(w)$, is the set of all words that can extend $w$, i.e.,  
$$F_\Sigma(w):=\Big\{v\in \mathcal{L}(\Sigma):\ wv\in \mathcal{L}(\Sigma)\Big\}.$$  
We denote by  $\mathcal{F}_\Sigma:=\Big\{F_\Sigma(w): \ w\in \mathcal{L}(\Sigma)\Big\}$ the collection of all follower sets. When the underlying subshift $\Sigma$ is clear from the context, we will simply write $F(w)$ and $\mathcal{F}$.
\end{definition}
While each word has its own follower set, a folklore result states that $\Sigma$ is sofic if and only if there are finitely many distinct follower sets \cite[Theorem 3.2.10]{Lind_Marcus_2021}, i.e., $|\mathcal{F}_\Sigma|<\infty$. If $d\in F(u)=F(v)$, it follows that $F(ud)=F(vd)$. Using these ideas, a cover can be constructed for $\Sigma$.
\begin{definition}
    Let $\Sigma\subseteq \mathcal{D}^{\N_0}$ be a sofic shift. Consider the finite set of nodes $V=\mathcal{F}_\Sigma$, and place an edge labelled by $d\in \mathcal{D}$ from $F\in \mathcal{F}_\Sigma$ to $F'\in \mathcal{F}_\Sigma$ if $d\in F$, and $F(wd)=F'$ for any word $w$ such that $F=F(w).$ The labelled multi-graph generated is called the \emph{follower set graph of $\Sigma$}.
\end{definition}
The Follower set graph is a cover of $\Sigma$ \cite[Proposition 3.2.9]{Lind_Marcus_2021}. Notice that the follower set graph is \emph{right-resolving}, meaning that for any node, all the edges going out have different labels.  When the sofic shift is transitive, we would expect the cover to be an irreducible graph (i.e., there is a path between each pair of vertices), but this is not always true for the follower set graph. To address this problem, a subgraph can be extracted, which remains a cover and is irreducible.
\begin{definition}
    A word $w\in \mathcal{L}(\Sigma)$ is said to be \emph{synchronizing} if for every $u,v\in \mathcal{L}(\Sigma)$, the condition that both $uv$ and $wv$ are in the language implies that $uwv\in \mathcal{L}(\Sigma)$.
\end{definition}
While not every subshift has synchronizing words, it is well-known that transitive sofic shifts do. Moreover, every word can be extended to a synchronizing, and any extension of a synchronizing word is also synchronizing. In the case of a $M$-step SFT, every word with length at least  $M+1$ is synchronizing 

An important result due to Fischer \cite{Fischer1,Fischer2} establishes the existence and uniqueness (up to isomorphism) of a minimal (in terms of  the number of nodes), right-resolving and irreducible cover for a transitive sofic shift.
This unique representation is known as the \emph{Fischer cover}, and plays an important role in this paper. The following outlines the construction of this cover.

\textbf{Construction of the Fischer cover}: Let $\Sigma\subseteq \mathcal{D}^{\N_0}$ be a transitive sofic shift.
For each follower set $F\in \mathcal{F}_\Sigma$, we create a node associated to $F$ if there exists a synchronizing word $w$ such that $F(w)=F$; in other words, we consider the set of nodes
\[\mathcal{V}_\Sigma:=\{F(w):\ w\in \mathcal{L}(\Sigma) \text{ is synchronizing}\}\subseteq \mathcal{F}_\Sigma.\]
Let $F,F'\in \mathcal{V}_\Sigma$ and $d\in \mathcal{D}$. Then, there exists $w\in \mathcal{L}(\Sigma)$ such that $F=F(w)$. We create an edge with starting node $F$, terminal node $F'$  and labelled by $d$ if and only if $F'=F(wd)$. The resulting graph, denoted by $\mathcal{G}_\Sigma=(\mathcal{V}_\Sigma,\mathcal{E}_\Sigma,\lambda_\Sigma)$, is the Fischer cover for $\Sigma$ (see Chapter 3 in \cite{Lind_Marcus_2021} and Lemma 6.11 in \cite{Spandl} for more details). \\

Thus, the Fischer cover is a subgraph of the follower set graph, constructed by considering only the nodes corresponding to follower sets of synchronizing words. Notice that for any synchronizing word $w \in \mathcal{L}(\Sigma)$, every path in $\mathcal{G}_\Sigma$ labelled by $w$ must end at the node associated with $F(w)$. In the case $\Sigma$ is a mixing sofic subshift, it is possible to find $n\in \N$ such that for every pair of nodes $F,F'\in \mathcal{V}$ and $m\geq n$, there exists a path of length $m$ from $F$ to $F'$. In such a case, for every pair of words $u$ and $v$ in the language, there exists $w\in \mathcal{L}^m(\Sigma)$ such that $uwv\in \mathcal{L}(\Sigma)$

\subsection{Fractal sets of integers}
In \cite{GMR}, Glasscock, Moreira and Richter introduced the notion of multiplicativaly invariant sets of integers (recall \cref{def:invariant_sets}), and studied them from a fractal perspective. Here, we focus on the notion of \emph{mass dimension}, which is analogous to the box dimension in fractal geometry but adapted to the setting of integers.
\begin{definition} Let $A\subseteq \N_0$ be non-empty. We define the \emph{lower mass dimension} and the \emph{upper mass dimension} of $A$ as:
\begin{align*}
  &\underline{\dim}_\text{M}(A):=\liminf_{N\to \infty} \dfrac{\log |A\cap [0,N) |}{\log N}=\sup \left\{\gamma\geq 0: \ \liminf_{N\to \infty}\dfrac{|A\cap [0,N)|}{N^\gamma}>0 \right\},\\
  &\overline{\dim}_\text{M}(A):=\limsup_{N\to \infty} \dfrac{\log |A\cap [0,N) |}{\log N}=\sup \left\{\gamma\geq 0: \ \limsup_{N\to \infty}\dfrac{|A\cap [0,N)|}{N^\gamma}>0 \right\}.
\end{align*}
If $\underline{\dim}_\text{M}(A)=\overline{\dim}_\text{M}(A)$, we denote this value by $\dim_\text{M}(A)$ and say that the \emph{mass dimension} of $A$ exists.\label{Mass_dimension}    
\end{definition}

 From \cite[Proposition 3.6]{GMR}, the mass dimension exists for any multiplicatively invariant set of integers. A natural way to generate $\times g$-invariant sets is by considering subshifts of $X=\{0,\ldots,g-1\}^{\N_0}$.
\begin{definition} The \emph{g-language set} asociated with a subshift $\Sigma \subseteq \{0,\ldots,g-1\}^{\N_0}$ is the set $A_\Sigma\subseteq \N_0$ defined by
\begin{equation}
A_\Sigma:=\Big\{(w)_g:=w_0+w_1g+\cdots+w_{n-1}g^{n-1}\ : \ w=w_0\cdots w_{n-1}\in \mathcal{L}(\Sigma) \Big\}.    
\end{equation}\label{A_Sigma}
\end{definition}
In general, the expansion of a number in a certain base is read from left to right, where the leftmost digit represents the most significant digit. For a given word $w$, in this paper we will consider the rightmost digit to be the most significant digit of the integer $(w)_g$, this in order to be consistent with the theory of susbshifts. However, when it comes to applications this does not make a major distinction.

From the definition of $\times g$-invariant sets and the shift-invariance of $\Sigma$, it is evident that $A_\Sigma$ is $\times g$-invariant. It is also possible to see that every multiplicatively invariant set can be written as $ A_\Sigma$ for some subshift $\Sigma$, and the mass dimension is related to the entropy of the subshift.  This is summarized in the following proposition (see  \cite[Proposition 3.10]{GMR}).

\begin{proposition} The $g$-language set $A_\Sigma\subseteq \N_0$ corresponding to any non-empty subshift $\Sigma\subseteq \{0,\cdots,g-1\}^{\N_0}$ is a $\times g$-invariant set, and the mass dimension of $A_\Sigma$ is equal to the normalized topological entropy of the subshift $(\Sigma,\sigma)$, i.e.,
\begin{equation}
   \dim_\text{M}(A_\Sigma)=h(\Sigma)/\log(g).\label{entropy2}
\end{equation}
    Moreover, for any $\times g$-invariant set $A\subseteq \N_0$, there exists a subshift $\Sigma\subseteq \{0,\cdots,g-1\}^{\N_0}$ such that $A=A_\Sigma$. In such a case, we say that $A$ is represented by $\Sigma$.\label{languageset}
\end{proposition}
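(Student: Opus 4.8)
\textbf{Proof plan for \cref{languageset}.}

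The plan is to establish three things in sequence: that $A_\Sigma$ is $\times g$-invariant, that its mass dimension equals $h(\Sigma)/\log g$, and that every $\times g$-invariant set arises this way. The first claim is essentially immediate from the definitions. If $n \in A_\Sigma$ with base-$g$ expansion $n = w_0 + w_1 g + \cdots + w_{m}g^m$ (so, in our left-to-right convention, $w = w_0\cdots w_m \in \mathcal{L}(\Sigma)$ with $w_m \ne 0$), then both $w_0\cdots w_{m-1}$ and $w_1\cdots w_m$ lie in $\mathcal{L}(\Sigma)$: the former because any prefix of a word in the language is in the language, and the latter because $\sigma(\Sigma) \subseteq \Sigma$ guarantees the language is closed under deleting the first symbol. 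Translating back via $(\cdot)_g$, these correspond exactly to the two integers required by \cref{def:invariant_sets}.

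For the dimension formula, I would compare $|A_\Sigma \cap [0,N)|$ with $|\mathcal{L}^N(\Sigma)|$ and apply \cref{Mass_dimension} together with \cref{entropy}. The key observation is that a word $w \in \mathcal{L}^n(\Sigma)$ satisfies $(w)_g < g^n$, so $A_\Sigma \cap [0,g^n)$ is covered by $\{(w)_g : w \in \mathcal{L}^n(\Sigma)\}$, giving $|A_\Sigma \cap [0,g^n)| \le |\mathcal{L}^n(\Sigma)|$. For the reverse inequality one must handle the non-injectivity of $w \mapsto (w)_g$ — distinct words such as $w$ and $w0$ can represent the same integer — but every integer in $A_\Sigma \cap [g^{n-1}, g^n)$ is represented by a word of length exactly $n$, and the number of words of length $n$ mapping into $[0,g^{n-1})$ is at most $|\mathcal{D}|\cdot|\mathcal{L}^{n-1}(\Sigma)|$, which is negligible on the exponential scale since $|\mathcal{L}^{n-1}(\Sigma)| = e^{(h(\Sigma)+o(1))n}$. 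Hence $\log|A_\Sigma \cap [0,g^n)| = h(\Sigma)\cdot n + o(n)$, and taking $N$ between consecutive powers of $g$ and dividing by $\log N$ yields $\dim_\text{M}(A_\Sigma) = h(\Sigma)/\log g$; in particular the limit exists. (Alternatively, one simply invokes \cite[Proposition 3.10]{GMR} directly, as the statement permits.)

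For the converse, given a $\times g$-invariant set $A$, I would build $\Sigma$ as the closure under $\sigma$ of the set of (reversed) base-$g$ expansions of elements of $A$, or more precisely take $\Sigma$ to be the subshift whose language consists of all words $w_0\cdots w_{n-1}$ such that $(w)_g \in A$ and, to absorb leading-zero ambiguity, all words obtained from these by appending zeros; one checks using \cref{def:invariant_sets} that this collection is factorial (closed under subwords) and extendable, hence is the language of a genuine subshift $\Sigma$, and that $A_\Sigma = A$. The main subtlety — and the step I expect to require the most care — is precisely this bookkeeping around the digit $0$ and leading zeros: ensuring that the word set is closed under taking \emph{both} prefixes and suffixes (which is exactly where both clauses of $\times g$-invariance get used), and that the passage between integers and words does not lose or gain elements. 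Since this is asserted as \cref{languageset} quoting \cite[Proposition 3.10]{GMR}, it suffices to cite that reference, but the argument above indicates why it holds.
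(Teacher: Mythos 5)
The paper does not actually prove this proposition --- it is stated with a citation to \cite[Proposition 3.10]{GMR} --- so there is no internal argument to compare against; your sketch is a reasonable reconstruction of what that proof must contain, and its overall architecture (prefix-closedness and shift-invariance of the language giving the two clauses of \cref{def:invariant_sets}; word-counting for the dimension; a factorial-and-extendable word set for the converse) is sound. Two local points deserve correction. First, your claimed covering $A_\Sigma\cap[0,g^n)\subseteq\{(w)_g:w\in\mathcal{L}^n(\Sigma)\}$ is false in general: an element of $A_\Sigma\cap[0,g^n)$ is only guaranteed a representative of length \emph{at most} $n$, and a short word need not extend to length $n$ by appending zeros (take $\Sigma=\{1^\infty\}$, where $|\mathcal{L}^n(\Sigma)|=1$ but $|A_\Sigma\cap[0,g^n)|=n+1$). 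The correct bound is $|A_\Sigma\cap[0,g^n)|\le\sum_{j=0}^{n}|\mathcal{L}^j(\Sigma)|\le(n+1)|\mathcal{L}^n(\Sigma)|$, which still yields $\log|A_\Sigma\cap[0,g^n)|=h(\Sigma)n+O(\log n)$, so nothing downstream breaks. Second, your lower bound is more complicated than necessary and, as written, potentially vacuous: the quantity $|\mathcal{L}^n(\Sigma)|-|\mathcal{D}|\cdot|\mathcal{L}^{n-1}(\Sigma)|$ can be nonpositive (e.g.\ whenever $0$ is appendable to every word, so that $|\mathcal{L}^n|\le|\mathcal{D}||\mathcal{L}^{n-1}|$). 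The clean observation is that $w\mapsto(w)_g$ is \emph{injective} on $\mathcal{L}^n(\Sigma)$ --- fixed-length base-$g$ strings with leading zeros allowed are in bijection with $[0,g^n)$ --- so $|A_\Sigma\cap[0,g^n)|\ge|\mathcal{L}^n(\Sigma)|$ outright; non-injectivity of $(\cdot)_g$ only occurs across different lengths and is exactly the issue handled by the polynomial factor in the upper bound. With these two repairs, \cref{lemma:DIM} (or interpolation between consecutive powers of $g$) finishes the dimension claim. Your converse construction is correct; note only that the clause ``and all words obtained by appending zeros'' is redundant, since $(w0^k)_g=(w)_g$ already places those words in $\{w:(w)_g\in A\}$, and that extendability of this word set is automatic precisely because appending $0$ never leaves it.
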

\begin{corollary} Let $g\geq 2$ be an integer and let $\mathcal{D}\subseteq \{0,\cdots,g-1\}$. If we consider the full shift $\Sigma=\mathcal{D}^{\N_0}$, $\mathcal{C}_{g,\mathcal{D}}=A_\Sigma$. In addition, \[\dim_{\text{M}}(\mathcal{C}_{g,\mathcal{D}})=\log(|\mathcal{D}|)/\log(g).\]
    \label{dim_missingdigitsets}
\end{corollary}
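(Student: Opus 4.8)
The plan is to apply \cref{languageset} directly. The set $\CC_{g,\CD}$ is, by \cref{dim_missingdigitsets} itself, the $g$-language set of the full shift $\Sigma = \CD^{\N_0}$, so it suffices to compute the topological entropy of this full shift and invoke the entropy--dimension identity \eqref{entropy2}. First I would recall from \cref{A_Sigma} that $\CL^N(\Sigma) = \CD^N$ for the full shift, since every finite string over $\CD$ appears in some infinite sequence over $\CD$; hence $|\CL^N(\Sigma)| = |\CD|^N$. Plugging this into \cref{entropy} gives
\[
h(\Sigma) = \lim_{N\to\infty}\frac{\log |\CD|^N}{N} = \lim_{N\to\infty}\frac{N\log|\CD|}{N} = \log|\CD|,
\]
so the limit trivially exists and equals $\log|\CD|$.

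Next, I would verify the first assertion of the corollary, namely $\CC_{g,\CD} = A_\Sigma$. This is essentially a matter of matching definitions: by \eqref{def:missing_sets}, $\CC_{g,\CD}$ consists of all integers $\sum_{i=0}^m w_i g^i$ with each $w_i \in \CD$, while by \cref{A_Sigma}, $A_\Sigma$ consists of all $(w)_g = \sum_{i=0}^{n-1} w_i g^i$ with $w_0\cdots w_{n-1} \in \CL(\Sigma) = \CD^n$. These are the same set of integers (one should note, as the paper does after \cref{A_Sigma}, that the convention here places the most significant digit on the right, but this does not affect which integers are represented). Having established $\CC_{g,\CD} = A_\Sigma$, \cref{languageset} guarantees $\CC_{g,\CD}$ is $\times g$-invariant and that its mass dimension exists and equals $h(\Sigma)/\log g = \log|\CD|/\log g$, which is precisely the claimed formula.

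There is no real obstacle here; the statement is an immediate specialization of \cref{languageset} to the full-shift case, and the only thing to check is the (trivial) entropy computation for a full shift and the (definitional) identification $\CC_{g,\CD} = A_\Sigma$. If one wished to avoid citing \cref{languageset} and argue directly, one could instead observe that $|\CC_{g,\CD} \cap [0, g^N)| $ is comparable to $|\CD|^N$ (up to the bounded ambiguity caused by leading-zero representations when $0 \in \CD$, and up to the fact that the interval $[0,N)$ for general $N$ lies between consecutive powers of $g$), so that $\log|\CC_{g,\CD}\cap[0,N)| / \log N \to \log|\CD|/\log g$ directly from \cref{Mass_dimension}; but invoking the already-proved \cref{languageset} is cleaner.
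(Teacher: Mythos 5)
Your proposal is correct and matches exactly how the paper treats this statement: the corollary is an immediate specialization of \cref{languageset} to the full shift, requiring only the definitional identification $\mathcal{C}_{g,\mathcal{D}}=A_\Sigma$ and the trivial entropy computation $h(\mathcal{D}^{\N_0})=\log|\mathcal{D}|$. Nothing is missing.
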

Finally, the following result allows us to simplify the calculation of the dimension to simpler intervals.
\begin{lemma}\cite[Lemma 3.2. (IV)]{GMR}.\label{dimM2}
    For $r\in \mathbb{N}$, $r\geq 2$, \begin{equation}
    \overline{\dim}_\text{M}(A)=\limsup_{N\to \infty}\dfrac{\log |A\cap [0,r^N) |}{N\log r}.\label{computation}\end{equation}
Analogously, we can compute $\underline{dim}_\text{M}(A)$ by using $\liminf$ instead of $\limsup$ in (\ref{computation}).\label{lemma:DIM}
\end{lemma}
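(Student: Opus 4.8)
The plan is to prove that replacing the cut-off $N$ by the sparse sequence $r^N$ leaves the (upper) mass dimension unchanged, using nothing more than the monotonicity of $N\mapsto |A\cap[0,N)|$ and the fact that every integer is comparable, up to a bounded additive shift in the exponent, to a power of $r$. First I would dispose of the trivial case: if $A$ is finite then $|A\cap[0,N)|$ is eventually constant, so both $\overline{\dim}_\text{M}(A)$ and the right-hand side of \eqref{computation} equal $0$ (and likewise with $\liminf$). So assume $A$ is infinite, whence $|A\cap[0,N)|\to\infty$; then for all large $k$ the number $a_k:=\log|A\cap[0,r^k)|$ is positive, and moreover $0\le a_k\le k\log r$ for every $k$ because $[0,r^k)$ contains exactly $r^k$ integers, so that $a_k/(k\log r)\in[0,1]$.

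For the inequality $\overline{\dim}_\text{M}(A)\ge \limsup_{N\to\infty}\frac{a_N}{N\log r}$, note that $\frac{a_N}{N\log r}=\frac{\log|A\cap[0,r^N)|}{\log(r^N)}$, so the right-hand side of \eqref{computation} is the limit superior of the sequence $M\mapsto \frac{\log|A\cap[0,M)|}{\log M}$ taken along the subsequence $M=r^N$; a limit superior along a subsequence never exceeds the full limit superior, which is exactly $\overline{\dim}_\text{M}(A)$.

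For the reverse inequality, given a large integer $M$ let $k=k(M)$ be the unique integer with $r^k\le M<r^{k+1}$; note $k(M)\to\infty$ as $M\to\infty$. Monotonicity gives $|A\cap[0,M)|\le |A\cap[0,r^{k+1})|$ and $\log M\ge k\log r$, hence
\[
\frac{\log|A\cap[0,M)|}{\log M}\;\le\;\frac{a_{k+1}}{k\log r}\;=\;\frac{k+1}{k}\cdot\frac{a_{k+1}}{(k+1)\log r}\;\le\;\frac{a_{k+1}}{(k+1)\log r}+\frac1k,
\]
using $a_{k+1}/((k+1)\log r)\le 1$ in the last step. Taking $\limsup$ over $M$ and recalling $k(M)\to\infty$ yields $\overline{\dim}_\text{M}(A)\le \limsup_{N\to\infty}\frac{a_N}{N\log r}$, which together with the previous paragraph proves \eqref{computation}.

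The lower mass dimension is handled symmetrically. Along the subsequence $M=r^N$ one gets $\underline{\dim}_\text{M}(A)\le \liminf_{N\to\infty}\frac{a_N}{N\log r}$ directly, since a limit inferior along a subsequence is at least the full limit inferior; for the reverse one uses instead the bounds $|A\cap[0,M)|\ge |A\cap[0,r^{k})|$ and $\log M<(k+1)\log r$ to obtain $\frac{\log|A\cap[0,M)|}{\log M}\ge \frac{k}{k+1}\cdot\frac{a_k}{k\log r}\ge \frac{a_k}{k\log r}-\frac1{k+1}$, and then takes $\liminf$ over $M$. I do not expect a genuine obstacle here; the only points that require a small amount of care are the degenerate case of finite $A$ (dealt with at the outset) and the justification that the distortion factors $\frac{k+1}{k}$ and $\frac{k}{k+1}$ may be absorbed in the limit, which is immediate from the uniform bound $a_k/(k\log r)\in[0,1]$.
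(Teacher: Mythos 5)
Your proof is correct. Note that the paper does not actually prove this lemma --- it is quoted verbatim from \cite[Lemma 3.2 (IV)]{GMR} --- so there is no in-text argument to compare against; your self-contained proof supplies the standard sandwiching argument (comparing an arbitrary cut-off $M$ with the bracketing powers $r^k\le M<r^{k+1}$ and absorbing the distortion factor $\tfrac{k+1}{k}$ via the uniform bound $a_k/(k\log r)\in[0,1]$), and all steps, including the degenerate finite case and the one-sided subsequence inequalities for $\limsup$ and $\liminf$, check out.
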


\section{Construction of Markov chains}\label{Section3}

In \cref{Section1:Outline},
 we provided an overview of the construction of Markov chains in the context of integers with missing digits and their distribution in residue classes, but now we extend this approach to a more general framework. Let $f_1,\ldots,f_r$ be $g$-additive functions, $a_1,\ldots,a_r\in \N$ and $A\subseteq \N_0$ a $\times g$-invariant set. We aim to find conditions to ensure
 \begin{equation}\lim_{N\to \infty}\dfrac{|\{n\in A: f_i(n)\equiv b_i\Mod{a_i} \text{ for $i=1,\ldots,r$}\}\cap [0,N)|}{|A\cap [0,N)|}=\dfrac{1}{\Pi_{i=1}^r a_i} \label{limit_distribution}\end{equation}
  for any $(b_1,\ldots,b_r)\in \Z_{a_1}\times \cdots\times \Z_{a_r}.$ 

\begin{notation} Throughout this paper, we denote $\mathbf{a} := (a_1, \ldots, a_r)$, $|\mathbf{a}|:=|\Z_{a_1}\times \ldots\times \Z_{a_r}|=\Pi_{i=1}^r a_i$, and   $\mathbf{f} := (f_1, \ldots, f_r)$. For any $\mathbf{b} \in \mathbb{Z}_{\mathbf{a}} := \mathbb{Z}_{a_1} \times \cdots \times \mathbb{Z}_{a_r}$ and $n \in \mathbb{N}_0$, we will write $\mathbf{f}(n) \equiv \mathbf{b}\Mod{\mathbf{a}}$ if $f_j(n) \equiv b_j\Mod{a_j}$ for every $j \in \{1, \ldots, r\}$. \end{notation}

From \cref{languageset}, there exists a subshift $\Sigma\subseteq \{0,\cdots,g-1\}^{\N_0}$ such that $$A=A_\Sigma=\Big\{(w)_g:\ w\in \mathcal{L}(\Sigma)\Big\}.$$
 Therefore, we will focus on studying the distribution of words of a certain length and then extend the result to the set of integers $A$. Similarly to \cref{Section1:Outline}, we need to find an analogue to the value $p$ for periodicity and properly understand the concatenation of words in the language.

\begin{definition}
   Let $f_1,\cdots,f_r$ be $g$-additive functions and $\mathbf{a}\in \N^{r}$. We say $\mathbf{f}$ is \emph{eventually periodic with respect to $\mathbf{a}$} if there exists $\ell\in \N_0$ and $p\in \N$ such that \[\mathbf{f}(dg^{i+np})\equiv \mathbf{f}(dg^i)\Mod{\mathbf{a}}\]
for every $d\in \{0,\ldots,g-1\}$, $n\in \N$ and $i\geq \ell$. Any pair $(p,\ell)$ satisfying this condition is called an \emph{eventual period}.\label{def:periodic}
    \end{definition}

    When $\ell'\geq \ell$ and $p\mid p'$, the pair $(p',\ell')$ is also an eventual period. Notice that $\mathbf{f}$ is eventually periodic if each $f_i$ is eventually periodic.  Clearly, the function sum-of-digits in base $g$ satisfies this condition with $p=1$ and $\ell=0$. Also, the identity is always $g$-invariant and eventually periodic.

\begin{proposition}
 For any $a,g\in \N$, there exist $p\in \N$ and $\ell\leq a-1$ such that 
 \[dg^{i}\equiv dg^{i+np}\Mod{a}\]
 for every $d\in \{0,\cdots,g-1\}$, $n\in \N$ and $i\geq \ell$.\label{prop:periodicity} 
\end{proposition}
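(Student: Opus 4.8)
The statement asserts that the sequence $i \mapsto dg^i \bmod a$ is eventually periodic, uniformly in $d$. The plan is to analyze the single sequence $(g^i \bmod a)_{i \geq 0}$ in $\Z_a$ and then multiply through by $d$. First I would observe that the map $x \mapsto gx$ on $\Z_a$ has a finite domain, so the orbit $1, g, g^2, \ldots$ in $\Z_a$ must eventually repeat: there exist indices $s < t \leq a$ with $g^s \equiv g^t \Mod{a}$ (by pigeonhole among the first $a+1$ powers $g^0, \ldots, g^a$, two must coincide mod $a$). Set $\ell := s$ and $p := t - s$; then $s \leq a - 1$, so $\ell \leq a-1$ as required.

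The next step is to propagate this coincidence forward. From $g^\ell \equiv g^{\ell+p}\Mod a$, multiplying both sides by $g$ repeatedly gives $g^i \equiv g^{i+p}\Mod a$ for every $i \geq \ell$; iterating the shift by $p$ then yields $g^i \equiv g^{i+np}\Mod a$ for all $i \geq \ell$ and all $n \in \N$ (a trivial induction on $n$). Finally, multiplying this congruence by an arbitrary digit $d \in \{0,\ldots,g-1\}$ gives $dg^i \equiv dg^{i+np}\Mod a$, which is exactly the claimed conclusion; note the bound $\ell \leq a-1$ is already in hand and is independent of $d$.

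There is essentially no obstacle here — the argument is a standard pigeonhole-plus-iteration observation about eventually periodic orbits of a map on a finite set. The only mild point worth stating carefully is the pigeonhole range: among $g^0, g^1, \ldots, g^a$ (which is $a+1$ residues mod $a$) two must agree, so the smaller index $s$ satisfies $s \leq a - 1$, giving the bound $\ell \leq a - 1$. I would also remark in passing that this proposition is the special case of \cref{def:periodic} applied to $\mathbf f = (\mathrm{id})$ and $\mathbf a = (a)$, which motivates why eventual periodicity of $g$-additive functions is a natural hypothesis: it holds automatically whenever one only restricts integers modulo some $a_i$.
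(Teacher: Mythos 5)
Your proof is correct and follows essentially the same route as the paper: pigeonhole among the $a+1$ residues $g^0,\ldots,g^a \bmod a$ to find $s<t\leq a$ with $g^s\equiv g^t$, set $\ell=s$ and $p=t-s$, and propagate forward by multiplying by $g$. The paper's proof is just a terser version of the same argument, and your explicit handling of the bound $\ell\leq a-1$ and the final multiplication by $d$ fills in exactly the steps the paper leaves implicit.
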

 \begin{proof}
    At least two elements of $\{g^j\Mod{a}:\ j\in\{0,\cdots,a\}\}$ agree. Then, there exist $j_0<j_1\leq a$ such that $g^{j_0}\equiv g^{j_1}\Mod{a}$. We conclude by considering $\ell=j_0$ and $p=j_1-j_0$.
\end{proof}
If $A$ is a missing digits set, then $A=A_\Sigma$ for $\Sigma=\mathcal{D}^{\N_0}$, where $\mathcal{D}$ is the respective set of digits. In this context, the concatenation of two words is in $\mathcal{L}(\Sigma)$ if and only if both words are in $\mathcal{L}(\Sigma)$.  Unlike full shifts, this property generally does not hold. Nevertheless, it is possible to adapt the construction when concatenation is governed by a suitable rule, as occurs in transitive sofic shifts through the notions of follower sets and the Fischer cover.

As discussed in \cref{Section2:Symbolic}, we can identify the nodes of the Fischer cover $\mathcal{G}_{\Sigma}=(\mathcal{V}_\Sigma,\mathcal{E}_{\Sigma},\lambda_{\Sigma})$ with the collection of follower sets associated to synchronizing words of the language. Thus, we can identify $$\mathcal{V}_{\Sigma}=\{F_1,F_2,\ldots,F_q\}=\{F(w):\ w\in \mathcal{L}(\Sigma)\text{ is a synchronizing word}\}.$$
The concatenation of words can be understood from $\mathcal{G}_\Sigma$. For $x,y\in \mathcal{L}(\Sigma)$, $$xy\in \mathcal{L}(\Sigma)\iff y\in F(x) \iff \text{There exists a path of $\mathcal{G}_\Sigma$ labelled by $xy$.}$$ When $F(x)\in \mathcal{V}_{\Sigma}$, it is also equivalent to the existence of a path $\pi$ in the Fischer cover starting at the node associated to $F(x)$, and it is labelled by $y$. 
\begin{notation}  For a word $w\in \mathcal{L}(\Sigma)$ and  $F,F'\in \mathcal{V}_\Sigma$, we write $F\overset{w}{\to} F'$ if there is a path $\pi$ in $\mathcal{G}_\Sigma$ from $F$ to $F'$ labelled by $w$. Since $\mathcal{G}_\Sigma$ is right-resolving, any other path from $F$ to $F'$ has a different label.
  \end{notation}
  Notice that  $F\overset{w}{\to} F'$ if and only if $w\in F$, and for every word $u$ such that $F=F(u)$, $F(uw)=F'$.

 \begin{remark}
For the rest of this paper,  we will consider a base  $g\geq 2$,  $r\in \N$, $\mathbf{a}\in \N^{r}$, a collection of  $g$-additive functions $\mathbf{f}=(f_1,\ldots,f_r)$ eventually periodic with respect to $\mathbf{a}$, and an eventual period $(p,\ell)$. In addition, we consider a transitive sofic subshift $\Sigma\subseteq \{0,\ldots,g-1\}^{\N_0}$, and $\mathcal{G}_{\Sigma}=(\mathcal{V}_\Sigma,\mathcal{E}_{\Sigma},\lambda_{\Sigma})$ its Fischer cover (if there is no confusion, we omit the subscript). For technical reasons, we will assume that $\ell$ is large enough such that there exists some synchronizing word of length equal to $\ell$.\label{Convention}
\end{remark}

\subsection{Transition matrix}

In order to define the transition matrix, we will group the elements of $\mathcal{L}(\Sigma)$ according to how they extend other words. Recalling that $(w_0\ldots w_n)_g:=w_0+w_1g+\cdots+w_ng^n$, we introduce the following subsets of $\mathcal{L}^p(\Sigma)$.

  \begin{definition}
Let $i\in \N$. For any $\mathbf{b}\in \Z_{\mathbf{a}}=\Z_{a_1}\times \cdots\times \Z_{a_r}$ and $F,F'\in \mathcal{V}$, we define the set
       $$E_{i}(\mathbf{b},F,F'):=\Big\{w\in \mathcal{L}^p(\Sigma): \mathbf{f}(g^i(w)_g)\equiv \mathbf{b}\Mod{\mathbf{a}},\  F\overset{w}{\to}F'\Big\}.$$\label{def:sets_E}
\end{definition}
Notice that $E_i$ depends on $g$, $\mathbf{f}$, $\mathbf{a}$, $\Sigma$ and $p$, but as in \cref{Convention} we set these elements, we will not make the dependency explicit in order to lighten the notation. The following lemma clarifies the motivation for defining these sets.

\begin{lemma}  Let $x\in \mathcal{L}^i(\Sigma)$ and $w\in E_i(\mathbf{b},F(x),F')$. Then, $xw\in \mathcal{L}(\Sigma)$, $F(xw)=F'$ and $\mathbf{f}\left((xw)_g\right)\equiv \mathbf{f}((x)_g)+\mathbf{b}\Mod{\mathbf{a}}.$ If $i\geq \ell$, it also holds $E_{i}=E_{i+np}$ for all $n\in \N$.\label{lemma:sets_E}
\end{lemma}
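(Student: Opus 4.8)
The statement collects four assertions about a word $x \in \mathcal{L}^i(\Sigma)$ and an element $w \in E_i(\mathbf{b}, F(x), F')$, namely: (1) $xw \in \mathcal{L}(\Sigma)$; (2) $F(xw) = F'$; (3) $\mathbf{f}((xw)_g) \equiv \mathbf{f}((x)_g) + \mathbf{b}\Mod{\mathbf{a}}$; and (4) if $i \geq \ell$ then $E_i = E_{i+np}$ for all $n$. The plan is to dispatch (1)--(2) using the combinatorial characterization of the Fischer cover recalled just above, then (3) using $g$-additivity together with the hypothesis defining $E_i$, and finally (4) using the eventual periodicity of $\mathbf{f}$.

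For (1) and (2): by definition of $E_i(\mathbf{b}, F(x), F')$, we have $F(x) \overset{w}{\to} F'$ in $\mathcal{G}_\Sigma$. The Notation paragraph preceding the lemma states precisely that $F \overset{w}{\to} F'$ holds if and only if $w \in F$ and, for every word $u$ with $F(u) = F$, we have $F(uw) = F'$. Applying this with $F = F(x)$ and $u = x$ gives immediately that $w \in F(x)$ — hence $xw \in \mathcal{L}(\Sigma)$ by the definition of the follower set — and that $F(xw) = F'$. So (1) and (2) are essentially unpacking definitions.

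For (3): write $x = x_0 \cdots x_{i-1}$ and $w = w_0 \cdots w_{p-1}$, so that $(xw)_g = (x)_g + g^i (w)_g$ as integers (the digits of $w$ occupy positions $i, i+1, \ldots, i+p-1$). Since each $f_j$ is $g$-additive and the base-$g$ expansion of $(xw)_g$ is the concatenation of the expansions contributed by $x$ and by $g^i(w)_g$ (there is no carrying, as the digit blocks are disjoint), we get $\mathbf{f}((xw)_g) = \mathbf{f}((x)_g) + \mathbf{f}(g^i(w)_g)$. By the defining condition of $E_i(\mathbf{b}, F(x), F')$, we have $\mathbf{f}(g^i(w)_g) \equiv \mathbf{b}\Mod{\mathbf{a}}$, which yields (3). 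A minor point to state carefully is that $g$-additivity is literally a statement about the digit expansion, so one should note that the base-$g$ digits of $(xw)_g$ are exactly $x_0, \ldots, x_{i-1}, w_0, \ldots, w_{p-1}$ (possibly with $w_{p-1} = 0$, which is harmless).

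For (4), assume $i \geq \ell$ and fix $n \in \N$. We must show $E_i(\mathbf{b}, F, F') = E_{i+np}(\mathbf{b}, F, F')$ for all $\mathbf{b}, F, F'$. The condition $F \overset{w}{\to} F'$ does not involve $i$ at all, so only the congruence condition can differ. For $w = w_0 \cdots w_{p-1} \in \mathcal{L}^p(\Sigma)$, $g$-additivity gives $\mathbf{f}(g^i(w)_g) = \sum_{k=0}^{p-1} \mathbf{f}(w_k g^{i+k})$ and likewise $\mathbf{f}(g^{i+np}(w)_g) = \sum_{k=0}^{p-1} \mathbf{f}(w_k g^{i+np+k})$. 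Since $i + k \geq \ell$ and $(p,\ell)$ is an eventual period of $\mathbf{f}$ with respect to $\mathbf{a}$, Definition~\ref{def:periodic} gives $\mathbf{f}(w_k g^{i+k+np}) \equiv \mathbf{f}(w_k g^{i+k})\Mod{\mathbf{a}}$ term by term, hence $\mathbf{f}(g^{i+np}(w)_g) \equiv \mathbf{f}(g^i(w)_g)\Mod{\mathbf{a}}$. Therefore $w$ satisfies the congruence condition for $E_i$ iff it does for $E_{i+np}$, and the two sets coincide.

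\textbf{Main obstacle.} There is no serious obstacle here; the lemma is a bookkeeping statement whose entire content is that the sets $E_i$ were defined to make exactly these properties true. The only point requiring a little care is being explicit that the base-$g$ digit string of $(xw)_g$ is the concatenation of the relevant digit blocks with no carrying, so that $g$-additivity applies cleanly — and tracking that the eventual-periodicity hypothesis is invoked at indices $i+k$ which are all $\geq \ell$ precisely because $i \geq \ell$.
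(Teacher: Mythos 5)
Your proposal is correct and follows essentially the same route as the paper's proof: (1)--(2) unpack the definition of $F(x)\overset{w}{\to}F'$, (3) uses $g$-additivity applied to the disjoint digit blocks of $x$ and $w$ together with the defining congruence of $E_i$, and (4) invokes eventual periodicity at the indices $i+k\geq\ell$. Your version just spells out a couple of steps (no carrying; term-by-term periodicity) that the paper leaves implicit.
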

\begin{proof}
    Since $F(x)\overset{w}{\to}F'$,  $xw\in \mathcal{L}(\Sigma)$ and $F(xw)=F'$. Also, $$\mathbf{f}((xw)_g)=\mathbf{f}\left(\sum_{j=0}^{i-1}x_jg^j+\sum_{j=i}^{i+p-1}w_jg^j\right)=\mathbf{f}((x)_g)+\mathbf{f}(g^i(w)_g)\equiv \mathbf{f}((x)_g)+\mathbf{b}\Mod{\mathbf{a}}.$$
    If $i\geq \ell$, $\mathbf{f}(g^i(y)_g)=\mathbf{f}(g^{i+np}(y)_g)$ for every $n\in \N$ and $y\in \mathcal{L}^p(\Sigma)$, concluding that $E_i=E_{i+np}.$
\end{proof}

Despite that the collection of sets $E_{i}$ is not a partition of $\mathcal{L}^p$ (since a word might be read along different paths in $\mathcal{G}$), we still can understand it as a partition of the paths of length $p$ in $\mathcal{G}$. 
\begin{proposition}  Let $i\in\N$.  For every $F \in \mathcal{V}$,
 \begin{equation}\ \sum_{(\mathbf{b},F')\in \Z_{\mathbf{a}}\times \mathcal{V}}|E_{i}(\mathbf{b},F,F')|=|\{\pi=(\pi_0\cdots\pi_{p-1}) \text{ is a path of $\mathcal{G}$ and }s(\pi)=F\}|.\label{eq:source}\end{equation}
Similarly, for every $F'\in \mathcal{V}$, 
     \begin{equation} \sum_{(\mathbf{b},F)\in \Z_{\mathbf{a}}\times \mathcal{V}}|E_{i}(\mathbf{b},F,F')|=|\{\pi=(\pi_0\cdots\pi_{p-1}) \text{ is a path of $\mathcal{G}$ and }t(\pi)=F'\}|.\label{eq:target}\end{equation}
     \label{prop:E_size}
\end{proposition}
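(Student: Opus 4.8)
The plan is to prove the two identities \eqref{eq:source} and \eqref{eq:target} by showing that, for a fixed source node $F$ (respectively, a fixed target node $F'$), the sets $E_i(\mathbf{b},F,F')$ as $(\mathbf{b},F')$ range over $\Z_{\mathbf{a}}\times\mathcal{V}$ partition the set of paths of length $p$ in $\mathcal{G}$ that start at $F$ (respectively, end at $F'$). Since $\mathcal{G}$ is right-resolving, a path of length $p$ starting at $F$ is uniquely determined by its label $w\in\mathcal{D}^p$, so counting such paths is the same as counting the labels they carry; this is the observation that converts the right-hand sides of \eqref{eq:source}--\eqref{eq:target} into counts of words.

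First I would set up the correspondence between paths and words. Let $\Pi_F$ denote the set of paths $\pi=\pi_0\cdots\pi_{p-1}$ in $\mathcal{G}$ with $s(\pi)=F$. The map $\pi\mapsto\lambda(\pi):=\lambda(\pi_0)\cdots\lambda(\pi_{p-1})$ sends $\Pi_F$ into $\mathcal{L}^p(\Sigma)$, and because the Fischer cover is right-resolving this map is injective: at each node the outgoing edges carry distinct labels, so a starting node together with a label word determines the path edge by edge. Hence $|\Pi_F|$ equals the number of words $w\in\mathcal{L}^p(\Sigma)$ for which there is a (necessarily unique) path from $F$ labelled $w$, i.e.\ the number of $w$ with $F\overset{w}{\to}F'$ for some $F'\in\mathcal{V}$; moreover for each such $w$ the terminal node $F'$ is uniquely determined. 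Recalling from the text that $F\overset{w}{\to}F'$ holds iff $w\in F$ and $F(uw)=F'$ for every $u$ with $F(u)=F$, we see that each $w\in\mathcal{L}^p(\Sigma)$ with $w\in F$ contributes to exactly one pair $(w,F')$.

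Next I would check disjointness and exhaustiveness of the family $\{E_i(\mathbf{b},F,F')\}_{(\mathbf{b},F')}$. Given $F$ fixed, suppose $w\in E_i(\mathbf{b}_1,F,F_1')\cap E_i(\mathbf{b}_2,F,F_2')$. Then $F\overset{w}{\to}F_1'$ and $F\overset{w}{\to}F_2'$, and by the uniqueness of the terminal node just established, $F_1'=F_2'$; and $\mathbf{f}(g^i(w)_g)\equiv\mathbf{b}_1\equiv\mathbf{b}_2\pmod{\mathbf{a}}$ forces $\mathbf{b}_1=\mathbf{b}_2$ in $\Z_{\mathbf{a}}$. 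So the sets are pairwise disjoint. For exhaustiveness, any path $\pi\in\Pi_F$ has label $w:=\lambda(\pi)\in\mathcal{L}^p(\Sigma)$ with $F\overset{w}{\to}t(\pi)$; setting $\mathbf{b}$ to be the residue of $\mathbf{f}(g^i(w)_g)$ and $F':=t(\pi)$, we get $w\in E_i(\mathbf{b},F,F')$, and $w$ in turn corresponds to the unique path $\pi$. Therefore
\[
\sum_{(\mathbf{b},F')\in\Z_{\mathbf{a}}\times\mathcal{V}}|E_i(\mathbf{b},F,F')|
=|\{w\in\mathcal{L}^p(\Sigma): w\in F\}|
=|\Pi_F|,
\]
which is \eqref{eq:source}. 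The identity \eqref{eq:target} is proved symmetrically: fixing $F'$, a path of length $p$ ending at $F'$ is again determined by $(s(\pi),\lambda(\pi))$ by right-resolvingness, so it suffices to run the same disjointness/exhaustiveness argument over the pairs $(\mathbf{b},F)$.

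\textbf{Main obstacle.} The only genuinely delicate point is making sure that the right-resolving property is used in the correct direction and that no word gets double-counted: a word $w$ may label several distinct paths in $\mathcal{G}$ in general, which is exactly why $\{E_i\}$ is \emph{not} a partition of $\mathcal{L}^p(\Sigma)$ (as the text warns), but once the \emph{source} node is pinned down the path—and hence the terminal node and the contribution to the sum—becomes unique. Thus the crux is the observation ``$s(\pi)$ and $\lambda(\pi)$ together determine $\pi$,'' with the analogous statement for the target node, after which both identities reduce to bookkeeping.
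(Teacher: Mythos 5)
Your proposal is correct and follows essentially the same route as the paper: both arguments use the right-resolving property of the Fischer cover to identify paths of length $p$ (with a fixed source, resp.\ target) bijectively with their labels, observe that the sets $E_i(\mathbf{b},F,F')$ then partition these paths according to residue and the remaining endpoint, and sum. The only cosmetic difference is that the paper first counts paths between a fixed pair $(F,F')$ and then sums over the free endpoint, whereas you partition the paths from (or to) a single node directly; the content is identical.
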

\begin{proof}
    Let $F,F'\in \mathcal{V}$. Notice that the sets $E_{i}(\mathbf{b},F,F')$ and $E_{i}(\mathbf{b}',F,F')$ are disjoint when $\mathbf{b}\neq \mathbf{b}'\in \Z_{\mathbf{a}}$. For any $\mathbf{b}\in \Z_{\mathbf{a}}$, if $w\in E_{i}(\mathbf{b},F,F')$, there exists a path $\pi$ in $\mathcal{G}$ of length $p$ such that $\lambda(\pi)=w$ from $F$ to $F'$. On the other hand, any path $\pi$ of length $p$ from $F$ to $F'$ represents a word $x:=\lambda(\pi)$ of $E_{i}(\mathbf{b},F,F')$, where $\mathbf{b}:=\mathbf{f}(g^i(x)_g)$. Since $F$ and $F'$ are fixed, each of these paths represents a different word. Then,
    \begin{align*}
   |\{\pi=(\pi_0\cdots\pi_{p-1}) \text{ is a path in $\mathcal{G}$ from $F$ to $F'$}|&=\left|\bigcup_{\mathbf{b}\in \Z_{\mathbf{a}}}E_{i}(\mathbf{b},F,F')\right|\\
    &=\sum_{\mathbf{b}\in \Z_{\mathbf{a}}}  |E_{i}(\mathbf{b},F,F')|.
    \end{align*}
    By summing over $F'$  the equality \eqref{eq:source} is obtained, and similarly, \eqref{eq:target} follows by summing over $F$.
\end{proof}

Usually, \eqref{eq:source} and \eqref{eq:target} do not agree. For this work, we require an extra regularity of $\mathcal{G}$ to ensure that the number of paths of length $p$ starting at some $F\in \mathcal{V}$ is equal to the number of paths of length $p$ with terminal node $F'\in \mathcal{V}$. From graph theory, a directed multi-graph is said to be \emph{$k$-regular} if each vertex has $k$ edges going in and $k$ edges going out (two edges are different if they differ in the starting node, terminal node or label).  We can pose this definition in subshifts.
\begin{definition}
 We say a transitive sofic shift is \emph{$k$-regular} if the graph representation given by the Fischer cover is a $k$-regular graph. We say that a transitive sofic shift is regular if it is $k$-regular for some $k\geq 2$. \label{def:regular}
\end{definition}

If a graph is $1$-regular, it must represent a full shift in one symbol. Thus, we are interested in the case $k\geq 2$, so we include it directly in the definition. Trivially, any full shift satisfies this condition, with $k$ being the number of symbols. When $\Sigma$ is $k$-regular and $F \in \mathcal{V}$, there are $k^p$ paths of length $p$ starting from $F$, and $k^p$ paths of length $p$ ending at $F$. From this idea, we can define a transition matrix for the Markov chains.

\begin{definition}
In the framework of \cref{Convention}, and assuming $\Sigma$ is $k$-regular, we define the state space $\mathcal{S}:=\Z_{\mathbf{a}}\times \mathcal{V}$ and the matrix $M_{i}$ indexed by $\mathcal{S}\times \mathcal{S}$ as \[M_{i}\Big((\mathbf{b},F),(\mathbf{b}',F')\Big):=k^{-p}\cdot |E_{i}(\mathbf{b}'-\mathbf{b},F,F')|\]
  for any $(\mathbf{b},F), (\mathbf{b}',F')\in \mathcal{S}.$
 \label{def:matrix}
\end{definition}

Clearly, the matrix $M_i$ depends on several elements, including the choice of $p$, but we will not make the dependence explicit. The regularity condition allows us to show that this matrix $M_i$ is suitable for defining a Markov chain.
\begin{proposition} For any  $i\in \N$, the matrix $M_{i}$ is a doubly stochastic matrix. \label{prop:stochastic}
 \end{proposition}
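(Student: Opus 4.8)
The claim is that each $M_i$ has all row sums and all column sums equal to $1$. The key input is \cref{prop:E_size}, which partitions the paths of length $p$ in the Fischer cover by their source (or target) node, together with the $k$-regularity assumption, which tells us that for every $F\in\mathcal{V}$ there are exactly $k^p$ paths of length $p$ starting at $F$ and exactly $k^p$ paths of length $p$ ending at $F$. The plan is simply to compute the two sums directly using these two facts.

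\textbf{Row sums.} Fix $(\mathbf{b},F)\in\mathcal{S}$. By \cref{def:matrix},
\[
\sum_{(\mathbf{b}',F')\in\mathcal{S}} M_i\big((\mathbf{b},F),(\mathbf{b}',F')\big)
= k^{-p}\sum_{F'\in\mathcal{V}}\ \sum_{\mathbf{b}'\in\Z_{\mathbf{a}}} \big|E_i(\mathbf{b}'-\mathbf{b},F,F')\big|.
\]
For fixed $F'$, as $\mathbf{b}'$ ranges over $\Z_{\mathbf{a}}$ the shifted value $\mathbf{b}'-\mathbf{b}$ also ranges over all of $\Z_{\mathbf{a}}$, so the inner double sum equals $\sum_{(\mathbf{c},F')\in\Z_{\mathbf{a}}\times\mathcal{V}}|E_i(\mathbf{c},F,F')|$, which by \eqref{eq:source} is the number of paths of length $p$ in $\mathcal{G}$ with source $F$. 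By $k$-regularity this count is $k^p$, and we conclude the row sum is $k^{-p}\cdot k^p = 1$.

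\textbf{Column sums.} Symmetrically, fix $(\mathbf{b}',F')\in\mathcal{S}$. Then
\[
\sum_{(\mathbf{b},F)\in\mathcal{S}} M_i\big((\mathbf{b},F),(\mathbf{b}',F')\big)
= k^{-p}\sum_{F\in\mathcal{V}}\ \sum_{\mathbf{b}\in\Z_{\mathbf{a}}} \big|E_i(\mathbf{b}'-\mathbf{b},F,F')\big|,
\]
and again for fixed $F$ the substitution $\mathbf{c}=\mathbf{b}'-\mathbf{b}$ shows the inner sum runs over all of $\Z_{\mathbf{a}}$, giving $\sum_{(\mathbf{c},F)}|E_i(\mathbf{c},F,F')|$, which by \eqref{eq:target} equals the number of paths of length $p$ in $\mathcal{G}$ ending at $F'$; this is $k^p$ by $k$-regularity, so the column sum is $1$. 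Since all entries of $M_i$ are nonnegative rationals in $[0,1]$, $M_i$ is doubly stochastic. The only subtlety — and it is minor — is to be careful that the re-indexing $\mathbf{b}'\mapsto\mathbf{b}'-\mathbf{b}$ (resp.\ $\mathbf{b}\mapsto\mathbf{b}'-\mathbf{b}$) is a bijection of $\Z_{\mathbf{a}}$, which holds because $\Z_{\mathbf{a}}$ is a finite abelian group; there is no real obstacle here, the statement follows essentially by bookkeeping once \cref{prop:E_size} and $k$-regularity are in hand.
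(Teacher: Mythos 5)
Your proof is correct and follows essentially the same route as the paper: fix a row (resp.\ column), re-index the sum over $\Z_{\mathbf{a}}$, invoke \eqref{eq:source} (resp.\ \eqref{eq:target}) of \cref{prop:E_size} to identify the total with a path count, and conclude by $k$-regularity. No gaps.
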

 \begin{proof}
     Let $(\mathbf{b},F)\in \mathcal{S}$. From \eqref{eq:source},
     \begin{align*}
        \sum_{s\in \mathcal{S}} M_{i}\Big((\mathbf{b},F),s\Big)&=k^{-p}\sum_{(\mathbf{b}',F')\in \mathcal{S}}|E_{i}(\mathbf{b}'-\mathbf{b},F,F')| \\
        &=k^{-p} \sum_{(\hat{b},F')\in \mathcal{S}} |E_{i}(\hat{\mathbf{b}},F,F')|\\
        &=k^{-p}|\{\pi=(\pi_0\pi_1\cdots\pi_{p-1}) \text{ is a path in $\mathcal{G}$ and }s(\pi)=F\}|.\end{align*}
        We conclude $\displaystyle \sum_{s\in \mathcal{S}} M_{i}\Big((\mathbf{b},F),s\Big)=1$ from $k$-regularity.
        Analogously, from \eqref{eq:target}  we obtain $\displaystyle\sum_{s\in \mathcal{S}} M_{i}\Big(s,(\mathbf{b},F)\Big)=1.$
        \end{proof}

  Note that the regularity is important to ensure that the normalization factor to make the matrix stochastic is the same for all rows. Without this, while a stochastic matrix could still be defined, the link between $M_i$ and the distribution of integers becomes unclear, and our method no longer seems to work.

\subsection{Initial distribution}
In our attempt to study $A_\Sigma$, the idea is to extend all the $\ell$-long words that have a follower set in $\mathcal{V}$, as we can fully describe those extensions using the Fischer cover. However, for some $w\in \mathcal{L}^{\ell}$, it might be that $F(w)\notin \mathcal{V}$. This leads to the following definition.
 \begin{definition} Let $\Sigma$ be a transitive sofic shift. We define the \emph{$\ell$-restricted language of $\Sigma$}  as all words of $\mathcal{L}(\Sigma)$ that are extension of a word of length $\ell$ with follower set in $\mathcal{V}$,
\[\mathcal{L}_{\mathcal{V},\ell}(\Sigma):=\Big\{w\in \mathcal{L}(\Sigma): |w|\geq \ell \text{ and } F(w_0\cdots w_{\ell-1})\in \mathcal{V}\Big\}.\]
  For every $i\geq \ell$, we also define $\mathcal{L}_{\mathcal{V},\ell}^i(\Sigma):=\{w\in \mathcal{L}_{\mathcal{V},\ell}(\Sigma):\ |w|=i\}$.
\label{def:partial_language}
\end{definition}

The reason for considering $\ell$ large enough in \cref{Convention} is to ensure that this set is not empty. If $\Sigma$ is a $M$-step shift of finite type, any word $w$ such that $|w|\geq M+1$ is synchronizing, therefore $\mathcal{L}_{\mathcal{V},\ell}^{i}(\Sigma)=\mathcal{L}^i(\Sigma)$ for all $i\geq M+1$. Under the condition of regularity, it is easy to describe the cardinality of $\mathcal{L}_{\mathcal{V},\ell}^i$. For the upcoming results, if $w = w_0 \cdots w_n$ is a word, we will identify it with a pair of words $(x, y)$ such that $w = xy$.
\begin{proposition}
       Let $\Sigma$ be a transitive sofic shift and $k$-regular. Then,
       \[|\mathcal{L}^i_{\mathcal{V},\ell}(\Sigma)|=|\mathcal{L}^{\ell}_{\mathcal{V},\ell}(\Sigma)|\cdot k^{i-\ell}\]\label{propCardinality}
       for every $i\geq \ell$.\label{prop:cardinality}
\end{proposition}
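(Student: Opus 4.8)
The plan is to count words in $\mathcal{L}^i_{\mathcal{V},\ell}(\Sigma)$ by tracking the follower-set node reached after reading the first $\ell$ symbols, and then using $k$-regularity of the Fischer cover to count extensions. First I would fix a word $w \in \mathcal{L}^i_{\mathcal{V},\ell}(\Sigma)$ with $i \geq \ell$ and write $w = xy$ with $|x| = \ell$ and $|y| = i - \ell$. By definition of $\mathcal{L}_{\mathcal{V},\ell}(\Sigma)$ we have $F(x) \in \mathcal{V}$, so $x$ is (an extension of a word, hence itself) synchronizing, and there is a unique node $F(x) \in \mathcal{V}$ associated to it. Since $xy \in \mathcal{L}(\Sigma)$, the word $y$ labels a path in the Fischer cover $\mathcal{G}$ starting at the node $F(x)$; conversely, any path of length $i-\ell$ starting at $F(x)$ has a label $y$ with $xy \in \mathcal{L}^i_{\mathcal{V},\ell}(\Sigma)$. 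Because $\mathcal{G}$ is right-resolving, the map $w = xy \mapsto (x, \pi)$, where $\pi$ is the unique path from $F(x)$ labelled by $y$, is a bijection between $\mathcal{L}^i_{\mathcal{V},\ell}(\Sigma)$ and the set of pairs $(x,\pi)$ with $x \in \mathcal{L}^{\ell}_{\mathcal{V},\ell}(\Sigma)$ and $\pi$ a path of length $i-\ell$ in $\mathcal{G}$ starting at $F(x)$.

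The second step is the counting. By $k$-regularity, every node of $\mathcal{G}$ has exactly $k$ outgoing edges, so by an immediate induction the number of paths of length $i-\ell$ starting at any fixed node equals $k^{i-\ell}$, independently of the node. Therefore, partitioning the pairs $(x,\pi)$ according to $x$,
\[
|\mathcal{L}^i_{\mathcal{V},\ell}(\Sigma)| = \sum_{x \in \mathcal{L}^{\ell}_{\mathcal{V},\ell}(\Sigma)} k^{i-\ell} = |\mathcal{L}^{\ell}_{\mathcal{V},\ell}(\Sigma)| \cdot k^{i-\ell},
\]
which is the claimed identity.

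The point requiring the most care is the claim that the correspondence $w \mapsto (x,\pi)$ is genuinely a bijection, i.e. that $x$ together with the path it determines recovers all of $w$ and that distinct $w$ give distinct pairs. Injectivity is clear since $x$ and $\lambda(\pi) = y$ together reconstruct $w = xy$. Surjectivity uses that for $x \in \mathcal{L}^{\ell}_{\mathcal{V},\ell}(\Sigma)$ the node $F(x)$ is well-defined and that, by the characterization recorded after the definition of $F \overset{w}{\to} F'$ (namely $F(x) \overset{y}{\to} F'$ iff $xy \in \mathcal{L}(\Sigma)$), a path of length $i-\ell$ from $F(x)$ labelled $y$ exists precisely when $xy \in \mathcal{L}(\Sigma)$; and then $xy \in \mathcal{L}^i_{\mathcal{V},\ell}(\Sigma)$ because its length-$\ell$ prefix is still $x$. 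Right-resolvingness of $\mathcal{G}_\Sigma$ guarantees the path $\pi$ realizing a given label $y$ from a given start node is unique, so there is no overcounting. The only mild subtlety is ensuring $\mathcal{L}^{\ell}_{\mathcal{V},\ell}(\Sigma) \neq \emptyset$, but this is exactly the hypothesis built into \cref{Convention} that $\ell$ is large enough to admit a synchronizing word of length $\ell$, whose follower set is then a node of $\mathcal{V}$.
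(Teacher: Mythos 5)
Your proof is correct and follows essentially the same route as the paper: decompose each word of $\mathcal{L}^i_{\mathcal{V},\ell}(\Sigma)$ into its length-$\ell$ prefix and a path of length $i-\ell$ in the Fischer cover starting at the associated node, use right-resolvingness to avoid overcounting, and apply $k$-regularity to count $k^{i-\ell}$ such paths. One small caveat: your parenthetical claim that $F(x)\in\mathcal{V}$ forces $x$ itself to be synchronizing is not justified (and is not needed) — all that matters is that $F(x)$ is a node of the cover, which is exactly what the definition of $\mathcal{L}_{\mathcal{V},\ell}$ gives you.
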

\begin{proof}
    Notice  $x\in \mathcal{L}^i_{\mathcal{V},\ell}$ if and only if $F:=F(x_0\cdots x_{\ell-1})\in \mathcal{V}$  and there is a path $\pi$ in the Fischer cover of length $i-\ell$  from the vertex associated to $F$, and $\pi$ is labelled by $x_\ell\cdots x_{i-1}$. Using the identification, 
\begin{equation}
\mathcal{L}^i_{\mathcal{V},\ell}=\bigcup_{F\in \mathcal{V}}\{w\in \mathcal{L}^\ell_{\mathcal{V},\ell}: F(w)=F\}\times \{\lambda(\pi):\ \pi=\pi_0\cdots \pi_{i-\ell-1}\in \mathcal{G},\ s(\pi)=F \}.\label{proof:UnionLanguage} 
\end{equation}

Since the Fischer cover is right-resolving, all paths in $\{ \pi=\pi_0\cdots \pi_{i-\ell-1}\in \mathcal{G},\ s(\pi)=F \}$ have different labels. Then, the  conclusion follows by using the $k$-regularity in \eqref{proof:UnionLanguage}.
\end{proof}

We introduce a probability measure on $\mathcal{S}$.
\begin{definition} For every $i\geq \ell$, the distribution $\mu_{i}$
   on $\mathcal{S}=\Z_{\mathbf{a}}\times \mathcal{V}$ is defined as
\[\mu_{i}(\{(\mathbf{b},F)\}):=\dfrac{|\{w\in \mathcal{L}_{\mathcal{V},\ell}^{i}(\Sigma):\ \mathbf{f}((w)_g)\equiv \mathbf{b}\Mod{\mathbf{a}},\ F(w)=F\}|}{|\mathcal{L}_{\mathcal{V},\ell}^{i}(\Sigma)|}\]
for any $(\mathbf{b},F)\in \mathcal{S}$.
\label{def:distribution}
\end{definition}

We will denote $\mu_{i}(\mathbf{b},F):=\mu_{i}(\{(\mathbf{b},F)\})$. By understanding the powers of the matrix $M_i$, we can describe the evolution of the measure $\mu_{i+np}$ with respect to $n\in \N$.
\begin{proposition} For any $i,n\in \N$ and $s=(\mathbf{b}, F),s'=(\mathbf{b}',F')\in \mathcal{S}$,  \begin{equation}
    M_{i}^n(s,s')=k^{-pn}|\{w\in \mathcal{L}^{pn}(\Sigma):\ \mathbf{f}(g^{i}(w)_g)\equiv \textbf{b}'-\textbf{b}\Mod{\textbf{a}}, \ F\overset{w}{\to} F'\}|.\label{eq:keymarkov_1}
\end{equation}
    Moreover, for every $i\geq \ell$ and $n\in \N$,
\begin{equation}
    \mu_{i+np}=\mu_{i} M_{i}^n.\label{eq:keymarkov_2}
\end{equation}\label{prop:keymarkov}
\end{proposition}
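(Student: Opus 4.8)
The plan is to prove the two displays in sequence, deriving the measure-evolution identity \eqref{eq:keymarkov_2} from the matrix-power identity \eqref{eq:keymarkov_1} together with a path-counting argument that unwinds \cref{def:distribution} and \cref{def:matrix}.

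First I would establish \eqref{eq:keymarkov_1} by induction on $n$. The base case $n=1$ is essentially \cref{def:matrix} combined with \cref{def:sets_E}: by definition $M_i((\mathbf{b},F),(\mathbf{b}',F')) = k^{-p}|E_i(\mathbf{b}'-\mathbf{b},F,F')|$, and $E_i(\mathbf{b}'-\mathbf{b},F,F')$ is exactly the set of $w\in\mathcal{L}^p(\Sigma)$ with $\mathbf{f}(g^i(w)_g)\equiv\mathbf{b}'-\mathbf{b}\Mod{\mathbf{a}}$ and $F\overset{w}{\to}F'$, since $\mathcal{G}$ is right-resolving so each such word corresponds to at most one path. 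For the inductive step, write $M_i^{n+1}(s,s') = \sum_{s''\in\mathcal{S}} M_i^n(s,s'') M_i(s'',s')$ with $s''=(\mathbf{c},F'')$. A word $w\in\mathcal{L}^{p(n+1)}(\Sigma)$ witnessing the right-hand side of \eqref{eq:keymarkov_1} factors uniquely as $w = uv$ with $|u|=pn$, $|v|=p$; the key point is that the path of $\mathcal{G}$ labelled by $w$ from $F$ to $F'$ (unique by right-resolvingness) passes through a well-defined intermediate node $F''$ after $pn$ steps, so $F\overset{u}{\to}F''$ and $F''\overset{v}{\to}F'$. Moreover, since $\mathbf{f}$ is $g$-additive and $(uv)_g = (u)_g + g^{pn}(v)_g$, we get $\mathbf{f}(g^i(uv)_g) = \mathbf{f}(g^i(u)_g) + \mathbf{f}(g^{i+pn}(v)_g)$, and since $i\geq\ell$ (or more precisely once we are in the eventually-periodic regime — one must be slightly careful if $i<\ell$, but \eqref{eq:keymarkov_1} as stated only needs the literal additive splitting plus $E_{i+pn}=E_i$ which holds for $i\geq\ell$; actually \eqref{eq:keymarkov_1} is stated for all $i\in\N$, so here I would only use $g$-additivity and not periodicity, keeping the exponent $i+pn$ inside and noting that the count it defines is precisely $|E_{i+pn}(\cdot)|$-type) — the congruence class of $\mathbf{f}(g^i(uv)_g)$ decomposes as $\mathbf{c}-\mathbf{b}$ plus $\mathbf{b}'-\mathbf{c}$ for the appropriate $\mathbf{c}$. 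Counting factorizations with a fixed intermediate state and using the induction hypothesis on the $u$-part gives the sum over $s''$, completing the step. The normalization $k^{-p(n+1)} = k^{-pn}\cdot k^{-p}$ matches.

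Next, to deduce \eqref{eq:keymarkov_2}, fix $i\geq\ell$ and $s'=(\mathbf{b}',F')\in\mathcal{S}$. By definition of matrix-vector multiplication, $(\mu_i M_i^n)(s') = \sum_{s=(\mathbf{b},F)}\mu_i(\mathbf{b},F)\,M_i^n((\mathbf{b},F),(\mathbf{b}',F'))$. Substituting \cref{def:distribution} and \eqref{eq:keymarkov_1}, this becomes
\[
\frac{1}{|\mathcal{L}^i_{\mathcal{V},\ell}(\Sigma)|}\sum_{(\mathbf{b},F)} \frac{|\{x\in\mathcal{L}^i_{\mathcal{V},\ell}:\mathbf{f}((x)_g)\equiv\mathbf{b},F(x)=F\}|\cdot|\{w\in\mathcal{L}^{pn}:\mathbf{f}(g^i(w)_g)\equiv\mathbf{b}'-\mathbf{b},F\overset{w}{\to}F'\}|}{k^{pn}}.
\]
Here I would use $|\mathcal{L}^{i+np}_{\mathcal{V},\ell}(\Sigma)| = |\mathcal{L}^i_{\mathcal{V},\ell}(\Sigma)|\cdot k^{np}$ from \cref{prop:cardinality}, so the denominator is $|\mathcal{L}^{i+np}_{\mathcal{V},\ell}(\Sigma)|$. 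For the numerator, I claim the map $(x,w)\mapsto xw$ is a bijection from the disjoint union (over $\mathbf{b},F$) of the product sets onto $\{y\in\mathcal{L}^{i+np}_{\mathcal{V},\ell}(\Sigma):\mathbf{f}((y)_g)\equiv\mathbf{b}'\Mod{\mathbf{a}},F(y)=F'\}$. Well-definedness uses \cref{lemma:sets_E}: if $F(x)=F$ and $F\overset{w}{\to}F'$ then $xw\in\mathcal{L}(\Sigma)$, $F(xw)=F'$, $xw$ still starts with a length-$\ell$ word with follower set in $\mathcal{V}$ (the prefix of $x$), hence $xw\in\mathcal{L}^{i+np}_{\mathcal{V},\ell}$, and $\mathbf{f}((xw)_g)\equiv\mathbf{f}((x)_g)+(\mathbf{b}'-\mathbf{b})\equiv\mathbf{b}'$. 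Surjectivity: given such $y$, split $y=xw$ with $|x|=i$; then $x\in\mathcal{L}^i_{\mathcal{V},\ell}$, and with $F:=F(x)\in\mathcal{V}$ and $\mathbf{b}:=\mathbf{f}((x)_g)\bmod\mathbf{a}$ we have $F\overset{w}{\to}F'$ (because $F(xw)=F'$ and $\mathcal{G}$ is right-resolving) and $\mathbf{f}(g^i(w)_g)\equiv\mathbf{b}'-\mathbf{b}$, so $(x,w)$ lies in the $(\mathbf{b},F)$-summand. Injectivity is immediate from unique splitting at position $i$. Hence the sum of numerators equals $|\{y\in\mathcal{L}^{i+np}_{\mathcal{V},\ell}:\mathbf{f}((y)_g)\equiv\mathbf{b}',F(y)=F'\}|$, and dividing by $|\mathcal{L}^{i+np}_{\mathcal{V},\ell}|$ gives exactly $\mu_{i+np}(\mathbf{b}',F')$, as required.

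The main obstacle I anticipate is the bookkeeping in the bijection — specifically verifying that the intermediate-node structure of paths in the right-resolving Fischer cover is respected under concatenation, i.e.\ that the unique path labelled by $xw$ really does decompose into the unique path labelled by $x$ followed by the unique path labelled by $w$ from the correct node, and that this is what makes the disjoint union over $F$ (rather than an overcount) correct. This is where right-resolvingness of $\mathcal{G}_\Sigma$ and the characterization ``$F\overset{w}{\to}F'$ iff $w\in F$ and $F(uw)=F'$ for every $u$ with $F(u)=F$'' (noted just after the $F\overset{w}{\to}F'$ notation) do all the work; once that is cleanly invoked, everything else is routine counting. A minor secondary point: one should double-check the regime $i<\ell$ is not needed for \eqref{eq:keymarkov_1} (it is stated for $i\in\N$ but only $g$-additivity is used there, with the exponent $i$ carried literally), while \eqref{eq:keymarkov_2} is only asserted for $i\geq\ell$, which is exactly where \cref{prop:cardinality} and the $\mathcal{L}_{\mathcal{V},\ell}$ formalism apply.
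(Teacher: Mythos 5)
Your proposal matches the paper's proof essentially step for step: induction on $n$ for \eqref{eq:keymarkov_1} via the factorization $w=uv$ at position $pn$ (using right-resolvingness of the Fischer cover to pin down the intermediate node, and $g$-additivity plus eventual periodicity to split the congruence), followed by the concatenation bijection together with \cref{prop:cardinality} to obtain \eqref{eq:keymarkov_2}. The one caveat you raise --- whether \eqref{eq:keymarkov_1} really holds for $i<\ell$ without invoking periodicity --- is an imprecision shared with the paper itself, whose inductive step also uses $\mathbf{f}(g^{i+pn}(y)_g)\equiv\mathbf{f}(g^{i}(y)_g)$; since the identity is only ever applied for $i\geq\ell$, this is harmless.
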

\begin{proof}
 We proceed by induction to show \eqref{eq:keymarkov_1}. Let  $s=(\mathbf{b}, F), s'=(\mathbf{b}',F')\in \mathcal{S}$. The case $n=1$ holds by definition. Suppose that the result holds for some $n\in \N$, so we have to prove the result for $n+1$. Since
 \[M_i^{n+1}(s,s')=\sum_{\Tilde{s}\in \mathcal{S}}M_i^{n}(s,\Tilde{s})M_i(\Tilde{s},s'),\]
 from the induction hypotheses, it will be sufficient to prove that 
 \begin{equation}
     \begin{split}
        &\{w\in \mathcal{L}^{p(n+1)}(\Sigma):\ \mathbf{f}(g^{i}(w)_g)\equiv \textbf{b}'-\textbf{b}\Mod{\textbf{a}}, \ F\overset{w}{\to} F'\}\\
        &=\bigcup_{(\Tilde{\mathbf{b}},\Tilde{F})\in \mathcal{S}} \Big( \{x\in \mathcal{L}^{pn}(\Sigma):\ \mathbf{f}(g^{i}(x)_g)\equiv \Tilde{\textbf{b}}-\textbf{b}\Mod{\textbf{a}}, \ F\overset{x}{\to} \Tilde{F}\}\\
        &\hspace{2cm}\times\{y\in \mathcal{L}^{p}(\Sigma):\ \mathbf{f}(g^{i}(w)_g)\equiv \textbf{b}'-\Tilde{\textbf{b}}\Mod{\textbf{a}}, \ \Tilde{F}\overset{y}{\to} F'\}\Big).
     \end{split}\label{proof_keymarkov:1}
 \end{equation}
 Let $w\in \mathcal{L}^{p(n+1)}$ in the left hand side of \eqref{proof_keymarkov:1}, and consider the decomposition $w=(x,y)$, where $x=w_0\ldots w_{pn-1}$ and $y=w_{pn}\ldots w_{n(p+1)-1}$. Since $\mathbf{f}(g^i(w)_g)=\mathbf{f}(g^i(x)_g)+\mathbf{f}(g^{i+pn}(y)_g),$ the eventual periodicity leads to
 \[\mathbf{f}(g^i(w)_g)\equiv_{\mathbf{a}}\mathbf{b}'-\mathbf{b} \text{ if and only if }\mathbf{f}(g^i(x)_g)\equiv_{\mathbf{a}}\Tilde{\mathbf{b}}-\mathbf{b}\text{ and }\mathbf{f}(g^i(y)_g)\equiv_{\mathbf{a}}\mathbf{b}'-\Tilde{\mathbf{b}}, \]
 where $\Tilde{\mathbf{b}}:=\mathbf{f}(g^i(x)_g)+\mathbf{b}\Mod{\mathbf{a}}$. In addition, $F\overset{w}{\to}F'$ is equivalent to the existence of a path $\pi$ in $\mathcal{G}$ of length $(n+1)p$ from $F$ to $F'$ labelled by $w$. If we denote by $\Tilde{F}$ the terminal node for the subpath $\pi_0\ldots \pi_{np-1}$, $F\overset{w}{\to}F' \text{ if and only if }F\overset{x}{\to}\Tilde{F}\text{ and }\Tilde{F}\overset{y}{\to}F'$. Thus, we conclude \eqref{proof_keymarkov:1}. On the other hand,  $$\Big(\mu_iM^n_i\Big)(s)=\sum_{s'\in \mathcal{S}}\mu_i(s')M_i^n(s',s).$$ By \cref{prop:cardinality}, $k^{pn}|\mathcal{L}^i_{\mathcal{V},\ell}(\Sigma)|=|\mathcal{L}^{i+np}_{\mathcal{V},\ell}(\Sigma)|$. Hence, using \eqref{eq:keymarkov_1}, we can conclude \eqref{eq:keymarkov_2} by showing that
\begin{equation}
    \begin{split}
        &\{w\in \mathcal{L}_{\mathcal{V},\ell}^{i+(n+1)p}(\Sigma):\ \mathbf{f}((w)_g)\equiv \mathbf{b}\Mod{\mathbf{a}},\ F(w)=F\}\\
        &=\bigcup_{s'\in \mathcal{S}}\Big(\{x\in \mathcal{L}_{\mathcal{V},\ell}^{i}(\Sigma):\ \mathbf{f}((w)_g)\equiv \mathbf{b}'\Mod{\mathbf{a}},\ F(w)=F'\}\\
        &\hspace{1cm}\times \{y\in \mathcal{L}^{pn}(\Sigma):\ \mathbf{f}(g^{i}(y)_g)\equiv \textbf{b}-\textbf{b}'\Mod{\textbf{a}}, \ F'\overset{y}{\to} F\}\Big).
    \end{split}
\end{equation}
If $w\in \mathcal{L}_{\mathcal{V},\ell}^{i+(n+1)p}(\Sigma)$, it is straightforward that $w_0\ldots w_{i-1}\in \mathcal{L}^i_{\mathcal{V},\ell}(\Sigma)$. The rest of the proof follows by replicating the argument used to prove \eqref{proof_keymarkov:1}. 
 \end{proof}

From the previous results, we can characterize the distribution using Markov chains.

 \begin{definition} Let $g, \mathbf{a}, \mathbf{f}, p, \ell$ and $\Sigma$ as defined in \cref{Convention}, and assume that the subshift $\Sigma$ is $k$-regular (where $k\geq 2$). For $i\geq \ell$, we denote by $X^i=(X^{i}_n)_{n\in \N_0}$ a Markov chain on the state space $\mathcal{S}$ with transition matrix $M_{i}$ (\cref{def:matrix}) and initial distribution $\mu_{i}$ (\cref{def:distribution}). We denote by $\mathbb{P}$ the probability measure associated to the process. Notice that $X^i= X^i(g, \mathbf{a}, \mathbf{f}, p, \ell,\Sigma)$, but we omit this dependency in the notation for simplicity.\label{def:Markov}
\end{definition}

\begin{remark}
   Let $\Sigma\subseteq \mathcal{D}^{\N_0}$ be a 1-step shift of finite type. For any $w=w_0\ldots w_n\in \mathcal{L}(\Sigma)$, $F(w)=F(w_n)$. Thus, $\mathcal{V}_\Sigma=\{F(d): d\in \mathcal{D}\}.$ Note that the cover presented in \cref{graph:SFT} is the Fischer cover for $\Sigma$, except if there exist $d\neq d'\in \mathcal{D}$ such that $F(d)=F(d')$. In such a case, the symbols $d$ and $d'$ are associated to the same node in the Fischer cover.

    If the graph of \cref{graph:SFT} is irreducible and regular, we can use this cover to make the construction of the Markov chains instead of the Fischer cover. The only distinction is that different symbols are forced to be associated with different nodes, but the rest of the construction follows immediately. It is easy to see that the results that we will present in \cref{Section4} also hold when using this cover.\label{Remark_SFT}
\end{remark}

\section{From Markov chains to uniform distribution}\label{Section4}

Throughout this section, we will work within the framework outlined in \cref{Convention}, and assuming that $\Sigma$ is a $k$-regular subshift for some $k\geq 2$.
The connection between the Markov chains introduced in \cref{def:Markov} and the distribution of words in the $\ell$-restricted language $\mathcal{L}_{\mathcal{V},\ell}(\Sigma)$ is straightforward from \cref{prop:keymarkov}.  
\begin{corollary} For any $i\geq \ell$, $(\mathbf{b},F)\in \mathcal{S}$ and $n\in \N$,
 \[\mathbb{P}\Big(X^{i}_n=(\mathbf{b},F)\Big)=\dfrac{|\{w\in \mathcal{L}_{\mathcal{V},\ell}^{i+np}(\Sigma): \ \mathbf{f}((w)_g)\equiv\mathbf{b}\Mod{\mathbf{a}}, \ F(w)=F\}|}{|\mathcal{L}_{\mathcal{V},\ell}^{i+np}(\Sigma)|}.\]\label{cor:Fundamental}
 \end{corollary}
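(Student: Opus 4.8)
The statement is an immediate consequence of \cref{prop:keymarkov}, so the plan is essentially to unwind the definitions. I would start from the left-hand side $\mathbb{P}(X^i_n = (\mathbf{b},F))$ and use the general formula for the $n$-step distribution of a Markov chain, namely $\mathbb{P}(X^i_n = s') = \sum_{s \in \mathcal{S}} \mu_i(s) M_i^n(s,s')$, which is available from the preliminaries on Markov chains. Then I would substitute the formula \eqref{eq:keymarkov_1} for $M_i^n$ and the definition \eqref{def:distribution} of $\mu_i$, and show the resulting double sum telescopes into the claimed count of words in $\mathcal{L}_{\mathcal{V},\ell}^{i+np}(\Sigma)$.

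The cleanest route, however, is to bypass the double-sum bookkeeping and invoke \eqref{eq:keymarkov_2} directly: since $\mathbb{P}(X^i_n = (\mathbf{b},F)) = (\mu_i M_i^n)(\mathbf{b},F) = \mu_{i+np}(\mathbf{b},F)$ by \cref{prop:keymarkov}, I only need to recall that $\mu_{i+np}(\mathbf{b},F)$ is, by \cref{def:distribution} applied at index $i+np$ (which is $\geq \ell$ since $i \geq \ell$), exactly
\[
\frac{|\{w \in \mathcal{L}_{\mathcal{V},\ell}^{i+np}(\Sigma) : \mathbf{f}((w)_g) \equiv \mathbf{b} \Mod{\mathbf{a}}, \ F(w) = F\}|}{|\mathcal{L}_{\mathcal{V},\ell}^{i+np}(\Sigma)|}.
\]
That is the right-hand side of the corollary. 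So the proof is two sentences: first that $X^i$ has initial distribution $\mu_i$ and transition matrix $M_i$, hence $\mathbb{P}(X^i_n = s) = (\mu_i M_i^n)(s)$; second, that \eqref{eq:keymarkov_2} identifies this with $\mu_{i+np}(s)$, whose value is read off from \cref{def:distribution}.

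There is essentially no obstacle here — the work has all been done in \cref{prop:keymarkov}. The only point requiring a word of care is that \cref{def:distribution} and \eqref{eq:keymarkov_2} both require the index to be at least $\ell$; since $i \geq \ell$ and $n \geq 1$ we have $i + np \geq \ell$, so everything is legitimate. I would also remark that one could instead prove the corollary by induction on $n$ directly from \cref{cor:Fundamental}'s statement, re-deriving the recursion \eqref{proof_keymarkov:1}, but that merely repeats the proof of \cref{prop:keymarkov} and is not worth doing.
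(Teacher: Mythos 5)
Your proposal is correct and matches the paper's proof exactly: the paper also writes $\mathbb{P}(X^i_n=(\mathbf{b},F))=\sum_{s\in\mathcal{S}}\mu_i(s)M_i^n(s,(\mathbf{b},F))$ and then concludes from \cref{def:distribution} together with \eqref{eq:keymarkov_2} of \cref{prop:keymarkov}. Nothing further is needed.
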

 \begin{proof}
     Given the Markov chain $X^i$ with initial distribution $\mu_i$ and transition matrix $M_i$, for every $(\mathbf{b},F)\in \mathcal{S}$ and $n\in \N$, $$\mathbb{P}\Big(X^i_n=(\mathbf{b},F)\Big)=\sum_{s\in \mathcal{S}}\mu_i(s)M_i\Big(s, (\mathbf{b},F)\Big).$$
     The conclusion follows from \cref{def:distribution} and  \eqref{eq:keymarkov_2} of \cref{prop:keymarkov}.
 \end{proof}

 Since we are restricted to words in $\mathcal{L}_{\mathcal{V},\ell}(\Sigma)\subseteq \mathcal{L}(\Sigma)$, it is natural to consider the subset of $A_\Sigma$ defined by \[A_{\Sigma,\ell}:=\left\{\sum_{i=0}^n w_ig^i:\ n\geq \ell-1, \  w_0\cdots w_n\in \mathcal{L}_{\mathcal{V},\ell}(\Sigma)\right\}.\]
 Through $X_n^{i}$, the distribution of integers represented by words in $\mathcal{L}_{\mathcal{V},\ell}^{i+np}$ can be understood. By running different Markov chains for $i=\ell,\ell+1,\ldots,\ell+(p-1)$, we can recover the distribution in residue classes for $A_{\Sigma,\ell}$. Furthermore, as $\ell$ increases, the set difference $A_{\Sigma}\setminus A_{\Sigma,\ell}$ becomes negligible in terms of relative density, allowing us to recover the distribution for $A_\Sigma$.
 
In order to ensure the convergence of the Markov chains, we introduce the following definition.

  \begin{definition}
      Let $g, \mathbf{a}, \mathbf{f}, p, \ell$ and $\Sigma$ as defined in \cref{Convention}, and assume that the subshift $\Sigma$ is regular. We say that the \emph{Markov condition} holds if $M_i$ (\cref{def:matrix}) is irreducible and aperiodic for every $i\geq \ell$.
  \end{definition}
  
 If $i\geq \ell$, $M_i=M_{i+np}$ for all $n\in\N$, so the Markov condition it is equivalent to show that $M_i$ is irreducible and aperiodic for $i\in \{\ell,\cdots,\ell+(p-1)\}$. Notice that the Markov condition implicitly assumes that $\Sigma$ is mixing (while the construction only requires transitivity). Assuming the convergence of the Markov chains and using \cref{cor:Fundamental}, we devote the rest of the section to obtain the uniform distribution $\mathbf{f}\Mod{\mathbf{a}}$ for the $\times g$-invariant set $A_\Sigma$.

  \begin{lemma}
      If the Markov condition holds, there exist $C>0$ and $\rho\in (0,1)$ such that
   \begin{equation}
\left|\dfrac{|\{w\in \mathcal{L}_{\mathcal{V},\ell}^{n}(\Sigma): \ \mathbf{f}((w)_g)\equiv\mathbf{b}\Mod{\mathbf{a}},\ F(w)=F\}|}{|\mathcal{L}_{\mathcal{V},\ell}^{n}(\Sigma)|}-\dfrac{1}{
|\mathcal{S}|}\right|\leq C\rho^n\label{eq:Estimation1_1}
\end{equation}
 for every $n\geq \ell$ and $(\mathbf{b},F)\in \mathcal{S}$. In addition, 
 
  \begin{equation}
\left|\dfrac{|\{w\in \mathcal{L}_{\mathcal{V},\ell}^{n}(\Sigma): \ \mathbf{f}((w)_g)\equiv\mathbf{b}\Mod{\mathbf{a}},\ F(w)=F\}|}{|\{w\in \mathcal{L}_{\mathcal{V},\ell}^{n}(\Sigma):\ F(w)=F\}|}-\dfrac{1}{
|\mathbf{a}|}\right|\leq C\rho^n.\label{eq:Estimation1_2}
\end{equation}\label{lemma:Estimation1}
 \end{lemma}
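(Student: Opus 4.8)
The first estimate \eqref{eq:Estimation1_1} is essentially immediate from what we have assembled: by \cref{cor:Fundamental}, for $n = i + mp$ with $i \in \{\ell, \ldots, \ell + p - 1\}$ and $m \in \N$, the quantity on the left of \eqref{eq:Estimation1_1} equals $\mathbb{P}(X^i_m = (\mathbf{b}, F))$. Since $M_i$ is doubly stochastic (\cref{prop:stochastic}), the uniform distribution $\lambda(s) = 1/|\mathcal{S}|$ is invariant, and since the Markov condition gives irreducibility and aperiodicity, \cref{thm:ConvergenceMC} provides $C_i > 0$ and $\rho_i \in (0,1)$ with $|\mathbb{P}(X^i_m = s) - 1/|\mathcal{S}|| \le C_i \rho_i^m$ for all $m, s$. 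I would then pass from the exponent $m = (n-i)/p$ back to $n$: writing $\rho_i^{m} = (\rho_i^{1/p})^{n - i} = \rho_i^{-i/p}(\rho_i^{1/p})^n$, this is bounded by $C_i' (\rho_i^{1/p})^n$ for a slightly larger constant. Finally, take $\rho := \max_i \rho_i^{1/p} \in (0,1)$ and $C := \max_i C_i'$ over the finitely many residues $i \in \{\ell, \ldots, \ell + p - 1\}$; this handles all $n \ge \ell$ uniformly and yields \eqref{eq:Estimation1_1} with constants independent of $\mathbf{b}$, $F$ and $n$.

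For \eqref{eq:Estimation1_2} the plan is to deduce it from \eqref{eq:Estimation1_1} by summing over the first coordinate. Fix $F \in \mathcal{V}$. Summing \eqref{eq:Estimation1_1} over all $\mathbf{b} \in \Z_{\mathbf{a}}$ (there are $|\mathbf{a}|$ of them) and using the triangle inequality gives
\[
\left| \frac{|\{w \in \mathcal{L}^n_{\mathcal{V},\ell}(\Sigma) : F(w) = F\}|}{|\mathcal{L}^n_{\mathcal{V},\ell}(\Sigma)|} - \frac{|\mathbf{a}|}{|\mathcal{S}|} \right| \le |\mathbf{a}| \, C \rho^n,
\]
and since $|\mathcal{S}| = |\mathbf{a}| \cdot |\mathcal{V}|$ the target value is $1/|\mathcal{V}|$. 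Now write the ratio in \eqref{eq:Estimation1_2} as a quotient of two quantities, each of which is $\frac{1}{|\mathcal{S}|}$ (respectively $\frac{1}{|\mathcal{V}|}$) up to an error of size $O(\rho^n)$, namely the ratio of
\[
\frac{|\{w \in \mathcal{L}^n_{\mathcal{V},\ell}(\Sigma) : \mathbf{f}((w)_g) \equiv \mathbf{b}, \, F(w) = F\}|}{|\mathcal{L}^n_{\mathcal{V},\ell}(\Sigma)|}
\quad\text{and}\quad
\frac{|\{w \in \mathcal{L}^n_{\mathcal{V},\ell}(\Sigma) : F(w) = F\}|}{|\mathcal{L}^n_{\mathcal{V},\ell}(\Sigma)|}.
\]
Then \eqref{eq:Estimation1_2} follows from the elementary fact that if $|x - \alpha| \le \delta$ and $|y - \beta| \le \delta$ with $\beta > 0$ and $\delta$ small compared to $\beta$, then $|x/y - \alpha/\beta| \le c \, \delta$ for a constant $c$ depending only on $\alpha, \beta$; here $\alpha = 1/|\mathcal{S}|$, $\beta = 1/|\mathcal{V}|$, $\delta = |\mathbf{a}| C \rho^n$, and $\alpha/\beta = 1/|\mathbf{a}|$. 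Concretely one bounds $|x/y - \alpha/\beta| \le \frac{1}{y}|x - \alpha| + \frac{\alpha}{y\beta}|\beta - y| \le \frac{|\beta|+1}{\beta^2}\delta$ once $n$ is large enough that $y \ge \beta/2$, and absorbs the finitely many remaining small $n$ by enlarging $C$ (the left side of \eqref{eq:Estimation1_2} is at most $1$ always, which keeps constants finite). Adjusting $C$ and $\rho$ once more gives the stated clean form.

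\textbf{Main obstacle.} The only genuine subtlety is making sure the denominator in \eqref{eq:Estimation1_2}, i.e. the count of words in $\mathcal{L}^n_{\mathcal{V},\ell}(\Sigma)$ with $F(w) = F$, is bounded away from zero relative to $|\mathcal{L}^n_{\mathcal{V},\ell}(\Sigma)|$ for $n$ large — this is exactly what \eqref{eq:Estimation1_1} summed over $\mathbf{b}$ delivers (the proportion tends to $1/|\mathcal{V}| > 0$), so it is not a real problem, just something that must be invoked in the right order: one proves \eqref{eq:Estimation1_1} first, extracts the positivity of the denominator, and only then divides. Everything else is bookkeeping with constants over the finite index set $\{\ell, \ldots, \ell+p-1\}$ and the conversion between the Markov-chain time parameter $m$ and the word-length parameter $n = i + mp$.
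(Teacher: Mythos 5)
Your proposal is correct and follows essentially the same route as the paper: establish \eqref{eq:Estimation1_1} by combining \cref{cor:Fundamental} with \cref{thm:ConvergenceMC} (using double stochasticity to identify the uniform invariant measure) and a uniform choice of constants over the residues $i\in\{\ell,\dots,\ell+p-1\}$, then deduce \eqref{eq:Estimation1_2} by summing over $\mathbf{b}\in\Z_{\mathbf{a}}$ to control the denominator and performing the same quotient estimate for $n$ large, absorbing the finitely many small $n$ into the constant. No gaps worth noting.
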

\begin{proof} Notice that the uniform measure $1/|\mathcal{S}|$ is invariant for $M_i$ by \cref{prop:stochastic}. From \cref{thm:ConvergenceMC}, for every $i\in \{\ell,\cdots,\ell+p-1\}$ there exist $C_i>0$ and $\rho_i\in (0,1)$ such that 

 \begin{equation} \Big|\mathbb{P}\Big(X_n^{i}=(\mathbf{b},F)\Big)-\dfrac{1}{|\mathcal{S}|}\Big|\leq C_i \rho_i^n\label{Prob1}
 \end{equation} 
 for every $(\mathbf{b},F)\in \mathcal{S}$ and $n\in \N.$   By choosing $C':=\max_{i}\{C_i\rho_i^{-i/p }\}$ and $\rho:=\max_i\{\rho_i^{1/p}\}$, 
      \begin{equation}
     \Big|\mathbb{P}\Big(X_n^{i}=(\mathbf{b},F)\Big)-\dfrac{1}{|\mathcal{S}|}\Big|\leq C' \rho^{np+i}\label{Prob2}
 \end{equation} 
 for every  $i\in \{\ell,\cdots,\ell+(p-1)\}$.   From \cref{cor:Fundamental}, we can write \[\left|\dfrac{|\{w\in \mathcal{L}_{\mathcal{V},\ell}^{np+i}(\Sigma): \ \mathbf{f}((w)_g)\equiv \mathbf{b}\Mod{\mathbf{a}};\ F(w)=F\}|}{|\mathcal{L}_{\mathcal{V},\ell}^{np+i}(\Sigma)|}-\dfrac{1}{|\mathcal{S}|}\right|\leq C'\rho^{np+i}.\]
Since it holds for every $i\in \{\ell,\cdots,\ell+(p-1)\}$ and $n\in \N$, \eqref{eq:Estimation1_1} is obtained. Equivalently, for every $n\in \N$,
\begin{equation}
    \begin{split}
      |\mathcal{L}_{\mathcal{V},\ell}^{n}(\Sigma)|\left(\dfrac{1}{|\mathcal{S}|}-C'\rho^n\right)  &\leq |\{w\in \mathcal{L}_{\mathcal{V},\ell}^{n}(\Sigma): \ \mathbf{f}((w)_g)\equiv \mathbf{b}\Mod{\mathbf{a}};\ F(w)=F\}|\\
        &\leq |\mathcal{L}_{\mathcal{V},\ell}^{n}(\Sigma)|\left(\dfrac{1}{|\mathcal{S}|}+C'\rho^n\right).
    \end{split}\label{proof:Estimation1_1}
\end{equation}
By summing \eqref{proof:Estimation1_1} over all $\mathbf{b}\in \Z_{\mathbf{a}}$, we obtain
\begin{equation}|\mathcal{L}_{\mathcal{V},\ell}^{n}(\Sigma)|\left(\dfrac{1}{|\mathcal{V}|}-C'|\mathbf{a}|\rho^n\right)  \leq |\{w\in \mathcal{L}_{\mathcal{V},\ell}^{n}(\Sigma): F(w)=F\}|\label{proof:Estimation1_2}.\end{equation}
Let $n_0\in \N$ large enough such that $1-C'|\mathcal{S}|\rho^{n_0}>1/2$. Combining \eqref{proof:Estimation1_1} and \eqref{proof:Estimation1_2}, 
\begin{equation}
\begin{split}
    &|\{w\in \mathcal{L}_{\mathcal{V},\ell}^{n}(\Sigma): \ \mathbf{f}((w)_g)\equiv\mathbf{b}\Mod{\mathbf{a}};\ F(w)=F\}|\\
    &\leq \left(\dfrac{1}{|\mathcal{S}|}+C'\rho^{n}\right) \left(\dfrac{1}{|\mathcal{V}|}-C'|\mathbf{a}|\rho^n\right)^{-1} |\{w\in \mathcal{L}_{\mathcal{V},\ell}^{n}(\Sigma): F(w)=F\}|\label{proof:Estimation1_3}
\end{split}
\end{equation}
for every $n\geq n_0$. A quick computation leads to 
$$\left(\dfrac{1}{|\mathcal{S}|}+C'\rho^{n}\right) \left(\dfrac{1}{|\mathcal{V}|}-C'|\mathbf{a}|\rho^n\right)^{-1}=\dfrac{1}{|\mathbf{a}|}+2C'|\mathcal{V}|\dfrac{\rho^n}{1-C'|\mathcal{S}|\rho^n}\leq \dfrac{1}{|\mathbf{a}|}+4C'|\mathcal{V}| \rho^n.$$

Using the previous estimation in \eqref{proof:Estimation1_3}, 
\begin{equation*}
 \dfrac{|\{w\in \mathcal{L}_{\mathcal{V},\ell}^{n}(\Sigma): \ \mathbf{f}((w)_g)\equiv_{\mathbf{a}} \mathbf{b};\ F(w)=F\}|}{ |\{w\in \mathcal{L}_{\mathcal{V},\ell}^{n}(\Sigma): F(w)=F\}|}\leq \dfrac{1}{|\mathbf{a}|}+4C'|\mathcal{V}|\rho^n
\end{equation*}
for every $n\geq n_0$.
With the obvious modifications, we can obtain in an analogous way that
\begin{equation*}
 \dfrac{|\{w\in \mathcal{L}_{\mathcal{V},\ell}^{n}(\Sigma): \ \mathbf{f}((w)_g)\equiv_{\mathbf{a}} \mathbf{b};\ F(w)=F\}|}{ |\{w\in \mathcal{L}_{\mathcal{V},\ell}^{n}(\Sigma): F(w)=F\}|}\geq \dfrac{1}{|\mathbf{a}|}-\dfrac{2C'|\mathcal{V}|\rho^n}{1+C'|\mathcal{S}|\rho^n}\geq  \dfrac{1}{|\mathbf{a}|}-4C'|\mathcal{V}|\rho^n.
\end{equation*}
The conclusion follows by choosing $C>0$ large enough.
\end{proof}

 When $0\notin \mathcal{L}(\Sigma)$, each element of $A_{\Sigma,\ell}$ has a unique representation in the language, which is no longer true if $0\in \mathcal{L}(\Sigma)$. This lack of uniqueness can introduce complications when analysing the distribution of integers, as multiple representations may correspond to the same integer. 
We deal with this situation in the following result by using follower sets and the estimation \eqref{eq:Estimation1_2}.

\begin{lemma}
If the Markov condition holds, there exist $C>0$ and $\rho\in (0,1)$ such that
\[\left|\dfrac{|\{n\in A_{\Sigma,\ell}:\ \mathbf{f}(n)\equiv \mathbf{b}\Mod{\mathbf{a}} \}\cap [0,g^m)|}{|A_{\Sigma,\ell}\cap [0,g^m)|}-\dfrac{1}{|\mathbf{a}|}\right|\leq C\rho^m\]\label{lemma:Estimation2}
 for every $m\geq \ell$ and $(\mathbf{b},F)\in \mathcal{S}$.
\end{lemma}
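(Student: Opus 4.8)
The plan is to transfer the word-level estimates of \cref{lemma:Estimation1} to the integer-level statement, the only genuine difficulty being that when $0\in\mathcal{L}(\Sigma)$ a single integer of $A_{\Sigma,\ell}$ is represented by several words of $\mathcal{L}_{\mathcal{V},\ell}(\Sigma)$, differing by leading zeros (recall that in our convention the most significant digit is the last symbol).

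First I would set up a bijection with \emph{canonical words}: call $w\in\mathcal{L}_{\mathcal{V},\ell}(\Sigma)$ canonical if $|w|=\ell$, or $|w|>\ell$ and its most significant symbol $w_{|w|-1}$ is nonzero. Given $n\in A_{\Sigma,\ell}$, take any $v\in\mathcal{L}_{\mathcal{V},\ell}(\Sigma)$ with $(v)_g=n$ and delete leading symbols (all necessarily $0$) until the leading symbol is nonzero or the length equals $\ell$; since a prefix of length $\ge\ell$ of a word in $\mathcal{L}_{\mathcal{V},\ell}(\Sigma)$ lies again in $\mathcal{L}_{\mathcal{V},\ell}(\Sigma)$, this produces a canonical word $w(n)$ with $(w(n))_g=n$ and $\ell\le|w(n)|\le m$ whenever $n<g^m$. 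Because base-$g$ expansions of a fixed length are unique, $n\mapsto w(n)$ is a bijection from $A_{\Sigma,\ell}\cap[0,g^m)$ onto the canonical words of length in $\{\ell,\dots,m\}$, carrying $\mathbf{f}(n)\equiv\mathbf{b}\Mod{\mathbf{a}}$ to $\mathbf{f}((w)_g)\equiv\mathbf{b}\Mod{\mathbf{a}}$. Letting $\wt N(j)$, resp.\ $\wt N(j,\mathbf{b})$, be the number of canonical words of length $j$, resp.\ those also satisfying $\mathbf{f}((w)_g)\equiv\mathbf{b}\Mod{\mathbf{a}}$, I would then have
\[
|A_{\Sigma,\ell}\cap[0,g^m)|=\sum_{j=\ell}^{m}\wt N(j),\qquad
\bigl|\{n\in A_{\Sigma,\ell}:\mathbf{f}(n)\equiv\mathbf{b}\Mod{\mathbf{a}}\}\cap[0,g^m)\bigr|=\sum_{j=\ell}^{m}\wt N(j,\mathbf{b}).
\]

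Next I would compute each $\wt N(j,\mathbf{b})$ from quantities controlled by \cref{lemma:Estimation1}. For $j>\ell$, a canonical word of length $j$ is $w'd$ with $w'\in\mathcal{L}^{j-1}_{\mathcal{V},\ell}(\Sigma)$, $d\in\{1,\dots,g-1\}$ and $w'd\in\mathcal{L}(\Sigma)$. Since any extension of a synchronizing word is synchronizing, $F(w')\in\mathcal{V}$ for every $w'\in\mathcal{L}^{j-1}_{\mathcal{V},\ell}(\Sigma)$, and $w'd\in\mathcal{L}(\Sigma)$ exactly when $d\in F(w')$; as $(w')_g<g^{j-1}$, $g$-additivity gives $\mathbf{f}((w'd)_g)=\mathbf{f}((w')_g)+\mathbf{f}(dg^{j-1})$. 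Hence, writing $N(i,F,\mathbf{c}):=|\{w\in\mathcal{L}^{i}_{\mathcal{V},\ell}(\Sigma):F(w)=F,\ \mathbf{f}((w)_g)\equiv\mathbf{c}\Mod{\mathbf{a}}\}|$ and $L(i,F):=|\{w\in\mathcal{L}^{i}_{\mathcal{V},\ell}(\Sigma):F(w)=F\}|$,
\[
\wt N(j,\mathbf{b})=\sum_{d=1}^{g-1}\ \sum_{\substack{F\in\mathcal{V}\\ d\in F}}N\bigl(j-1,F,\mathbf{b}-\mathbf{f}(dg^{j-1})\bigr),\qquad
\wt N(j)=\sum_{d=1}^{g-1}\ \sum_{\substack{F\in\mathcal{V}\\ d\in F}}L(j-1,F)\qquad(j>\ell),
\]
together with $\wt N(\ell,\mathbf{b})=\sum_{F\in\mathcal{V}}N(\ell,F,\mathbf{b})$ and $\wt N(\ell)=|\mathcal{L}^{\ell}_{\mathcal{V},\ell}(\Sigma)|$. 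By \eqref{eq:Estimation1_2}, combined with \cref{prop:cardinality} (so that $L(i,F)\le|\mathcal{L}^{i}_{\mathcal{V},\ell}(\Sigma)|=|\mathcal{L}^{\ell}_{\mathcal{V},\ell}(\Sigma)|k^{i-\ell}$), there are $\rho_0\in(0,1)$ and $C'>0$ depending only on the data fixed in \cref{Convention} with $N(i,F,\mathbf{c})=\tfrac1{|\mathbf{a}|}L(i,F)+O\bigl((\rho_0k)^{i}\bigr)$ uniformly in $\mathbf{c}$ and $F$ for all $i\ge\ell$ (for the finitely many $i$ with $L(i,F)=0$ use the trivial bound $N(i,F,\mathbf{c})\le L(i,F)$). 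Substituting this at $i=j-1$, resp.\ $i=\ell$, and summing over the bounded set of pairs $(d,F)$ yields
\[
\wt N(j,\mathbf{b})=\frac{1}{|\mathbf{a}|}\,\wt N(j)+E_j,\qquad |E_j|\le C_1(\rho_0k)^{j}\quad\text{for all }j\ge\ell,
\]
for a constant $C_1$ (absorbing $C'$ and the $O(1)$ number of pairs).

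Finally I would sum over $\ell\le j\le m$ and divide. The numerator error is $\sum_{j=\ell}^{m}E_j$, of absolute value at most $C_1\sum_{j=\ell}^{m}(\rho_0k)^{j}\le C_1(m+1)\max(1,\rho_0k)^{m}$. For the denominator, $k$-regularity forces every node of the Fischer cover to have exactly $k$ outgoing edges, all with distinct labels since $\mathcal{G}$ is right-resolving, so at least $k-1\ge1$ of them carry a nonzero label; hence $\wt N(m)\ge(k-1)\sum_{F\in\mathcal{V}}L(m-1,F)=(k-1)|\mathcal{L}^{\ell}_{\mathcal{V},\ell}(\Sigma)|k^{m-1-\ell}$, giving $|A_{\Sigma,\ell}\cap[0,g^m)|\ge\wt N(m)\ge c_0k^{m}$ with $c_0>0$. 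Therefore
\[
\left|\frac{\bigl|\{n\in A_{\Sigma,\ell}:\mathbf{f}(n)\equiv\mathbf{b}\Mod{\mathbf{a}}\}\cap[0,g^m)\bigr|}{|A_{\Sigma,\ell}\cap[0,g^m)|}-\frac{1}{|\mathbf{a}|}\right|\le\frac{C_1(m+1)\max(1,\rho_0k)^{m}}{c_0k^{m}}\le C_2(m+1)\max(\rho_0,1/k)^{m},
\]
and since $\max(\rho_0,1/k)<1$ the factor $m+1$ is absorbed by fixing any $\rho\in(\max(\rho_0,1/k),1)$, which yields the asserted bound $C\rho^{m}$; the parameter $F$ in the statement is immaterial, the estimate depending only on $\mathbf{b}$. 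I expect the hard part to be the first two paragraphs — setting up the canonical-word bijection so that it respects both the length constraint $|w|\ge\ell$ built into $\mathcal{L}_{\mathcal{V},\ell}(\Sigma)$ and the possible occurrence of the symbol $0$ in $\mathcal{L}(\Sigma)$ — after which the remaining estimates are routine invocations of \cref{lemma:Estimation1}, \cref{prop:cardinality} and $k$-regularity.
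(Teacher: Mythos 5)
Your proof is correct and follows essentially the same route as the paper: decompose $A_{\Sigma,\ell}\cap[0,g^m)$ according to the length of the unique representative whose most significant digit is nonzero (or has length $\ell$), apply the two estimates of \cref{lemma:Estimation1} at each length, bound the accumulated error $\sum_t\rho_0^t|\mathcal{L}^t_{\mathcal{V},\ell}|$ against the denominator $\gtrsim k^m$ via \cref{prop:cardinality}, and absorb the resulting polynomial factor by enlarging $\rho$. The only cosmetic difference is that you count the length-$j$ canonical words directly as nonzero-label extensions of $\mathcal{L}^{j-1}_{\mathcal{V},\ell}$, whereas the paper counts them as all length-$j$ words minus those obtained by appending $0$ (using $\mathbbm{1}_F(0)$); the two bookkeepings are equivalent.
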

\begin{proof}
      For $\mathbf{b}\in \Z_{\mathbf{a}}$, $F\in \mathcal{V}$ and $t\geq \ell$, we denote \begin{align*}
        \mathcal{L}_{\mathcal{V},\ell}^t(\mathbf{b},F)&:=\{w\in \mathcal{L}_{\mathcal{V},\ell}^{t}(\Sigma): \ \mathbf{f}((w)_g)\equiv\mathbf{b}\Mod{\mathbf{a}},\ F(w)=F\},\\
        \mathcal{L}_{\mathcal{V},\ell}^t(\mathbf{b})&:=\{w\in \mathcal{L}_{\mathcal{V},\ell}^{t}(\Sigma): \ \mathbf{f}((w)_g)\equiv\mathbf{b}\Mod{\mathbf{a}}\}, \text{ and }
        \\
        \mathcal{L}_{\mathcal{V},\ell}^t(F)&:=\{w\in \mathcal{L}_{\mathcal{V},\ell}^{t}(\Sigma): \ F(w)=F\}.
    \end{align*}
    If two different words represent the same integer, they must have different length. Additionally, any word of length $t$ ending with the digit $0$ represents an integer that can also be represented by a word of length $t-1$. Therefore, 
    \begin{equation}
    \begin{split}
      &|\{n\in A_{\Sigma,\ell}:\ \mathbf{f}(n)\equiv \mathbf{b}\Mod{\mathbf{a}} \}\cap [0,g^m)| \\
      &=|\mathcal{L}_{\mathcal{V},\ell}^{\ell}(\mathbf{b})|+\sum_{t=\ell+1}^{m}|\{w\in \mathcal{L}_{\mathcal{V},\ell}^t:\ \mathbf{f}((w)_g)\equiv \mathbf{b}\Mod{\mathbf{a}},\ w_{t-1}\neq 0\}|.
      \end{split} \label{proof:Estimation2_1}
\end{equation}
Any word of length $t$ that ends at $0$ must be an extension by $0$ of a word $w$ of length $t-1$. Recalling that $\mathbf{f}(0)=0$, and since $0$ can extend $w$ if and only if $0\in F(w)$, it follows
 \begin{equation}
       |\{w\in \mathcal{L}_{\mathcal{V},\ell}^t:\ \mathbf{f}((w)_g)\equiv \mathbf{b}\Mod{\mathbf{a}};\ w_{t-1}\neq 0\}|=|\mathcal{L}_{\mathcal{V},\ell}^t(\mathbf{b})|-\sum_{F\in \mathcal{V}}|\mathcal{L}_{\mathcal{V},\ell}^{t-1}(\mathbf{b},F)|\cdot \mathbbm{1}_{F}(0).\label{proof:Estimation2_2} 
    \end{equation}

 Let $C>0$ and $\rho\in (0,1)$ given by \cref{lemma:Estimation1}. From \eqref{eq:Estimation1_1},
 \begin{equation}
     |\mathcal{L}_{\mathcal{V},\ell}^t(\mathbf{b})|\leq |\mathcal{L}^t_{\mathcal{V},\ell}|\left(\dfrac{1}{|\mathbf{a}|}+C|\mathcal{V}|\rho^n\right).\label{proof:Estimation2_extra}
 \end{equation}
From \eqref{proof:Estimation2_2}, \eqref{proof:Estimation2_extra} and the estimation provided by \eqref{eq:Estimation1_2} of \cref{lemma:Estimation1} for $|\mathcal{L}_{\mathcal{V},\ell}^{t-1}(\mathbf{b},F)|$, 
    \begin{align*}
     &|\{w\in \mathcal{L}_{\mathcal{V},\ell}^t:\ \mathbf{f}((w)_g)\equiv \mathbf{b}\Mod{\mathbf{a}};\ w_{t-1}\neq 0\}|\\
     &\leq \dfrac{1}{|\mathbf{a}|}\left(|\mathcal{L}_{\mathcal{V},\ell}^t|-\sum_{F\in \mathcal{V}}|\mathcal{L}_{\mathcal{V},\ell}^{t-1}(F)|\mathbbm{1}_F(0)\right) +C|\mathcal{V}|\rho^t|\mathcal{L}_{\mathcal{V},\ell}^t|+ C\sum_{F\in \mathcal{V}}\rho^{t-1}|\mathcal{L}_{\mathcal{V},\ell}^{t-1}(F)|\mathbbm{1}_F(0)\\
     &\leq \dfrac{1}{|\mathbf{a}|}|\{w\in \mathcal{L}_{\mathcal{V},\ell}^t:\ w_{t-1}\neq 0\}|+C'\rho^{t}|\mathcal{L}_{\mathcal{V},\ell}^{t}|,
   \end{align*}
    for some constant $C'>C$. Using the previous estimation and \eqref{proof:Estimation2_extra} in \eqref{proof:Estimation2_1},
\begin{equation}
    \begin{split}
            &|\{n\in A_{\Sigma,\ell}:\ \mathbf{f}(n)\equiv \mathbf{b}\Mod{\mathbf{a}} \}\cap [0,g^m)|\\
            &\leq \dfrac{1}{|\mathbf{a}|}\left(|\mathcal{L}_{\mathcal{V},\ell}^\ell|+\sum_{t=\ell+1}^m|\{w\in \mathcal{L}_{\mathcal{V},\ell}^t:\ w_{t-1}\neq 0\}|\right)+C\rho^\ell|\mathcal{L}^\ell_{\mathcal{V},\ell}|+C'\sum_{t=\ell+1}^m\rho^t|\mathcal{L}_{\mathcal{V},\ell}^t|\\
       &=\dfrac{1}{|\mathbf{a}|}|A_{\Sigma,\ell}\cap [0,g^m)|+C'\sum_{t=\ell}^m\rho^t|\mathcal{L}_{\mathcal{V},\ell}^t|. 
    \end{split} \label{proof:Estimation2_3}
\end{equation}
 Since $|A_{\Sigma,\ell}\cap [0,g^m)|\geq |\mathcal{L}_{\mathcal{V},\ell}^m|$,  \cref{prop:cardinality} leads to
   \[|A_{\Sigma,\ell}\cap [0,g^m)|^{-1}\sum_{t=\ell}^m\rho^t|\mathcal{L}_{\mathcal{V},\ell}^t|\leq \sum_{t=\ell}^m \rho^t\dfrac{|\mathcal{L}_{\mathcal{V},\ell}^t|}{|\mathcal{L}_{\mathcal{V},\ell}^m|} \leq \sum_{t=\ell}^m\rho^t k^{-(m-t)}\leq m\rho^m,\]
where in the last inequality we assumed that $k^{-1}\leq \rho$ (otherwise, we change $\rho$ by $k^{-1}$). Hence, it follows from \eqref{proof:Estimation2_3} that
   \[\dfrac{|\{n\in A_{\Sigma,\ell}:\ \mathbf{f}(n)\equiv \mathbf{b}\Mod{\mathbf{a}} \}\cap [0,g^m)|}{|A_{\Sigma,\ell}\cap [0,g^m)|}-\dfrac{1}{|\mathbf{a}|}\leq C'm\rho^m.\]
   Following the same approach, the analogue lower inequality can be obtained. If $r=\rho^{1/2}$ and $C''>0$ is large enough, it holds $C'm\rho^m\leq C'' r^m$ for all $m\in \N$, concluding the result.
   \end{proof}

   Let $N\in \N$, and consider its expansion in base $g$, $N=d_0+d_1g+\ldots+w_mg^m$, where $w_m\neq 0$. We can write $[0,N)=[0,g^m)\cup [g^m,N)$, and using \cref{lemma:Estimation2}, the distribution can be recovered when restricted to the interval $[0,g^m)$. The next lemma focuses on addressing the distribution in the interval  $[g^m,N)$.

\begin{lemma}
 If the Markov condition holds, there exist $C>0$ and $\rho\in (0,1)$  such that for all $t,m\in \N$ (with $t<m$), and for any sequence of digits $d_t,d_{t+1},\ldots,d_m\in \{0,\cdots,g-1\}$ where $d_m\neq 0$, defining $M:=d_mg^m+\cdots+d_{t+1}g^{t+1}$ and assuming $A_{\Sigma,\ell}\cap [M,M+d_tg^t)\neq \emptyset$, the following estimation holds for every $\mathbf{b}\in \Z_{\mathbf{a}}:$ 
 \begin{equation*}
 \left|\dfrac{ |  \{n\in A_{\Sigma,\ell}:\ \mathbf{f}(n)\equiv \mathbf{b}\Mod{\mathbf{a}}\}\cap [M,M+d_tg^{t})|
}{|A_{\Sigma,\ell}\cap [M,M+d_tg^t)|}-\dfrac{1}{|\mathbf{a}|}\right|
    \leq C\rho^t.
 \end{equation*}
 \label{lemma:Estimation3}
\end{lemma}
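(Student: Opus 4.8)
The plan is to reduce the statement about the interval $[M, M+d_tg^t)$ to the already-established estimate for intervals of the form $[0, g^m)$ from \cref{lemma:Estimation2}, by analyzing how elements of $A_{\Sigma,\ell}$ in this window decompose in terms of words in $\mathcal{L}_{\mathcal{V},\ell}(\Sigma)$. First I would observe that the integers $n \in A_{\Sigma,\ell} \cap [M, M+d_tg^t)$ are exactly those of the form $n = M' + n'$ where $M' = d_mg^m + \cdots + d_{t+1}g^{t+1} + jg^t$ for some digit $j$ with $0 \le j < d_t$ (plus the boundary contribution $j = d_t$ with $n' < $ something, but since we are looking at $[M, M+d_tg^t)$ it is cleaner to note $[M, M+d_tg^t) = \bigsqcup_{j=0}^{d_t-1}[M+jg^t, M+(j+1)g^t)$), and $n'$ ranges over integers in $A_{\Sigma,\ell} \cap [0, g^t)$ whose base-$g$ word of length $t$ can legally be followed (in the Fischer cover) by the fixed suffix word $d_t^{(j)}d_{t+1}\cdots d_m$ determined by the high digits — here I am using that $\Sigma$ is sofic, so whether a long word extends a given $t$-word is governed entirely by the follower set $F(n')$ of the length-$t$ prefix. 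So the count splits according to the follower set $F \in \mathcal{V}$ reached after reading the first $t$ digits.

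The key point is that the constraint imposed by the high digits is purely a constraint on which follower sets $F \in \mathcal{V}$ are admissible: fixing the suffix, there is a subset $\mathcal{V}' \subseteq \mathcal{V}$ of follower sets from which the suffix path can be read, and $n \in A_{\Sigma,\ell}\cap[M+jg^t, M+(j+1)g^t)$ contributes (once suitably accounting for the unique-representation issue when the $t$-word ends in $0$) exactly to words $w \in \mathcal{L}^t_{\mathcal{V},\ell}(\Sigma)$ with $F(w) \in \mathcal{V}'$, and moreover the sum-of-digits type quantity $\mathbf{f}(n) \bmod \mathbf{a}$ equals $\mathbf{f}((w)_g) + \mathbf{f}(\text{high part})$, a fixed shift. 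Thus
\[
|\{n \in A_{\Sigma,\ell}: \mathbf{f}(n) \equiv \mathbf{b} \Mod{\mathbf{a}}\} \cap [M, M+d_tg^t)|
\]
is, up to the $w_{t-1}=0$ correction handled exactly as in \cref{lemma:Estimation2}, a sum over $F \in \mathcal{V}'$ (and over $j$) of $|\mathcal{L}^t_{\mathcal{V},\ell}(\mathbf{b}-\mathbf{c}_j, F)|$ for appropriate shifts $\mathbf{c}_j$. Now I apply \eqref{eq:Estimation1_2} of \cref{lemma:Estimation1}: each such term is $\frac{1}{|\mathbf{a}|}|\mathcal{L}^t_{\mathcal{V},\ell}(F)|$ up to error $C\rho^t |\mathcal{L}^t_{\mathcal{V},\ell}(F)| \le C\rho^t|\mathcal{L}^t_{\mathcal{V},\ell}|$. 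Summing the main terms over the admissible $F$ and $j$ reconstructs exactly $\frac{1}{|\mathbf{a}|}|A_{\Sigma,\ell} \cap [M, M+d_tg^t)|$, while the total error is $O(\rho^t|\mathcal{L}^t_{\mathcal{V},\ell}|)$. The denominator $|A_{\Sigma,\ell}\cap [M, M+d_tg^t)|$ is, by the nonemptiness hypothesis and $k$-regularity (\cref{prop:cardinality}), comparable to $|\mathcal{L}^t_{\mathcal{V},\ell}|$ up to a constant: at least one $j$ has a nonempty window, and each nonempty window of this type has size $\gg k^{t-\ell}\asymp |\mathcal{L}^t_{\mathcal{V},\ell}|$ since once the prefix follower set lies in the admissible set, all $k^{t-\ell}$ continuations of an $\ell$-word are available. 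Dividing gives the claimed $C\rho^t$ bound after adjusting $C$ and, if necessary, replacing $\rho$ by its square root to absorb a polynomial factor, just as at the end of \cref{lemma:Estimation2}.

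The main obstacle I anticipate is bookkeeping the unique-representation issue together with the admissibility constraint simultaneously: an integer $n$ in the window whose natural length-$m+1$ word ends in trailing zeros in the middle range could in principle be represented by a shorter word, and one must make sure the shorter word still lies in the correct interval and is counted exactly once — but actually, because we have fixed the high digits $d_{t+1}, \ldots, d_m$ with $d_m \neq 0$, every $n$ in the window has a unique word of length $m+1$ (the leading digit is nonzero), so the only ambiguity is internal, and it is resolved precisely by the same trailing-zero argument as in \eqref{proof:Estimation2_1}–\eqref{proof:Estimation2_2}: we restrict to length-$t$ prefixes $w$ with $w_{t-1} \neq 0$ and separately add the length-$(t{-}1)$ contributions via $0 \in F(w)$. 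The second, milder, obstacle is verifying the lower bound on $|A_{\Sigma,\ell}\cap[M,M+d_tg^t)|$ uniformly — this is where the regularity of $\Sigma$ and the convention (\cref{Convention}) that $\ell$ admits a synchronizing word are used, ensuring that the admissible follower-set class is nonempty and that the count is genuinely of order $k^t$ rather than something smaller. Once these two points are nailed down the estimate falls out of \cref{lemma:Estimation1} by the same algebra as \cref{lemma:Estimation2}.
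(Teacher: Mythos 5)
Your overall route is the paper's: split $[M,M+d_tg^t)$ into the sub-windows $[M+jg^t,M+(j+1)g^t)$ for $j=0,\dots,d_t-1$, identify each integer there with its length-$t$ prefix word, note that membership in $A_{\Sigma,\ell}$ amounts to the prefix lying in $\mathcal{L}^t_{\mathcal{V},\ell}(\Sigma)$ together with the fixed suffix $jd_{t+1}\cdots d_m$ belonging to its follower set, shift $\mathbf{b}$ by $\mathbf{f}$ of the high part, and invoke \eqref{eq:Estimation1_2}. Two points where your write-up deviates from what actually needs to be done. First, the trailing-zero correction you import from \cref{lemma:Estimation2} addresses a non-issue: since $d_m\neq 0$ is fixed, every $n$ in a sub-window has a unique base-$g$ word of length $m+1$ and hence a unique length-$t$ prefix; a zero at position $t-1$ of the prefix does not create a second representation of $n$. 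Worse, if one literally replaces a prefix $w$ with $w_{t-1}=0$ by the length-$(t-1)$ word $w_0\cdots w_{t-2}$ and re-attaches the suffix, the positional values of $d_t,\dots,d_m$ shift and the resulting integer leaves the window. The correct count is simply over all length-$t$ prefixes, with no case split on the last digit.

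Second, your error term is additive, $O(\rho^t|\mathcal{L}^t_{\mathcal{V},\ell}|)$, which forces you to prove $|A_{\Sigma,\ell}\cap[M,M+d_tg^t)|\gg|\mathcal{L}^t_{\mathcal{V},\ell}|$. The justification offered, that all $k^{t-\ell}$ continuations of an $\ell$-word are available, does not give this: regularity produces $k^{t-\ell}$ continuations but says nothing about how many of them terminate at a follower set in the admissible class determined by the suffix. One genuinely needs the equidistribution of follower sets among length-$t$ words, which is part of \cref{lemma:Estimation1} and hence uses the Markov condition (plus an adjustment of $C$ for small $t$). This is repairable, but the paper sidesteps it entirely by keeping the error multiplicative: \eqref{eq:Estimation1_2} bounds each follower-set term by $\left(\tfrac{1}{|\mathbf{a}|}+C\rho^t\right)\cdot|\{w\in\mathcal{L}^t_{\mathcal{V},\ell}(\Sigma):F(w)=F\}|$, and summing over the admissible $F$ and over $j$ reproduces exactly $\left(\tfrac{1}{|\mathbf{a}|}+C\rho^t\right)\cdot|A_{\Sigma,\ell}\cap[M,M+d_tg^t)|$, so the division by the denominator is immediate and no lower bound on it is ever needed.
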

\begin{proof}
 Let $C$ and $\rho$ be as given by \cref{lemma:Estimation1}, and let $d\in \{0,\cdots,g-1\}$. Let
    \begin{equation}
        x\in \{n\in A_{\Sigma,\ell}:\ \mathbf{f}(n)\equiv\mathbf{b}\Mod{\mathbf{a}}\}\cap [M+dg^{t},M+(d+1)g^{t}).\label{proof:Estiomation3_1}
    \end{equation}
  Since $x\in[M+dg^{t},M+(d+1)g^{t})$, the representation of $x$ can be written as $x=(x_0\cdots x_{m})_g$, where $x_{t}=d$ and $x_{t+1}\cdots x_m=d_{t+1}\cdots d_m$. Therefore, using the $g$-additivity, we can see that $\mathbf{f}(x)\equiv\mathbf{b}\Mod{\mathbf{a}}$ is equivalent to $$\mathbf{f}((x_0\cdots x_{t-1})_g)\equiv\mathbf{b}-\mathbf{f}(dg^t+d_{t+1}g^{t+1}+\cdots+d_mg^m)\Mod{\mathbf{a}}=:\Tilde{\mathbf{b}}(d).$$
Additionally, as $d_m\neq 0$, $x=(x_0\cdots x_{t-1}dd_{t+1}\cdots d_m)_g\in A_{\Sigma,\ell}$ is equivalent to $$x_0\cdots x_{t-1}\in \mathcal{L}_{\mathcal{V},\ell}(\Sigma)\ \text{and}\ dd_{t+1}\cdots d_m\in F(x_0\cdots x_{t-1}).$$ Therefore, \eqref{proof:Estiomation3_1} is equivalent to
\[x_0\cdots x_{t-1}\in \{w\in \mathcal{L}_{\mathcal{V},\ell}^t(\Sigma):\ \mathbf{f}((w)_g)\equiv\Tilde{\mathbf{b}}(d)\Mod{\mathbf{a}}\} \text{ and } dd_{t+1}\cdots d_m\in F(x_0\cdots x_{t-1}). \]
Using this equivalence and the estimation \eqref{eq:Estimation1_2} of \cref{lemma:Estimation3},
\begin{equation}
\begin{split}
    &| \{n\in A_{\Sigma,\ell}:\ \mathbf{f}(n)\equiv\mathbf{b}\Mod{\mathbf{a}}\}\cap [M+dg^{t},M+(d+1)g^{t})|\\
    &=\sum_{F\in \mathcal{V}} |\{w\in \mathcal{L}_{\mathcal{V},\ell}^{t}(\Sigma): \ \mathbf{f}((w)_g)\equiv\Tilde{\mathbf{b}}(d)\Mod{\mathbf{a}};\  F(w)=F\} | \cdot \mathbbm{1}_F\left(dd_{t+1}\cdots d_m\right)\\
    &\leq \sum_{F\in \mathcal{V}}  \left(\dfrac{1}{|\mathbf{a}|}+C\rho^{t}\right)|\{w\in\mathcal{L}_{\mathcal{V},\ell}^{t}(\Sigma):\ F(w)=F\}| \cdot \mathbbm{1}_F(dd_{t+1}\cdots d_m).
\end{split}\label{proof:Estimation3_2}
\end{equation}

With an analogous argument as above, we can obtain that
$$\sum_{F\in \mathcal{V}}|\{w\in \mathcal{L}_{\mathcal{V},\ell}^{t}(\Sigma): F(w)=F\}| \cdot \mathbbm{1}_F(dd_{t+1}\cdots d_m)=|A_{\Sigma,\ell}\cap [M+dg^t,M+(d+1)g^t)|.$$ 
From the assumption $A_{\Sigma,\ell}\cap [M,M+d_tg^t)\neq \emptyset$, it is clear that $d_t\geq 1$. Thus, we can use the previous identity in \eqref{proof:Estimation3_2}, and then sum from  $d=0$ to $d_t-1$, obtaining
\begin{align*}
    | \{n\in A_{\Sigma,\ell}:\ \mathbf{f}(n)\equiv_{\mathbf{a}}\mathbf{b}\}\cap [M,M+d_tg^{t})|\leq \left(\dfrac{1}{|\mathbf{a}|}+C\rho^t\right)|A_{\Sigma,\ell}\cap [M,M+d_tg^t)|.
\end{align*}
With a similar argument, the lower inequality can be obtained, concluding the result.
\end{proof}

Putting the previous results together, the distribution for the set $A_{\Sigma,\ell}$ can be estimated for an arbitrary interval $[0,N)$.

\begin{proposition}
     Let $g\geq 2$, $\mathbf{a}\in \N^r$, and $\mathbf{f}=(f_1,\ldots,f_r)$ be a collection of $g$-additive functions eventually periodic with respect to $\mathbf{a}$.
      Let $\Sigma\subseteq \{0,\cdots,g-1\}^{\N_0}$ be a mixing, sofic  and regular subshift, and $(p,\ell)$ be an eventual period such that $\ell$ is at least the length of the shortest synchronizing word.

      If the Markov condition holds, there exists $\gamma>0$ such that \[\dfrac{| \{n\in A_{\Sigma,\ell}:\ \mathbf{f}(n)\equiv \mathbf{b}\Mod{\mathbf{a}}\}\cap [0,N)|}{|A_{\Sigma,\ell}\cap [0,N)|}=\frac{1}{a_1\cdot a_2\cdots a_r}+O(N^{-\gamma})\] for every $\mathbf{b}\in \Z_{\mathbf{a}}$.\label{prop:main}
 \end{proposition}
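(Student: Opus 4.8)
The plan is to decompose the interval $[0,N)$ according to the base-$g$ expansion of $N$ and apply the three estimation lemmas just proved. Write $N = d_m g^m + d_{m-1} g^{m-1} + \cdots + d_0$ with $d_m \neq 0$ (so $m \asymp \log N / \log g$). The greedy decomposition gives
\[
[0,N) = [0, g^{m}) \ \sqcup\ \bigsqcup_{t : d_t \geq 1,\ t < m} \big[M_t,\ M_t + d_t g^{t}\big),
\]
where $M_t := d_m g^m + \cdots + d_{t+1} g^{t+1}$; only blocks with $d_t \geq 1$ contribute, and for the leading block note $M_m = 0$ so $[0, g^m) = [M_m, M_m + g^m)$ could be folded in, but it is cleaner to handle it separately via \cref{lemma:Estimation2}. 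On the block $[0,g^m)$ we invoke \cref{lemma:Estimation2}: the count of $n \in A_{\Sigma,\ell}$ with $\mathbf{f}(n) \equiv \mathbf{b} \Mod{\mathbf{a}}$ is $\tfrac{1}{|\mathbf{a}|}|A_{\Sigma,\ell} \cap [0,g^m)| + O(\rho^m |A_{\Sigma,\ell} \cap [0,g^m)|)$. On each block $[M_t, M_t + d_t g^t)$ with nonempty intersection with $A_{\Sigma,\ell}$ we invoke \cref{lemma:Estimation3}, getting $\tfrac{1}{|\mathbf{a}|}|A_{\Sigma,\ell} \cap [M_t, M_t + d_t g^t)| + O(\rho^t |A_{\Sigma,\ell}\cap[M_t,M_t+d_tg^t)|)$; blocks with empty intersection contribute nothing to either side.

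Summing over all blocks, the main terms assemble to exactly $\tfrac{1}{|\mathbf{a}|}|A_{\Sigma,\ell} \cap [0,N)|$, and the total error is bounded by
\[
C\rho^m |A_{\Sigma,\ell}\cap[0,g^m)| + C\sum_{t=0}^{m-1} \rho^t \, |A_{\Sigma,\ell}\cap[M_t,M_t+d_tg^t)|
\ \leq\ C\sum_{t=0}^{m}\rho^t\, |A_{\Sigma,\ell}\cap[0,g^{t+1})|,
\]
using the crude bound $|A_{\Sigma,\ell}\cap[M_t,M_t+d_tg^t)| \le |A_{\Sigma,\ell}\cap[0,g^{t+1})|$ (all the blocks with index $\le t$ fit inside an interval of length $g^{t+1}$, up to adjusting constants). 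Now \cref{prop:cardinality} controls the growth of $|\mathcal{L}_{\mathcal{V},\ell}^j|$: it is $|\mathcal{L}^\ell_{\mathcal{V},\ell}| k^{j-\ell}$, so $|A_{\Sigma,\ell}\cap[0,g^j)| \asymp k^{j}$ (the dominant contribution to $A_{\Sigma,\ell}\cap[0,g^j)$ comes from words of length $j$, since $\sum_{i\le j} k^i \asymp k^j$), and likewise $|A_{\Sigma,\ell}\cap[0,N)| \asymp k^{m}$. Hence the error divided by $|A_{\Sigma,\ell}\cap[0,N)|$ is $\lesssim \sum_{t=0}^m \rho^t k^{t+1-m} = k\sum_{t=0}^m (\rho k)^t k^{-m}$. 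If $\rho k < 1$ this is $O(k^{-m})$; if $\rho k \geq 1$ it is $O((\rho k)^m k^{-m}) = O(\rho^m)$; in all cases we get a bound of the form $O(\theta^m)$ for some $\theta \in (0,1)$ (replacing $\rho$ by $\max\{\rho, \rho k \cdot k^{-1}\}$ and absorbing the polynomial-in-$m$ factor from the geometric sum when $\rho k = 1$). Since $m \asymp \log N/\log g$, this is $O(N^{-\gamma})$ for a suitable $\gamma > 0$, which is the claim.

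\textbf{Main obstacle.} The delicate point is not any single lemma but the bookkeeping that ties them together: one must be careful that the blocks $[M_t, M_t + d_t g^t)$ genuinely partition $[g^{m}, N)$ together with $[0, g^m)$ partitioning $[0,N)$, that the hypotheses of \cref{lemma:Estimation3} (namely $d_m \neq 0$ and nonempty intersection with $A_{\Sigma,\ell}$) are met for exactly the blocks that matter, and that discarding the empty-intersection blocks costs nothing. The other subtlety is comparing $|A_{\Sigma,\ell} \cap [0,N)|$ with $k^m$ from below: this uses that $[0,N) \supseteq [0, g^m)$ together with \cref{prop:cardinality} giving $|A_{\Sigma,\ell}\cap[0,g^m)| \geq |\mathcal{L}^m_{\mathcal{V},\ell}| = |\mathcal{L}^\ell_{\mathcal{V},\ell}| k^{m-\ell}$, so the denominator is genuinely of order $k^m$ and the geometric error sum is dominated by it. Once these combinatorial points are nailed down, everything else is the routine geometric-series estimate sketched above, and the exponent $\gamma$ can be taken to be any value below $-\log\theta / \log g$.
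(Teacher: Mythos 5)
Your overall strategy is the same as the paper's: decompose $[0,N)$ according to the base-$g$ digits of $N$, apply \cref{lemma:Estimation2} to $[0,g^m)$ and \cref{lemma:Estimation3} to the remaining blocks, and control the total error by a geometric series via \cref{prop:cardinality}. However, there is a concrete gap in the decomposition: the identity $[0,N)=[0,g^{m})\sqcup\bigsqcup_{t<m}[M_t,M_t+d_tg^{t})$ is false whenever $d_m\geq 2$. The union of the blocks with $t<m$ telescopes to $[M_{m-1},N)=[d_mg^m,N)$, so the interval $[g^m,d_mg^m)$ is unaccounted for. This is not a negligible piece — it can carry a positive proportion of $A_{\Sigma,\ell}\cap[0,N)$ — so your main terms only assemble to $\tfrac{1}{|\mathbf{a}|}\,|A_{\Sigma,\ell}\cap([0,g^m)\cup[d_mg^m,N))|$ rather than $\tfrac{1}{|\mathbf{a}|}\,|A_{\Sigma,\ell}\cap[0,N)|$, and the claimed conclusion does not follow as written. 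The paper inserts $[g^m,d_mg^m)$ as a separate piece and treats it by the argument of \cref{lemma:Estimation3} applied to $[g^m,g^m+(d_m-1)g^m)$; note this is a mild extension of that lemma, which as stated requires $t<m$, i.e.\ a nonzero fixed digit strictly above the block.

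A smaller point: your justification of the crude bound $|A_{\Sigma,\ell}\cap[M_t,M_t+d_tg^t)|\le|A_{\Sigma,\ell}\cap[0,g^{t+1})|$ ("the block fits inside an interval of length $g^{t+1}$") is not a proof, since $A_{\Sigma,\ell}$ is not translation invariant. The inequality is nevertheless true: every $n$ in the block has a length-$(m+1)$ expansion whose top $m-t$ digits are the fixed $d_{t+1}\cdots d_m$, so $n$ is determined by its $t+1$ least significant digits, which form a word of $\mathcal{L}^{t+1}_{\mathcal{V},\ell}(\Sigma)$; hence the block has at most $|\mathcal{L}^{t+1}_{\mathcal{V},\ell}(\Sigma)|$ elements, while $A_{\Sigma,\ell}\cap[0,g^{t+1})$ has at least that many because words of a fixed length represent distinct integers. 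This is precisely the bound $|A_{\Sigma,\ell}\cap[N(t+1),N(t))|\le|\mathcal{L}^{t+1}_{\mathcal{V},\ell}(\Sigma)|$ used in the paper. With these two repairs your geometric-series bookkeeping goes through and the argument coincides with the paper's proof.
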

 \begin{proof}
    Since the Markov condition holds, we can apply \cref{lemma:Estimation2} and \cref{lemma:Estimation3} with constants $C>0$ and $\rho\in (0,1)$. Let $N\in \N$, and  its base-$g$ expansion $N=d_mg^m+\ldots+d_1g+d_0,$
    where $d_m\neq 0$. For every $t\in \{0,\cdots,m\}$, we define $N(t):=d_mg^m+\ldots+d_tg^t$. Noting
    \begin{equation}
        [0,N)=[0,g^m)\cup [g^m,d_mg^m)\cup \left(\bigcup_{t=0}^{m-1} [N(t+1),N(t))\right),\label{proof:main_decomposition}
    \end{equation}
we aim to estimate $|\{n\in A_{\Sigma,\ell}:\ \mathbf{f}(n)\equiv \mathbf{b}\Mod{\mathbf{a}}\}\cap I|$ for each interval $I$ in the previous decomposition of $[0,N)$. The interval $[0,g^m)$ can be estimated from \cref{lemma:Estimation2}, and  $[N(t+1),N(t))$ from \cref{lemma:Estimation3} as $N(t)=N(t+1)+d_tg^t$. On the other hand, the interval $[g^m,d_mg^m)$ can be handled in the same way as \cref{lemma:Estimation3} by writing $[g^m, g^m+(d_m-1)g^m)$. Putting these estimates together, we can obtain the upper bound
\begin{equation}
\begin{split}
    &\dfrac{| \{n\in A_{\Sigma,\ell}:\ \mathbf{f}(n)\equiv \mathbf{b}\Mod{\mathbf{a}}\}\cap [0,N)|}{|A_{\Sigma,\ell}\cap [0,N)|}
    \leq \dfrac{1}{|\mathbf{a}|}+C R(m),
\end{split}\label{proof:main_decomposition2}
\end{equation}
where
\[R(m):=\rho^m\dfrac{|A_{\Sigma,\ell}\cap [0,g^m)|}{|A_{\Sigma,\ell}\cap [0,N)|}+\rho^m\dfrac{|A_{\Sigma,\ell}\cap [g^m,d_mg^m)|}{|A_{\Sigma,\ell}\cap [0,N)|}
    +\sum_{t=0}^{m-1}\rho^t\dfrac{|A_{\Sigma,\ell}\cap [N(t+1),N(t))|}{|A_{\Sigma,\ell}\cap [0,N)|}.\]
  If $A_{\Sigma,\ell}\cap [N(t+1),N(t))\neq \emptyset$ (in particular, $d_t\neq 0$) each element of $A_{\Sigma,\ell}\cap [N(t+1),N(t))$ has a base-$g$ expansion of length $m+1$, where the $(m-t)$ most significant digits are $d_{t+1}\cdots d_m$. Then, they may differ only in the $(t+1)$ least significant digits. Therefore, $$|A_{\Sigma,\ell}\cap [N(t+1),N(t))|\leq |\mathcal{L}^{t+1}_{\mathcal{V},\ell}(\Sigma)|.$$  Let $k\in \N$ the regularity of $\Sigma$. Since $|A_{\Sigma,\ell}\cap [0,N)|\geq |\mathcal{L}^{m}_{\mathcal{V},\ell}(\Sigma)|$, we can get from the above estimation and \cref{prop:cardinality} that
  \[\dfrac{|A_{\Sigma,\ell}\cap [N(t+1),N(t))|}{|A_{\Sigma,\ell}\cap [0,N)|}\leq \dfrac{|\mathcal{L}_{\mathcal{V},\ell}^{\ell}(\Sigma)|k^{t+1-\ell}}{|\mathcal{L}_{\mathcal{V},\ell}^{\ell}(\Sigma)|k^{m-\ell}}=k^{t+1-m}\leq \rho^{m-t-1},\]
  where we assumed that $k^{-1}\leq \rho$ (otherwise, replace $\rho$ by $k^{-1}$).
  Therefore, using the previous estimation in \eqref{proof:main_decomposition2},
  \[\dfrac{| \{n\in A_{\Sigma,\ell}:\ \mathbf{f}(n)\equiv \mathbf{b}\Mod{\mathbf{a}}\}\cap [0,N)|}{|A_{\Sigma,\ell}\cap [0,N)|}
    -\dfrac{1}{|\textbf{a}|}\leq C(2+\rho^{-1}m)\rho^m.\]
The lower inequality can be obtained following the same argument. Considering $r=\rho^{1/2}$ and $C'$ large enough, $C'r^m>C(2+\rho^{-1}m)\rho^m$ for all $m\in \N$.  The conclusion follows as $m\log(g)\leq \log (N)\leq (m+1)\log(g).$
 \end{proof}
\begin{remark} When $\Sigma$ is a shift of finite type, $|A_\Sigma \setminus A_{\Sigma,\ell}|$ is finite for all $\ell$ large enough. In such a case,  we can replace $A_{\Sigma,\ell}$ by $A_\Sigma$ in \cref{prop:main}. Moreover, while the choice of the initial distribution for the Markov chains is important to recover $A_{\Sigma,\ell}$ in \cref{prop:main}, the convergence of the Markov chains remains independent of this choice. Therefore, different sets of integers can be considered by adjusting the initial distribution. \end{remark}

The next step is to transfer the distribution from $A_{\Sigma,\ell}$ to the multiplicatively invariant set $A_\Sigma$. To achieve this, it suffices to show that the difference between these two sets becomes asymptotically negligible in size. The main ingredient to conclude that is the following proposition.

\begin{proposition} Let $\Sigma$ be a transitive, sofic and regular subshift. Then, 
\begin{equation}
 \lim_{\ell\to \infty} \dfrac{|\mathcal{L}^\ell(\Sigma)\setminus\mathcal{L}^\ell_{\mathcal{V},\ell}(\Sigma)|}{|\mathcal{L}^\ell_{\mathcal{V},\ell}(\Sigma)|}=0.\label{eq:prop_estimation_language}  
\end{equation}\label{prop:estimation_language}
\end{proposition}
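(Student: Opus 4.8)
The plan is to show that the ``bad'' words at level $\ell$ — those whose length-$\ell$ prefix has a follower set outside $\mathcal{V}$ — form an asymptotically negligible fraction. The key structural fact is that in a transitive sofic shift every word extends to a synchronizing word, and synchronizing words are exactly those whose follower sets sit among the nodes of the Fischer cover $\mathcal{G}_\Sigma$. Concretely, fix a synchronizing word $s$ of some length $L$ (which exists, and by \cref{Convention} we may assume $L\le\ell$). For any word $u\in\mathcal{L}^\ell(\Sigma)$, transitivity gives a connecting word $v$ with $u v s\in\mathcal{L}(\Sigma)$, and $u v s$ is synchronizing because any extension of a synchronizing word is synchronizing; hence its follower set lies in $\mathcal{V}$. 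The length of $v$ can be bounded uniformly: since $\mathcal{G}_\Sigma$ is irreducible with $q=|\mathcal{V}|$ nodes, one can always choose $|v|\le q$. So every word of length $\ell$ has an extension of length at most $\ell+q+L =: \ell + c_0$ lying in $\mathcal{L}_{\mathcal{V},\ell}$-type position (its length-$\ell$... more precisely, we should track this carefully: the condition defining $\mathcal{L}^{i}_{\mathcal{V},\ell}$ is on the \emph{first} $\ell$ symbols, so what we actually need is different — see below).

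\textbf{Reframing.} We want to bound $|\mathcal{L}^\ell(\Sigma)\setminus \mathcal{L}^\ell_{\mathcal V,\ell}(\Sigma)| = |\{w\in\mathcal{L}^\ell(\Sigma): F(w)\notin\mathcal V\}|$ (note that for a word of length exactly $\ell$, its length-$\ell$ prefix is itself). Since every $w\in\mathcal{L}^\ell$ extends to a synchronizing word $w w'$ with $|w'|\le c_0$ for a uniform constant $c_0=c_0(\Sigma)$, and since $F(w w')\in\mathcal V$, each bad word $w$ of length $\ell$ can be injectively mapped into the set of \emph{good} words of length between $\ell$ and $\ell+c_0$ by appending $w'$; conversely each such good word of length $\ell+j$ has at most $k^{j}$ (indeed exactly $|F|$-many, bounded by $k^{c_0}$) preimages obtained by deleting the last $j$ symbols. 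Hence
\[
|\mathcal{L}^\ell(\Sigma)\setminus\mathcal{L}^\ell_{\mathcal V,\ell}(\Sigma)|\ \le\ \sum_{j=1}^{c_0} k^{j}\,\big|\{v\in\mathcal{L}^{\ell+j}(\Sigma): F(v)\in\mathcal V\}\big|\ \le\ c_0 k^{c_0}\,\max_{1\le j\le c_0}|\mathcal{L}^{\ell+j}_{\mathcal V,\ell+j}(\Sigma)|\cdot\frac{|\mathcal V|}{1},
\]
and by \cref{prop:cardinality}, $|\mathcal{L}^{\ell+j}_{\mathcal V,\ell+j}(\Sigma)| = |\mathcal{L}^{\ell+j}_{\mathcal V,\ell+j}(\Sigma)|$ grows like $k^{\ell+j}$ up to the bounded factor $|\mathcal{L}^{\ell+j}_{\mathcal V,\ell+j}|/k^{\ell+j}$ which is itself a finite quantity. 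More usefully, $|\mathcal{L}^\ell_{\mathcal V,\ell}(\Sigma)| = |\mathcal{L}^{\ell}_{\mathcal V,\ell}(\Sigma)|$ also grows like $k^\ell$: pick one synchronizing word $s_0$ of length $\ell$ (possible for $\ell$ large), then $F(s_0)\in\mathcal V$ and all $k^{?}$-many continuations... actually the cleanest route is to note $|\mathcal{L}^{\ell+c_0}_{\mathcal V,\ell+c_0}(\Sigma)| \le k^{c_0}|\mathcal{L}^{\ell}_{\mathcal V,\ell}(\Sigma)|\cdot(\text{const})$ is false in general, so instead I will directly compare both sides to $k^\ell$: by \cref{prop:cardinality}, $|\mathcal{L}^{i}_{\mathcal V,\ell}(\Sigma)| = |\mathcal{L}^\ell_{\mathcal V,\ell}(\Sigma)|k^{i-\ell}$ for $i\ge\ell$ with a \emph{fixed} $\ell$; taking $\ell$ itself large and using that $|\mathcal{L}^{\ell}_{\mathcal V,\ell}(\Sigma)|\ge 1$ while $|\mathcal L^\ell(\Sigma)|\le |\mathcal D|^\ell$, one sees the ratio in \eqref{eq:prop_estimation_language} is bounded by $c_0 k^{c_0}|\mathcal V|\cdot k^{c_0}\cdot |\mathcal{L}^{\ell}_{\mathcal V,\ell}(\Sigma)|^{-1}\cdot(\dots)$.

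\textbf{The clean argument.} Let me state the step I would actually write: both $|\mathcal{L}^\ell(\Sigma)|$ and $|\mathcal{L}^\ell_{\mathcal V,\ell}(\Sigma)|$ satisfy $\lim_\ell \frac1\ell\log(\cdot) = h(\Sigma)$. For $|\mathcal{L}^\ell(\Sigma)|$ this is \cref{entropy}. For $|\mathcal{L}^\ell_{\mathcal V,\ell}(\Sigma)|$: the lower bound $|\mathcal{L}^\ell_{\mathcal V,\ell}(\Sigma)|\le|\mathcal{L}^\ell(\Sigma)|$ is trivial, and the matching lower bound follows from the extension argument above, namely $|\mathcal{L}^\ell(\Sigma)| \le c_0 k^{c_0}|\mathcal V|\,|\mathcal{L}^{\ell+c_0}_{\mathcal V,\ell+c_0}(\Sigma)|$, combined with $|\mathcal{L}^{\ell+c_0}_{\mathcal V,\ell+c_0}(\Sigma)| = k^{c_0}|\mathcal{L}^{\ell+c_0}_{\mathcal V,\ell}(\Sigma)|/k^{c_0}$... no. Since this is getting circular, the honest plan is: use that the Fischer cover is an \emph{irreducible} graph, so the number of paths of length $\ell$ starting at any fixed node $F_0$ is comparable (up to a multiplicative constant depending only on $\mathcal G$) to the total number of length-$\ell$ paths; words in $\mathcal{L}^\ell_{\mathcal V,\ell}(\Sigma)$ are in bijection with pairs (length-$\ell$ good prefix paired with a follower-set node), and the extension argument shows every word of length $\ell$ sits within bounded distance of such a good configuration. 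I expect the \textbf{main obstacle} to be exactly this bookkeeping: making the comparison between $|\mathcal{L}^\ell(\Sigma)|$ and $|\mathcal{L}^\ell_{\mathcal V,\ell}(\Sigma)|$ quantitative enough that their ratio of the \emph{removed} part (not the whole) tends to $0$ — i.e. showing not just that both grow at rate $e^{h\ell}$ but that the \emph{difference} $|\mathcal{L}^\ell(\Sigma)\setminus\mathcal{L}^\ell_{\mathcal V,\ell}(\Sigma)|$ is $o(|\mathcal{L}^\ell_{\mathcal V,\ell}(\Sigma)|)$; this requires the synchronizing-extension bound with a \emph{uniform} connecting length $c_0$ and the regularity/right-resolving property of $\mathcal G_\Sigma$ to control multiplicities, and then a geometric-sum estimate exactly as in the proof of \cref{lemma:Estimation2}.
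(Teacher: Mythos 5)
There is a genuine gap, and you have in fact flagged it yourself without closing it. Your extension argument — every word of length $\ell$ admits a continuation of uniformly bounded length $c_0$ after which the follower set lands in $\mathcal{V}$ — only yields an inequality of the form $|\mathcal{L}^\ell(\Sigma)\setminus\mathcal{L}^\ell_{\mathcal{V},\ell}(\Sigma)|\leq C\,|\mathcal{L}^{\ell+c_0}_{\mathcal{V},\ell+c_0}(\Sigma)|$, which (after comparing scales via \cref{prop:cardinality}) shows at best that the ratio in \eqref{eq:prop_estimation_language} is \emph{bounded}, not that it tends to zero. The fallback you suggest, that both $|\mathcal{L}^\ell(\Sigma)|$ and $|\mathcal{L}^\ell_{\mathcal{V},\ell}(\Sigma)|$ have exponential growth rate $h(\Sigma)$, is likewise insufficient: two sequences can share a growth rate while the ratio of their difference to either one stays bounded away from $0$ (take $2^\ell$ and $2^{\ell-1}$). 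The closing appeal to "a geometric-sum estimate exactly as in the proof of \cref{lemma:Estimation2}" does not supply the missing mechanism; the geometric sum there controls a different quantity.

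The idea you are missing is to \emph{iterate} the extension argument across disjoint blocks. Fix $q$ such that from every node of the Fischer cover there is at least one length-$q$ path labelled by a synchronizing word (possible since every word extends to a synchronizing one and extensions of synchronizing words are synchronizing). A word $x$ of length $\ell+nq$ with $F(x)\notin\mathcal{V}$ must have $F(x_0\cdots x_{\ell-1})\notin\mathcal{V}$ \emph{and} its tail $x_\ell\cdots x_{\ell+nq-1}$ must be non-synchronizing, hence must fail to synchronize in each of the $n$ consecutive length-$q$ windows. By $k$-regularity and right-resolvingness of the Fischer cover, from any node there are exactly $k^q$ length-$q$ continuations and at least one of them synchronizes, so the number of non-synchronizing length-$nq$ continuations from a node is at most $(k^q-1)^n$ out of $k^{nq}$. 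This multiplicative loss per block is what forces
$|\mathcal{L}^{i}(\Sigma)\setminus\mathcal{L}^{i}_{\mathcal{V},i}(\Sigma)|/|\mathcal{L}^{i}_{\mathcal{V},i}(\Sigma)|\lesssim \bigl((k^q-1)/k^q\bigr)^{\lfloor i/q\rfloor}\to 0$, using $|\mathcal{L}^{i}_{\mathcal{V},i}(\Sigma)|\geq|\mathcal{L}^{\ell}_{\mathcal{V},\ell}(\Sigma)|k^{i-\ell}$ from \cref{prop:cardinality} in the denominator. Your single-step version of the argument produces no decay in $\ell$; the block iteration is the essential missing step.
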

\begin{proof}
    Let $k\geq 2$ be the regularity of $\Sigma$. For any $y\in \mathcal{L}(\Sigma)$ such that $F(y)\in \mathcal{V}$, every extension $\Tilde{y}\in \mathcal{L}(\Sigma)$ of $y$ satisfies $F(\Tilde{y})\in \mathcal{V}$. Then,  for every $\ell\in \N$ and $i\geq \ell$, $\mathcal{L}^{i}_{\mathcal{V},\ell}\subseteq \mathcal{L}^i_{\mathcal{V},i}$.
  From \cref{prop:cardinality}, it follows
\begin{equation}
    |\mathcal{L}^i_{\mathcal{V},i}|\geq |\mathcal{L}^{i}_{\mathcal{V},\ell}|=|\mathcal{L}^{\ell}_{\mathcal{V},\ell}|k^{i-\ell}.\label{proof:prop_estimation_language_1}
\end{equation}
Since $\Sigma$ is transitive, the Fischer cover is an irreducible graph, so we can find $q\in \N$ large enough such that, for every node $F\in \mathcal{V}$, there is a path $\pi$ of length $q$ starting at $F$, where $\lambda(\pi)$ is a synchronizing word. To see this, notice that every word can be extended to a synchronizing word, and we can choose a $q$ common for every node as any extension of a synchronizing word is synchronizing.

Let $n\in \N$ and $x\in \mathcal{L}^{\ell+nq}\setminus\mathcal{L}_{\mathcal{V},\ell+nq}^{\ell+nq}$. Then, $x$ is in the language, but $F(x)\notin \mathcal{V}$.  In particular, $F(x_0\cdots x_{\ell-1})\notin \mathcal{V}$ and the word $x_{\ell}\cdots x_{\ell+nq-1}$ is not synchronizing. Thus,
\begin{equation}
    \left|\mathcal{L}^{\ell+nq}\setminus\mathcal{L}_{\mathcal{V},\ell+nq}^{\ell+nq}\right|\leq \left|\mathcal{L}^{\ell}\setminus\mathcal{L}_{\mathcal{V},\ell}^{\ell}\right|\times |\{w\in \mathcal{L}^{nq}(\Sigma):\ \text{$w$ is not synchronizing}\}|.\label{proof:prop_estimation_language_2}
\end{equation}
For each node $F\in \mathcal{V}$, there are $k^{qn}$ paths of length $qn$ in $\mathcal{G}$ starting at $F$. From those paths, there are at most $(k^{q}-1)^n$ paths labelled by non-synchronizing words, as for every node $F$ we can find at least one path starting at $F$ of length $q$ that represents a synchronizing word. From this idea and \eqref{proof:prop_estimation_language_2},
\begin{equation}
\left|\mathcal{L}^{\ell+nq}\setminus\mathcal{L}_{\mathcal{V},\ell+nq}^{\ell+nq}\right|\leq \left|\mathcal{L}^{\ell}\setminus\mathcal{L}_{\mathcal{V},\ell}^{\ell}\right|\times |\mathcal{V}|(k^q-1)^n.\label{proof:prop_estimation_language_3}\end{equation}
By defining $$C:=|\mathcal{V}|\max_{j=\ell,\cdots,\ell+(q-1)}\dfrac{\left|\mathcal{L}^{j}\setminus\mathcal{L}_{\mathcal{V},j}^{j}\right|}{|\mathcal{L}^j_{\mathcal{V},j}|},$$
and combining \eqref{proof:prop_estimation_language_1} with \eqref{proof:prop_estimation_language_3},
\[\dfrac{|\mathcal{L}^i\setminus\mathcal{L}^i_{\mathcal{V},i}|}{|\mathcal{L}^i_{\mathcal{V},i}|}\leq  C\dfrac{(k^q-1)^{\left \lfloor{i/q}\right \rfloor}}{k^{q{\left \lfloor{i/q}\right \rfloor}}}.\]
The result follows by taking $i\to \infty$.

\end{proof}
We can now state the main result of this section.
  \begin{theorem}
     Let $g\geq 2$, $\mathbf{a}=(a_1,\ldots,a_r)\in \N^r$, and $\mathbf{f}=(f_1,\ldots,f_r)$ be a collection of $g$-additive functions eventually periodic with respect to $\mathbf{a}$.
      Let $\Sigma\subseteq \{0,\cdots,g-1\}^{\N_0}$ be a mixing, sofic and regular subshift, and $p$ be an eventual period. If the Markov condition holds,  \[\lim_{N\to\infty}\dfrac{| \{n\in A_{\Sigma}:\ \mathbf{f}(n)\equiv \mathbf{b}\Mod{\mathbf{a}}\}\cap [0,N)|}{|A_{\Sigma}\cap [0,N)|}=\frac{1}{a_1\cdot a_2\cdots a_r}\] for every $\mathbf{b}=(b_1,\ldots,b_r)\in \Z_{\mathbf{a}}$.\label{MainTheorem}
 \end{theorem}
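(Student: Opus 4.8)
The plan is to deduce \cref{MainTheorem} from \cref{prop:main} by showing that replacing $A_{\Sigma,\ell}$ with $A_\Sigma$ introduces only a negligible error once $\ell$ is chosen large enough, and then letting $\ell\to\infty$. The one subtlety is that $\ell$ enters in two places at once: it controls both the size of the discarded set $A_\Sigma\setminus A_{\Sigma,\ell}$ and the choice of eventual period $(p,\ell)$ used to set up the Markov chains. Since $(p,\ell')$ is still an eventual period whenever $\ell'\ge\ell$, and the Markov condition for a fixed $p$ is equivalent to irreducibility and aperiodicity of $M_i$ for $i\in\{\ell,\dots,\ell+p-1\}$ and is preserved when we enlarge $\ell$ (the matrices $M_i$ are periodic in $i$ with period $p$), we may freely increase $\ell$ while keeping the hypotheses in force. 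So fix $p$ and let $\ell$ be a large parameter; \cref{prop:main} gives, for each such $\ell$, a rate $\gamma=\gamma(\ell)>0$ and a constant with
\[
\Big|\frac{|\{n\in A_{\Sigma,\ell}:\ \mathbf{f}(n)\equiv\mathbf{b}\Mod{\mathbf{a}}\}\cap[0,N)|}{|A_{\Sigma,\ell}\cap[0,N)|}-\frac{1}{|\mathbf{a}|}\Big|=O(N^{-\gamma(\ell)}).
\]

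First I would quantify the size of $A_\Sigma\setminus A_{\Sigma,\ell}$. An integer in $A_\Sigma\cap[0,g^m)$ lies outside $A_{\Sigma,\ell}$ only if its (shortest) base-$g$ representation of length $\le m$ has a length-$\ell$ prefix whose follower set is not in $\mathcal{V}$; as in the proof of \cref{prop:estimation_language} such a word is an extension of a word in $\mathcal{L}^\ell(\Sigma)\setminus\mathcal{L}^\ell_{\mathcal{V},\ell}(\Sigma)$, so
\[
|(A_\Sigma\setminus A_{\Sigma,\ell})\cap[0,g^m)|\ \le\ \sum_{j=\ell}^{m}|\mathcal{L}^j(\Sigma)\setminus\mathcal{L}^j_{\mathcal{V},\ell}(\Sigma)|\cdot k^{m-j},
\]
using that each such length-$j$ word extends to at most $k^{m-j}$ words of length $\le m$ in the $k$-regular cover. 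Meanwhile $|A_{\Sigma,\ell}\cap[0,g^m)|\ge|\mathcal{L}^m_{\mathcal{V},\ell}(\Sigma)|=|\mathcal{L}^\ell_{\mathcal{V},\ell}(\Sigma)|k^{m-\ell}$ by \cref{prop:cardinality}. Dividing and using \cref{prop:estimation_language} (which, combined with the decay estimate in its proof, gives $|\mathcal{L}^j(\Sigma)\setminus\mathcal{L}^j_{\mathcal{V},j}(\Sigma)|/|\mathcal{L}^j_{\mathcal{V},j}(\Sigma)|\le C((k^q-1)/k^q)^{\lfloor j/q\rfloor}$, which is summable), one gets a bound of the form
\[
\frac{|(A_\Sigma\setminus A_{\Sigma,\ell})\cap[0,g^m)|}{|A_{\Sigma,\ell}\cap[0,g^m)|}\ \le\ \varepsilon(\ell),\qquad \varepsilon(\ell)\to 0\ \text{as}\ \ell\to\infty,
\]
uniformly in $m$. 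By \cref{lemma:DIM} the same estimate, up to a constant, holds for intervals $[0,N)$ with arbitrary $N$ (interpolating between consecutive powers of $g$, using $k$-regularity so that $|A_{\Sigma,\ell}\cap[0,N)|$ and $|A_{\Sigma,\ell}\cap[0,g^m)|$ are comparable when $g^m\le N<g^{m+1}$).

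Then I would combine the two estimates. Write $D_{\mathbf{b}}(S,N):=|\{n\in S:\mathbf{f}(n)\equiv\mathbf{b}\Mod{\mathbf{a}}\}\cap[0,N)|$ and $|S|_N:=|S\cap[0,N)|$. Since $D_{\mathbf{b}}(A_\Sigma,N)=D_{\mathbf{b}}(A_{\Sigma,\ell},N)+D_{\mathbf{b}}(A_\Sigma\setminus A_{\Sigma,\ell},N)$ and $|A_\Sigma|_N=|A_{\Sigma,\ell}|_N+|A_\Sigma\setminus A_{\Sigma,\ell}|_N$, an elementary bound gives
\[
\Big|\frac{D_{\mathbf{b}}(A_\Sigma,N)}{|A_\Sigma|_N}-\frac{1}{|\mathbf{a}|}\Big|
\ \le\ \Big|\frac{D_{\mathbf{b}}(A_{\Sigma,\ell},N)}{|A_{\Sigma,\ell}|_N}-\frac{1}{|\mathbf{a}|}\Big|
+2\,\frac{|A_\Sigma\setminus A_{\Sigma,\ell}|_N}{|A_{\Sigma,\ell}|_N}
\ \le\ O_\ell(N^{-\gamma(\ell)})+2C\varepsilon(\ell).
\]
Now fix $\eta>0$. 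Choose $\ell$ so large that $2C\varepsilon(\ell)<\eta/2$; with that $\ell$ fixed, choose $N_0$ so that the first term is $<\eta/2$ for $N\ge N_0$. Then $\big|D_{\mathbf{b}}(A_\Sigma,N)/|A_\Sigma|_N-1/|\mathbf{a}|\big|<\eta$ for all $N\ge N_0$ and all $\mathbf{b}\in\Z_{\mathbf{a}}$, which is exactly the claimed limit. The main obstacle is the bookkeeping around the double role of $\ell$: one must be careful that \cref{prop:main} is applied with a genuine eventual period and that the Markov condition, stated for the period $p$, does not deteriorate when $\ell$ is increased, and that the error term $\varepsilon(\ell)$ from \cref{prop:estimation_language} can be made small \emph{before} fixing $\ell$ and only afterwards sending $N\to\infty$ — the rate $\gamma(\ell)$ is allowed to depend on $\ell$, so the two limits must be taken in this order.
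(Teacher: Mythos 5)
Your overall strategy is exactly the paper's: sandwich the density in $A_\Sigma$ between the density in $A_{\Sigma,\ell}$ (controlled by \cref{prop:main}) and an error proportional to $|(A_\Sigma\setminus A_{\Sigma,\ell})\cap[0,N)|/|A_{\Sigma,\ell}\cap[0,N)|$, show the latter is at most some $\varepsilon(\ell)$ uniformly in $N$, and send $\ell\to\infty$ only after $N\to\infty$. Your elementary inequality bounding the change of density by $2T'/T$, and your remarks on the double role of $\ell$ (the eventual period and the Markov condition survive enlarging $\ell$), are correct and agree with the paper's proof.

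The one quantitative step you actually carry out --- the bound on $|(A_\Sigma\setminus A_{\Sigma,\ell})\cap[0,g^m)|$ --- does not close as written. The factor $k^{m-j}$ should not be there: each integer of $A_\Sigma\setminus A_{\Sigma,\ell}$ in $[g^\ell,g^m)$ is accounted for by a single representing word of some length $j\le m$ lying in $\mathcal{L}^j(\Sigma)\setminus\mathcal{L}^j_{\mathcal{V},\ell}(\Sigma)$, so the correct numerator is $g^\ell+\sum_{j=\ell}^{m}|\mathcal{L}^j\setminus\mathcal{L}^j_{\mathcal{V},\ell}|$ with no extension count. With your extra factor, dividing by $|\mathcal{L}^\ell_{\mathcal{V},\ell}|k^{m-\ell}$ leaves $\sum_{j=\ell}^m |\mathcal{L}^j\setminus\mathcal{L}^j_{\mathcal{V},\ell}|/|\mathcal{L}^j_{\mathcal{V},\ell}|$, and the decay estimate you invoke controls $|\mathcal{L}^j\setminus\mathcal{L}^j_{\mathcal{V},j}|/|\mathcal{L}^j_{\mathcal{V},j}|$, which is a \emph{different and smaller} quantity (since $\mathcal{L}^j_{\mathcal{V},\ell}\subseteq\mathcal{L}^j_{\mathcal{V},j}$, being a non-member of $\mathcal{L}^j_{\mathcal{V},\ell}$ is a weaker condition, governed by the length-$\ell$ prefix rather than the whole word); the only available bound, $|\mathcal{L}^j\setminus\mathcal{L}^j_{\mathcal{V},\ell}|\le|\mathcal{V}|\,k^{j-\ell}|\mathcal{L}^\ell\setminus\mathcal{L}^\ell_{\mathcal{V},\ell}|$, then yields $(m-\ell+1)\,|\mathcal{V}|\,|\mathcal{L}^\ell\setminus\mathcal{L}^\ell_{\mathcal{V},\ell}|/|\mathcal{L}^\ell_{\mathcal{V},\ell}|$, which is not uniform in $m$. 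The repair is immediate and is precisely what the paper does: drop the spurious factor, apply $|\mathcal{L}^j\setminus\mathcal{L}^j_{\mathcal{V},\ell}|\le|\mathcal{V}|k^{j-\ell}|\mathcal{L}^\ell\setminus\mathcal{L}^\ell_{\mathcal{V},\ell}|$ termwise, and use the geometric series $\sum_{j\le m}k^{j-m}\le k/(k-1)$ to obtain $\varepsilon(\ell)$ proportional to $|\mathcal{L}^\ell\setminus\mathcal{L}^\ell_{\mathcal{V},\ell}|/|\mathcal{L}^\ell_{\mathcal{V},\ell}|$, which tends to $0$ by \cref{prop:estimation_language}. With that correction your argument coincides with the paper's proof.
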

 \begin{proof}
     Let $(p,\ell)$ be an eventual period and $N\in \N$. For every $\mathbf{b}\in \Z_{\mathbf{a}}$, define
     \[\alpha(\ell,N):=\dfrac{|(A_\Sigma\setminus A_{\Sigma,\ell})\cap [0,N)|}{|A_{\Sigma,\ell}\cap [0,N)|},\ L(\mathbf{b},\ell,N):=\dfrac{| \{n\in A_{\Sigma,\ell}:\ \mathbf{f}(n)\equiv \mathbf{b}\Mod{\mathbf{a}}\}\cap [0,N)|}{|A_{\Sigma,\ell}\cap [0,N)|}.\]
A simple computation leads to the following inequality:
\[\dfrac{1}{1+\alpha(\ell,N)}L(\mathbf{b},\ell,N)\leq \dfrac{| \{n\in A_{\Sigma}:\ \mathbf{f}(n)\equiv \mathbf{b}\Mod{\mathbf{a}}\}\cap [0,N)|}{|A_\Sigma\cap [0,N)|}\leq L(\mathbf{b},\ell,N)+\alpha(\ell,N).\]
The \cref{prop:main} gives that $L(\mathbf{b},\ell,N)\to 1/|\mathbf{a}|$ when $N\to \infty$. Therefore, to conclude the theorem, it will be enough to show that
\begin{equation}
    \lim_{\ell\to\infty}\limsup_{N\to \infty} \alpha(\ell,N)=0.\label{proof:MainTheorem_1}
\end{equation}

For this purpose,  we will estimate $|(A_\Sigma\setminus A_{\Sigma,\ell})\cap [0,N)|$.  For $N\in \N$, let $m$ be such that $g^m\leq N<g^{m+1}$. If $n\in (A_\Sigma\setminus A_{\Sigma,\ell})\cap [g^t,g^{t+1})$ for some $\ell\leq t\leq m$,  $n$ can be represented by a word in $\mathcal{L}^{t+1}(\Sigma)\setminus\mathcal{L}^{t+1}_{\mathcal{V},\ell}(\Sigma).$  Thus, we can estimate
\begin{equation}
  \dfrac{|(A_\Sigma\setminus A_{\Sigma,\ell})\cap [g^\ell,g^{m+1})|}{|A_{\Sigma,\ell}\cap [g^\ell,g^m)|}\leq \dfrac{\sum_{i=\ell}^{m+1}|\mathcal{L}^i\setminus\mathcal{L}^{i}_{\mathcal{V},\ell}|}{|\mathcal{L}^m_{\mathcal{V},\ell}|}.\label{proof:MainTheorem_2}  
\end{equation}
It is easy to see that each word in $\mathcal{L}^i\setminus\mathcal{L}^{i}_{\mathcal{V},\ell}$ is an extension of a word in $\mathcal{L}^\ell\setminus\mathcal{L}^\ell_{\mathcal{V},\ell}$. Let $w\in\mathcal{L}^\ell\setminus\mathcal{L}^\ell_{\mathcal{V},\ell}$. Despite $F(w)\notin \mathcal{V}$, $F(w)$ is union of elements in $\mathcal{V}$. To see this, just consider the union of all $F\in \mathcal{V}$ such that the word $w$ can be read in some path of the Fischer cover ending at $F$. Since the subshift is $k$-regular for some $k\geq 2$, from the Fischer cover we can deduce that there are at most $|\mathcal{V}|k^{i-\ell}$ words of length $i$ that are extension of $w$. In particular, each of those extensions is in $\mathcal{L}^i\setminus\mathcal{L}^i_{\mathcal{V},\ell}$. Therefore, each word in $\mathcal{L}^\ell\setminus\mathcal{L}^\ell_{\mathcal{V},\ell}$ is extended to at most $|\mathcal{V}|k^{i-\ell}$ words of length $i$, concluding  from \eqref{proof:MainTheorem_2} that
\[\dfrac{|(A_\Sigma\setminus A_{\Sigma,\ell})\cap [g^\ell,g^{m+1})|}{|A_{\Sigma,\ell}\cap [g^\ell,g^m)|}\leq \dfrac{|\mathcal{V}||\mathcal{L}^\ell\setminus\mathcal{L}^\ell_{\mathcal{V},\ell}|\sum_{i=\ell}^{m+1}k^{i-\ell}}{|\mathcal{L}^\ell_{\mathcal{V},\ell}|k^{m-\ell}}\leq |\mathcal{V}|\sum_{i=-1}^{\infty}k^{-i} \dfrac{|\mathcal{L}^\ell\setminus\mathcal{L}^\ell_{\mathcal{V},\ell}|}{|\mathcal{L}^\ell_{\mathcal{V},\ell}|}.\]
Therefore,
\[0\leq\limsup_{N\to \infty}\alpha(\ell,N)\leq\limsup_{N\to\infty}\dfrac{g^\ell+|(A_\Sigma\setminus A_{\Sigma,\ell})\cap [g^\ell,g^{m+1})|}{|A_{\Sigma,\ell}\cap [g^\ell,g^m)|}\leq \dfrac{|\mathcal{L}^\ell\setminus\mathcal{L}^\ell_{\mathcal{V},\ell}|}{|\mathcal{L}^\ell_{\mathcal{V},\ell}|} |\mathcal{V}|\sum_{i=-1}^{\infty}k^{-i}.\]
It follows that  $\displaystyle \lim_{\ell\to\infty}\limsup_{N\to \infty} \alpha(\ell,N)=0$ from the previous inequality and \cref{prop:estimation_language}, concluding the result.
\end{proof}

\begin{remark}
    Although we focus on subsets of integers, the nature of the construction allows some of the ideas presented in this paper to be immediately extended to more general contexts where elements can be uniquely represented on a certain basis (canonical numerical systems). Relevant references to these systems include \cite{A-A,Kovacs,MM}. A particular case are the Gaussian integers, where unique expansion of elements can be provided in certain bases (see \cite{G_Thus, Katai,Kovacs2}).
\end{remark}

\section{Applications to the distribution of multiplicatively invariant sets of integers}

Let $A$ be a multiplicatively invariant set of integers that admits a representation $A=A_\Sigma$, where $\Sigma$ is a mixing, sofic and regular subshift. From \cref{MainTheorem}, uniform distribution is obtained if the Markov chains are irreducible and aperiodic, which might be challenging to verify. In order to check the Markov condition, we can conveniently choose the period $p$ and study the matrices $M_i$ using the characterization provided by \cref{prop:keymarkov}: For any $n\in \N$ and $s,s'\in \mathcal{S}$,
 $$M_{i}^n(s,s')=k^{-pn}|\{w\in \mathcal{L}^{pn}(\Sigma):\ \mathbf{f}(g^{i}(w)_g)\equiv \textbf{b}'-\textbf{b}\Mod{\textbf{a}}, \ F\overset{w}{\to} F'\}|.$$
  For $M_i$ to be irreducible, we need to check that for all $\mathbf{b}\in \Z_{\mathbf{a}}$ and $F,F'\in \mathcal{V}$,
\begin{equation}
    \text{There exists }w\in \mathcal{L}(\Sigma) \text{ such that }p\mid |w|,\  \mathbf{f}(g^i(w)_g)\equiv \mathbf{b} \Mod{\mathbf{a}} \text{ and } F\overset{w}{\to} F'. \label{irreducible}
\end{equation}

Assuming that $M_i$ is irreducible, aperiodicity holds if there exists $F\in \mathcal{V}$ such that 
\begin{equation}
     \gcd \Big\{n\in \N: \exists w\in \mathcal{L}^{pn}(\Sigma) \text{ such that } \mathbf{f}(g^{i}(w)_g)\equiv 0\Mod{\mathbf{a}} \text{ and }F\overset{w}{\to} F \Big\}=1.\label{aperiodic}
\end{equation}
In this section, we will study the function $\textbf{f}=(\id,S_g)$, where $S_g$ denote the function sum of digits in base $g$. 

\subsection{Uniform distribution of integers with missing digits}\label{Section5:Missing_digits}

Let $g\geq 2$ be an integer, and let $\mathcal{D}\subseteq \{0,\cdots,g-1\}$ be a subset of digits with at least two elements. We will write $\mathcal{D}=\{d_1,\cdots,d_t\}$, where $d_1<d_2<\cdots<d_t$.  Recalling that $\mathcal{C}_{g,\mathcal{D}}=A_\Sigma$ for $\Sigma=\mathcal{D}^{\N_0}$, it is easy to see that the Fischer cover for $\Sigma$ is the labelled graph with a unique node $F$, and for each $d\in \mathcal{D}$, there is a path from $F$ to $F$ labelled by $d$. Thus, $\Sigma$ is mixing, sofic and regular, so we can apply \cref{MainTheorem}. Moreover, the existence of a single follower set allows us to obviate this notion in the following discussion.

\begin{definition}
    Let  $A\subseteq \N_0$ be a $\times g$-invariant set, and $a,a'\in \N$. We say $A$ is \emph{jointly uniformly distributed $\Mod{(a,a')}$} if
\[\lim_{N\to\infty}\dfrac{|\{n\in A:\ n\equiv b\Mod{a},\ S_g(n)\equiv b'\Mod{a'}\}\cap[0,N)|}{|A\cap [0,N)|}=\dfrac{1}{aa'}\]
for every $b\in \Z_a,b'\in \Z_{a'}$.\label{joint_distribution}
\end{definition} 
From \cref{prop:periodicity}, we can find an eventual period of the form $(p,\ell)$  for $\mathbf{f}=(\id,S_g)$. Thus, checking that $M_i$ is irreducible reduces to show that for all $(b,b')\in \Z_a\times \Z_{a'}$,
\begin{equation}
 \text{there is $n\in \N$ and } w\in \mathcal{D}^{np} \text{ such that }g^i(w)_g\equiv b\Mod{a} \text{ and }S_g(w)\equiv b'\Mod{a'}.\label{irreducible_missingdigits}
\end{equation}
Similarly, $M_i$ is aperiodic if
\begin{equation}
     \gcd \Big\{n\in \N: \exists w\in \mathcal{D}^{pn} \text{ such that } (w)_g\equiv 0\Mod{a} \text{ and }  S_g(w)\equiv 0\Mod{a'}\Big\}=1.\label{aperiodic_missingdigits}
\end{equation}

Aperiodicity in the case $\gcd(g,a)=1$ follows immediately from the following result.
\begin{lemma}
Let $g\geq 2$ be an integer, and $a,a'\in \N$ such that $\gcd(g,a)=1$. Let $p:=a'\phi(a(g-1))$, where $\phi$ is the Euler's totient function. Then, for every $n\in \N$ and $w=d\cdots d\in \mathcal{D}^{np}$, $(w)_g\equiv 0\Mod{a}$ and $S_g(w)\equiv 0\Mod{a'}$.
    \label{aperiodicity}
\end{lemma}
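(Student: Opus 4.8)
The plan is to reduce the statement to an elementary congruence computation. Since $w = d\cdots d$ is the constant word of length $np$ with repeated digit $d\in\mathcal{D}$, I would first record the two identities
\[(w)_g = d\sum_{j=0}^{np-1} g^j = d\cdot\frac{g^{np}-1}{g-1}, \qquad S_g(w) = dnp,\]
where $\frac{g^{np}-1}{g-1} = 1 + g + \cdots + g^{np-1}$ is an integer (equal to $g^{np}-1$ when $g=2$). The condition $S_g(w)\equiv 0\Mod{a'}$ is then immediate: $a'\mid p$ by the definition $p = a'\phi(a(g-1))$, and $p\mid np$, so $a'\mid dnp = S_g(w)$.

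For the condition modulo $a$, the key observation is that $\gcd(g,a(g-1))=1$: this follows from $\gcd(g,a)=1$ (the hypothesis) together with $\gcd(g,g-1)=1$ (always true). Euler's theorem then gives $g^{\phi(a(g-1))}\equiv 1\Mod{a(g-1)}$, and since $\phi(a(g-1))\mid p\mid np$, the exponent $np$ is a multiple of $\phi(a(g-1))$, whence $g^{np}\equiv 1\Mod{a(g-1)}$, i.e. $a(g-1)\mid g^{np}-1$. Writing $g^{np}-1 = a(g-1)k$ for some $k\in\N_0$, I obtain $\frac{g^{np}-1}{g-1} = ak$, so $a$ divides $\frac{g^{np}-1}{g-1}$ and hence divides $(w)_g = d\cdot\frac{g^{np}-1}{g-1}$; that is, $(w)_g\equiv 0\Mod{a}$.

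There is no genuine obstacle here; the only step deserving a moment's care is the passage from $a(g-1)\mid g^{np}-1$ to $a\mid\frac{g^{np}-1}{g-1}$, which is immediate once one notes that $(g-1)\mid g^{np}-1$ makes the quotient a genuine integer retaining the extra factor $a$. In the wider argument this lemma serves to produce, for the function $\mathbf{f}=(\id,S_g)$, admissible words of length a multiple of $p$ realizing the zero residue, which is exactly what is needed to verify the aperiodicity criterion \eqref{aperiodic_missingdigits}.
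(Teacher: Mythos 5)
Your proof is correct and follows essentially the same route as the paper's: both compute $(w)_g = d\,\frac{g^{np}-1}{g-1}$ and $S_g(w)=dnp$, use Euler's theorem with $\gcd(g,a(g-1))=1$ to get $a(g-1)\mid g^{np}-1$ and hence $a\mid \frac{g^{np}-1}{g-1}$, and observe $a'\mid p$ for the sum-of-digits congruence. No differences worth noting.
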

\begin{proof}
   Let $n\in \N$. Notice that $\gcd(g,a(g-1))=1$, so the Euler's theorem implies that $g^p\equiv 1\Mod{a(g-1)}$. If $w=d\cdots d\in \mathcal{D}^{np}$,
       \begin{equation}
          (w)_g=d+dg+\cdots+dg^{np-1}=d\dfrac{g^{np}-1}{g-1}.
       \end{equation} 
       Since $g^{np}-1$ is divisible by $a(g-1)$, $\dfrac{g^{np}-1}{g-1}\equiv 0\Mod{a}$, concluding that $(w)_g\equiv 0\Mod{a}$. On the other hand, $S_g(w)=npd=a'(n\phi(a(g-1))d)\equiv 0\Mod{a'}$.
\end{proof}

The following application of Bézout's identity is fundamental to check the irreducibility of $M_i$.

\begin{lemma}
      Let $g\geq 2$ be an integer, and let $\mathcal{D}=\{d_1,\cdots,d_t\}$ be a set of digits, where $d_1$ is the smallest digit. Consider $a,a'\in\N$ such that $\gcd(g,a)=1$, and define $p:=a'\phi(a(g-1))$, where $\phi$ is the Euler's totient function. Then, there exists a word $w\in \mathcal{L}(\mathcal{D}^{\N_0})$ such that $$|w|\equiv 0\Mod{p}, \ (w)_g\equiv \delta\Mod{a},  \text{ and } S_g(w)\equiv \delta\Mod{a'},$$ where $\delta:=\gcd(aa',d_2-d_1,\cdots,d_t-d_1).$\label{Bezout}
\end{lemma}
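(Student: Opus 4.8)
The plan is to construct $w$ explicitly as a concatenation of blocks, each of length $p$, where every block is a constant word $d\cdots d$ with $d\in\mathcal{D}$. Fix such a block $B_d := d\cdots d \in \mathcal{D}^p$. By \cref{aperiodicity}, $(B_d)_g \equiv 0 \Mod a$ and $S_g(B_d) \equiv 0 \Mod{a'}$ — but this is not quite what we want, because concatenating copies of $B_d$ would only ever give the residue $0$. The key observation is that although $(B_d)_g \equiv 0 \Mod a$, the \emph{position} at which we place $B_d$ matters: a block $B_d$ placed starting at position $j$ contributes $g^j(B_d)_g \equiv 0 \Mod a$ only when $p \mid j$; more useful is to compare two blocks $B_d$ and $B_{d_1}$ placed at the same position, whose difference in contribution to $(w)_g \Mod a$ is $g^j(d-d_1)\frac{g^p-1}{g-1}$, and since $\gcd(g,a)=1$ this is $\equiv 0$ only because of the $\frac{g^p-1}{g-1}$ factor again. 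So constant blocks alone will not separate residues.

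Instead I would work one digit at a time and exploit Bézout directly. Write $\delta = \gcd(aa', d_2-d_1,\dots,d_t-d_1)$, so there are integers $c_2,\dots,c_t$ and $c_0$ with $\delta = c_0\, aa' + \sum_{j=2}^t c_j (d_j - d_1)$. The idea is to build a word in which, relative to a ``baseline'' all-$d_1$ word, we swap certain digits $d_1 \to d_j$ to adjust the value by a controlled amount. Concretely, consider a long word $u$ consisting entirely of the digit $d_1$, of length $Np$ for a large $N$ to be chosen; then $(u)_g \equiv 0 \Mod a$ and $S_g(u) \equiv 0 \Mod{a'}$ by \cref{aperiodicity}. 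Now modify $u$: at position $i$, changing the digit from $d_1$ to $d_j$ changes $(u)_g$ by $g^i(d_j-d_1) \Mod a$ and changes $S_g(u)$ by $(d_j - d_1) \Mod{a'}$. Since $\gcd(g,a)=1$, the multiset $\{g^i \Mod a : i \in \N\}$ is eventually periodic and hits every residue in the cyclic subgroup generated by $g$; in particular, by choosing positions $i$ appropriately and summing, for each $j$ the available increments to $(\cdot)_g \Mod a$ from $d_1\to d_j$ swaps range over all multiples of $\gcd(d_j-d_1, a)$ inside $\Z_a$ (here one uses that $\sum_{i \in I} g^i$ can be made to equal any residue mod $a$ as $I$ ranges over finite sets, since $g$ is a unit — more carefully, $1 + g + \cdots + g^{p-1} \cdot k$ type sums generate everything). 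I would assemble the required total shift $\delta$ in the first coordinate mod $a$ and simultaneously $\delta$ in the second coordinate mod $a'$, using that both are achievable because $\delta$ is the gcd of $aa'$ with the differences $d_j - d_1$; the congruence mod $a$ and mod $a'$ are handled jointly by taking enough room (large $N$) and by the freedom to add multiples of $a$ (resp. $a'$) via extra full blocks $B_{d_1}$ appended at positions divisible by $p$, which cost nothing.

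More precisely, the cleanest route: first find a single finite word $v$ (not necessarily of length divisible by $p$) with $(v)_g \equiv \delta \Mod a$ and $S_g(v) \equiv \delta \Mod{a'}$, by the Bézout argument above — swap $c_j$ copies (with sign handled by working modulo $aa'$, so negative $c_j$ become large positive ones) of $d_1 \to d_j$ at positions chosen so the $g^i$ factors are all $\equiv 1 \Mod a$ (possible: take $i \equiv 0 \Mod p$, since $g^p \equiv 1 \Mod{a(g-1)}$ hence mod $a$), so that the contribution to $(v)_g$ is just $\sum c_j(d_j-d_1) \equiv \delta \Mod a$ and to $S_g(v)$ is $\sum c_j(d_j - d_1) \equiv \delta \Mod{a'}$, after absorbing the $c_0 aa'$ term and any sign corrections into extra digits that are multiples of $aa'$ in count. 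Then pad $v$ on the right with copies of the block $B_{d_1} = d_1\cdots d_1$ (length $p$) until the total length is divisible by $p$: padding at positions $\geq |v|$ that are $\equiv 0 \Mod p$ contributes $g^{i}(d_1 + d_1 g + \cdots) $; to keep this clean one instead pads on the \emph{left}, i.e. prepends all-$d_1$ blocks, which shifts $v$ to higher positions by a multiple of $p$ and hence multiplies its value by $g^{(\text{mult of }p)} \equiv 1 \Mod{a}$, leaving $(w)_g \equiv \delta \Mod a$ unchanged and adding $0 \Mod{a'}$ to the digit sum. The main obstacle is bookkeeping the sign issue in Bézout (the $c_j$ can be negative) together with making all the $g^i$ factors trivial mod $a$ simultaneously — both are resolved by the single trick of only ever placing modified digits at positions $\equiv 0 \Mod p$ and working modulo $aa'$ so that negative coefficients are replaced by representatives in $\{0,1,\dots,aa'-1\}$, at the cost of a longer word, which is harmless. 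I expect the write-up to be a short explicit construction once this trick is in place; the only real content is \cref{aperiodicity} plus Bézout plus $g^p \equiv 1 \Mod{a(g-1)}$.
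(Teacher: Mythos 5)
Your construction is correct and is essentially the paper's proof: both use Bézout to write $\delta=n_1(aa')+\sum_j n_j(d_j-d_1)$, make the coefficients positive by adding multiples of $aa'$, and realize each increment $d_j-d_1$ via a length-$p$ block $d_jd_1\cdots d_1$ placed at a position divisible by $p$ so that $g^p\equiv 1\Mod{a(g-1)}$ trivializes the $g^i$ factors mod $a$ while the digit-sum contribution is $(d_j-d_1)$ plus a multiple of $p$ (hence of $a'$). The only cosmetic difference is that the paper phrases this as concatenating $m_j$ copies of the block $w(j)$ rather than as swapping digits in a long all-$d_1$ baseline, which makes the length automatically a multiple of $p$ and avoids your padding discussion.
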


\begin{proof}
From Bézout's identity, there exist $n_1,n_2,\cdots,n_t\in \Z$ such that \[\delta=n_1(aa')+n_2(d_2-d_1)+\cdots+n_t(d_t-d_1).\]
For all $j\in \{2,\cdots,t\}$, choose $k_j\in \N$ large enough such that $m_j:=k_j(aa')+n_j>0$. Then, 
\begin{equation}
      m_2(d_2-d_1)+\ldots+m_t(d_t-d_1)\equiv \delta\Mod{aa'}.\label{eq:Bezout}
\end{equation}
Let us define the word $w(j):=d_jd_1\ldots d_1\in \mathcal{D}^p$ for $j\in \{2,\cdots,t\}$. From \cref{aperiodicity},
\[(w(j))_g=(d_j-d_1)+d_1+d_1g+\cdots+d_1g^{p-1}\equiv d_j-d_1\Mod{a}.\]
Similarly, $S_g(w(j))=pd_1+(d_j-d_1)\equiv d_j-d_1 \Mod{a'}$.
Let us define  $$w:=\underbrace{w(2)\cdots w(2)}_\text{$m_2$ times}\cdots \underbrace{w(t)\cdots w(t)}_\text{$m_t$ times}.$$
Notice that $|w|=p(m_2+\ldots+m_t)$. Also, the periodicity $g^p\equiv 1 \Mod{a}$ and \eqref{eq:Bezout} imply that
 $$(w)_g\equiv \sum_{j=2}^t(w(j))_gm_j\equiv \sum_{j=2}^t(d_j-d_1)m_j\equiv \delta\Mod{a}.$$
 Using \eqref{eq:Bezout} again, we can conclude $$S_g(w)=\sum_{j=2}^t S_g(w(j))m_j\equiv \sum_{j=2}^t (d_j-d_1)m_j\equiv \delta\Mod{a'}.$$ 
\end{proof}

Leveraging the \cref{MainTheorem} and \cref{Bezout}, we can establish a complete description of the phenomenon of joint distribution (\cref{joint_distribution}) for missing digits sets when $\gcd(g,a)=\gcd(a,a')=1$. In particular, this shows \cref{thmA}.
\begin{theorem}
Let $g\geq 2$ be an integer, and let $\mathcal{D}=\{d_1,\cdots,d_t\}$ be a set of digits, where $d_1$ is the smallest digit.  Suppose that $\gcd(g,a)=\gcd(a,a')=1$, and recall the missing digits set $\mathcal{C}_{g,\mathcal{D}}$ defined in \eqref{def:missing_sets}. Defining $\delta:=\gcd(aa',d_2-d_1,\cdots,d_t-d_1)$,  
\begin{enumerate}
    \item $\mathcal{C}_{g,\mathcal{D}}$ is jointly uniformly distributed $\Mod{(a,a')}$ if and only if $\delta=1$.\label{I_Applications_thm1}
    \end{enumerate}
    More generally, concerning the limit 
    \begin{equation}L(b,b'):=\lim_{N\to\infty}\dfrac{|\{n\in \mathcal{C}_{g,\mathcal{D}}:\ n\equiv b\Mod{a},\ S_g(n)\equiv b'\Mod{a'}\}\cap[0,N)|}{|\mathcal{C}_{g,\mathcal{D}}\cap [0,N)|}\label{Missing_Theorem_limit},\end{equation}
it follows that:
    \begin{enumerate}
\setcounter{enumi}{1}
    \item Let $\delta_a:=\gcd(\delta,a)$ and $\delta_{a'}:=\gcd(\delta,a')$. If $d_1\equiv 0\Mod{\delta_a}$ and $d_1\equiv 0\Mod{\delta_{a'}}$, then $L(b,b')=\delta/aa'$ for every $(b,b')$ in the subgroup $\langle \delta_{a}\rangle\times \langle\delta_{a'}\rangle\subseteq \Z_a\times \Z_{a'}$. On the other hand, if $(b,b')\notin\langle \delta_{a}\rangle\times \langle\delta_{a'}\rangle$, 
    $$\Big\{n\in \mathcal{C}_{g,\mathcal{D}}:\ n\equiv b\Mod{a}, \ S_g(n)\equiv b'\Mod{a'}\Big\}=\emptyset.$$\label{II_Applications_thm1}
    \item In any other case, for every $(b,b')\in \Z_a\times \Z_{a'}$, the limit $L(b,b')$ either equals zero or does not exist.\label{III_Applications_thm1}
\end{enumerate}\label{Missing_Theorem}
\end{theorem}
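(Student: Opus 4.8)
The plan is to specialise the framework of \cref{Section3,Section4} to the full shift $\Sigma=\mathcal D^{\N_0}$ and the pair $\mathbf f=(\id,S_g)$. Here $\Sigma$ is mixing, sofic and $|\mathcal D|$‑regular with a one‑node Fischer cover, by \cref{prop:periodicity} the pair $(\id,S_g)$ is eventually periodic, and following \cref{aperiodicity,Bezout} we fix an eventual period $(p,\ell)$ with $p:=a'\phi(a(g-1))$, so $g^{p}\equiv1\pmod{a(g-1)}$ and in particular $g^{p}\equiv1\pmod a$. Then $\mathcal S=\Z_a\times\Z_{a'}$, and by \cref{prop:keymarkov} each $X^{i}$ is the random walk on $\Z_a\times\Z_{a'}$ with one‑step increment $\bigl(g^{i}(w)_g\bmod a,\,S_g(w)\bmod a'\bigr)$, $w$ uniform in $\mathcal D^{p}$; since $g^{p}\equiv1\pmod a$ the automorphism $(x,y)\mapsto(g^{i}x,y)$ conjugates $M_{i}$ to $M_{0}$, so (aperiodicity being automatic, as $d_1^{\,p}$ loops at every state by \cref{aperiodicity}) the Markov condition reduces to: $T:=\{((w)_g\bmod a,\,S_g(w)\bmod a'):w\in\mathcal D^{p}\}$ generates $\Z_a\times\Z_{a'}$. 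I would settle this by duality: writing $e:=\gcd(d_2-d_1,\dots,d_t-d_1)$, a character $(\alpha,\beta)$ kills $T$ iff $\sum_k w_k(\alpha g^{k}/a+\beta/a')\in\Z$ for all $w\in\mathcal D^{p}$, which (varying a single coordinate; the constant word $d_1^{\,p}$ contributes nothing since $\sum_{k<p}g^{k}\equiv0\pmod a$ and $a'\mid p$) becomes $aa'\mid(d_j-d_1)(\alpha g^{k}a'+\beta a)$ for all $j,k$, and by $\gcd(a,a')=\gcd(g,a)=1$ this is $a\mid e\alpha$, $a'\mid e\beta$. Hence the annihilator of $T$ has order $\gcd(a,e)\gcd(a',e)=\delta_a\delta_{a'}$, so $\langle T\rangle=\langle\delta_a\rangle\times\langle\delta_{a'}\rangle=:H$, of index $\delta=\delta_a\delta_{a'}$ (using $\gcd(a,a')=1$, so $\delta=\gcd(aa',e)=\gcd(a,e)\gcd(a',e)$) and order $|H|=aa'/\delta$. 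Thus the Markov condition holds iff $\delta=1$, and then \cref{MainTheorem} for $\mathcal C_{g,\mathcal D}=A_\Sigma$ gives joint uniform distribution $\bmod(a,a')$; this is the ``if'' half of (i), and the converse will drop out of (ii)--(iii).

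Assume $\delta>1$. Since a difference of two digits is a multiple of $e$, the residue set $U_m:=\{((w)_g\bmod a,S_g(w)\bmod a'):w\in\mathcal D^{m}\}$ satisfies $U_m-U_m\subseteq H$, so $U_m\subseteq\mathbf c_m+H$ with $\mathbf c_m:=\bigl(d_1\tfrac{g^{m}-1}{g-1}\bmod a,\,md_1\bmod a'\bigr)\in U_m$; moreover $\mathbf c_{m+p}=\mathbf c_m$ (by $g^{p}\equiv1\pmod{a(g-1)}$ and $a'\mid p$), and $\mathbf c_m+H$ is independent of $m$ exactly when $\delta_a\mid d_1$ and $\delta_{a'}\mid d_1$ (forward via $\mathbf c_0=0$, $\mathbf c_1=(d_1,d_1)$). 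As the increments of $X^{i}$ lie in $H$ and $\mu_i$ is supported in $U_i\subseteq\mathbf c_i+H$, the chain is confined to the coset $\mathbf c_i+H$, where it is irreducible and aperiodic; so \cref{thm:ConvergenceMC} and \cref{cor:Fundamental} give, for every $i\in\{\ell,\dots,\ell+p-1\}$,
\[
\frac{\bigl|\{w\in\mathcal D^{\,i+np}:\ (w)_g\equiv b\Mod{a},\ S_g(w)\equiv b'\Mod{a'}\}\bigr|}{|\mathcal D|^{\,i+np}}\ \xrightarrow[\ n\to\infty\ ]{}\ \frac{\mathbf 1_{\mathbf c_i+H}(b,b')}{|H|}\,.
\]

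In case (ii), $\delta_a\mid d_1$, $\delta_{a'}\mid d_1$ also force $\delta_a\mid d_j$, $\delta_{a'}\mid d_j$ for all $j$ (as $\delta_a,\delta_{a'}\mid e\mid d_j-d_1$), hence $U_m\subseteq H$ for all $m$; so the set in the second assertion of (ii) is empty, and for $(b,b')\in H$ the displayed limit equals $1/|H|$ for every $i$. Being independent of $i$, one may rerun \cref{Section4} with $\mathcal S$ replaced by $H$ --- on which $M_i$ is still doubly stochastic (no transition enters $H$ from its complement) and the Markov condition holds --- so \cref{lemma:Estimation2,lemma:Estimation3,prop:main} yield $L(b,b')=1/|H|=\delta/(aa')$ for $(b,b')\in H$; this is (ii), and in particular $\mathcal C_{g,\mathcal D}$ is not jointly uniformly distributed when $\delta>1$ and (ii) applies. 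In case (iii) the cosets $\mathbf c_0+H,\dots,\mathbf c_{p-1}+H$ are not all equal. Fix $(b,b')$ and set $R:=\{r\in\Z_p:(b,b')\in\mathbf c_r+H\}$; then $U_m\subseteq\mathbf c_m+H$ shows $(b,b')$ is attained only by words of length $\equiv r\pmod p$ with $r\in R$, while $U_{r+np}\uparrow\mathbf c_r+H$, so $\{n\in\mathcal C_{g,\mathcal D}:n\equiv b,\ S_g(n)\equiv b'\}$ is finite iff $R=\varnothing$, giving $L(b,b')=0$ there. If $R\neq\varnothing$ then $R\neq\Z_p$, and evaluating \eqref{Missing_Theorem_limit} along $N=g^{m}$ (using the displayed limit and $|\mathcal C_{g,\mathcal D}\cap[0,g^{m})|\asymp|\mathcal D|^{m}$) the ratio tends, along $m\equiv r\pmod p$, to a value $v(r)$ that is strictly larger for $r\in R$ than for $r\notin R$: if $0\in\mathcal D$ it is $1/|H|$ versus $0$; if $0\notin\mathcal D$ a geometric‑series computation gives $v(r)=\tfrac{|\mathcal D|-1}{|\mathcal D|\,|H|}\sum_{l\ge0,\ (r-l)\bmod p\in R}|\mathcal D|^{-l}$, whose $l=0$ term (present iff $r\in R$) forces strictness since $|\mathcal D|\ge2$ and $R\neq\Z_p$. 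Hence the limit in \eqref{Missing_Theorem_limit} does not exist, which proves (iii) together with the remaining implication of (i).

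I expect the main obstacle to be case (iii): showing the densities along $N=g^{m}$ are genuinely nonconstant in $m\bmod p$ --- routine but requiring the geometric‑series estimate above in the $0\notin\mathcal D$ sub‑case --- together with the verification that the estimates of \cref{Section4} transfer verbatim to the subgroup $H$ in case (ii). The group‑theoretic core, $\langle T\rangle=\langle\delta_a\rangle\times\langle\delta_{a'}\rangle$, is short but is precisely where the hypotheses $\gcd(g,a)=\gcd(a,a')=1$ enter.
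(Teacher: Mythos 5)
Your proof is correct and follows the same overall architecture as the paper's: the same Markov chains on $\Z_a\times\Z_{a'}$ with increment set coming from $\mathcal D^{p}$, confinement of $X^{i}$ to the coset $\mathbf{c}_i+H$ of $H=\langle\delta_a\rangle\times\langle\delta_{a'}\rangle$, and the same three-way case split according to whether $\delta=1$ and whether the cosets $\mathbf{c}_i+H$ all coincide (equivalently $\delta_a\mid d_1$ and $\delta_{a'}\mid d_1$). You depart from the paper in two sub-arguments. First, you identify the subgroup generated by the increments via a character/annihilator computation, whereas the paper does this constructively through Bézout's identity (\cref{Bezout}); your version is cleaner and makes transparent exactly where the hypotheses $\gcd(g,a)=\gcd(a,a')=1$ enter, while the paper's constructive lemma has the advantage of being reusable elsewhere (e.g.\ in \cref{thm:General}). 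Second, in case (iii) the paper argues by contradiction --- choosing $j$ so that $X^{j}$ never visits $(b,b')$ and comparing the counts over $[0,g^{j+mp-1})$ and $[0,g^{j+mp})$ to force $L>L$ --- whereas you compute the subsequential limits $v(r)$ along $N=g^{m}$, $m\equiv r\Mod{p}$, explicitly and show they are non-constant; both are valid and yours yields more quantitative information. One small repair is needed in your case (iii): when $0\notin\mathcal D$ and $|\mathcal D|=2$, the claim that the $l=0$ term of $v(r)$ alone forces strictness fails, since $\sum_{l\geq1}|\mathcal D|^{-l}=1$. The fix is to use that $\mathbbm{1}_R(r-l)$ is $p$-periodic in $l$, so the series collapses to a finite sum over $s\in\{0,\dots,p-1\}$ with weights $|\mathcal D|^{-s}/(1-|\mathcal D|^{-p})$, and there $1-\sum_{s=1}^{p-1}|\mathcal D|^{-s}\geq|\mathcal D|^{-(p-1)}>0$ does give the required strict inequality.
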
 
\begin{proof}
   Let us define $p=a'\phi(a(g-1))$, where $\phi$ is the Euler's totient function, and note that $(p,1)$ is an eventual period for $\mathbf{f}=(\id,S_g)$ with respect to $\mathbf{a}=(a,a')$. For $i\in \N$, let $X^i$ be the Markov chain associated to $g, \mathbf{a}, \mathbf{f}, p, \ell$ and the subshift $\Sigma=\mathcal{D}^{\N_0}$, as defined in \cref{def:Markov}. We denote by $M_i$ its transition matrix and $\mu_i$ its initial distribution.
   
  The aperiodicity of $M_i$ follows from \cref{aperiodicity} and \eqref{aperiodic_missingdigits}.

     Let us suppose $\delta=1$. From \cref{Bezout}, there exists a word $w$ using symbols of $\mathcal{D}$ such that $|w|\equiv 0\Mod{p}$, $(w)_g\equiv 1\Mod{a}$ and $S_g(w)\equiv 1\Mod{a'}$. Since $\gcd(a,a')=1$, the element $(1,1)$ is a generator for $\Z_a\times \Z_{a'}$. Then, for every $(b,b')\in \Z_a\times \Z_{a'}$, there exists $m\in \N$ such that $m\equiv (g^i)^{-1}b\Mod{a}$ and $m\equiv b'\Mod{a'}$. Let $y\in \mathcal{L}(\mathcal{D}^{\N_0})$ be constructed by concatenating $w$ with itself $m$ times. The periodicity $g^p\equiv 1\Mod{a}$ implies that $g^i(y)_g\equiv g^i m(w)_g\equiv b\Mod{a}$. On the other hand, $S_g(y)\equiv mS_g(w)\equiv b'\Mod{a'}$. Therefore, $M_i$ is irreducible for every $i\in\N$. This concludes that the Markov condition holds, then the \cref{MainTheorem} implies that $\mathcal{C}_{g,\mathcal{D}}$ is jointly uniformly distributed $\Mod{(a,a')}.$

     Now, let us suppose that $\delta\neq 1$. Since $\delta$ divides $aa'$ and $\gcd(a,a')=1$,  we can write $\delta=\delta_{a}\cdot \delta_{a'}$, where $\delta_{a}:=\gcd(\delta,a)$ and $\delta_{a'}:=\gcd(\delta,a')$. In particular, $\gcd(\delta_a,\delta_{a'})=1.$
     
     From the definition of $\delta$, note that $d_j\equiv d_1\Mod{\delta}$ for all $j\in\{1,\cdots,t\}$. Therefore, for any $m\in \N$ and $x=x_0\cdots x_{m-1}\in \mathcal{D}^m$,
     \begin{equation}
     (x)_g=\sum_{j=0}^{m-1}(x_j-d_1)g^j+d_1g^j\equiv d_1\sum_{j=0}^{m-1}g^j\Mod{\delta_a}, \text{ and } S_g(x)\equiv md_1\Mod{\delta_{a'}}.
         \label{proof:Missing_Theorem_1}
     \end{equation}
    In particular, for every $x\in \mathcal{L}(\mathcal{D}^{\N_0})$ such that $|x|\equiv 0\Mod{p}$, \cref{aperiodicity} implies that $(x)_g\equiv 0\Mod{\delta_a}$ and $S_g(x)\equiv 0\Mod{\delta_{a'}}$.  On the other hand, from \cref{Bezout} we can find a word $y$ such that $|y|\equiv 0\Mod{p}$, $(y)_g\equiv \delta\Mod{a}$, and $S_g(y)\equiv \delta\Mod{a'}.$ By the Chinese remainder theorem, $(\delta,\delta)$ is a generator for the subgroup $\mathcal{S}(\delta_a,\delta_{a'}):=\langle\delta_a\rangle\times \langle\delta_{a'}\rangle\subseteq \Z_a\times \Z_{a'}$. Therefore, recalling that $\gcd(g,\delta_a)=1$, 
 \begin{equation}
    \bigcup_{n\in\N}\{w\in \mathcal{D}^{np}:\ g^i(w)_g\equiv b\Mod{a},\ S_g(w)\equiv b'\Mod{a'}\}\neq \emptyset\label{proof:Missing_Theorem_2}
\end{equation} if and only if $(b,b')\in \mathcal{S}(\delta_a,\delta_{a'})$. In particular, this already ensures that $M_i$ is not irreducible on $\Z_a\times \Z_{a'}$, so the Markov condition does not hold. However, we can show that $X^i$ is an irreducible and aperiodic Markov chain when restricting the state space. Recall the initial distribution of $X^i$ given by
\[\mu_{i}(b,b')=\dfrac{|\{w\in \mathcal{D}^i: \ (w)_g\equiv b\Mod{a},\ S_g(w)\equiv b'\Mod{a'}\}|}{|\mathcal{D}|^i}.\]
Defining $\delta_a(i):= d_1\sum_{j=0}^{i-1}g^{i-1}\Mod{\delta_a}$ and $\delta_{a'}(i):= id_1\mod{\delta_{a'}}$ , \eqref{proof:Missing_Theorem_1} implies that $(x)_g\equiv \delta_a(i)\Mod{\delta_a}$ and $S_g(x)\equiv \delta_{a'}(i)\Mod{\delta_{a'}}$ for every $x\in \mathcal{D}^i$.  Therefore, $\mu_i(b,b')\neq 0$ implies $b\equiv\delta_a(i) \Mod{\delta_a}$ and $b'\equiv \delta_{a'}(i)\Mod{\delta_{a'}}$. If we define $$\mathcal{S}(i):=(\delta_a(i),\delta_{a'}(i))+\mathcal{S}(\delta_a,\delta_{a'}),$$ the equivalence in \eqref{proof:Missing_Theorem_2} indicates that $X^i$ never visits the states in $\Z_a\times \Z_{a'}\setminus \mathcal{S}(i)$, and also $X^i$ is irreducible and aperiodic when restricted to the state space $\mathcal{S}(i)$. Note that $M^i$ restricted to $\mathcal{S}(i)$ is still doubly stochastic, since we only ignore zero-value entries. Therefore, as $|\mathcal{S}(i)|=|\mathcal{S}(\delta_a,\delta_{a'})|=(a/\delta_a)\cdot (a'/\delta_{a'})$,
\begin{equation}
    \lim_{N\to \infty}\mu_{i+Np}(b,b')=\begin{cases}
\delta/aa' & \text{ if }(b,b')\in \mathcal{S}(i),\\
 0& \text{ if }(b,b')\notin \mathcal{S}(i).
\end{cases}\label{proof:Missing_Theorem_3}
\end{equation}

If $d_1\equiv 0\Mod{\delta_a}$ and $d_1\equiv 0\Mod{\delta_{a'}}$, $\mathcal{S}(i)=\mathcal{S}(\delta_a,\delta_{a'})$ for every $i\in \N$. This shows that each Markov chain $X^i$ is irreducible and aperiodic in a common state space $\mathcal{S}(\delta_a,\delta_{a'})$. Note that the Markov condition means that every $X^i$ is irreducible and aperiodic over the state space $\Z_a\times \Z_{a'}$, but it is easy to see that the procedure of \cref{Section4} is identical if we assume that each $X^i$ is irreducible and aperiodic over a subgroup of $\Z_a\times \Z_{a'}$ (and the Markov chain never visits the other states). Thus, by replicating the argument of \cref{Section4}, we can conclude \begin{equation*}
    \lim_{N\to \infty}\dfrac{|\{n\in \mathcal{C}_{g,\mathcal{D}}:\ n\equiv_a b,\ S_g(n)\equiv_{a'} b'\}\cap [0,N)|}{|\mathcal{C}_{g,\mathcal{D}}\cap [0,N)|}=\begin{cases}
 \delta/aa' & \text{ if } (b,b')\in \mathcal{S}(\delta_a,\delta_{a'}) \\
 0& \text{ if } (b,b')\notin \mathcal{S}(\delta_a,\delta_{a'}).
\end{cases}
\end{equation*}
This shows (\ref{II_Applications_thm1}). If either $d_1\not \equiv 0\Mod{\delta_a}$ or $d_1\not \equiv 0\Mod{\delta_{a'}}$, notice that  $\mathcal{S}(1)\cap \mathcal{S}(2)=\emptyset$. Then, for every $(b,b')\in \Z_a\times \Z_{a'}$, there exists $j\in \N$ such that the Markov chain $X^j$ does not visit the state $(b,b')$. From this fact, we want to show that \eqref{Missing_Theorem_limit} either equals zero or the limit does not exist. Let $(b,b')\in \Z_a\times \Z_{a'}$ and let $j\in \N$ such that $X^j$ never visits the state $(b,b')$. By contradiction, suppose that $L(b,b')>0$. Since $X^j$ does not visit the state $(b,b')$, $$\{n\in \mathcal{C}_{g,\mathcal{D}}:\ n\equiv b\Mod{a},\ S_g(n)\equiv b'\Mod{a'}\}\cap [g^{j+mp-1},g^{j+mp})=\emptyset$$ for every $m\in\N$. The contradiction follows by noting that 
\begin{align*}
        &L(b,b')=\lim_{m\to \infty}\dfrac{|\{n\in \mathcal{C}_{g,\mathcal{D}}:\ n\equiv b\Mod{a},\ S_g(n)\equiv b'\Mod{a'}\}\cap [0,g^{j+mp-1})|}{|\mathcal{C}_{g,\mathcal{D}}\cap [0,g^{j+mp-1})|}\\
            &>\liminf_{m\to \infty}\dfrac{|\{n\in \mathcal{C}_{g,\mathcal{D}}:\ n\equiv b\Mod{a},\ S_g(n)\equiv b'\Mod{a'}\}\cap [0,g^{j+mp})|}{|\mathcal{C}_{g,\mathcal{D}}\cap [0,g^{j+mp})|}.
\end{align*}
This concludes (\ref{III_Applications_thm1}). 
\end{proof}
Note that in the cases (\ref{I_Applications_thm1}) and (\ref{II_Applications_thm1}), we can conclude from \cref{prop:main} instead of using \cref{MainTheorem}, as $\Sigma$ is a full shift. Hence, the order of convergence $O(N^{-\gamma})$ for some $\gamma>0$ can be provided for the limit \eqref{Missing_Theorem_limit}. An immediate consequence follows by taking $a'=1$, which allows us to fully describe the phenomenon of uniform distribution $\Mod{a}$  when $\gcd(g,a)=1$. This shows \cref{CorA}.
\begin{corollary} Let $g\geq 2$ be an integer, and let $\mathcal{D}=\{d_1,\cdots,d_t\}$ be a set of digits, where $d_1$ is the smallest digit. 
Suppose $\gcd(g,a)=1$ and denote $\delta:=\gcd(a,d_2-d_1,\cdots,d_t-d_1).$ 
Then, 
\begin{enumerate}
    \item $\mathcal{C}_{g,\mathcal{D}}$ is uniformly distributed $\Mod{a}$ if and only if $\delta=1$.
    \item If $\delta\neq 1$ and $d_1\equiv0\Mod{\delta}$, $\mathcal{C}_{g,\mathcal{D}}$ is uniformly distributed in the subgroup $\langle \delta \rangle\subseteq \Z_a$. \label{II_Cor}
    \item In any other case, the limit of \[\dfrac{|\{n\in \mathcal{C}_{g,\mathcal{D}}:\ n\equiv b\Mod{a}\}\cap[0,N)|}{|\mathcal{C}_{g,\mathcal{D}}\cap [0,N)|}\] when $N\to \infty$ is either zero, or does not exist.
\end{enumerate}

    \label{cor:Missing_Theorem}
\end{corollary}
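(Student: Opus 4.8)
The plan is to derive \cref{cor:Missing_Theorem} from \cref{Missing_Theorem} by specializing to the trivial modulus $a'=1$. Setting $a'=1$ makes $\Z_{a'}$ the trivial group, so the condition $S_g(n)\equiv b'\Mod{a'}$ is vacuous and the joint distribution collapses to ordinary distribution $\Mod a$; moreover $\gcd(a,a')=\gcd(a,1)=1$ holds automatically, so the hypotheses of \cref{Missing_Theorem} are met as soon as $\gcd(g,a)=1$. First I would observe that in this case $\delta=\gcd(aa',d_2-d_1,\ldots,d_t-d_1)=\gcd(a,d_2-d_1,\ldots,d_t-d_1)$ coincides with the quantity denoted $\delta$ in the corollary, that $\delta_{a'}=\gcd(\delta,1)=1$, and that $\delta_a=\gcd(\delta,a)=\delta$ since $\delta\mid a$. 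The subgroup $\langle\delta_{a'}\rangle\subseteq\Z_1$ is the whole (trivial) group, so $\langle\delta_a\rangle\times\langle\delta_{a'}\rangle$ becomes just $\langle\delta\rangle\subseteq\Z_a$ under the identification $\Z_a\times\Z_1\cong\Z_a$.

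Next I would translate each of the three cases of \cref{Missing_Theorem} under this identification. For (i): $\delta=1$ gives joint uniform distribution $\Mod{(a,1)}$, which is exactly uniform distribution $\Mod a$; conversely if $\delta\neq 1$ then by (iii) or (ii) the limit is not $1/a$ for some class, so uniform distribution fails. This yields part (1) of the corollary. For (ii): the condition ``$d_1\equiv 0\Mod{\delta_{a'}}$'' is automatic since $\delta_{a'}=1$, so the hypothesis reduces to $d_1\equiv 0\Mod\delta$; then $L(b)=\delta/a$ for $b\in\langle\delta\rangle$ and the relevant integer set is empty for $b\notin\langle\delta\rangle$, which is precisely the assertion that $\mathcal{C}_{g,\mathcal D}$ is uniformly distributed in the subgroup $\langle\delta\rangle$ (one should note $|\langle\delta\rangle|=a/\delta$, so $\delta/a=1/|\langle\delta\rangle|$). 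This gives part (2). For (iii): ``any other case'' with $a'=1$ means $\delta\neq 1$ and $d_1\not\equiv 0\Mod\delta$ (the clause $d_1\not\equiv 0\Mod{\delta_{a'}}$ never occurs), and \cref{Missing_Theorem}(iii) says the limit $L(b)$ is zero or does not exist for every $b$; this is part (3).

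I do not expect a serious obstacle here: the argument is a careful bookkeeping of how the quantities $\delta$, $\delta_a$, $\delta_{a'}$, and the subgroups degenerate when $a'=1$. The only point requiring a little care is making sure the ``if and only if'' in part (1) is fully justified --- the forward direction is immediate from \cref{Missing_Theorem}(i), and the reverse direction needs the observation that cases (ii) and (iii) of \cref{Missing_Theorem}, which cover all $\delta\neq 1$, both preclude the limit from equalling $1/a$ uniformly over $\Z_a$ (in case (ii) because some classes receive mass $0$, in case (iii) because the limit is $0$ or fails to exist). One should also remark, as the paper does in the sentence following the theorem, that in parts (1) and (2) one may invoke \cref{prop:main} directly rather than \cref{MainTheorem}, since $\Sigma=\mathcal D^{\N_0}$ is a full shift and hence $A_\Sigma\setminus A_{\Sigma,\ell}$ is finite, so a quantitative rate $O(N^{-\gamma})$ is in fact available.
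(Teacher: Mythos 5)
Your proposal is correct and follows exactly the paper's route: the paper derives \cref{cor:Missing_Theorem} as an immediate specialization of \cref{Missing_Theorem} to $a'=1$, and your bookkeeping of how $\delta$, $\delta_a=\delta$, $\delta_{a'}=1$ and the subgroup $\langle\delta_a\rangle\times\langle\delta_{a'}\rangle\cong\langle\delta\rangle$ degenerate is the (unwritten) content of that specialization. The closing remark about invoking \cref{prop:main} for a quantitative rate also matches the paper's comment following \cref{Missing_Theorem}.
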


Note that the condition $\gcd(g,a)= 1$ is commonly assumed in results of this kind. In the framework of \cref{cor:Missing_Theorem}, when $\gcd(g,a)\neq 1$ the matrices $M_i$ are no longer irreducible over $\Z_a$. However, the Markov chains constructed can still capture the behaviour of the distribution by breaking $X^i$ into pieces determined by the communication classes of the Markov chain (see Section 1.2 in \cite{MarkovChain2} for details about this notion). The disadvantage of this situation is that the convergence of $X^i$ depends on the initial distribution. 

The following simple example shows that when $\gcd(a,g)\neq 1$, the limiting behavior depends heavily on the initial distribution.
\begin{example}
    Let $a=12$, $g=6$ and $\mathcal{D}=\{1,2,4\}$. Notice that $a$ divides $g^i$ for every $i\geq 2$, then $(w_0\cdots w_n)_g\equiv w_0+w_1g\Mod{a}$. Therefore, 
\[\lim_{N\to \infty}\dfrac{|\{n\in \mathcal{C}_{6,\mathcal{D}}:\ n\equiv b\Mod{12}\}\cap [0,N)|}{|\mathcal{C}_{6,\mathcal{D}}\cap [0,N)|}=\begin{cases}
2/9 & \text{ if } b\in \{1,2,4\}\\ 
1/9 & \text{ if } b\in\{7,8,10\}\\
0 & \text{ if } b\in\{0,3,5,6,9,11\}.
\end{cases}\]
Notice that this distribution agrees with the distribution when considering $[0,g^2)$. Also, we can see that the limit is different and non-zero for some classes, behaviour which is not observed under the assumption $\gcd(g,a)=1$.
\end{example}

Finally, we will present an equivalent condition for uniform distribution $\Mod{(a,a')}$ without the assumption $\gcd(a,a')=1$, however, at the cost of having a more complicated condition to check.
\begin{proposition}
    Let $g\geq 2$ be an integer, and let $\mathcal{D}=\{d_1,\cdots,d_t\}$ be a set of digits, where $d_1$ is the smallest digit. Consider $a,a'\in \N$ such that $\gcd(g,a)=1$, and define $p:=a'\phi(a(g-1))$. Therefore, $\mathcal{C}_{g,\mathcal{D}}$ is jointly uniformly distributed $\Mod{(a,a')}$  if and only if $\gcd(a,d_2-d_1,\cdots,d_t-d_1)=1$,
     and there exists $w\in \mathcal{L}(\mathcal{D}^{\N_0})$ such that $(w)_g\equiv 0\Mod{a}$, $S_g(w)\equiv 1\Mod{a'}$ and  $|w|\equiv 0 \Mod{p}$.\label{thm:General}
\end{proposition}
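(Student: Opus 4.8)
The plan is to run the Markov-chain machinery of \cref{Section3} and \cref{Section4} for $\Sigma=\mathcal{D}^{\N_0}$, $\mathbf{f}=(\id,S_g)$, $\mathbf{a}=(a,a')$ and the eventual period $(p,1)$, and then to replace the Chinese-remainder step used in \cref{Missing_Theorem} by a direct analysis of the subgroup of $\Z_a\times\Z_{a'}$ generated by the length-$p$ blocks. First I would record that $(p,1)$ is an eventual period for $(\id,S_g)$ with respect to $(a,a')$: since $a\mid a(g-1)$ we have $\phi(a)\mid\phi(a(g-1))\mid p$, hence $g^{p}\equiv1\Mod a$ (and in fact $g^{p}\equiv1\Mod{a(g-1)}$, which is exactly what \cref{aperiodicity} needs). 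The shift $\Sigma$ is mixing, sofic, $|\mathcal{D}|$-regular and has a single follower set, so \cref{MainTheorem} (with a rate coming from \cref{prop:main}) applies the moment the Markov condition holds, and aperiodicity of every $M_i$ is automatic from \cref{aperiodicity} via the criterion \eqref{aperiodic_missingdigits}. Consequently the whole statement reduces to proving: \emph{every $M_i$ is irreducible if and only if $\gcd(a,d_2-d_1,\dots,d_t-d_1)=1$ and there is $w\in\mathcal{L}(\mathcal{D}^{\N_0})$ with $p\mid|w|$, $(w)_g\equiv0\Mod a$ and $S_g(w)\equiv1\Mod{a'}$.} Granting this claim, the ``if'' direction is immediate from \cref{MainTheorem}, and for ``only if'' I would argue by contraposition just as in the last part of the proof of \cref{Missing_Theorem}: if the right-hand side fails then some $M_i$ is reducible, the chain $X^i$ avoids a state $(b,b')$ forever, and \cref{cor:Fundamental} forces $\{n\in\mathcal{C}_{g,\mathcal{D}}:\ n\equiv b\Mod a,\ S_g(n)\equiv b'\Mod{a'}\}\cap[g^{i+np-1},g^{i+np})=\emptyset$ for all $n$, so the ratio over $[0,N)$ oscillates between $0$ and a positive value and cannot converge to $1/(aa')$.

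The core of the argument is the subgroup
\[
  G:=\bigl\{\bigl((w)_g\bmod a,\ S_g(w)\bmod a'\bigr)\ :\ w\in\mathcal{L}(\mathcal{D}^{\N_0}),\ p\mid|w|\bigr\}\subseteq\Z_a\times\Z_{a'}.
\]
Using $g^{p}\equiv1\Mod a$ one checks that concatenating two such words adds the corresponding pairs, and \cref{aperiodicity} applied to $d_1\cdots d_1\in\mathcal{D}^{p}$ gives $(0,0)\in G$; hence $G$ really is a subgroup. Since $g^{i}$ is a unit modulo $a$, the set appearing in the irreducibility criterion \eqref{irreducible_missingdigits} is exactly the image of $G$ under the automorphism $(x,y)\mapsto(g^{i}x,y)$ of $\Z_a\times\Z_{a'}$, so $M_i$ is irreducible for one (equivalently every) $i$ if and only if $G=\Z_a\times\Z_{a'}$. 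I would then compute $G$ coordinatewise: writing each digit of a length-$np$ word as $d_1+(d_j-d_1)$ and using $d_1\sum_{k=0}^{np-1}g^{k}=d_1\frac{g^{np}-1}{g-1}\equiv0\Mod a$ together with $pd_1\equiv0\Mod{a'}$, one finds that the projection $\pi_1(G)$ to $\Z_a$ equals the subgroup of $\Z_a$ generated by $d_2-d_1,\dots,d_t-d_1$ (the stray powers $g^{k}$ are units, and that subgroup is stable under multiplication by them), so $\pi_1(G)=\Z_a$ is precisely the first condition; and $(0,1)\in G$ is word-for-word the second condition. The claim is then closed by the elementary fact that $G=\Z_a\times\Z_{a'}$ iff $\pi_1(G)=\Z_a$ and $(0,1)\in G$: from $(0,1)\in G$ one gets $\{0\}\times\Z_{a'}\subseteq G$, and then for each $b\in\Z_a$ a pair $(b,y)\in G$ yields $(b,0)=(b,y)-(0,y)\in G$, so $G\supseteq\Z_a\times\{0\}$ as well.

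It remains to carry out the converse (non-uniformity when $G\subsetneq\Z_a\times\Z_{a'}$), which I would do exactly as for \cref{Missing_Theorem}: for $w,w'\in\mathcal{D}^{i}$ the difference $\bigl((w)_g-(w')_g,\ S_g(w)-S_g(w')\bigr)$ telescopes into a sum of generators of the twisted subgroup $G_i=\{(g^{i}x,y):(x,y)\in G\}$, so the support of $\mu_i$ sits inside a single coset of $G_i$; because $(0,0)$ is a generator, the support of $\mu_iM_i^{n}$ never leaves that coset, so $X^i$ misses some state $(b,b')$ for every $n$, which via \cref{cor:Fundamental} produces the empty block used above. I expect the main difficulty to be precisely this two-coordinate group bookkeeping: with $\gcd(a,a')=1$ dropped one can no longer split $G$ into a product of cyclic pieces by the Chinese remainder theorem as in \cref{Missing_Theorem}, so the equivalence ``$M_i$ irreducible for all $i$'' $\Longleftrightarrow$ ``the two displayed conditions'' must be extracted purely from the two structural facts that $\pi_1(G)=\langle d_2-d_1,\dots,d_t-d_1\rangle$ and that $G$ is determined by $\pi_1(G)$ together with whether $(0,1)\in G$; everything else — aperiodicity, the invocation of \cref{MainTheorem}, and the coset argument for the converse — is a faithful transcription of the corresponding steps in the proofs of \cref{Missing_Theorem} and \cref{MainTheorem}.
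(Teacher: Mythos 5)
Your proposal is correct and follows essentially the same route as the paper: reduce to the Markov condition, get aperiodicity from \cref{aperiodicity}, characterize irreducibility of the $M_i$ by the two stated conditions, apply \cref{MainTheorem} for sufficiency, and use the oscillation-along-$[0,g^{i+np})$ argument for necessity. Your subgroup $G$ and the lemma ``$G=\Z_a\times\Z_{a'}$ iff $\pi_1(G)=\Z_a$ and $(0,1)\in G$'' is just a structural repackaging of the paper's explicit construction (concatenating $(g^i)^{-1}b$ copies of the Bézout word $y$ and $b'-s$ copies of $w$), and your coset-confinement argument rederives what the paper obtains by citing \cref{cor:Missing_Theorem}.
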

\begin{proof}
    If $\mathcal{C}_{g,\mathcal{D}}$ is jointly uniformly distributed $\Mod{(a,a')}$, in particular, it is uniformly distributed $\Mod{a}$. From \cref{cor:Missing_Theorem} follows that   $\gcd(a,d_2-d_1,\cdots,d_t-d_1)=1$. In addition, there exist infinitely many words $w$ such that $(w)_g\equiv 0\Mod{a}$, $S_g((w)_g)\equiv 1\Mod{a'}$ and  $|w|\equiv 0 \Mod{p}$. Otherwise, for every $m\in \N$, $$\Big\{n\in \mathcal{C}_{g,\mathcal{D}}:\ n\equiv 0\Mod{a},\ S_g(n)\equiv 1\Mod{1}\Big\}\cap [g^{mp-1},g^{mp})=\emptyset,$$ contradicting that 
    \[\lim_{N\to\infty}\dfrac{|\{n\in \mathcal{C}_{g,\mathcal{D}}:\ n\equiv 0\Mod{a},\ S_g(n)\equiv 1\Mod{a'}\}\cap[0,N)|}{|\mathcal{C}_{g,\mathcal{D}}\cap [0,N)|}=\dfrac{1}{aa'}>0.\]

    On the other hand, let us suppose that $\gcd(a,d_2-d_1,\cdots,d_t-d_1)=1$ and that there exists $w\in \mathcal{L}(\mathcal{D}^{\N_0})$ such that $(w)_g\equiv 0\Mod{a}$, $S_g(w)\equiv 1\Mod{a'}$ and $|w|\equiv 0 \Mod{p}$. From \cref{Bezout}, there exists $y\in \mathcal{L}(\mathcal{D}^{\N_0})$ such that $(y)_g\equiv 1\Mod{a}$ and  $|y|\equiv 0\Mod{p}$. To prove that $M_i$ is irreducible, we will check \eqref{irreducible_missingdigits}. Let $b\in\Z_a$ and $b'\in \Z_{a'}$. If $\Tilde{y}$ is the concatenation $\left((g^{i})^{-1}b\Mod{a}\right)$-times of the word $y$ (in case $b=0$, we concatenate $a$ times), the periodicity $g^p\equiv 1\Mod{a}$ implies that $g^i(\Tilde{y})_g\equiv g^i(g^{i})^{-1}b\equiv b \Mod{a}$. If $s:=S_g(\Tilde{y})$, we denote by $\Tilde{w}$ the concatenation $\left(b'-s\Mod{a'}\right)$-times of $w$. Thus, we can conclude that 
    $g^i(\Tilde{y}\Tilde{w})_g\equiv b\Mod{a}$ and $S_g(\Tilde{y}\Tilde{w})\equiv b'\Mod{a'}$. Clearly, $|\Tilde{y}\Tilde{w}|\equiv 0\Mod{p}$.  Therefore,  $M_i$ is irreducible. The matrix $M_i$ is also aperiodic as consequence of \cref{aperiodicity}. Thus, the Markov condition holds and the result follows by the \cref{MainTheorem}.
\end{proof}

\subsection{Revisiting Gelfond's theorem: An equivalent condition}

Let $g\geq 2$ be an integer, and $a,a'\in \N$. As presented in \cref{Gelfond_thm}, Gelfond \cite{Gelfond} studied the long-term behaviour of 
\begin{equation}
        |\{n\leq N:\ n\equiv b\Mod{a},\ S_g(n)\equiv b'\Mod{a'}\}|,\label{Fine_eq}
\end{equation}
 showing that, under the assumption $\gcd(g-1,a')=1$, the set $A=\N$ is jointly uniformly distributed $\Mod{(a,a')}$. In this section, we present an extension of \cref{Gelfond_thm}, obtaining an equivalent condition to uniform distribution $\Mod{(a,a')}$.

Notice that the obstruction in applying our method arises from the lack of the condition $\gcd(g,a)=1$, which implies that the Markov chains associated with this problem are not irreducible. However, we can still break each Markov chain $X^i$ into pieces, generating irreducible and aperiodic Markov chains. In that way, the behaviour of \eqref{Fine_eq} can be understood using our method.
\begin{theorem}
    Let $g\geq 2$ be an integer and $a,a'\in \N$. Then,
\begin{equation}\lim_{N\to\infty}\dfrac{ |\{n\leq N:\ n\equiv b\Mod{a},\ S_g(n)\equiv b'\Mod{a'}\}|}{N}=\dfrac{1}{aa'} \label{eq:Fine_Gelfond}
\end{equation} for every $(b,b')\in \Z_a\times \Z_{a'}$ if and only if there exists $n\in \N$ such that $n\equiv 0\Mod{a}$ and $S_g(n)\equiv 1\Mod{a'}.$\label{thm:Gelfond}
\end{theorem}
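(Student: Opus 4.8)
The plan is to apply the Markov chain machinery developed in Sections~\ref{Section3} and~\ref{Section4} to the full shift $\Sigma = \{0,\ldots,g-1\}^{\N_0}$, which represents $A_\Sigma = \N_0$, with $\mathbf{f} = (\id, S_g)$ and $\mathbf{a} = (a,a')$. The one genuinely new feature compared with \cref{Missing_Theorem} is that we drop the hypothesis $\gcd(g,a)=1$, so the transition matrices $M_i$ will in general fail to be irreducible on all of $\Z_a\times\Z_{a'}$. First I would invoke \cref{prop:periodicity} to pick an eventual period $(p,\ell)$ for $(\id,S_g)$ with respect to $(a,a')$; concretely one may take $p = a'\cdot p_0$ where $g^{\ell}\equiv g^{\ell+p_0}\Mod a$ for suitable $\ell \le a-1$, so that $S_g$ is handled by the factor $a'$ and $\id$ by periodicity of $g$-powers. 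Since $\Sigma$ is the full shift it is mixing, sofic and $g$-regular with a single follower set, so the framework of \cref{Convention} applies and $\mathcal{V}$ can be ignored.

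The heart of the argument is the analysis of the communication classes of $X^i$. For a word $x = x_0\cdots x_{m-1}$, write $(x)_g \equiv \alpha(i) \Mod{d}$ for the constrained part, where $d = \gcd(g^{i+np},a)$ stabilizes for large $n$; more precisely, as in the proof of \cref{Missing_Theorem}, the reachable states from the support of $\mu_i$ form a coset of a fixed subgroup $\mathcal{S}(\delta)\subseteq \Z_a\times\Z_{a'}$. I would show: (i) the restriction of $M_i$ to this coset is doubly stochastic (we discard only zero entries), (ii) it is aperiodic — here the constant words $w = d\cdots d$ of length divisible by $a'\phi$ of the relevant modulus give a self-loop exactly as in \cref{aperiodicity}, adapted so that $(w)_g$ lands in the stabilized value — and (iii) it is irreducible on the coset \emph{if and only if} the hypothesis holds, namely that some $n$ satisfies $n\equiv 0\Mod a$ and $S_g(n)\equiv 1\Mod{a'}$. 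For the "if" direction, such an $n$ (whose base-$g$ expansion, padded with zeros to a length divisible by $p$, gives a word $w$ with $(w)_g\equiv 0\Mod a$ and $S_g(w)\equiv 1\Mod{a'}$) provides the "unit step" in the $S_g$-coordinate, while the free choice of low-order digits combined with the periodicity $g^p\equiv 1\Mod d$ lets us reach every target in the $n$-coordinate within the coset; concatenating these as in \cref{thm:General} produces the required connecting word.

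Once irreducibility and aperiodicity of the restricted chains are established, \cref{thm:ConvergenceMC} gives exponential convergence of $\mu_{i+np}$ to the uniform distribution on the coset, and the passage from word-counts to the count $|\{n\le N:\ldots\}|$ proceeds exactly as in Sections~\ref{Section4} (via \cref{lemma:Estimation2}, \cref{lemma:Estimation3}, \cref{prop:main}), noting that for the full shift $A_{\Sigma,\ell} = A_\Sigma = \N_0$ up to finitely many elements so no approximation step is needed. Since all cosets $\mathcal{S}(i)$ have the same index and, when the hypothesis holds, the union of the supports over $i = \ell,\ldots,\ell+p-1$ covers all of $\Z_a\times\Z_{a'}$ (this requires a short check that the cosets $(\alpha_a(i),\alpha_{a'}(i)) + \mathcal{S}(\delta)$ sweep out the whole group as $i$ ranges — using that $S_g$ on the digit $d$ shifts $\alpha_{a'}$ by $d$), we obtain the limit $1/(aa')$ for every $(b,b')$. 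Conversely, if no such $n$ exists, then $S_g(m)\equiv 0\Mod{\gcd(a',?)}$ on the class $n\equiv 0\Mod a$ in a way that makes $(b,b')=(0,1)$ unreachable along an entire arithmetic progression of scales $g^{mp}$, and the density argument of part~\ref{III_Applications_thm1} of \cref{Missing_Theorem} shows \eqref{eq:Fine_Gelfond} fails there. I expect the main obstacle to be the bookkeeping in step~(iii): cleanly identifying the stabilized subgroup $\mathcal{S}(\delta)$ and the coset representatives $\alpha(i)$ when $\gcd(g,a)>1$, and verifying that the hypothesis is precisely what makes the $S_g$-direction of the chain irreducible rather than trapped in a proper subgroup of $\Z_{a'}$.
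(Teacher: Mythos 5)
Your setup---the full shift, $\mathbf{f}=(\id,S_g)$, an eventual period from \cref{prop:periodicity}, aperiodicity from the all-zeros word, and the padded base-$g$ expansion of the hypothesised $n$ as the generator of the $S_g$-direction---matches the paper's proof, and the trivial forward implication is handled the same way. But there is a genuine gap in how you treat the reducibility caused by $\gcd(g,a)>1$, and the mechanism you propose would not deliver the limit $1/(aa')$.

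You assert that the reachable states from the support of $\mu_i$ form a single coset of a fixed subgroup, and you then try to recover uniform distribution on all of $\Z_a\times\Z_{a'}$ by letting $i$ range over $\ell,\ldots,\ell+p-1$ so that these cosets ``sweep out the whole group.'' For the full shift this picture is wrong: words of length $i$ represent every integer in $[0,g^i)$, so $\mu_i$ has positive mass in \emph{every} coset of $\langle\delta_i\rangle\times\Z_{a'}$, where $\delta_i=\gcd(g^i,a)$, and the chain $X^i$ decomposes into $\delta_i$ sub-chains simultaneously, one per coset. Moreover, the sweep-over-$i$ argument cannot work even in principle: for $g^m\le N<g^{m+1}$ the count $|\{n\le N\}|$ is dominated by the expansions of top length $m$, so if each fixed-length distribution were concentrated on a single proper coset depending on $m\bmod p$, the ratio would oscillate and the limit would fail to exist --- this is exactly the mechanism behind part (iii) of \cref{Missing_Theorem}, not a route to $1/(aa')$. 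The step your proposal is missing is the computation that makes $\N$ special: the initial mass that $\mu_i$ places on the $j$-th coset is $\bar\mu_{i,j}=|\{w\in\{0,\ldots,g-1\}^i:(w)_g\equiv j\Mod{\delta_i}\}|/g^i=1/\delta_i$, the \emph{same} for every $j$ (because $\delta_i\mid g^i$). Only because of this does the reducible chain, written as $\mu_{i+np}=\frac{1}{\delta_i}\sum_j\mu_{i,j}M_i^n$, converge for each fixed $i$ to the uniform value $1/(aa')$ on all of $\Z_a\times\Z_{a'}$, after which the transfer to $[0,N)$ goes through \cref{Section4} unchanged. The paper's closing remark makes the same point: it is precisely the failure of $\bar\mu_{i,j}$ to be constant in $j$ that blocks this argument for missing-digit sets. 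Your coset-wise irreducibility and aperiodicity analysis is fine; the proof needs this equal-mass computation inserted and the union-over-$i$ step removed.
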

\begin{proof}
    If \eqref{eq:Fine_Gelfond} holds, there are infinitely many $n\in\N$ such that $n\equiv 0\Mod{a}$ and $S_g(n)\equiv 1\Mod{a'}$. 

For the other direction, let $n\in \N$ be such that $n\equiv 0\Mod{a}$ and $S_g(n)\equiv 1\Mod{a'}$. For $\mathbf{f}=(\id,S_g)$, let $p\in \N$ be  given by \cref{prop:periodicity}. Then, $(p,a-1)$ is an eventual period for $\mathbf{f}$ with respect to $\mathbf{(a,a')}$, and notice that $\Sigma=\{0,\cdots,g-1\}^{\N_0}$ is such that $\N=A_\Sigma$. For $i\geq a-1$, let $X^i$ be the Markov chain of \cref{def:Markov}, with transition matrix $M_i$ and initial distribution $\mu_i$. 

Let $w=10\cdots 0$ such that $|w|=p$, and clearly $g^i(w)_g\equiv g^i\Mod{a}$. If $y$ is the base-$g$ expansion of $n$ (by adding $0$'s, we can suppose that $y$ has length divisible by $p$), then $(y)_g\equiv 0\Mod{a}$ and $S_g(y)\equiv 1\Mod{a'}$. Hence, for any $(b,b')\in \Z_a\times \Z_{a'}$, there exists $k\in \N$ such that $(w^{b}y^{k})_g\equiv b\Mod{a}$ and $S_g(w^{b}y^{k})_g\equiv b'\Mod{a'}$ (where $z^j$ denotes the concatenation of $z$ $j$-times). Therefore,
\begin{equation}
 \bigcup_{m\in\N}\{w\in \mathcal{D}^{mp}:\ g^i(w)_g\equiv b\Mod{a},\ S_g(w)\equiv b'\Mod{a'}\}\neq \emptyset\label{proof:Fine_Gelfond_1}  
\end{equation}
if and only if $(b,b')\in \langle g^i \rangle\times \Z_{a'}\subseteq \Z_a\times \Z_{a'}.$ Denoting $\delta_i:=\gcd(g^i,a)$, 
it follows that $\sum_{m\in \N}M_i^m((b_0,b_1),(b_0',b_1'))>0$ if and only if $b_0\equiv b_0' \Mod{\delta_i}$. Thus, if the Markov chain $X^i$ starts at $X_0^i=(b,b')$, that trajectory will only visit classes in $ (b+\langle \delta_i \rangle)\times \Z_{a'}\subseteq \Z_a\times \Z_{a'}.$ In this way, we can split $X^i$ into pieces depending on the starting point. 

For each $j\in \{0,\cdots,\delta_i-1\}$, we define $\mathcal{S}(i,j):=j+\langle \delta_i \rangle$ and 

 $$\mu_{i,j}(b,b'):=\begin{cases}
 \dfrac{\mu_i(b,b')}{\Bar{\mu}_{i,j}}& \text{ if } b\equiv j\Mod{\delta_i} \\
 0& \text{ otherwise,}  
\end{cases}$$
where $\mu_i$ is the initial distribution of $X^i$ and $$\Bar{\mu}_{i,j}:=\sum_{b'\in \Z_{a'}}\sum_{b\in \mathcal{S}(j)}\mu_i(b,b')=\dfrac{|w\in \{0,\cdots,g-1\}^i:\ (w)_g\equiv j\Mod{\delta_i}\}|}{g^i}=\dfrac{1}{\delta_i}.$$ Let $X^{i,j}$ be a Markov chain on the state space $\mathcal{S}(i,j)$, with initial distribution $\mu_{i,j}$ and transition matrix $M_i$, both restricted to indexes of  $\mathcal{S}(i,j)$. The aperiodicity follows directly as $0\cdots 0$ is in the language of $\{0,\cdots,g-1\}^{\N_0}$, and $M_{i}$ restricted to $\mathcal{S}(i,j)$ is irreducible by \eqref{proof:Fine_Gelfond_1}. Therefore, the Markov chain $X^{i,j}$ converges to its invariant distribution given by $1/|\mathcal{S}(i,j)|=\delta_i/aa'.$ It is not difficult to see that
\begin{equation}
    \mu_{i+np}=\sum_{j=0}^{\delta_i-1}\Bar{\mu}_{i,j}\left(\mu_{i,j}M^n_i\right)=\dfrac{1}{\delta_i}\sum_{j=0}^{\delta_i-1}\mu_{i,j}M^n_i.\label{decomposition_mu}
\end{equation}
From \cref{thm:ConvergenceMC}, there exist $C_{i,j}>0$ and $\rho_{i,j}\in (0,1)$ such that
\begin{equation}
    \left|\left(\mu_{i,j}M_i^n\right)(b,b')-\dfrac{\delta_i}{aa'}\right|\leq C_{i,j}\rho_{i,j}^n\label{decomposition_convergence}
\end{equation}
for every $(b,b')\in \mathcal{S}(i,j).$ Clearly, $\left(\mu_{i,j}M_i^n\right)(b,b')=0$ if $(b,b')\notin \mathcal{S}(i,j)$. Since $\displaystyle\Z_a\times \Z_{a'}=\bigsqcup_{j\in \{0,\cdots,\delta_i-1\}}\mathcal{S}(i,j)$, from \eqref{decomposition_mu} and \eqref{decomposition_convergence} we can obtain that, for every $(b,b')\in \Z_a\times \Z_{a'}$,
\begin{equation}
    \left|\mu_{i+np}(b,b')-\dfrac{1}{aa'}\right|\leq C_i\rho_i^n,\label{mu_decomposition_MC}
\end{equation}
where $\displaystyle C_i:=\max_{j=0,\cdots,\delta_i-1}C_{i,j}$ and  $\displaystyle\rho_i:=\max_{j=0,\cdots,\delta_i-1}\rho_{i,j}$. Equivalently,
\begin{equation}
    \left|\mathbb{P}\Big(X^i_n=(b,b')\Big)-\dfrac{1}{aa'}\right|\leq C_i\rho_i^n.\label{mu_decomposition_MC2}
\end{equation}
Despite each $X^i$ is not irreducible, we still have convergence to the uniform measure $1/aa'$. Note that \eqref{mu_decomposition_MC2} is equivalent to the estimation \eqref{Prob1} in the case of $\Sigma$ being a full shift, and essentially, this is what is required to retrieve the results of \cref{Section4}, and in particular, \cref{MainTheorem}. Therefore, we can conclude that $\N$ is jointly uniformly distributed $\Mod{(a,a')}$ by replicating the arguments of \cref{Section4}.
\end{proof}
\begin{remark}
    The obstruction when trying to use the previous idea in the context of missing digits sets lies in the fact that for some $i\in \N$, the normalization factor $\Bar{\mu}_{i,j}$ could be not the same for each Markov chain $X^{i,j}$. 
\end{remark}

\subsection{Forbidding combinations of digits}\label{Section5:SFT}
So far, we have focused on applying \cref{MainTheorem} in restricted digit sets, where restrictions come from forbidding some digits (missing digits sets). Given the flexibility in choosing digits and restrictions in the general case, it does not seem possible to establish general conditions like those in \cref{Section5:Missing_digits} within the context of general $\times g$-invariant sets. In such a case, \cref{MainTheorem} is still a useful tool on a case-by-case basis. Seeking greater generality, we construct a class of sets with more flexible digit restrictions that exhibit joint uniform distribution. While we focus on restrictions of length 2, some of these ideas can be extended to longer lengths.

Let $g\geq 2$ be an integer and $a,a'\in \N$. Let $\mathcal{D}\subseteq \{0,\cdots,g-1\}$ be a set of digits, and let $T$ be a matrix on $\mathcal{D}$ such that $T(d,d')\in \{0,1\}$ for every $d,d'\in \mathcal{D}$. 
\begin{definition}
  We say that the matrix $T$ is \emph{$k$-regular} if for every $d\in \mathcal{D}$, $$\sum_{d'\in \mathcal{D}}M(d,d')=\sum_{d'\in \mathcal{D}}M(d',d)=k.$$ Also, we say that $T$ is \emph{irreducible} if for every $d,d'\in \mathcal{D}$, there exists $n\in \N$ such that $M^n(d,d')>0$.  
\end{definition}

As pointed out in \cref{Section2:Symbolic}, such a matrix $T$ induces a shift of finite type 
\[\Sigma_T:=\left\{x\in \mathcal{D}^{\N_0}:\ T(x_i,x_{i+1})=1 \ \forall i \in \N_0\right\}.\]
In fact, any 1-step subshift of finite type can be written in this form. Similarly, we can define the $\times g$-invariant set of integers generated by the subshift $\Sigma_T$ as
\[A_T:=A_{\Sigma_T}=\left\{\sum_{i=0}^nd_ig^i:\ T(d_i,d_{i+1})=1 \ \forall i\in \N_0\right\}.\]
As discussed in \cref{Remark_SFT}, for a 1-step SFT we can apply \cref{MainTheorem} when the Markov chains $X^i$ (\cref{def:Markov}) are constructed using the cover of \cref{graph:SFT} instead of the Fischer cover. It is also straightforward that the graph of \cref{graph:SFT} is regular and irreducible if the matrix $T$ is regular and irreducible.

\begin{theorem} Let $g\geq 2$ be an integer, and $a,a'\in \N$ such that $\gcd(g,a)=\gcd(a,a')=1$. Let $\mathcal{D}\subseteq \{0,\cdots,g-1\}$ be a set of digits, and $T$ be a matrix over $\mathcal{D}$ to values in $\{0,1\}$, and assume that $T$ is $k$-regular (for $k\geq 2$) and irreducible. If there exist $d,d'\in \mathcal{D}$ such that $T(d,d)=T(d,d')=T(d',d)=1$ and $\gcd(d'-d,aa')=1$, it follows
\[\lim_{N\to\infty}\dfrac{|\{n\in A_T:\ n\equiv b\Mod{a},\ S_g(n)\equiv b'\Mod{a'}\}\cap [0,N)|}{|A_T\cap [0,N)|}\]
for every $b\in \Z_a$ and $b'\in \Z_{a'}$.\label{Theorem_Matrix}
\end{theorem}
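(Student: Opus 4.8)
The plan is to apply \cref{MainTheorem} with $\mathbf{f}=(\id,S_g)$, $\mathbf{a}=(a,a')$, and the subshift $\Sigma=\Sigma_T$. First I would verify the hypotheses of that theorem. Since $T$ is $k$-regular and irreducible with $k\geq 2$, \cref{Remark_SFT} tells us we may use the graph $\mathcal{G}_T$ of \cref{graph:SFT} in place of the Fischer cover; this graph is $k$-regular and irreducible, so $\Sigma_T$ is mixing, sofic and regular (mixing because an irreducible $k$-regular graph with $k\geq 2$ gives, for each pair of nodes, paths of all sufficiently large lengths between them). As in \cref{Section5:Missing_digits}, I would set $p:=a'\phi(a(g-1))$, which by \cref{prop:periodicity} (or the computation in \cref{aperiodicity}) is an eventual period for $\mathbf{f}$ with respect to $\mathbf{a}$, together with some $\ell$ at least as large as the length of the shortest synchronizing word (for a $1$-step SFT any word of length $\geq 2$ is synchronizing, so any $\ell\geq a$ works). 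It then remains to check the Markov condition: that each $M_i$ is irreducible and aperiodic.

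For \textbf{aperiodicity}, fix any node $V(d)$ of $\mathcal{G}_T$ with $T(d,d)=1$ (the hypothesis supplies one). Then for every $n$, the word $w=d^{np}\in\mathcal{L}^{np}(\Sigma_T)$ labels a loop at $V(d)$, and by \cref{aperiodicity} we have $(w)_g\equiv 0\Mod a$ and $S_g(w)\equiv 0\Mod{a'}$, i.e. $\mathbf f(g^i(w)_g)\equiv 0\Mod{\mathbf a}$ for all $i$. Hence the gcd in \eqref{aperiodic} is $1$, so $M_i$ is aperiodic.

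For \textbf{irreducibility}, I must show \eqref{irreducible}: for all $\mathbf b=(b,b')\in\Z_a\times\Z_{a'}$ and all nodes $F,F'$, there is $w\in\mathcal L(\Sigma_T)$ with $p\mid|w|$, $\mathbf f(g^i(w)_g)\equiv\mathbf b\Mod{\mathbf a}$ and $F\overset{w}{\to}F'$. The key building block comes from the digits $d,d'$ with $T(d,d)=T(d,d')=T(d',d)=1$ and $\gcd(d'-d,aa')=1$: the word $u:=d'd^{p-1}$ has length $p$, is a legal loop at $V(d)$ in $\mathcal{G}_T$ (read $V(d)\to V(d')\to V(d)\to\cdots\to V(d)$), and by the computation in the proof of \cref{Bezout} satisfies $(u)_g\equiv d'-d\Mod a$ and $S_g(u)\equiv d'-d\Mod{a'}$; since $\gcd(a,a')=1$ and $\gcd(d'-d,aa')=1$, the pair $(d'-d,d'-d)$ generates $\Z_a\times\Z_{a'}$ via the Chinese remainder theorem, so by concatenating $u$ with itself an appropriate number of times (and exploiting $g^p\equiv 1\Mod a$ to cancel the $g^i$ factor) we can realize any prescribed residue pair through loops at $V(d)$. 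To go from an arbitrary $F$ to an arbitrary $F'$: using irreducibility of $\mathcal{G}_T$, pick a path $\pi_1$ from $F$ to $V(d)$ and a path $\pi_2$ from $V(d)$ to $F'$, pad each (using the freedom in $\mathcal{G}_T$, e.g. inserting loops) so their combined length is a multiple of $p$; insert between $\pi_1$ and $\pi_2$ a suitable power of $u$ chosen to correct the residue contributed by $\pi_1\pi_2$ to the target $\mathbf b$. This yields the desired $w$, establishing \eqref{irreducible}.

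With the Markov condition verified, \cref{MainTheorem} gives directly
\[\lim_{N\to\infty}\dfrac{|\{n\in A_{\Sigma_T}:\ n\equiv b\Mod a,\ S_g(n)\equiv b'\Mod{a'}\}\cap[0,N)|}{|A_{\Sigma_T}\cap[0,N)|}=\dfrac{1}{aa'}\]
for every $b\in\Z_a$, $b'\in\Z_{a'}$, which is the claim (the statement's right-hand side $1/(aa')$ being understood). The main obstacle I anticipate is the irreducibility step: one must carefully manage lengths modulo $p$ when splicing the connecting paths $\pi_1,\pi_2$ with the residue-adjusting power of $u$, making sure that the padding needed to fix the length does not spoil the residue class already achieved — this is handled by noting that padding by $u^p$ (or $d^{p\cdot(\,\cdot\,)}$) changes the length by a multiple of $p$ while changing the residue in a controlled, correctable way, so a final adjustment by a power of $u$ restores the target. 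Everything else is a routine adaptation of \cref{Section5:Missing_digits}, and since $\Sigma_T$ is an SFT one may note, as in the remark after \cref{prop:main}, that $|A_{\Sigma_T}\setminus A_{\Sigma_T,\ell}|$ is finite so \cref{prop:main} already suffices and even yields a power-saving error term $O(N^{-\gamma})$.
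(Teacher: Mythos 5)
Your proposal follows essentially the same route as the paper: the same choice $p=a'\phi(a(g-1))$, the same use of the self-loop word $d^{np}$ at $V(d)$ for aperiodicity, and the same loop $u=d'd^{p-1}$ at $V(d)$ whose residue pair $(d'-d,d'-d)$ generates $\Z_a\times\Z_{a'}$ by CRT, spliced between connecting paths of length divisible by $p$ to get irreducibility, after which \cref{MainTheorem} finishes. One small correction: your parenthetical claim that $\Sigma_T$ is mixing \emph{because} an irreducible $k$-regular graph admits paths of all sufficiently large lengths between any two nodes is false in general (a bipartite-style irreducible $k$-regular graph is periodic); mixing here, and the existence of connecting paths of length divisible by $p$, instead come from aperiodicity of $\mathcal{G}_T$, which is supplied by the self-loop $T(d,d)=1$ that you already invoke elsewhere --- this is exactly how the paper argues.
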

\begin{proof}
    We will check the conditions to apply \cref{MainTheorem}. Denote by $\Sigma\subseteq \mathcal{D}^{\N_0}$ the subshift of finite type associated to the matrix $T$. Since $T$ is regular and irreducible, the cover $\mathcal{G}_T$ constructed in \cref{graph:SFT} is regular and irreducible. Moreover, $\mathcal{G}_T$ is aperiodic, as there exist $d\in \mathcal{D}$ such that $T(d,d)$ (i.e., there is a cycle of length $1$ in $\mathcal{G}_T$). Let $p=a'\phi(a(g-1))$, and notice that $(p,1)$ is an eventual period for $\mathbf{f}=(\id,S_g)$ with respect to $(a,a')$. By considering the Markov chains $X^i$ of \cref{def:Markov} (with the consideration of \cref{Remark_SFT}), the transition matrix $M_i$ of $X^i$ has state space $\mathcal{S}=\Big(\Z_a\times \Z_{a'}\Big)\times \mathcal{V}$, where $\mathcal{V}=\{F(\Tilde{d}):\ \Tilde{d}\in \mathcal{D}\}.$ 
    
    Since $T(d,d)=1$, the word $x=d\cdots d\in \mathcal{D}^p$ satisfies $(w)_g\equiv 0\Mod{a}$ and $S_g(w)\equiv 0\Mod{a'}$ (\cref{aperiodicity}). In addition, $F(d)\overset{x}{\to} F(d)$ as the last symbol of $x$ is $d$. Thus, the state $(0,0,F(d))$ is aperiodic. 
    
     Let $(b_0,b_0',F_0),(b_1,b_1',F_1)\in \mathcal{S}$. To show that $M_i$ is irreducible for every $i$, we have to find $n(i)\in \N$ such that $$M^{n(i)}_i\Big((b_0,b_0',F_0),(b_1,b_1',F_1)\Big)>0.$$ Let $d_0,d_1\in \mathcal{D}$ such that $F_0=F(d_0)$ and $F_1=F(d_1)$. 
Since the cover $\mathcal{G}_T$ is aperiodic, there exists a path  (of length divisible by $p$) from $F_0$ to $F(d)$ labelled by some word $y$. Similarly, there exists a path labelled by $z$ (of length divisible by $p$) from $F(d)$ to $F_1$. 

Since $T(d,d)=T(d,d')=T(d',d)=1$, the word $w=d'd\cdots d\in \mathcal{D}^p$ can be read in a path from $F(d)$ to $F(d)$, $(w)_g\equiv d'-d\Mod{a}$ and $S_g(w)\equiv d'-d\Mod{a'}$. Since $\gcd(d'-d,aa')=1$ and $\gcd(a,a')=1$, $(d'-d,d'-d)$ generates $\Z_a\times \Z_{a'}$, so we can find $m\in \N$ such that $m(d'-d)\equiv b_1-b_0-(y)_g-(z)_g\Mod{a}$ and $m(d'-d)\equiv b_1'-b_0'-S_g(y)-S_g(z)\Mod{a'}$. Let $\Tilde{w}\in \mathcal{L}(\Sigma)$ be the concatenation $m$-times of $w$. It holds that $F_0\overset{y\Tilde{w}z}{\to} F_1$, the length of $y\Tilde{w}z$ is divisible by $p$, $(y\Tilde{w}z)_g\equiv b_1-b_0\Mod{a}$ and $S_g(y\Tilde{w}z)\equiv b_1'-b_0'\Mod{a'}$. Thus, if $n(i)=|y\Tilde{w}z|$, $M_i^{n(i)}((b_0,b_0',F_0),(b_1,b_1',F_1))>0$. Therefore, $M_i$ is irreducible and aperiodic for every $i\geq a-1$, so the conclusion follows from \cref{MainTheorem}.
\end{proof}
\begin{remark}
    Notice that the condition in the \cref{Theorem_Matrix} about the existence of $d,d'\in \mathcal{D}$ such that $T(d,d)=T(d,d')=T(d',d)=1$ and $\gcd(d'-d,aa')=1$ can be easily relaxed. However, we state it in that way because it allows us to check the Markov condition in a simple way, providing a tool to generate many examples of sets with digit constraints of length 2 that are jointly uniformly distributed.
\end{remark}
\begin{example}
    Let $g=10$, and consider the set of integers
    \[A:=\left\{\sum_{j=0}^m w_i10^i:\ w_i\in\{0,\cdots,9\},\ w_{i+1}\in \{w_i-1,w_i,w_i+1\}\Mod{10}\right\}.\]
    Notice that $A=A_T$, where $T(d,d')=1$ if and only if $d'\in \{d-1,d,d+1\}\Mod{10}$. Clearly, $T$ is $3$-regular and irreducible. Also, $T(0,0)=T(0,1)=T(1,0)=1$. 

Let $a,a'\in \N$ such that $\gcd(10,a)=\gcd(a,a')=1$. From the \cref{Theorem_Matrix},
\[\lim_{N\to\infty}\dfrac{|\{n\in A_T:\ n\equiv b\Mod{a},\ S_{10}(n)\equiv b'\Mod{a'}\}\cap [0,N)|}{|A_T\cap [0,N)|}=\dfrac{1}{aa'}\]
    for every $(b,b')\in \Z_a\times \Z_{a'}.$
\end{example}

\begin{example}
    Let $g=10$ and $\mathcal{D}=\{1,2,d,d'\}$ for any $d,d'\in \{0,3,4,\cdots,9\}$. Let $T$ a matrix indexed by $\mathcal{D}\times \mathcal{D}$ and defined by
    \[T=\begin{pmatrix}
 1&1  &0  &0  \\
 1&0  &1  &0  \\
 0&0  &1  &1  \\
 0&1  &0  &1  \\
\end{pmatrix}.\]
Since $T(1,1)=T(1,2)=T(2,1)$, the \cref{Theorem_Matrix} gives that $A_T$ is jointly uniformly distributed $\Mod{(a,a')}$ whenever $\gcd(10,a)=\gcd(a,a')=1$. This set can be understood as all the integers which base-10 expansion can be read in paths of the graph of \cref{Figure1}.\label{Example2}
    \end{example}

    \definecolor {processblue}{cmyk}{0,0,0,1}
\begin{figure}[h!]
\begin {center}

\begin {tikzpicture}[-latex ,auto ,node distance =3 cm and 3cm ,on grid ,
semithick ,
state/.style ={ circle ,top color =white , bottom color = processblue!5 ,
draw,processblue , text=black , minimum width =1 cm}]
\node[state] (C)
{};
\node[state] (D) [right =of C] {};
\node[state] (A) [above =of C] {};
\node[state] (B) [above=of D] {};

\path (A) edge [loop left] node[left] {$1$} (A);
\path (B) edge [bend left =15] node[below] {$1$} (A);
\path (A) edge [bend left =15] node[above] {$2$} (B);

\path (C) edge [bend left =15] node[below =0.15 cm] {$2$} (B);
\path (C) edge [loop left] node[below =0.15 cm] {$d'$} (C);
\path (B) edge [bend left=0] node[below right=0.15cm and 0.05 cm] {$d$} (D);
\path (D) edge [bend left=0] node[below right=0.15cm and 0.05 cm] {$d'$} (C);

\path (D) edge [loop right] node[right] {$d$} (D);
\end{tikzpicture}
\end{center}
\caption{Graph that represents the 2-regular and transitive shift of finite type associated to the Matrix $T$ in \cref{Example2}.}\label{Figure1}
\end{figure}
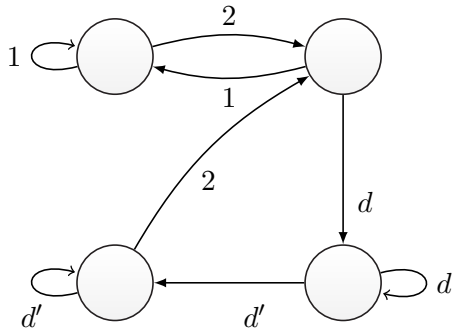

\subsection{Example involving a sofic subshift}\label{Section5:sofic}
When $\Sigma$ is a sofic subshift, more general restriction can be placed, but at the same time, it is harder to give general conditions to ensure uniform distribution. We will provide a sofic shift $\Sigma$, which is not of finite type, whose associated multiplicatively invariant set is uniformly distributed.

\begin{example}
    Let $\mathcal{G}$ be the graph presented in \cref{Figure2}, and let $\Sigma$ be the sofic shift $\Sigma:=\Sigma_\mathcal{G}$ (recall \cref{cover_def}). We can see that 
    $$\Sigma=\left\{x\in \{0,1,2\}^{\N_0}:\ \text{there is an even number of zeros between two 1's, or two 2's}\right\},$$
    and the cover $\mathcal{G}$ is the Fischer cover for $\Sigma$.\label{ExampleSofic}
\end{example}
\definecolor {processblue}{cmyk}{0,0,0,1}
\begin{figure}[h!]
\begin {center}
\begin {tikzpicture}[-latex ,auto ,node distance =3 cm and 3cm ,on grid ,
semithick ,
state/.style ={ circle ,top color =white , bottom color = processblue!5 ,
draw,processblue , text=black , minimum width =1 cm}]
\node[state] (A) [] {};
\node[state] (B) [right=of A] {};

\path (A) edge [loop left] node[left] {1} (A);
\path (B) edge [bend left =15] node[below] {0} (A);
\path (A) edge [bend left =15] node[above] {0} (B);
\path (B) edge [loop right] node[right] {2} (B);
\end{tikzpicture}
\end{center}
\caption{Fischer cover of the sofic shift $\Sigma$ presented in \cref{ExampleSofic}.}\label{Figure2}
\end{figure}
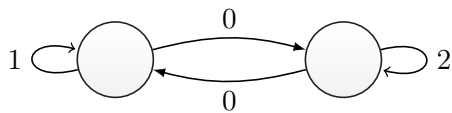
Notice that the graph $\mathcal{G}$ is $2$-regular and aperiodic, so that $\Sigma$ is a mixing, sofic and $2$-regular subshift. For $g=3$, we define the $\times 3$-invariant set $$A_\Sigma:=\left\{\sum_{j=0}^md_j3^j:\ \text{between two consecutive 1's or 2's there is an even number of 0's} \right\}.$$
Let $a,a'\in \N$ such that $\gcd(3,a)=\gcd(a,a')=1$. We can find a period $p\in \N$ such that $g^p\equiv 1\Mod{a(g-1)}$. Without loss of generality, we can suppose $p$ is even (otherwise, change $p$ by $2p$). The aperiodicity of the matrix $M_i$ follows because $0\cdots 0\in \mathcal{L}^p(\Sigma)$ is a cycle. Similarly,  the word $w=110\cdots 0\in \mathcal{L}^{p}(\Sigma)$ is a cycle in the graph $\mathcal{G}$. Also,  $(w)_3\equiv 4\Mod{a}$ and $S_3(w)\equiv 2\Mod{a'}$. By concatenating $w$ with itself, we can generate every state in $\langle 4\rangle\times \langle 2\rangle \subseteq \Z_a\times \Z_{a'}$. 

For instance, if $a=5$ and $a'=7$,  $\langle 4\rangle\times \langle 2\rangle=\Z_5\times \Z_{7}$. Considering that the word $w$ is read in a cycle of the Fischer cover, $\Big((w)_3,S_3(w)\Big)$ is a generator for $\Z_5\times \Z_7$ and $\mathcal{G}$ is aperiodic, it is possible to see that $M_i$ is irreducible for every $i\in\N$. The \cref{MainTheorem} implies that
\[\lim_{N\to\infty}\dfrac{|\{n\in A_\Sigma:\ n\equiv b\Mod{5},\ S_{3}(n)\equiv b'\Mod{7}\}\cap [0,N)|}{|A_\Sigma\cap [0,N)|}=\dfrac{1}{35}\]
    for every $(b,b')\in \Z_5\times \Z_{7}.$

\section{Transversality between multiplicatively invariant sets and arithmetic progressions}

In this section, we provide a partial answer to the open question \cite[Question 5.6]{GMR}. Recalling the notion of mass dimension provided in \cref{Mass_dimension}, the question is stated as follows: \emph{Let   $A\subseteq \N_0$ be a multiplicatively invariant set and $P$ be an arithmetic progression. Is it true that $\dim_{\text{M}}(A\cap P)$ is either zero or $\dim_{\text{M}}(A)$?}

Let $A$ be $\times g$-invariant, and recall the existence of a subshift $\Sigma$ such that $A=A_\Sigma$. If $A$ satisfies the conditions of the \cref{MainTheorem} with $f=\id$ and $a\in \N$ (including the Markov condition), it is possible to see that $\dim_{\text{M}}(A\cap (a\N+b))=\dim_{\text{M}}(A)$ for every $b\in \N$. Therefore, this provides a positive answer to the open question in those cases. As the notion of dimension is less sensitive to rough estimations than the relative density, we will modify the construction presented in \cref{Section3} to address this question for every $A=A_\Sigma$ when $\Sigma$ is any transitive sofic subshift.

\subsection{Counterexample for general cases}\label{Section6_1}
We will show that the answer is negative if we only suppose that the subshift $\Sigma$ is only transitive, or only sofic.

Let $g=a>4$, and  let $\mathcal{D}=\{d_0,d_1\}$ and $\mathcal{D}'=\{d_2,d_3,d_4\}$ be disjoint subsets of $\{0,\cdots,g-1\}$. We define the subshift $\Sigma=\mathcal{D}^{\N_0}\cup \mathcal{D}'^{\N_0}$. Notice that $\Sigma$ is a sofic subshift (in fact, it is a SFT). From \cref{languageset}, it is easy to see that $\dim_{\text{M}}(A_\Sigma)=\log(3)/\log(g).$ Since $a=g$, $n\in A_\Sigma\cap (a\N+d_0)$ if and only if $d_0$ is the least significant digit of $n$ in base $g$, so we can conclude that
\[0<\dim_{\text{M}}(A_\Sigma\cap (a\N+d_0))=\log(2)/\log(g)<\dim_{\text{M}}(A_\Sigma).\]

More generally, other examples can be constructed even when $\gcd(g,a)=1$. For this, we can use the same idea of considering a disjoint set of digits with different size, and choose the elements in order to apply (\ref{II_Cor}) of \cref{cor:Missing_Theorem}. This concludes that the sofic condition by itself is not enough to provide a positive answer to the open question. Now, we will see that assuming only transitivity is not enough either.

\begin{proposition}
    For any integer $g\geq 3$, there exists a transitive subshift $\Sigma$ and an arithmetic progression $P$ such that $0<\dim_\text{M}(A_\Sigma\cap P)<\dim_\text{M}(A_\Sigma).$\label{prop:ExampleTransitive}
\end{proposition}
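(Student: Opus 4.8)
The plan is to construct, for each $g \geq 3$, a transitive subshift $\Sigma \subseteq \{0,\ldots,g-1\}^{\N_0}$ whose associated $\times g$-invariant set $A_\Sigma$ intersects some arithmetic progression $P$ in a set whose mass dimension is strictly between $0$ and $\dim_{\text{M}}(A_\Sigma)$. The key tension to exploit is that transitivity is a global ``connectedness'' condition on the language, but it says nothing about how the \emph{density} of allowed words of a given length behaves along a fixed residue class. So I would engineer a subshift where long stretches of a ``rich'' sub-language (high entropy) are all congruent to a bad residue, while the words needed to reach other residues are comparatively sparse, yet still present (to maintain transitivity).

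Concretely, first I would pick two disjoint digit sets, say $\mathcal{D}_0 = \{d_0, d_1\}$ (with, e.g., $d_0 = 0$) and a larger block built from digits that are all divisible by $g$'s interaction with $a$ — but since we want $\gcd(g,a)$ arbitrary we can simply take $a = g$ as in the sofic counterexample and instead handle transitivity. The subshift $\Sigma$ would consist of all sequences that are either eventually in $\mathcal{D}_0^{\N_0}$, or contain a sparse ``connector'' symbol. One clean construction: let $\Sigma$ be the transitive subshift generated by taking the SFT $\mathcal{D}_0^{\N_0} \cup \mathcal{D}_1^{\N_0}$ (which is sofic but not transitive) and adding a single bridging symbol $c$ with the rule that $c$ may be followed or preceded by anything, but $c$ must reappear only after arbitrarily long intervals — e.g. require that between consecutive occurrences of $c$ there is a pure block from exactly one of $\mathcal{D}_0, \mathcal{D}_1$. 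This makes the system transitive (one can always route through a $c$), while the entropy is still $\log(\max(|\mathcal{D}_0|,|\mathcal{D}_1|)) = \log 3$ if $|\mathcal{D}_1| = 3$. Then, as in the sofic counterexample, pick $P$ so that membership in $P$ forces the least significant digit (or the first few digits) to lie in the smaller alphabet $\mathcal{D}_0$, or forces a configuration that kills the $\mathcal{D}_1$ contribution: this drops the dimension to $\log 2 / \log g$, which is positive and strictly less than $\log 3 / \log g$.

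The main steps, in order, would be: (1) write down the transitive subshift $\Sigma$ explicitly via a rule on words (or a countable-state but effectively controllable graph), and verify transitivity by exhibiting, for any $u, v \in \mathcal{L}(\Sigma)$, an explicit connecting word $w$ with $uwv \in \mathcal{L}(\Sigma)$ — this is where the ``bridging symbol'' design pays off; (2) compute $\dim_{\text{M}}(A_\Sigma) = h(\Sigma)/\log g$ using \cref{languageset}, showing the connectors are sparse enough not to raise the entropy above $\log(\max(|\mathcal{D}_0|, |\mathcal{D}_1|))$, so $\dim_{\text{M}}(A_\Sigma) = \log 3/\log g$; (3) choose the arithmetic progression $P$ — taking $a = g$ and the residue $b$ corresponding to a digit forced to be in $\mathcal{D}_0$ (the least-significant-digit trick from \cref{Section6_1}) is simplest, and then use \cref{lemma:DIM} to compute $\dim_{\text{M}}(A_\Sigma \cap P)$ by counting words of length $N$ whose fixed digit lies in $\mathcal{D}_0$; (4) conclude $0 < \dim_{\text{M}}(A_\Sigma \cap P) = \log 2/\log g < \log 3/\log g = \dim_{\text{M}}(A_\Sigma)$.

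I expect the main obstacle to be step (1)–(2) simultaneously: making the subshift genuinely transitive \emph{without} inflating its entropy past that of the larger sub-alphabet, and without the ``fix the least significant digit'' operation becoming incompatible with the bridging structure. If the bridging symbol appears too often, transitivity is easy but the entropy might tick up (or the counting for $A_\Sigma \cap P$ gets muddied by contributions routed through $c$); if it appears too rarely, one must check the Fischer-cover-type reachability argument still closes. A safe resolution is to let the spacing between connectors grow (e.g. the $n$-th gap has length $\geq 2^n$), which forces the connector density to $0$ so entropy is unaffected, while still allowing transitivity since one may always wait long enough to insert a connector; the dimension count for $A_\Sigma \cap P$ then reduces to counting words in $\mathcal{D}_0^{\N_0}$ of length $N$ up to a subexponential correction from the rare connector blocks, giving exactly $\log 2 / \log g$. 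One should double-check that $A_\Sigma \cap P$ is nonempty and infinite (so its dimension is genuinely $> 0$), which is immediate since all of $\mathcal{D}_0^{\N_0}$'s integers ending in digit $d_0 = 0 \equiv b$ lie in it.
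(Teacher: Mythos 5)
There is a genuine gap, and it sits exactly where you suspected, in steps (1)--(3). Your first concrete rule --- pure blocks over $\mathcal{D}_0$ or $\mathcal{D}_1$ separated by a freely usable connector symbol $c$ --- defines a transitive \emph{sofic} subshift (indeed a $1$-step SFT: one only forbids adjacencies between $\mathcal{D}_0$ and $\mathcal{D}_1$), so by \cref{thm:Transversality} the intersection with any arithmetic progression already must have dimension $0$ or $\dim_{\text{M}}(A_\Sigma)$. Concretely, $d_0cv\in\mathcal{L}(\Sigma)$ for \emph{every} $v\in\mathcal{D}_1^{N-2}$, so there are at least $3^{N-2}$ words of length $N$ beginning with $d_0$, and $A_\Sigma\cap(g\N+d_0)$ has full dimension. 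Your fallback (gaps between connectors growing like $2^n$) does not repair this: such a condition is not shift-invariant and must be realized as an orbit closure, and, more importantly, the quantity controlling $|A_\Sigma\cap P\cap[0,g^N)|$ is not the \emph{density} of connectors in the generating sequence but the \emph{number of distinct length-$N$ continuations} available after a connector. Since the language is closed under passing to subwords, a single occurrence of ``$d_0$, then $c$, then a rich $\mathcal{D}_1$-stretch'' anchors many length-$N$ words starting with $d_0$; making connectors rare does nothing to impoverish those continuations, so the asserted ``subexponential correction'' is unproved and false without further design. There is also a smaller but real defect: two disjoint alphabets of sizes $2$ and $3$ plus a separate connector require $g\geq 6$, whereas the proposition is claimed for all $g\geq 3$.

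The paper's proof supplies precisely the missing idea. It takes the orbit closure of a single sequence built by concatenating the blocks $hd^kz$ with $z\in\mathcal{D}^k$, so that \emph{every} occurrence of the distinguished digit $h$ is followed by a deterministic run $d^k$ occupying a fixed fraction (one half) of the word before any free choice is permitted. This forces the number of length-$2i$ words beginning with $h$ down to roughly $|\mathcal{D}|^{i-1}$ rather than $|\mathcal{D}|^{2i}$, giving $\dim_{\text{M}}\bigl(A_\Sigma\cap(g\N+h)\bigr)=\tfrac12\dim_{\text{M}}(A_\Sigma)$, and it uses only $|\mathcal{D}|+1\leq g$ symbols, hence works for all $g\geq 3$. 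Your construction could be salvaged by likewise forcing every connector to be followed by a long deterministic run whose length grows proportionally to the word, but that is the essential mechanism of the proof, not a routine verification.
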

\begin{proof}
      Let us choose a set of digits $\mathcal{D}\subsetneq \{0,\ldots,g-1\}$ with at least two elements, fixing some digits $h\notin \mathcal{D}$ and $d\in \mathcal{D}$.
    For every $k\in\N_0$, we define the set of words
    \[W_{k}:=\left\{hd^kz:\ z\in \mathcal{D}^k \right\}\subseteq \mathcal{L}(\{0,\cdots,g-1\}^{\N_0}),\]
where $d^k:=d\cdots d$ is the concatenation $k$ times of $d$. The length of any word in $W_k$ is $2k+1$ and $|W_k|=|\mathcal{D}|^k.$ Giving an enumeration to the $|\mathcal{D}|^k$ words of $W_k$, we can write
\[W_k=\left\{w_k^j: \ j=0,\cdots, |\mathcal{D}|^k-1 \right\}.\]
By concatenating all the words in the sets $W_k$, we define the infinite sequence \[w:=(w_0^0)(w_1^0w_1^1\cdots w_1^{|\mathcal{D}|-1})\cdots(w_k^0\cdots w_k^{|\mathcal{D}|^{k}-1})\cdots,\]
and we refer to the subword $w_k^0\cdots w_k^{|\mathcal{D}|^{k}-1}$ as the block induced by $W_k$. Considering the full shift $\left (\{0,\cdots,g-1\}^{\mathbb{N}_0},\sigma\right),$
we can define the transitive subshift
\[\Sigma=\overline{Orb_\sigma (w)}:=\overline{\left\{\sigma^n(w):\ n\in \N_0\right\}}.\] 
Also, it is straightforward that 
\[\mathcal{L}(\Sigma)=\Big\{x=x_0\cdots x_m:\ m\in \N_0,\ x \text{ appears in the sequence $w$}\Big\}.\]
We claim that $\dim_\text{M}(A_\Sigma)=\log(|\mathcal{D}|)/\log(g).$ To show it, notice  that $\mathcal{C}_{g,\mathcal{D}}\subseteq A_\Sigma$ as $\mathcal{D}^{\N_0} \subseteq \Sigma$.  Thus, \cref{dim_missingdigitsets} implies that \begin{equation}
\dim_\text{M}(A_\Sigma)\geq \dim_\text{M}(\mathcal{C}_{g,\mathcal{D}})=\log(|\mathcal{D}|)/\log(g).\label{proof:Transversality_1}
\end{equation}
For the other direction, we aim to estimate the number of elements in $A_\Sigma$ whose base-$g$ expansion has length at most $i \in \N$. After the block $W_i$, no new words of length at most $i$ appear in the sequence $w$. Therefore, any $n \in A_\Sigma \cap [0, g^i)$ can be generated by a subword with length at most $i$ of the sequence
\[w(i):=w_0^0w_1^0\cdots w_1^{|\mathcal{D}|-1} \cdots w_i^0\cdots w_i^{|\mathcal{D}|^i-1}.\]

In $w(i)$, there are $|w(i)|-(i-1)$ possible starting symbols for words of length $i$, so there are at most $|w(i)|-(i-1)$ words of length $i$. A quick estimation leads to
\[|w(i)|=\sum_{k=0}^{i}(2k+1)|\mathcal{D}|^k\leq (i+1)(2i+1)|\mathcal{D}|^i.\]
Therefore, $i(i+1)(2i+1)|\mathcal{D}|^i$ gives a rough estimation for the number of words of length at most $i$ that can be read in $w(i)$. Then, 

\[\dfrac{\log\left(|A_\Sigma \cap [0,g^i)|\right)}{i\log(g) } \leq \dfrac{\log(i(i+1)(2i+1))+i\log|\mathcal{D}|}{i\log(g)}.\]
Taking $i\to \infty$  and using \cref{lemma:DIM},  $\log(|\mathcal{D}|)/\log(g)\leq\dim_{\text{M}}(A_\Sigma).$ This concludes that $$\dim_{\text{M}}(A_\Sigma)=\log(|\mathcal{D}|)/\log(g).$$

Let $P:=g\N+h$ be an arithmetic progression. Notice that $n\in A_\Sigma\cap P$ if and only if $n\in A_\Sigma$ and the least significant digit of $n$\ in base $g$ is $h$. Thus, let us define $\Sigma(h):=\{v\in \mathcal{L}(\Sigma):\ v_0=h\},$  $\displaystyle A(h):=\left\{(v)_g:\ v\in \Sigma(h)\right\}\subseteq A_\Sigma$ and
 $r=g^2$. Notice that for any $i\in \N$  and $z\in \mathcal{D}^{i-1}$, $(hy^{i-1}z)_g\in A(h)\cap [0,r^i)$. Then, $|\mathcal{D}|^{i-1}\leq |A(h)\cap [0,r^i)|.$ 

On the other hand, after after the appearance of the word $w^0_{2i-1}$ in  the sequence $w$, the only word of length $2i$ that starts with $h$ and can be read is $hy^{2i-1}$. For $j<i$, there are $|\mathcal{D}|^{j}$ appearances of $h$ in the block $j$ of $w$, and each occurrence can be associated with one $2i$-long word. In the block $(w_i^0\cdots w_i^{|\mathcal{D}|^i-1})$, every word of length $2i$ starting with $h$ is given by $hy^iz$ for some $z\in \mathcal{D}^{i-1}$, hence, there are $|\mathcal{D}|^{i-1}$ of those words in $W_i$. More generally, for every $j\geq i$, every word of length $2i$ starting with $h$ in the block $W_j$ has the form $hy^jz$, where $z\in \mathcal{D}^{2i-j-1}$, so there are $|\mathcal{D}|^{2i-j-1}$ of such words in $W_j$. Using these estimates,
\[|\{w\in \Sigma(h):\ |w|=2i\}|\leq \sum_{j=0}^{i-1}|\mathcal{D}|^j+\sum_{j=i}^{2i-1}|\mathcal{D}|^{2i-j-1}\leq 2i|\mathcal{D}|^{i-1}.\]
Using the same estimate for words of length  less than $2i$, it follows that $$|A(h)\cap [0,r^i)|\leq (2i)^2|\mathcal{D}|^{i-1}.$$ Hence,
\[|\mathcal{D}|^{i-1}\leq |A(h)\cap [0,r^i)|\leq (2i)^2|\mathcal{D}|^{i-1}.\]
It follows that
\[\dfrac{i-1}{2i}\dfrac{\log(|\mathcal{D}|)}{\log(g)}\leq \dfrac{\log \left(|A(h)\cap [0,r^i)|\right)}{2i\log(g)}\leq \dfrac{2\log(2i)+i\log(|\mathcal{D}|)}{2i\log(g)},\]
and invoking \cref{lemma:DIM} we conclude
\begin{equation}
 \dim_{\text{M}}(A(h))=\dfrac{1}{2}\dfrac{\log(|\mathcal{D}|)}{\log(g)}.\label{Transversality_}   
\end{equation}
Since the arithmetic progression $P=g\N+h$ satisfies $P\cap A_\Sigma=A(h)$, from \eqref{Transversality_} follows that $$\dim_{\text{M}}(A_\Sigma\cap P)=\dfrac{\dim_{\text{M}}(A_\Sigma)}{2}.$$
\end{proof}
 Notice that for every rational $\alpha\in (0,1)$, it is possible to  modify the construction of the sets $W_j$ to construct a transitive subshift $\Sigma$ such that $\dim_{\text{M}}(A_\Sigma\cap P)=\alpha\dim_{\text{M}}(A_\Sigma)>0$.

\subsection{Positive answer for transitive sofic subshifts}

In this section, we will prove that the question \cite[Question 5.6]{GMR} has a positive answer when the multiplicatively invariant set can be represented by a subshift that is transitive and sofic.
\begin{remark}
    For the rest of this section, consider an integer $g\geq 2$ and $a\in \N$. Let $\Sigma\subseteq \{0,\ldots,g-1\}^{\N_0}$ be a transitive and sofic subshift, where $\mathcal{G}=(\mathcal{V},\mathcal{E},\lambda)$ is the Fischer cover of $\Sigma$. Let $p\in \N$ be as given by \cref{prop:periodicity}, so that $(p,a-1)$ forms an eventual period for the function $\mathbf{f}=\id$ with respect to $a$.\end{remark}
We utilize the sets $E_i$ from \cref{def:sets_E} to define a non-normalized version of the matrix $M$ and the measure $\mu$ introduced in \cref{Section3}. 
\begin{definition}
    For any $i\in \N$,  $b,b'\in \Z_a$ and $F,F'\in \mathcal{V}$, we define
\[\mathcal{M}_{i}\Big((b,F),(b',F')\Big)=|E_{i}(b'-b,F,F')|,\]
    and \[\nu_{i}(b,F)=|\{w\in \mathcal{L}^{i}(\Sigma):\  (w)_g\equiv b\Mod{a},\ F(w)=F\}|.\] 
\end{definition}
In a straightforward way, we can rewrite \cref{prop:keymarkov} in terms of $\mathcal{M}_i$ and $\nu_i$. 
\begin{corollary}
   If $i\geq a-1$ and $n\in\N$, $$\mathcal{M}_i^n\Big((b_0,F_0),(b_1,F_1)\Big)=|\{w\in \mathcal{L}^{np}(\Sigma):\ g^{i}(w)_g\equiv b'-b \Mod{a} \text{ and } F\overset{w}{\to}F'\}|.$$
 for any $b,b'\in \Z_a$ and $F,F'\in \mathcal{V}$. Also, $\nu_{i+np}=\nu_{i}\mathcal{M}_i^n.$\label{cor:keymarkov_2} 
\end{corollary}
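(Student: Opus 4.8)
The statement to prove is \cref{cor:keymarkov_2}, which reformulates \cref{prop:keymarkov} in terms of the non-normalized matrix $\mathcal{M}_i$ and measure $\nu_i$.

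\medskip

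The plan is to deduce this corollary directly from \cref{prop:keymarkov} by tracking how the normalization factor $k^{-p}$ was introduced. Recall that in \cref{def:matrix} and \cref{def:distribution} the quantities $M_i$ and $\mu_i$ were obtained from $|E_i(\cdot,\cdot,\cdot)|$ and the word-counts $|\{w\in\mathcal{L}^i_{\mathcal{V},\ell}(\Sigma):\ldots\}|$ by dividing through by $k^p$ and by $|\mathcal{L}^i_{\mathcal{V},\ell}(\Sigma)|$ respectively. The non-normalized objects $\mathcal{M}_i$ and $\nu_i$ are precisely what one gets before performing these divisions — with the extra simplification that in the present setting $f=\id$, so the eventual period is $(p,a-1)$ and $\mathbf{b}$ is replaced by a scalar $b\in\Z_a$, and we count over the full language $\mathcal{L}^i(\Sigma)$ rather than the $\ell$-restricted language. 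So the first step I would take is to observe that the first assertion of \cref{cor:keymarkov_2}, namely
\[\mathcal{M}_i^n\Big((b_0,F_0),(b_1,F_1)\Big)=\big|\{w\in\mathcal{L}^{np}(\Sigma):\ g^i(w)_g\equiv b_1-b_0\Mod a,\ F_0\overset{w}{\to}F_1\}\big|,\]
is literally \eqref{eq:keymarkov_1} of \cref{prop:keymarkov} multiplied by $k^{pn}$: since $\mathcal{M}_i=k^p M_i$ entrywise, we have $\mathcal{M}_i^n=k^{pn}M_i^n$, and \eqref{eq:keymarkov_1} says $M_i^n(s,s')=k^{-pn}\,|\{w\in\mathcal{L}^{pn}(\Sigma):\ldots\}|$. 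The case $f=\id$ only streamlines the congruence condition. This step is essentially bookkeeping and carries no real difficulty.

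\medskip

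For the second assertion, $\nu_{i+np}=\nu_i\mathcal{M}_i^n$, I would argue in one of two equivalent ways. The cleaner route is to re-run the combinatorial identity established in the proof of \cref{prop:keymarkov}: exactly as in \eqref{proof_keymarkov:1}, a word $w\in\mathcal{L}^{i+np}(\Sigma)$ with $(w)_g\equiv b\Mod a$ and $F(w)=F$ splits uniquely (after fixing the intermediate follower set at position $i$) as $w=xy$ with $x\in\mathcal{L}^i(\Sigma)$, $(x)_g\equiv b'\Mod a$, $F(x)=F'$, and $y\in\mathcal{L}^{np}(\Sigma)$, $g^i(y)_g\equiv b-b'\Mod a$, $F'\overset{y}{\to}F$, the eventual periodicity $(p,a-1)$ being used to ensure $g^i(y)_g$ and $g^{i+np}(y)_g$ have the same residue, and here $i\geq a-1$. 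Summing over all $(b',F')\in\Z_a\times\mathcal{V}$ and invoking the first assertion gives $\nu_{i+np}(b,F)=\sum_{(b',F')}\nu_i(b',F')\,\mathcal{M}_i^n\big((b',F'),(b,F)\big)=(\nu_i\mathcal{M}_i^n)(b,F)$. Alternatively — and this is the shortcut I would actually write — one notes from \cref{def:distribution} and \cref{prop:cardinality} that $\nu_i=|\mathcal{L}^i(\Sigma)|\cdot\mu_i$ once we work with the full language (taking $\ell=a-1$, or noting that for the present purpose no $\ell$-restriction is needed because every sufficiently long word is synchronizing only matters when constructing $\mu_i$; for $\nu_i$ we simply count all words), and $|\mathcal{L}^{i+np}(\Sigma)| = k^{np}|\mathcal{L}^i(\Sigma)|$ is again \cref{prop:cardinality}; then $\nu_{i+np} = |\mathcal{L}^{i+np}(\Sigma)|\mu_{i+np} = k^{np}|\mathcal{L}^i(\Sigma)|\,\mu_i M_i^n = |\mathcal{L}^i(\Sigma)|\,\mu_i\,(k^{np}M_i^n) = \nu_i\mathcal{M}_i^n$, using \eqref{eq:keymarkov_2}.

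\medskip

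The only genuine subtlety — and the step I would flag as the main thing to get right — is the interplay between the $\ell$-restricted language used in \cref{Section3} and the full language used here: \cref{prop:keymarkov} and \cref{prop:cardinality} are stated for $\mathcal{L}^i_{\mathcal{V},\ell}$, whereas $\nu_i$ counts over all of $\mathcal{L}^i(\Sigma)$. To bridge this, I would observe that the combinatorial decomposition \eqref{proof_keymarkov:1} in the proof of \cref{prop:keymarkov} is already stated at the level of the full language $\mathcal{L}^{pn}(\Sigma)$ (only the initial-distribution part uses $\mathcal{L}^i_{\mathcal{V},\ell}$), so the first assertion of the corollary transfers verbatim; and for the second assertion the self-contained re-derivation of the splitting $w=xy$ above works over the full language without any reference to follower sets of length-$\ell$ prefixes, because the decomposition only requires reading $w$ along a path in $\mathcal{G}$ and recording the intermediate node — which is well-defined for every word in the language via right-resolvingness of the Fischer cover after the synchronizing portion. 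Thus I would present the proof as: (i) note $\mathcal{M}_i = k^p M_i$ and hence $\mathcal{M}_i^n = k^{pn}M_i^n$, giving the first claim from \eqref{eq:keymarkov_1}; (ii) establish the $w=xy$ factorization bijection over the full language and sum to get $\nu_{i+np}=\nu_i\mathcal{M}_i^n$, citing the eventual period $(p,a-1)$ and $i\geq a-1$ to license the residue identity.
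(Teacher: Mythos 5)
Your overall plan --- restating \cref{prop:keymarkov} with the normalizations stripped out --- is exactly what the paper intends (the paper offers no proof beyond the remark that the rewriting is straightforward), and your ``cleaner route'' of re-running the factorization $w=xy$ is the right way to carry it out. The genuine problem is with the version you say you would actually write. In the section where \cref{cor:keymarkov_2} lives, $\Sigma$ is only assumed transitive and sofic; it is \emph{not} assumed regular. Consequently the constant $k$, the normalized matrix $M_i$ of \cref{def:matrix}, the measure $\mu_i$ of \cref{def:distribution}, and the cardinality formula of \cref{prop:cardinality} are all unavailable --- indeed, the entire point of introducing $\mathcal{M}_i$ and $\nu_i$ is to have objects that make sense without regularity. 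So the identities $\mathcal{M}_i=k^pM_i$, $\nu_i=|\mathcal{L}^i(\Sigma)|\cdot\mu_i$ and $|\mathcal{L}^{i+np}(\Sigma)|=k^{np}|\mathcal{L}^i(\Sigma)|$ cannot be invoked here; moreover the last two are wrong even under regularity, since \cref{prop:cardinality} and \cref{def:distribution} concern the restricted language $\mathcal{L}^i_{\mathcal{V},\ell}$ rather than $\mathcal{L}^i(\Sigma)$. The repair is to drop the rescaling argument entirely and prove both assertions by the same induction as in \cref{prop:keymarkov}: the counting identity \eqref{proof_keymarkov:1} uses no regularity, so it directly shows that $\mathcal{M}_i^{n+1}(s,s')=\sum_{\tilde s}\mathcal{M}_i^n(s,\tilde s)\mathcal{M}_i(\tilde s,s')$ equals the stated word count, and \cref{lemma:sets_E} then handles the concatenation step.

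A second, smaller caveat concerns the assertion $\nu_{i+np}=\nu_i\mathcal{M}_i^n$, which you raise but resolve too optimistically. For $w\in\mathcal{L}^{i+np}(\Sigma)$ with $F(w)=F\in\mathcal{V}$, the prefix $x=w_0\cdots w_{i-1}$ need not satisfy $F(x)\in\mathcal{V}$ (a word may become synchronized only after position $i$), so the ``intermediate node at position $i$'' is not always well defined over the full language, and the factorization argument literally yields only $\nu_{i+np}(b,F)\geq(\nu_i\mathcal{M}_i^n)(b,F)$: every pair $(x,y)$ with $F(x)=F'\in\mathcal{V}$ and $F'\overset{y}{\to}F$ injects into the words counted by $\nu_{i+np}(b,F)$, but not conversely. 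This imprecision is inherited from the paper (which treats the rewriting as immediate), and the inequality is all that is used downstream in \cref{thm:Transversality}; still, if you claim the equality as stated you should either justify that every relevant length-$i$ prefix has follower set in $\mathcal{V}$ or weaken the conclusion to the inequality.
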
 
The main idea can be summarized as follows: Firstly, we want to find a word $w\in \mathcal{L}(\Sigma)$ such that $i:=|w|\geq a-1$, $(w)_g\equiv b\Mod{a}$ and $F(w)\in \mathcal{V}$. By studying the extensions of $w$ to words of length $i+pn$ for some $n\in\N$, we will see that a significant number of these extensions $\Tilde{w}$ satisfy $(\Tilde{w})_g\equiv b\Mod{a}$. As a conclusion, we aim to show $\dim_{\text{M}}(A_\Sigma\cap (a\N+b))=\dim_{\text{M}}(A_\Sigma)$. If such a word $w$ does not exist, then the intersection will be finite (hence, zero-dimensional).

For a given word $w$, it is easy to compute $(w)_g \Mod{a}$, however, decide whether $F(w)\in \mathcal{V}$ or not could be challenging if we don't know the Fischer cover explicitly. The following lemma shows that, for our purposes, such a check is not required.

\begin{lemma} Let $w\in \mathcal{L}(\Sigma)$ such that $|w|\geq a-1$, and $F(w)\notin \mathcal{V}$. Then, there exists an extension $\Tilde{w}$ of $w$ such that $(w)_g\equiv (\Tilde{w})_g\Mod{a}$, $F(\Tilde{w})\in \mathcal{V}$ and $|\Tilde{w}|=|w|+pn$ for some $n\in\N$. \label{lemma:Extension_to_synchronizing}
\end{lemma}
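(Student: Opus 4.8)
The plan is to extend the word $w$ by a synchronizing word whose base-$g$ value vanishes modulo $a$ and whose length is a multiple of $p$. Since $\Sigma$ is transitive and sofic, it has synchronizing words, and moreover every word can be extended to a synchronizing word; so pick a synchronizing word $u$ such that $wu \in \mathcal{L}(\Sigma)$. First I would modify $u$ to control its length and value: by \cref{prop:periodicity} we have $dg^{j}\equiv dg^{j+np}\Mod a$ for $j\ge a-1$, and more relevantly the trick from \cref{aperiodicity}-type computations shows that a word of the form $d\cdots d$ of length a suitable multiple of $p$ contributes $0$ modulo $a$ — but here we cannot freely choose digits, so instead I would pad $u$ on the right by $0$'s (note $0\in\mathcal L(\Sigma)$ is not guaranteed, so this is not automatic).

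Here is the cleaner route I would actually carry out. Let $u$ be any synchronizing extension of $w$, so $wu\in\mathcal L(\Sigma)$ and $F(wu)\in\mathcal V$ (follower sets of synchronizing words are exactly the nodes of the Fischer cover). Now $|wu|$ and $(wu)_g\Mod a$ are whatever they are; I want to further extend $wu$ to fix both the length modulo $p$ and the value modulo $a$. Because $F(wu)\in\mathcal V$ and the Fischer cover $\mathcal G$ is irreducible (transitivity), there is a cycle in $\mathcal G$ based at the node $F(wu)$; let $v$ be the label of such a cycle, so $(wu)v^k\in\mathcal L(\Sigma)$ for every $k\in\N$ and $F((wu)v^k)=F(wu)\in\mathcal V$. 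Replacing $v$ by a concatenation of the cycle with itself, I may assume $p\mid |v|$. Then for any $k$, set $\Tilde w = (wu)v^k$; we have $|\Tilde w| = |wu| + k|v|$ and
\[(\Tilde w)_g \equiv (wu)_g + g^{|wu|}(v)_g\big(1 + g^{|v|} + \cdots + g^{(k-1)|v|}\big)\Mod a.\]
The remaining task is to choose an appropriate further extension so that $(\Tilde w)_g\equiv (w)_g\Mod a$ and $|\Tilde w|\equiv |w|\Mod p$; the length condition is arranged by first padding $wu$ (or $v$) to make $|wu|\equiv|w|\Mod p$ using extra copies of the cycle, and the value condition is the real point.

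To kill the value discrepancy $\delta:=(w)_g-(wu)_g\Mod a$ I would use a pigeonhole/periodicity argument on the sequence of partial sums. Consider extending $wu$ by longer and longer prefixes of $v^\infty$ taken at lengths that are multiples of $p$; the resulting values modulo $a$ form an eventually periodic sequence in $\Z_a$ (this is exactly the content of eventual periodicity, \cref{def:periodic}, applied to $\mathbf f=\id$, together with $g^p\equiv$ something fixed). If this orbit is a full coset of a subgroup containing $\delta$ we are done; if not, I would instead argue differently — since we only need the \emph{existence} of some extension with the right residue, and since the problem later only needs this for the purpose of a mass-dimension lower bound, it may in fact suffice to extend $w$ so that $F(\Tilde w)\in\mathcal V$ and $(\Tilde w)_g\equiv (w)_g\Mod a$ \emph{without} the length condition $p\mid(|\Tilde w|-|w|)$, since \cref{def:periodic} then lets us adjust the length by appending $0$-free cycles whose total length is a multiple of $p$ and whose value is congruent to $0$; but as noted, producing a cycle of value $0\Mod a$ and length divisible by $p$ at a given node requires care. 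The honest fix: extend $wu$ by the cycle $v$ a number of times $k$ chosen so that $k|v|\equiv (|w|-|wu|)\Mod{pa}$ — possible once we also allow multiplying $v$'s length — and then the value $(\Tilde w)_g$ runs over $(wu)_g + g^{|wu|}(v)_g\cdot S_k\Mod a$ where $S_k=1+g^{|v|}+\cdots+g^{(k-1)|v|}$; since $g^{|v|}$ has finite multiplicative order modulo $a/\gcd(a,\text{stuff})$, the map $k\mapsto S_k\Mod a$ is eventually periodic, and I would choose $k$ in the right residue class both for the length and to hit the target value, invoking the Chinese-remainder-type independence of the length constraint (mod $pa$) from the value constraint.

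The main obstacle is precisely this last coordination: we need a single extension satisfying \emph{both} the congruence $(\Tilde w)_g\equiv (w)_g\Mod a$ \emph{and} $|\Tilde w|\equiv|w|\Mod p$, and the available degree of freedom (number of times a cycle is traversed, and which cycle) must be shown to be rich enough. I expect the resolution to use that any node of the Fischer cover lies on cycles of many different lengths (again by irreducibility plus the existence of a loop somewhere, or by a standard gcd-of-cycle-lengths argument), so that one has enough flexibility to solve the simultaneous congruences; alternatively, one reduces to \cref{prop:periodicity} by first reaching, via a synchronizing word, a configuration where appending blocks $d^{cp}$ with $c$ large is legal. Once $\Tilde w$ is produced with $F(\Tilde w)\in\mathcal V$, $(\Tilde w)_g\equiv(w)_g\Mod a$, and $|\Tilde w|=|w|+pn$, the lemma is proved.
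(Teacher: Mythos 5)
Your setup is the right one (extend $w$ by a synchronizing word to land in $\mathcal{V}$, then exploit cycles of the Fischer cover and the periodicity $g^{i+p}\equiv g^i\Mod{a}$ for $i\geq a-1$), but the proof is not complete: you explicitly leave open the ``coordination'' of the value congruence with the length congruence, and the route you sketch for it does not work in general. Once you append the synchronizing word $u$, you have introduced a fixed discrepancy $g^{|w|}(u)_g\Mod{a}$, and you then try to cancel it by traversing a cycle $v$ at $F(wu)$ some number of times. But the values contributed by $k$ traversals lie in the coset generated by $g^{|wu|}(v)_g$, and there is no reason this subgroup of $\Z_a$ contains $-g^{|w|}(u)_g$; your own hedge (``if this orbit is a full coset of a subgroup containing $\delta$ we are done; if not, I would instead argue differently'') concedes exactly this. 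The fallback of appending blocks $d^{cp}$ of value $0$ also fails as stated: without $\gcd(g,a)=1$ (which is \emph{not} assumed in this lemma, unlike in \cref{aperiodicity}) a constant block of length a multiple of $p$ need not have value $0\Mod{a}$, and in any case a general sofic shift need not permit repeating a single digit at the relevant node.

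The missing idea, which is how the paper closes the argument, is to put the synchronizing word \emph{inside} the repeated block rather than in front of it, so that no congruence ever has to be solved. Concretely: take $x,y$ with $wxy\in\mathcal{L}(\Sigma)$ and $y$ synchronizing, arrange $p\mid|xy|$, pick a path labelled $xy$ from some $F'\in\mathcal{V}$ to $F:=F(y)$, and use irreducibility to find $z$ labelling a path from $F$ back to $F'$; then $xyz$ is (the label of) a cycle and $\Tilde{w}:=w(xyz)^{ap}$ is legal. Grouping the $ap$ copies into $a$ blocks of $p$ copies each, every block has length $p\,|xyz|$ (a multiple of $p$) and starts at an exponent $\geq a-1$, so by \cref{prop:periodicity} each of the $a$ blocks contributes the \emph{same} value modulo $a$; their sum is therefore $a$ times something, i.e.\ $\equiv 0\Mod{a}$, whatever $(xyz)_g$ is. This gives $(\Tilde{w})_g\equiv(w)_g\Mod{a}$, $|\Tilde{w}|=|w|+ap\,|xyz|$, and $F(\Tilde{w})=F'\in\mathcal{V}$ because $\Tilde{w}$ ends in $yz$ with $y$ synchronizing. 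Your draft never reaches this cancellation-by-$a$-fold-repetition mechanism, so as written the lemma remains unproved.
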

\begin{proof}
    Since $\Sigma$ is transitive and sofic, there exists a synchronizing word $y\in \mathcal{L}(\Sigma)$, and also there exists $x\in \mathcal{L}(\Sigma)$ such that $wxy\in \mathcal{L}(\Sigma)$. Since $y$ is synchronizing, any extension of $y$ is also synchronizing. Thus, we can suppose  $|xy|=mp$ for some $m\in \N$ (otherwise, we can extend $y$ to meet that condition). Since $y$ is synchronizing, the follower set of any word that ends with the subword $y$ is equal to $F(y)$. In particular,  $F:=F(wxy)=F(xy)=F(y)\in \mathcal{V}.$ Furthermore, every path in $\mathcal{G}$ labelled by $xy$ has $F$ as terminal node, and we can find a path $\pi$ labelled by $xy$ starting at some $F'\in \mathcal{V}$. As $\mathcal{G}$ is an irreducible graph, there is a path from $F$ to $F'$, labelled by some $z\in \mathcal{L}(\Sigma).$ For any $k\in\N$, denote $$(xyz)^k=\underbrace{(xyz)\cdots (xyz)}_{\text{$k$ times}}\in \mathcal{L}(\Sigma).$$
    Let $\Tilde{w}=w(xyz)^{ap}$, and $\ell:=|xyz|$. Using that $g^{p+j}\equiv g^p\Mod{a}$ for all $j\geq a-1$,
    $$(\Tilde{w})_g=(w)_g+g^{|w|}(xyz)^p+\cdots+g^{|w|+(a-1)p\ell}(xyz)^p=(w)_g+ag^{|w|}(xyz)^p\equiv (w)_g\Mod{a}.$$
    As the path associated to $z$ ends at $F'$, it follows that $F(\Tilde{w})=F'\in \mathcal{V}$. Also, notice that $|\Tilde{w}|=|w|+\ell a p$.
\end{proof}

We now establish a sufficient condition to ensure that extensions of a certain word $w$ will visit certain congruence classes and follower sets.
\begin{lemma}
    Let $w\in \mathcal{L}(\Sigma)$ such that $|w|\geq a-1$, $(w)_g\equiv b\Mod{a}$ and $F(w)\in \mathcal{V}$. If $\mathcal{M}^n_{|w|}\Big((b',F'),(b,F(w)\Big)>0$ for some $(b',F')\in \Z_a\times \mathcal{V}$ and $n\in \N$, there exists $\Tilde{w}\in \mathcal{L}(\Sigma)$ such that $(\Tilde{w})_g\equiv b'\Mod{a}$, $F(\Tilde{w})=F'$ and $|\Tilde{w}|=|w| +mp$ for some $m\in \N$.\label{lemma:Returning}
\end{lemma}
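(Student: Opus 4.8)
The statement asserts a sort of "time-reversal" reachability: if the $n$-th power of the non-normalized transition matrix $\mathcal{M}_{|w|}$ has a positive entry from $(b',F')$ to $(b,F(w))$, then one can realize $(b',F')$ as the data $(\,(\tilde w)_g \bmod a,\ F(\tilde w)\,)$ for an extension $\tilde w$ of... well, not of $w$ itself, but for some word of the right length. The key point is that a positive entry $\mathcal{M}^n_{|w|}\big((b',F'),(b,F(w))\big)>0$ is, by \cref{cor:keymarkov_2}, exactly the existence of a word $v\in\mathcal{L}^{np}(\Sigma)$ with $g^{|w|}(v)_g\equiv b-b'\Mod a$ and $F'\overset{v}{\to}F(w)$. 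So the plan is: first unpack this into a concrete word $v$; then construct a word $u$ with $F(u)=F'$ (using transitivity/soficity and that $F'\in\mathcal{V}$, so $F'$ is the follower set of some synchronizing word, which we may take of length a multiple of $p$ and at least $a-1$ after padding); then show $uv$ is in the language, compute $(uv)_g$ and $F(uv)$, and finally reconcile the length so that $|\tilde w|=|w|+mp$.

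\textbf{Key steps.} First I would invoke \cref{cor:keymarkov_2} to rewrite the hypothesis $\mathcal{M}^n_{|w|}\big((b',F'),(b,F(w))\big)>0$ as: there exists $v\in\mathcal{L}^{np}(\Sigma)$ with $g^{|w|}(v)_g\equiv b-b'\Mod a$ and $F'\overset{v}{\to}F(w)$. Second, I would produce a word $u\in\mathcal{L}(\Sigma)$ with $F(u)=F'$: since $F'\in\mathcal{V}$, by the construction of the Fischer cover $F'=F(y)$ for some synchronizing word $y$; using \cref{prop:periodicity}-style padding (extending $y$ by a synchronizing suffix, e.g. concatenating a word that loops appropriately, any extension of a synchronizing word being synchronizing) I can arrange $|u|\ge a-1$ and $|u|\equiv 0\Mod p$, while keeping $F(u)=F(y)=F'$. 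Third, because $F'\overset{v}{\to}F(w)$ means $v\in F(u)$, the concatenation $\tilde w:=uv$ lies in $\mathcal{L}(\Sigma)$ and $F(\tilde w)=F(w)$ — wait, that gives $F(\tilde w)=F(w)$, not $F'$; so actually the roles must be read the other way. Re-examining: we want $F(\tilde w)=F'$, so the path labelled by the "new" part must end at $F'$. The correct reading is to take $u$ with $F(u)=F(w)$ — but $F(w)\in\mathcal V$ is already given — and append a word $v'$ with $F(w)\overset{v'}{\to}F'$ realizing the transition in the matrix power; here one uses that $\mathcal M_{|w|}$ being a "doubly stochastic up to $k^p$" transition structure lets us transpose, or more directly, one constructs $v'$ from the hypothesis together with irreducibility of $\mathcal{G}$ to route from $F(w)$ back to $F'$. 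Then $\tilde w=wv'$ works directly as an extension of $w$, with $|\tilde w|=|w|+|v'|$ and $|v'|$ a multiple of $p$; and $(\tilde w)_g=(w)_g+g^{|w|}(v')_g$, so using the eventual-periodicity congruence $g^{|w|+jp}\equiv g^{|w|}\Mod a$ for $j\ge 0$ (valid since $|w|\ge a-1$) one computes $(\tilde w)_g\equiv b+(b'-b)\equiv b'\Mod a$.

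\textbf{Main obstacle.} The delicate point is matching both the congruence class \emph{and} the terminal follower set \emph{and} the length-divisibility simultaneously, since the matrix power $\mathcal{M}^n_{|w|}$ only directly gives me a word realizing one direction of the transition. Concretely, the hypothesis gives a word $v\in\mathcal L^{np}(\Sigma)$ with $F'\overset{v}{\to}F(w)$ and $g^{|w|}(v)_g\equiv b-b'\Mod a$; to turn this into an \emph{extension of $w$} ending at $F'$ I must travel the cycle-structure of the (irreducible) Fischer cover in the opposite sense, which I expect to handle exactly as in \cref{lemma:Extension_to_synchronizing}: use irreducibility of $\mathcal{G}$ to find a return word $z$ from $F(w)$ to $F'$, then note that a suitable power $(vz)^{a}$ (or similar) contributes a multiple of $a$ times $g^{|w|}$ to the integer value, hence does not disturb the residue, while ending at the desired node and having length a multiple of $p$. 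So the proof is essentially: apply \cref{cor:keymarkov_2}, then apply the cycle-padding trick of \cref{lemma:Extension_to_synchronizing} to fix up the terminal node and length, then do the short $g$-additivity computation for the residue. The only real work is the bookkeeping in this padding step; everything else is routine.
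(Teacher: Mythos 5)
Your strategy is the same as the paper's: unpack $\mathcal{M}^n_{|w|}\big((b',F'),(b,F(w))\big)>0$ via \cref{cor:keymarkov_2} into a word $y\in\mathcal{L}^{np}(\Sigma)$ with $F'\overset{y}{\to}F(w)$ and $g^{|w|}(y)_g\equiv b-b'\Mod{a}$, then use irreducibility of the Fischer cover to get a return path from $F(w)$ to $F'$ and pad with cycles so that the terminal node is $F'$, the added length is a multiple of $p$, and the residue comes out to $b'$. You also correctly isolate the padding bookkeeping as the only real work. But your sketch of that bookkeeping has a concrete slip: you claim a cycle power such as $(vz)^a$ "contributes a multiple of $a$ times $g^{|w|}$ to the integer value, hence does not disturb the residue." A padding that leaves the residue undisturbed would produce $(\tilde w)_g\equiv b$, not $b'$; the padding must shift the residue by exactly $b'-b$, which contradicts the property you are aiming for. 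The paper's resolution is the asymmetric concatenation $\tilde w=w(xy)^{a-1}x$, where $x$ labels a path $F(w)\to F'$ whose length is forced to be divisible by $p$ by taking $(\pi_2\pi_1)^{p-1}\pi_2$ (a detail you leave implicit, since nothing controls $|\pi_2|$ a priori). This word contains $a$ copies of $x$, whose contributions sum to $0\Mod{a}$, and only $a-1$ copies of $y$, whose contributions sum to $(a-1)(b-b')\equiv b'-b\Mod{a}$, giving $(\tilde w)_g\equiv b+(b'-b)\equiv b'$. So the unequal multiplicities of $x$ and $y$ are the whole point, and "kill the residue with a full power of $a$" is exactly the wrong target.

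One smaller remark: your aside that the "doubly stochastic up to $k^p$" structure "lets us transpose" is not available here. In this section $\Sigma$ is only assumed transitive and sofic, not regular, so $\mathcal{M}_i$ has no double-stochasticity to exploit; the irreducibility route you mention as the alternative is the one that actually works.
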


\begin{proof}
    As $\mathcal{M}^n_{|w|}\Big((b',F'),(b,F(w)\Big)>0$,  \cref{cor:keymarkov_2} provides the existence of a word $y$ with length $np$ such that $g^{|w|}(y)_g\equiv b-b'\Mod{a}$ and $ F'\overset{y}{\to}F(w).$ Equivalently, we can find a path $\pi_1$ in $\mathcal{G}$ with length $np$, labelled by $y$ and from  $F'$ to $F(w)$. Since the Fischer cover is irreducible, there is a path $\pi_2$ from $F(w)$ to $F'$, thus the path $\Tilde{\pi}:=(\pi_2\pi_1)^{p-1}\pi_2$ goes from $F(w)$ to $F'$, and $|\Tilde{\pi}|=p|\pi_2|+(p-1)|\pi_1|$ is divisible by $p$. If $\Tilde{\pi}$ is labelled by some $x\in \mathcal{L}(\Sigma)$, we can define $\Tilde{w}:=w(xy)^{a-1}x\in \mathcal{L}(\Sigma)$. Since the terminal node for $\Tilde{\pi}$ is $F'$, $F(\Tilde{w})=F'$. As $|x|$ and $|y|$ are divisible by $p$, it follows that $|\Tilde{w}|=|w|\Mod{p}$, and the periodicity implies
    \[(\Tilde{w})_g\equiv (w)_g+(a-1)g^{|w|}(x)_g+(a-1)g^{|w|}(y)_g+g^{|w|}(x)_g\equiv b+(a-1)g^{|w|}(y)_g\Mod{a}.\]
    Since $g^{|w|}(y)_g\equiv b-b'$, it follows $(\Tilde{w})_g\equiv b+(a-1)(b-b')\equiv b'\Mod{a}.$
\end{proof}
By combining \cref{lemma:Extension_to_synchronizing}, \cref{lemma:Returning} and ideas of previous sections, we can show a more general version of \cref{thmC}.

\begin{theorem}
    Let $\Sigma\subseteq \{0,\cdots,g-1\}^{\N_0}$ be a transitive sofic subshift, and $a\in \N$. If there exists $w\in \mathcal{L}(\Sigma)$ such that $|w|\geq a-1$ and $(w)_g\equiv b\Mod{a}$, then \begin{equation}
      \dim_{\text{M}}(A_\Sigma\cap (a\N+b))=\dim_{\text{M}}(A_\Sigma). \label{eq:thmB}  
    \end{equation}
    In particular, either \eqref{eq:thmB} holds, or $|A_\Sigma\cap (a\N+b)|\leq g^a$.\label{thm:Transversality}
\end{theorem}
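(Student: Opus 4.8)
The plan is to fix such a word $w$ and analyze the extensions of $w$ of length $|w|+np$ in order to produce many integers in $A_\Sigma\cap(a\N+b)$, thereby matching the mass dimension of $A_\Sigma$ from below; the reverse inequality is trivial since $A_\Sigma\cap(a\N+b)\subseteq A_\Sigma$. By \cref{lemma:Extension_to_synchronizing}, we may assume without loss of generality that $F(w)\in\mathcal{V}$: replacing $w$ by an extension $\tilde w$ with $(\tilde w)_g\equiv b\Mod a$, $F(\tilde w)\in\mathcal{V}$, and $|\tilde w|\equiv|w|\Mod p$ changes neither the residue nor (eventually) the value of $|w|\Mod p$, and only enlarges $|w|$. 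So set $i:=|w|\geq a-1$ and $F:=F(w)\in\mathcal{V}$.

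The next step is to count extensions. For each $n\in\N$, the words of length $i+np$ extending $w$ and representing integers $\equiv b\Mod a$ are in bijection with the paths of length $np$ in $\mathcal{G}$ starting at $F$ whose label $y$ satisfies $g^i(y)_g\equiv 0\Mod a$; by \cref{cor:keymarkov_2} (or directly \cref{prop:keymarkov}) the number of such paths equals $\sum_{F'\in\mathcal{V}}\mathcal{M}_i^n\big((b,F),(b,F')\big)$. I would lower-bound this by $\mathcal{M}_i^n\big((b,F),(b,F)\big)$, the number of returns to $(b,F)$. Here the relevant object is the matrix $\mathcal{M}_i$ restricted to the communicating class of $(b,F)$: by transitivity of $\Sigma$ (equivalently, irreducibility of the Fischer cover $\mathcal{G}$) combined with the eventual periodicity, this class is nonempty and $\mathcal{M}_i$ is an irreducible nonnegative integer matrix on it. Its Perron eigenvalue $\Lambda$ controls the growth: $\mathcal{M}_i^n\big((b,F),(b,F)\big)$ grows like $\Lambda^n$ up to subexponential factors (using irreducibility to avoid periodicity issues in the $\liminf$, e.g. by summing over a window of $p$ consecutive exponents or by passing to $\mathcal{M}_i^{dp}$ for a suitable $d$). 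On the other hand, $\Lambda$ must equal $k^p$, the total number of length-$p$ paths from any fixed node: indeed summing $\mathcal{M}_i^n$ over the full state space $\Z_a\times\mathcal{V}$ gives exactly $k^{np}$ by $k$-regularity (\cref{prop:E_size}, \cref{prop:stochastic}), and one shows $1/|\mathcal{S}|$ times the all-ones vector — or rather the uniform vector on each irreducible block — witnesses that the spectral radius on the relevant block is still $k^p$; alternatively, and more cleanly, I would argue that $A_\Sigma\cap(a\N+b)$ restricted to these extensions already has at least $\approx k^{np}/|\mathcal{S}|$ elements below $g^{i+np}$, because the doubly stochastic matrix $M_i$ (\cref{def:matrix}, which is $k^{-p}\mathcal{M}_i$ with a shift of index) has, by irreducibility on its communicating class together with \cref{thm:ConvergenceMC}, the property that $\mathbb{P}(X^i_n=(b,F))$ stays bounded below by a positive constant.

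With the count $|A_\Sigma\cap(a\N+b)\cap[0,g^{i+np})|\geq c\,k^{np}$ for some $c>0$ and all large $n$, I then invoke \cref{lemma:DIM} with $r=g^p$:
\[
\underline{\dim}_{\text M}(A_\Sigma\cap(a\N+b))\geq \liminf_{n\to\infty}\frac{\log(c\,k^{np})}{(n+\lceil i/p\rceil)\log(g^p)}=\frac{\log k}{\log(g^p)}\cdot p=\frac{\log k}{\log g},
\]
and since \cref{languageset} (together with $k$-regularity of the Fischer cover, which forces $|\mathcal{L}^N(\Sigma)|\asymp k^N$ and hence $h(\Sigma)=\log k$) gives $\dim_{\text M}(A_\Sigma)=h(\Sigma)/\log g=\log k/\log g$, we conclude $\dim_{\text M}(A_\Sigma\cap(a\N+b))=\dim_{\text M}(A_\Sigma)$. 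Wait — I should be careful: the theorem statement does not assume $\Sigma$ is regular, so $|\mathcal{L}^N(\Sigma)|$ need not be exactly $k^N$; in the general transitive sofic case I would instead compare directly, showing $|A_\Sigma\cap(a\N+b)\cap[0,g^{i+np})|\geq c'\,|\mathcal{L}^{np}(\Sigma)|/|\mathcal{S}|$ by the same extension argument (every label of a length-$np$ path from $F$ lands in some residue class and some terminal follower set, and the counts are balanced enough — this is where \cref{lemma:Returning} is needed to show all states in the communicating class are hit with comparable frequency, so a positive proportion land in class $(b,F)$), which directly yields $\underline{\dim}_{\text M}(A_\Sigma\cap(a\N+b))\geq h(\Sigma)/\log g$.

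For the dichotomy: if no word $w$ with $|w|\geq a-1$ has $(w)_g\equiv b\Mod a$, then every integer in $A_\Sigma\cap(a\N+b)$ is represented by a word of length $\leq a-1$ (one must check that a word of length $\geq a$ ending in $0$ reduces to a shorter word, so the shortest representation has length $<a$; and since there are no length-$(a-1)$ or longer representations landing in class $b$, all representations have length $\leq a-1$), hence $|A_\Sigma\cap(a\N+b)|\leq |\{0,\dots,g-1\}^{a-1}|=g^{a-1}\leq g^a$. \textbf{The main obstacle} I anticipate is the exact-growth/balancing step: pinning down that the returns $\mathcal{M}_i^n((b,F),(b,F))$ (or the frequency of state $(b,F)$) grow at the full exponential rate $|\mathcal{L}^{np}(\Sigma)|$ up to a constant, rather than a strictly smaller exponential — this requires genuinely using irreducibility of $\mathcal{M}_i$ on the communicating class of $(b,F)$ together with the fact that summing over all states recovers the full language count, and handling the potential periodicity of $\mathcal{M}_i$ on that class (which, unlike in the Markov-condition setting, we are not assuming away) by working along the arithmetic progression of exponents where returns are positive and checking that progression has the right density to not cost us any dimension.
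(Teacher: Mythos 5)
Your skeleton matches the paper's: reduce to $F(w)\in\mathcal{V}$ via \cref{lemma:Extension_to_synchronizing}, count words realizing the state $(\,b,F(w)\,)$ at lengths $i+np$ using the unnormalized matrices, and convert to dimension via (a variant of) \cref{lemma:DIM}; the dichotomy is handled the same way. But the core counting step is where you diverge, and as written it has a genuine gap that you yourself flag as ``the main obstacle'' without resolving. You fix $w$ and try to show that a \emph{positive proportion} of its length-$(i+np)$ extensions land back in residue class $b$ --- i.e.\ you need a lower bound on the row entry $\mathcal{M}_i^n((b,F),(b,F))$ (or on $\mathbb{P}(X_n^i=(b,F))$) of the full exponential order $|\mathcal{L}^{np}(\Sigma)|$. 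Your first route (Perron eigenvalue $=k^p$, double stochasticity) presupposes regularity, which the theorem does not assume; your fallback for the general transitive sofic case (``the counts are balanced enough,'' a positive lower bound from \cref{thm:ConvergenceMC}) is exactly the Markov condition that this theorem is designed to avoid, and \cref{lemma:Returning} does not give comparable frequencies --- it only gives that each accessible state is realized by \emph{at least one} word of an appropriate length. Without regularity there is no a priori reason the restriction of $\mathcal{M}_i$ to the communicating class of $(b,F)$ has spectral radius equal to the growth rate of $|\mathcal{L}^{np}(\Sigma)|$, so the frequency could in principle decay exponentially relative to the full count.

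The paper closes this by summing the \emph{column} of $\mathcal{M}_i^n$ at $(b,F)$ rather than a row: it sets $\mathcal{R}$ to be the set of states from which $(b,F)$ is accessible, uses \cref{lemma:Returning} only to guarantee $\sum_{j=0}^m\nu_{i+jp}(b',F')\geq 1$ for each $(b',F')\in\mathcal{R}$, and then bounds
\[
\sum_{j=0}^m\nu_{i+(j+n)p}(b,F)\ \geq\ \sum_{(b',F')}\mathcal{M}_i^n\bigl((b',F'),(b,F)\bigr),
\]
which by \cref{prop:E_size} is the \emph{total} number of length-$np$ paths ending at $F$, irrespective of residue class. Irreducibility of the Fischer cover then routes every path of length $np-\ell-1$ to terminate at $F$, giving a lower bound of $|\mathcal{L}^{np-\ell-1}(\Sigma)|$ with no equidistribution input whatsoever. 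This column-sum trick is the missing idea in your proposal: it trades the hard ``positive proportion of good extensions of a fixed $w$'' statement for the much weaker ``each relevant starting state is realized at least once,'' which is all that \cref{lemma:Returning} actually delivers. (Your dichotomy argument and the trivial upper bound are fine.)
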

\begin{proof}
    Consider $p\in \N$ given by \cref{prop:periodicity}. From \cref{lemma:Extension_to_synchronizing}, we can suppose $F(w)\in \mathcal{V}$ (otherwise, use $\Tilde{w}$ instead), and define  $i:=|w|\geq a-1$. In particular, $(p,i)$ is an eventual period. Consider the set \[\mathcal{R}:=\left\{(b',F')\in \Z_a\times \mathcal{V}: \text{ there exists $n\in \N$ s.t. } \mathcal{M}^n_i\Big((b',F'),(b,F)\Big)>0\ \right\}.\]
    From \cref{lemma:Returning}, for each $(b',F')\in \mathcal{R}$, there is $m'\in \N$ such that
$\nu_{i+m'p}(b',F')\neq 0$.

In particular, we can obtain some $m\in \N$ large enough such that
\begin{equation}
  \sum_{j=0}^{m}\nu_{i+jp}(b',F')\neq 0 \label{proof:ThmB_1}  
\end{equation}for every $(b',F')\in \mathcal{R}$. Let  $n\in \N$ and $(b,F)\in \Z_a\times \mathcal{V}$. Using \cref{cor:keymarkov_2},
\begin{align*}
    \sum_{j=0}^m\nu_{i+(j+n)p}(b,F)&=\sum_{j=0}^m \nu_{i+jp}\mathcal{M}^n_i\Big(\cdot, (b,F)\Big)\\
    &=\sum_{j=0}^m\sum_{b'\in \Z_a,F'\in \mathcal{V}}\nu_{i+jp}(b',F')\mathcal{M}_i^n\Big((b',F'),(b,F)\Big)\\
    &= \sum_{b'\in \Z_a,F'\in \mathcal{V}}\Big(\sum_{j=0}^m\nu_{i+jp}(b',F')\Big)\mathcal{M}_i^n\Big((b',F'),(b,F)\Big)\\
    &\geq \sum_{b'\in \Z_a,F'\in \mathcal{V}} \mathcal{M}_i^n\Big((b',F'),(b,F)\Big),
\end{align*}
where the last inequality follows from the definition of $\mathcal{R}$ and \eqref{proof:ThmB_1}. Combining the previous inequality, \cref{prop:E_size} and the characterization of \cref{cor:keymarkov_2}, we can conclude
\begin{equation}
\sum_{j=0}^m\nu_{i+(j+n)p}(b,F) \geq |\{\pi=(\pi_0\cdots \pi_{np-1}):\ \text{$\pi$ is a path of $\mathcal{G}$ and $t(\pi)=F$}\}|.\label{proof:ThmB_2}
\end{equation}

Since $\mathcal{G}$ is an irreducible graph, for each $F'\in \mathcal{V}$ there exists $\ell(F')\in \N$ such that there is a path of length $\ell(F')$ from $F'$ to $F$. Thus, defining $\ell:=\max_{F'\in \mathcal{V}}\ell(F'),$ for every $n$ large enough,
\begin{equation}
\begin{split}
    &|\{\pi=(\pi_0\cdots \pi_{np-1}):\ \text{$\pi$ is a path of $\mathcal{G}$ and $t(\pi)=F$}\}\\
    &\geq \sum_{F'\in \mathcal{V}}|\{\pi=(\pi_0\cdots \pi_{np-1-\ell(F')}):\ \text{$\pi$ is a path of $\mathcal{G}$ and $t(\pi)=F'$}\}|\\
    &\geq \sum_{F'\in \mathcal{V}}|\{\pi=(\pi_0\cdots \pi_{np-1-\ell}):\ \text{$\pi$ is a path of $\mathcal{G}$ and $t(\pi)=F'\}$}|\\
    &=|\{\pi=(\pi_0\cdots \pi_{np-1-\ell}):\ \text{$\pi$ is a path of $\mathcal{G}\}$}|,
\end{split}\label{proof:ThmB_3}
\end{equation}
 Since $\mathcal{G}$ is a cover for $\Sigma$, for every $x\in \mathcal{L}^{np-1-\ell}(\Sigma)$ there exists at least one path in $\mathcal{G}$ labelled by $x$. From this idea, \eqref{proof:ThmB_2} and \eqref{proof:ThmB_3}, it follows
 \begin{equation}
    \sum_{j=0}^m\nu_{i+(j+n)p}(b,F) \geq |\mathcal{L}^{np-\ell-1}(\Sigma)|. \label{proof:ThmB_4}
 \end{equation}

  On the other hand, by definition of $\nu_{i+(j+n)p}$, 
 \begin{equation}
     \sum_{j=0}^m\nu_{i+(j+n)p}(b,F)\leq \left|\left\{w\in \bigcup_{j=0}^{m}\mathcal{L}^{i+(j+n)p}(\Sigma):(w)_g\equiv b\Mod{a}\right\}\right|.\label{proof:ThmB_5}
 \end{equation}
 Since two words of different length can represent the same integer, the RHS of \eqref{proof:ThmB_5}  contains at most $m+1$ representations for each integer, then
 \begin{equation*}
     \left|\left\{w\in \bigcup_{j=0}^{m}\mathcal{L}^{i+(j+n)p}(\Sigma):(w)_g\equiv b\Mod{a}\right\}\right|\leq (m+1)|A_\Sigma\cap (a\N+b)\cap [0,g^{i+(m+n)p})|.
 \end{equation*}

Combining this estimation with \eqref{proof:ThmB_4} and \eqref{proof:ThmB_5}, it follows that
 \begin{equation*}
    \dfrac{1}{m+1}|\mathcal{L}^{np-\ell-1}(\Sigma)|\leq  |A_\Sigma\cap (a\N+b)\cap  [0,g^{i+(m+n)p})|.
 \end{equation*}
Taking $\log(\cdot)$ and rearranging terms,
\begin{align*}
    &\dfrac{np-\ell-1}{(i+mp+np)\log(g)}\dfrac{\log \left(|\mathcal{L}^{np-\ell-1}(\Sigma)|\right)}{np-\ell-1}-\dfrac{\log(m+1)}{(i+mp+np)\log(g)}\\
    &\leq \dfrac{\log\left( |A_\Sigma\cap (a\N+b)\cap  [0,g^{i+(m+n)p})| \right)}{(i+mp+np)\log(g)}.
\end{align*}

  Taking $\liminf$, we conclude that
 \[ h(\Sigma)/\log(g)\leq \liminf_{n\to \infty}\dfrac{\log\left(| A_\Sigma\cap (a\N+b)\cap [0,g^{i+mp+np})|\right)}{(i+mp+np)\log(g)},\]
 where $h(\Sigma)$ is the topological entropy of $\Sigma$.
 Since $A_\Sigma\cap (a\N+b)\subseteq A_\Sigma$, and recalling that $\dim_{\text{M}}(A_\Sigma)=h(\Sigma)/\log(g)$, we can obtain 
 \[\limsup_{n\to \infty}\dfrac{\log\left(| A_\Sigma\cap (a\N+b)\cap [0,g^{i+mp+np})|\right)}{(i+mp+np)\log(g)}\leq h(\Sigma)/\log(g). \]
 Therefore, 
  \[\lim_{n\to \infty}\dfrac{\log\left(| A_\Sigma\cap (a\N+b)\cap [0,g^{i+mp+np})|\right)}{(i+mp+np)\log(g)}=h(\Sigma)/\log(g).\]
From \cref{lemma:DIM}\footnote{In particular, we use a slight variant of this lemma that follows directly from the proof of Lemma 3.2 in \cite{GMR}.},
  \[\dim_{\text{M}}(A_\Sigma\cap (a\N+b))=h(\Sigma)/\log(g).\]
  
  Finally, if there is no word $w\in \mathcal{L}(\Sigma)$ such that $|w|\geq a-1$ and $(w)_g\equiv b\Mod{a}$, $A_\Sigma\cap (a\N+b)\cap [g^{a},\infty)=\emptyset$. Consequently, $|A_\Sigma\cap (a\N+b)|\leq g^a$, and thus $$\dim_{\text{M}}(A_\Sigma\cap (a\N+b))=0.$$
\end{proof}

From the discussion in \cref{Section6_1} and \cref{thm:Transversality}, the conclusion is the \cref{OpenQuestion} is affirmative if the multiplicatively invariant set can be represented by a transitive sofic shift. However, this is not necessarily true if the shift is merely transitive, or sofic. 

Nevertheless, we can extend the positive answer beyond transitive sofic subshifts.
\begin{definition}
Let $S\subseteq \N_0$.  We define the \emph{$S$-gap} subshift as
\[X_S:=\Big\{x\in \{0,1\}^{\N_0}:\ \text{ the number of 0's between two consecutive 1's  is in $S$} \Big\}.\]
\end{definition}
For $S\subseteq \N_0$, we denote $A_S:=A_{\Sigma_S}$.
\begin{proposition}
    Let  $S\subseteq\N_0$. For any arithmetic progression $P$, $\dim_{\text{M}}(A_{S}\cap P)$ is either $\dim_{\text{M}}(A_{S})$  or zero.
\end{proposition}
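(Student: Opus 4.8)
The strategy is to reduce to the transitive sofic case handled by \cref{thm:Transversality}, but to do so separately according to whether $S$ is finite or infinite, since the $S$-gap subshift $X_S$ need not be sofic when $S$ is infinite. First I would record the (standard) structural dichotomy for $S$-gap shifts: $X_S$ is a subshift of finite type when $S$ is finite or cofinite, and it is always transitive provided $S\neq\emptyset$ (if $n\in S$, then any two occurrences of $1$ can be joined by inserting a string $1 0^n 1 0^n\cdots$, so the language is transitive). More relevantly, $X_S$ is sofic if and only if $S$ is finite or cofinite. Thus for finite $S$ (and also cofinite $S$) the set $A_S=A_{X_S}$ is represented by a transitive sofic subshift, and \cref{thm:Transversality} directly yields that $\dim_{\text M}(A_S\cap(a\N+b))$ is either $\dim_{\text M}(A_S)$ or $0$ for every arithmetic progression $P=a\N+b$. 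So the only genuine work is the case where $S$ is infinite and coinfinite.

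For infinite coinfinite $S$, the plan is to approximate $X_S$ from inside by finite-gap subshifts. Fix $P=a\N+b$ and suppose first that $A_S\cap P$ is infinite; otherwise it has dimension $0$ and there is nothing to prove. Choose a large finite subset $S'\subseteq S$; then $X_{S'}\subseteq X_S$ is a transitive SFT, hence sofic, and $A_{S'}\subseteq A_S$. By \cref{thm:Transversality} applied to $X_{S'}$ (with $g=2$, the base), for each residue $b$ either $\dim_{\text M}(A_{S'}\cap(a\N+b))=\dim_{\text M}(A_{S'})$ or $A_{S'}\cap(a\N+b)$ is finite. The first key step is to show that one can pick $S'$ finite with $\dim_{\text M}(A_{S'})$ arbitrarily close to $\dim_{\text M}(A_S)$; this follows because the entropy $h(X_S)$ is the growth rate of the number of admissible words, and words of length $\le L$ only use gaps of size $\le L$, so $h(X_{S\cap[0,L]})\to h(X_S)$ as $L\to\infty$ (monotone convergence of the entropies of the finite-gap approximants to that of $X_S$), and then \eqref{entropy2} converts this to convergence of mass dimensions. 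The second key step is to guarantee that the residue class $a\N+b$ is \emph{hit} by $A_{S'}$ for some sufficiently rich finite $S'$: since $A_S\cap(a\N+b)$ is infinite, it contains an element $n$ with base-$2$ expansion a word $w\in\mathcal L(X_S)$ of length $\ge a-1$ with $(w)_2\equiv b\Mod a$; this word uses only finitely many gap sizes, so it already lies in $\mathcal L(X_{S'})$ for any finite $S'$ containing those gaps, and then \cref{thm:Transversality} forces $\dim_{\text M}(A_{S'}\cap(a\N+b))=\dim_{\text M}(A_{S'})$.

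Assembling these, for every $\varepsilon>0$ we obtain a finite $S'\subseteq S$ with $\dim_{\text M}(A_S)-\varepsilon\le \dim_{\text M}(A_{S'})=\dim_{\text M}(A_{S'}\cap(a\N+b))\le \dim_{\text M}(A_S\cap(a\N+b))\le\dim_{\text M}(A_S)$, where the monotonicity of the lower/upper mass dimension under inclusion of sets is used for the middle inequality (note $A_{S'}\cap(a\N+b)\subseteq A_S\cap(a\N+b)\subseteq A_S$). Letting $\varepsilon\to0$ gives $\dim_{\text M}(A_S\cap(a\N+b))=\dim_{\text M}(A_S)$. Hence for infinite coinfinite $S$ the intersection dimension is always full (when nonzero), which combined with the finite and cofinite cases proves the dichotomy for all $S$. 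The main obstacle I anticipate is the second step above: one must be careful that the \emph{witness} word $w$ realizing the residue class $b$ can be taken in the language of a finite-gap approximant \emph{and} of length at least $a-1$ simultaneously, and that (in the cofinite-but-not-full case, which is subsumed under sofic) the argument still goes through; one should also double-check that $A_S$ genuinely has a well-defined mass dimension here, which is guaranteed by \cite[Proposition 3.6]{GMR} since $A_S$ is $\times2$-invariant, and that the approximating dimensions $\dim_{\text M}(A_{S'})$ exist for the same reason. A minor subtlety is that $P$ is an arbitrary arithmetic progression $a\N+b$ with possibly $\gcd(2,a)\neq1$; but \cref{thm:Transversality} as stated imposes no coprimality hypothesis, so this causes no difficulty.
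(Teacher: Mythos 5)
Your proposal is correct and follows essentially the same route as the paper: handle finite $S$ via \cref{thm:Transversality} directly, and for infinite $S$ approximate $X_S$ from inside by finite-gap transitive SFTs whose entropies converge to $h(X_S)$, locate a witness word for the residue class inside a sufficiently large finite approximant, and sandwich the dimension using monotonicity. One small caveat: your side remark that $X_S$ is sofic \emph{if and only if} $S$ is finite or cofinite is not the correct characterization (soficity corresponds to eventual periodicity of $S$), but you only use the true direction (finite $\Rightarrow$ SFT $\Rightarrow$ sofic), so the argument is unaffected.
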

\begin{proof}
    If $|S|<\infty$, the result follows from \cref{thm:Transversality} as $\Sigma_S$ is transitive and sofic. Let us suppose $|S|=\infty$. Consider $s_n$ as the $n$-th element of $S$ in increasing order, and define $S_n:=\{s_1,\cdots,s_n\}$.  The subshift $X_{S_n}$ is a transitive and sofic shift for every $n\in\N$, and notice that $\mathcal{L}^n(X_{S_n})=\mathcal{L}^n(X_S)$. Consequently, it is not difficult to see that \begin{equation}
        h(X_S)=\sup_{n\in\N} h(X_{S_n}).\label{Transversality_sup}
    \end{equation}
    Suppose that there exists $w\in \mathcal{L}(X_S)$ such that $|w|\geq a-1$ and $(w)_g\equiv b \Mod{a}$. Then, there exists $n_0\in \N$ such that $w\in \mathcal{L}(X_{S_n})$ for every $n\geq n_0$. Since $X_{S_n}$ is sofic and transitive, we can apply \cref{thm:Transversality} to obtain that \begin{equation}
        \dim_{\text{M}}(A_{S_n}\cap (a\N+b))=\dim_{\text{M}}(A_{S_n})=h(X_{S_n})/\log(g).\label{Transversality_1}
    \end{equation}
    On the other hand,  
    \begin{equation}
        h(X_S)/\log(g)=\dim_{\text{M}}(A_{S})\geq\dim_{\text{M}}(A_{S}\cap (a\N+b))\geq \dim_{\text{M}}(A_{S_n}\cap (a\N+b)).\label{Transversality_2}
    \end{equation}
    From \eqref{Transversality_1} and \eqref{Transversality_2}, 
    \[h(X_{S_n})/\log(g)\leq\dim_{\text{M}}( A_{S}\cap (a\N+b))\leq h(X_S)/\log(g).\]
  The conclusion follows from \eqref{Transversality_sup}.
\end{proof}
If $S$ is the set of prime numbers, it is easy to see that $X_S$ is transitive but not sofic. Moreover, note that the property can be extended to subshifts that can be approximated (in terms of entropy) from within by transitive sofic subshifts. In particular, the class of transitive sofic subshifts is not optimal to provide an affirmative answer. 

One possible refinement is to look at subshifts with a unique measure of maximal entropy (which includes all transitive sofic subshifts \cite{Weiss1973}). However, it is not difficult to see that the subshift constructed in \cref{prop:ExampleTransitive} also has a unique measure of maximal entropy. Thus, this class is not sufficient to settle the question either. Seeking a general answer to the open question, we will consider the following class.

\begin{definition}
    A subshift $\Sigma$ is said to be \emph{entropy minimal} if for every subshift $\Sigma'\subsetneq \Sigma$, $h(\Sigma)>h(\Sigma').$\label{EntropyMinimal}
\end{definition}
The class of transitive sofic subshifts and $S$-gap subshifts are examples of entropy minimal subshifts (see \cite[Remark 2.4]{CT} and \cite{GR-P}). On the other hand, the counterexample provided in \cref{prop:ExampleTransitive} is not entropy minimal. Based on the techniques used in this paper, we think it should be possible to extend the affirmative answer to the open question to subshifts having left almost specification with bounded function  (see \cite[Definition 2.14]{PCl}), which is a subclass of the entropy minimal subshifts. More generally, we propose the following conjecture.
\begin{Conjecture}
    Let $A\subseteq \N_0$ be a $\times g$-invariant set. There exists an entropy minimal subshift $\Sigma\subseteq \Big\{0,\cdots,g-1\Big\}^{\N_0}$ such that $A=A_\Sigma$ if and only if, for every arithmetic progression $P$, it holds that $\dim_{\text{M}}(A\cap P)$ is either 0 or $\dim_{\text{M}}(A)$.
\end{Conjecture}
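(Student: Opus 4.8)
Because the statement is posed as a conjecture, the realistic goal is a strategy rather than a complete argument; I would attack the two implications separately, expecting the direction \emph{entropy minimal representation $\Rightarrow$ dimension dichotomy} to be within reach of the methods developed here, while the converse \emph{dichotomy $\Rightarrow$ existence of an entropy minimal representation} seems to require genuinely new input, which is presumably why the statement is a conjecture and not a theorem.

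For the forward implication, suppose $A = A_\Sigma$ with $\Sigma$ entropy minimal, fix an arithmetic progression $a\N + b$, and let $p$ be an eventual period for $\id$ with respect to $a$ as in \cref{prop:periodicity}. The plan is to reduce $\dim_{\text{M}}(A_\Sigma \cap (a\N+b))$ to the entropy of an invariant subsystem of a finite $\Z_a$-extension of $\Sigma$. After passing to the period-$p$ higher-block recoding, the residue $(w)_g \bmod a$ is governed by the Birkhoff sums of a locally constant $\Z_a$-valued cocycle $c$ (locally constant precisely because $g^{i+np}\equiv g^i \Mod a$ for $i\geq a-1$), so one forms the skew product $\Sigma \times_c \Z_a$; its projection to $\Sigma$ is $a$-to-one, hence entropy preserving, and the subshift obtained by fixing the residue fibre controls the intersection dimension. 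The crux is then to show this fibre entropy lies in $\{0,\ h(\Sigma)\}$: by the variational principle the measure of maximal entropy of $\Sigma$ lifts to the extension, entropy minimality forces it to have full support, and the lifts therefore spread over entire $\Z_a$-orbits of fibres, so every reachable residue carries entropy $h(\Sigma)$ while every unreachable one gives a finite (zero-dimensional) set. This is the dimension analogue, for general entropy minimal $\Sigma$, of the sofic argument in \cref{thm:Transversality}, with \cref{lemma:Returning} and \cref{lemma:Extension_to_synchronizing} replaced by the equidistribution of the lifted measure.

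I expect the main obstacle to be exactly the step \emph{reachable $\Rightarrow$ full entropy} once the follower-set and Fischer-cover apparatus is unavailable: a general entropy minimal subshift need not have any specification property, so one cannot freely concatenate an admissible residue-$b$ word with a positive-entropy family of extensions. The remark in this section that the argument should survive under left almost specification with bounded gap function identifies the quantitative substitute. Concretely, I would first prove the conjecture under that hypothesis, using almost specification to glue a fixed admissible residue-$b$ prefix to an entropy-full family of suffixes with controlled insertion gaps (mirroring \cref{thm:Transversality} but tracking the residue through each gap), and only afterwards attempt to dispense with it.

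For the converse, given $A$ satisfying the dichotomy, the plan is to exhibit an entropy minimal $\Sigma$ with $A=A_\Sigma$. Let $\Sigma_0$ be the canonical (maximal) subshift representing $A$ from \cref{languageset}; if it is entropy minimal we are done, and otherwise it contains a proper $\Sigma_1 \subsetneq \Sigma_0$ with $h(\Sigma_1)=h(\Sigma_0)$. The strategy is to show that the dichotomy hypothesis forbids $A$ from carrying a positive- but non-full-dimensional arithmetic fibre, and to use this to peel $\Sigma_0$ down to an entropy minimal core while preserving the $g$-language set \emph{exactly}. This exactness is the delicate point, since shrinking a subshift ordinarily deletes integers from $A_\Sigma$, and it is where the counterexamples of \cref{prop:ExampleTransitive} and the disjoint-full-shift example must be excluded using only the dichotomy. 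I regard this direction as the genuinely open part; a sensible first target is shifts of finite type, where the canonical representation is governed by an adjacency matrix and entropy minimality is detectable spectrally, before confronting the sofic and fully general cases.
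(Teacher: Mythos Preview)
The paper does not prove this statement; it is explicitly posed as a conjecture, and no proof or even proof sketch is offered. You correctly recognized this and presented a strategy rather than a purported proof, which is the right stance.

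Your forward-direction plan is broadly sensible and in the spirit of the paper's partial results: the paper proves the dichotomy for transitive sofic $\Sigma$ (\cref{thm:Transversality}) by a combinatorial argument on the Fischer cover, extends it to $S$-gap shifts by approximation from within by transitive sofic subshifts, and remarks that left almost specification with bounded function should suffice. Your skew-product/variational-principle approach is a natural measure-theoretic reformulation of the same idea, and you correctly isolate the key obstruction---that entropy minimality alone does not supply any gluing mechanism, so the step ``reachable residue $\Rightarrow$ full entropy'' is where the work lies. One caution: the claim that the measure of maximal entropy has full support follows from entropy minimality, but the assertion that its lifts ``spread over entire $\Z_a$-orbits of fibres'' is not automatic and is essentially equivalent to what you are trying to prove; you would need an ergodicity or cocycle-range argument here, which is exactly the missing piece.

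Your assessment of the converse as the genuinely open part matches the paper's implicit position (the paper offers no evidence for it beyond the counterexamples ruling out weaker hypotheses). The ``peel down to an entropy minimal core while preserving $A_\Sigma$ exactly'' idea is natural but, as you note, the exactness is the crux and nothing in the paper addresses it.
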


\bibliographystyle{abbrv}
\bibliography{bibliography}

\end{document}